\tikzset{liltext/.style={font=\tiny}}
\newcommand\Zhyp{Z_{{\text{hyp}}}}
\newcommand\Zpd{Z_{{\text{pd}}}}
\newcommand\Zred{Z_{{\text{red}}}}
\newcommand\Zell{Z_{{\text{ell}}}}
\DeclareMathOperator{\diag}{diag}
\DeclareMathOperator{\antidiag}{antidiag}
\begin{document}
	\newtheorem{theorem}{\quad Theorem}[section]
	\newtheorem{definition}[theorem]{\quad Definition}
	\newtheorem{corollary}[theorem]{\quad Corollary}
	\newtheorem{lemma}[theorem]{\quad Lemma}
	\newtheorem{proposition}[theorem]{\quad Proposition}
	\newtheorem{exam}[theorem]{\quad Example}
	\newtheorem{remark}[theorem]{\quad Remark}
\begin{frontmatter}
		
		%% Title, authors and addresses
		
		%% use the tnoteref command within \title for footnotes;
		%% use the tnotetext command for the associated footnote;
		%% use the fnref command within \author or \address for footnotes;
		%% use the fntext command for the associated footnote;
		%% use the corref command within \author for corresponding author footnotes;
		%% use the cortext command for the associated footnote;
		%% use the ead command for the email address,
		%% and the form \ead[url] for the home page:
		%%
		%% \title{Title\tnoteref{label1}}
		%% \tnotetext[label1]{}
		%% \author{Name\corref{cor1}\fnref{label2}}
		%% \ead{email address}
		%% \ead[url]{home page}
		%% \fntext[label2]{}
		%% \cortext[cor1]{}
		%% \address{Address\fnref{label3}}
		%% \fntext[label3]{}

		%% Use \dochead if there is an article header, e.g. \dochead{Short communication}
		
\title{Visco-elastic damped wave models with time-dependent coefficient}
		
		%% use optional labels to link authors explicitly to addresses:
		%% \author[label1,label2]{<author name>}
		%% \address[label1]{<address>}
		%% \address[label2]{<address>}
\author[label1]{Halit Sevki Aslan}\author[label2]{Michael Reissig \corref{*}}
\address[label1]{Department of Computing and Mathematics (FFCLRP), University of São Paulo (USP), Ribeirão Preto, SP, 14040--901, Brazil \\}
\address[label2]{Faculty for Mathematics and Computer Science, Technical University Bergakademie Freiberg, Prüferstr. 9, 09596 Freiberg, Germany \\
halitsevkiaslan@gmail.com \qquad reissig@math.tu-freiberg.de}
\cortext[*]{Corresponding author}
%%%%%%%%%%%%%%%%%%%%%%%%%%%%%%%%%%%%%%%%%%%%%%%%%%%%%%%%%%%%%%%%%%%%%%%%%%%%%%%%
%%%%%%%%%%% Final version, submitted on ..., 2023, to ...
%%%%%%%%%%%%%%%%%%%%%%%%%%%%%%%%%%%%%%%%%%%%%%%%%%%%%%%%%%%%%%%%%%%%%%%%%%%%%%%%

\begin{abstract}
In this paper, we study the following Cauchy problem for linear visco-elastic damped wave models with a general time-dependent coefficient $g=g(t)$:
\begin{equation} \label{EqAbstract} \tag{$\star$}
\begin{cases}
u_{tt}- \Delta u + g(t)(-\Delta)u_t=0, &(t,x) \in (0,\infty) \times \mathbb{R}^n, \\
u(0,x)= u_0(x),\quad u_t(0,x)= u_1(x), &x \in \mathbb{R}^n.
\end{cases}
\end{equation}
We are interested to study the influence of the damping term $g(t)(-\Delta)u_t$ on qualitative properties of solutions to \eqref{EqAbstract} as decay estimates for energies of higher order and the parabolic effect. The main tools are related to WKB-analysis. We apply elliptic as well as hyperbolic WKB-analysis in different parts of the extended phase space.
\end{abstract}
		
\begin{keyword}
wave equation; visco-elastic damping; higher order energies; WKB-analysis; parabolic effect.
%% or
\MSC[2020] 35L30, 35B40, 35L15, 35L05.
\end{keyword}
	
\end{frontmatter}
	
	%%
	%% Start line numbering here if you want
	%%
	% \linenumbers
	
	%% main text

\section{Introduction}\label{Sec_Intro}
\subsection{Historical remarks} \label{Section1.1}
It is well-known from the theory of visco-elasticity that visco-elastic materials undergoing deformation exhibit dual properties of viscosity and elasticity, which can keep the memory of their entire history and show natural damping. Therefore, the study of visco-elastic mechanical equations has wide application in the natural sciences and has become an important area of research.

Let us first recall some historical background for the following Cauchy problem with a time-dependent structural damping:
\begin{equation} \label{IntrHistorical}
\begin{cases}
u_{tt} - \Delta u + g(t)(-\Delta)^\delta u_t=0, &(t,x) \in (0,\infty) \times \mathbb{R}^n, \\
u(0,x)= u_0(x),\quad u_t(0,x)= u_1(x), &x \in \mathbb{R}^n,
\end{cases}
\end{equation}
with $\delta\in(0,1)$. This model was recently studied by several authors, in particular, see \cite{Wirth.non-eff, Wirth.eff.} for $\delta=0$. In \cite{ReissigLu2009}, the authors studied the Cauchy problem \eqref{IntrHistorical} with a decreasing time-dependent coefficient $g(t) = \mu(1+t)^{-\alpha}$, where $\mu>0$ and $\alpha\in[0,1]$. They studied the decay behavior of the energies of higher order of solutions and determined decay rates depending on the order of the energy. If higher order energies decay faster with increasing order, then the authors called this effect as ``parabolic effect''. Later, in \cite{Reissig} the author studied the Cauchy problem \eqref{IntrHistorical} with strictly increasing in time coefficients $g(t) = \mu(1+t)^{\alpha}$, where $\mu>0$ and $\alpha\in(0,1]$. In \cite{MatthesReissig2013}, the authors considered wave models of \eqref{IntrHistorical} with a scale-invariant coefficient $g=g(t)$, that is, $g(t)=\mu(1+t)^{2\delta-1}$ with $\mu>0$ and $\delta\in(0,1]$. They proved the optimality of their decay estimates and studied smoothing effects for solutions to structurally damped waves. \\
Let us mention briefly also some recent contributions related to the following Cauchy problem, the so-called \emph{structurally damped $\sigma$-evolution equation with time-dependent dissipation}:
\begin{equation}\label{Equation_sigma-delta}
\begin{cases}
u_{tt}+ (-\Delta)^\sigma u+ g(t) (-\Delta)^\delta u_t = 0, &(t,x) \in (0,\infty) \times \mathbb{R}^n, \\
u(0,x)= u_0(x),\quad u_t(0,x)= u_1(x), &x \in \mathbb{R}^n,
\end{cases}
\end{equation}
with $\sigma> 1$ and $\delta\in(0,\sigma)$. The authors in \cite{Kainane2014,KainaneReissig2015-1,KainaneReissig2015-2} developed WKB-analysis to derive an explicit representation formula based on Fourier multipliers for solutions when the dissipation coefficient $g=g(t)$ is supposed to be a monotonous function.
Using the obtained representation formula, they derived some $L^2-L^2$ decay estimates and $L^p-L^q$ estimates on the conjugate line for the energies of higher order. Moreover, some qualitative properties of energy solutions such as parabolic and smoothing effects were explained in detail. Coming back to the particular case of $g(t)= \mu(1+t)^{-\alpha}$ with a constant $\mu>0$ and $\alpha\in (-1,1)$, one can see that a classification between effective damping and noneffective damping, which strongly depends on parameters $\sigma$, $\delta$ and $\alpha$, is introduced in \cite{DAbbiccoEbert2016}. Their main goal is to study the asymptotic profile of solutions to \eqref{Equation_sigma-delta} and simultaneously to clarify that in the effective damping case, a diffusion phenomenon occurs. Physical motivations for the role played by a time-dependent structural damping term can be found in \cite{FangLuReissig2010}. For more details about the qualitative properties of solutions to wave models with visco-elastic damping and constant coefficients, we refer to Section 14.3 of \cite{EbertReissigBook} and pioneering works \cite{Ponce1985, Shibata2000}.
\subsection{Main purpose of the paper} \label{Section1.2}
This paper is concerned with studying the following Cauchy problem for the linear visco-elastic damped wave equation with a time-dependent coefficient $g=g(t)$:
\begin{equation} \label{modellinearviscoelasticdamped}
\begin{cases}
u_{tt}- \Delta u + g(t)(-\Delta)u_t=0, &(t,x) \in (0,\infty) \times \mathbb{R}^n, \\
u(0,x)= u_0(x),\quad u_t(0,x)= u_1(x), &x \in \mathbb{R}^n.
\end{cases}
\end{equation}
Here $g=g(t)$ is a time-dependent coefficient satisfying some appropriate conditions like positivity, continuity, monotonicity, and some behavior of derivatives. The role played by the visco-elastic damping in \eqref{modellinearviscoelasticdamped} varies with the choices of the coefficient $g=g(t)$ and different approaches are required to study its influence on the asymptotic profile of the solution as $t\rightarrow\infty$. To the best of the authors' knowledge, it seems that wave models with general time-dependent coefficient in the visco-elastic damping term have not been studied before. Our main goal is to study decay rates of energies of higher order for solutions to the Cauchy problem \eqref{modellinearviscoelasticdamped} with a general time-dependent coefficient in the visco-elastic damping. These estimates rely on the structural properties of representations of solutions. Our approach is based on asymptotic representations combined with an extended phase space analysis under some assumptions, mostly adapted from the treatment of problems in WKB-analysis.\medskip

We divide our considerations into the following cases of the time-dependent coefficients:
\begin{itemize}
\item Models with increasing time-dependent coefficient $g=g(t)$ in Section \ref{Section4.1}.
\item Models with integrable and decreasing time-dependent coefficient $g=g(t)$ in Section \ref{Section4.2}.
\item Models with non-integrable and decreasing time-dependent coefficient $g=g(t)$ in Section \ref{Section4.3}.
\item Models with non-integrable and slowly increasing time-dependent coefficient $g=g(t)$ in Section \ref{Section4.4}.
\end{itemize}
We will treat separately all four cases. After explaining how WKB-analysis should be applied to each of the cases, we sometimes shorten calculations and refer to older papers where the necessary calculations are given in detail to treat other models. In this way we try to shorten the paper without loosing readability.\medskip

\textbf{Notations}
\begin{itemize}
\item We write $f\lesssim g$ when there exists a constant $C>0$ such that $f\leq Cg$, and $f \approx g$ when $g\lesssim f\lesssim g$.
\item By $f\sim g$ we denote $\lim_{t\rightarrow\infty}\dfrac{f(t)}{g(t)}=1$, that is, $f$ and $g$ have the same asymptotic behavior.
\item As usual, the spaces $H^a$ and $\dot{H}^a$ with $a \geq 0$ stand for Bessel and Riesz potential spaces based on the $L^2$ space. Here $\big<D\big>^a$ and $|D|^a$ denote the pseudo-differential operator with symbol $\big<\xi\big>^a$ and the fractional Laplace operator with symbol $|\xi|^a$, respectively.
\item $(|\,\cdot\,|)$ denotes the matrix of absolute values of its entries for a given matrix.
\end{itemize}

\subsection{Our approach} \label{Section1.3}
We apply the partial Fourier transformation with respect to spatial variables to the Cauchy problem \eqref{modellinearviscoelasticdamped}. So we get that $\hat{u}(t,\xi)=\mathcal{F}_{x\rightarrow\xi}\left( u(t,x) \right)$ solves
\begin{equation} \label{modellinearviscoelasticdampedFourier}
\begin{cases}
\hat{u}_{tt}+|\xi|^2\hat{u} + g(t)|\xi|^2\hat{u}_t=0, &(t,\xi) \in (0,\infty) \times \mathbb{R}^n, \\
\hat{u}(0,\xi) = \hat{u}_0(\xi),\quad \hat{u}_t(0,\xi)= \hat{u}_1(\xi), &\xi \in \mathbb{R}^n.
\end{cases}
\end{equation}
We apply the change of variables
\[ \hat{u}(t,\xi)=\exp\bigg(-\frac{1}{2} \int_0^t g(\tau)|\xi|^2 d\tau\bigg) v(t,\xi) \]
to arrive at
\begin{equation} \label{modellinearauxiliary}
\begin{cases}
v_{tt} + |\xi|^2\bigg(1-\dfrac{g(t)^2|\xi|^2}{4}-\dfrac{g'(t)}{2}\bigg) v=0, &(t,\xi) \in (0,\infty) \times \mathbb{R}^n, \\
v(0,\xi)= v_0(\xi),\quad v_t(0,\xi)= v_1(\xi), &\xi \in \mathbb{R}^n,
\end{cases}
\end{equation}
where
\[ v_0(\xi) = \hat{u}_0(\xi) \quad \text{and} \quad v_1(\xi) = \frac{g(0)}{2}|\xi|^2\hat{u}_0(\xi) + \hat{u}_1(\xi). \]
Examples for $g=g(t)$ are
\begin{align*}
& \bullet \quad g(t)=e^t, & \bullet &\quad g(t)=e^{-t}, & \bullet \quad g(t)=(C_d+t)^d \,\, \text{ with } \,\, d \in \mathbb{R}, \\
& \bullet \quad g(t)=(1+t)\log(e+t), & \bullet &\quad g(t)=\mu (1+t) \,\, \text{ with } \,\, \mu >0, \\
& \bullet \quad g(t)=e^{e^t}, & \bullet &\quad g(t)=e^{-e^t}.
\end{align*}
To study (\ref{modellinearauxiliary}) we analyze
\[ 1-\frac{g(t)^2|\xi|^2}{4}-\frac{g'(t)}{2}=0. \]
This equation divides the extended phase space $(0,\infty) \times \mathbb{R}^n$ into two regions, the hyperbolic region $\Pi_{hyp}$ and the elliptic region $\Pi_{ell}$, as follows:
\[ \Pi_{hyp}=\bigg\{(t,\xi): 1-\frac{g(t)^2|\xi|^2}{4}-\frac{g'(t)}{2}>0\bigg\}, \qquad \Pi_{ell}=\bigg\{(t,\xi): 1-\frac{g(t)^2|\xi|^2}{4}-\frac{g'(t)}{2}<0\bigg\}.\]
The assumptions for $g=g(t)$ are organized in such a way that we can define a separating line
\[ t_\xi=t(|\xi|)=\bigg\{ (t,\xi) \in (0,\infty) \times \mathbb{R}^n: 1-\frac{g(t)^2|\xi|^2}{4}-\frac{g'(t)}{2}=0 \bigg\}.\]
We are going to consider the following two cases:\medskip

{\it \underline{Application of elliptic WKB-analysis}}:\medskip

\noindent We assume that
\[ 1-\frac{g'(t)}{2}=-h(t)^2 g(t)^2 \,\,\,\mbox{for all}\,\,\, t>0,\]
where $h=h(t)$ is a positive function for all $t>0$. This function has to satisfy usual properties to carry out steps from WKB-analysis. In this case the equation (\ref{modellinearauxiliary}) becomes
\begin{eqnarray} \label{modellinearauxiliary1}
v_{tt} - g(t)^2 |\xi|^2\bigg( h(t)^2+\frac{|\xi|^2}{4} \bigg)v=0.
\end{eqnarray}
So, we have only to apply tools from elliptic WKB-analysis to get WKB-representations of solutions.
\begin{exam}  \label{Example1}
Let us choose $g(t)=3e^t$. We study (\ref{modellinearauxiliary1}) with the function
$h=h(t)=\frac{\sqrt{3e^t-2}}{3\sqrt{2}e^t}$.
\end{exam}
\begin{exam}  \label{Example3}
Let us choose $g(t)=3e^{e^t}$. We study (\ref{modellinearauxiliary1}) with the function $h=h(t)=\frac{\sqrt{3e^{e^t}e^t-2}}{3\sqrt{2}e^{e^t}}$.
\end{exam}
\begin{exam}  \label{Example6}
Let us choose $g(t)=(C_d+t)^d$ with $d > 1$ and $C_d=4^{\frac{1}{d}}$ for example. We study (\ref{modellinearauxiliary1}) with the function $h=h(t)=\frac{\sqrt{(C_d+t)^d-2}}{\sqrt{2}(C_d+t)^d}$.
\end{exam}
\begin{exam}  \label{Example7}
Let us choose $g(t)=3(1+t)\log(e+t)$. We study (\ref{modellinearauxiliary1}) with the function $h=h(t)=\frac{\sqrt{3\log(e+t)+3 \frac{1+t}{e+t}-2}}{3\sqrt{2}(1+t)\log(e+t)}$.
\end{exam}
\begin{exam}  \label{Example9}
Let us choose $g(t)=\mu(1+t)$ with $\mu>2$. We study (\ref{modellinearauxiliary1}) with the function $h=h(t)=\sqrt{\frac{\mu-2}{2\mu^2}} \frac{1}{1+t}$.
\end{exam}
{\it \underline{Application of elliptic and hyperbolic WKB-analysis}}:\medskip

\noindent We assume that
\[ 1-\frac{g'(t)}{2}=h(t)^2 g(t)^2 \,\,\,\mbox{for all} \,\,\, t>0, \]
where $h=h(t)$ is a positive function for all $t>0$. This function has to satisfy usual properties to carry out steps from WKB-analysis. In this case the equation (\ref{modellinearauxiliary}) becomes
\begin{equation} \label{modellinearauxiliary2}
v_{tt} +g(t)^2  |\xi|^2\bigg(h(t)^2-\frac{|\xi|^2}{4}\bigg) v=0.
\end{equation}
So, we have to apply tools from elliptic WKB-analysis and from hyperbolic WKB-analysis as well to get WKB-representations of solutions.
The hyperbolic and elliptic regions can be easily described by
\[ \Pi_{hyp}=\bigg\{ (t,\xi):  h(t) > \frac{|\xi|}{2} \bigg\}, \qquad \Pi_{ell}=\bigg\{ (t,\xi): h(t) < \frac{|\xi|}{2} \bigg\}.\]
\begin{exam}  \label{Example2}
Let us choose $g(t)=e^{-t}$. We study (\ref{modellinearauxiliary2}) with the function
$h=h(t)= e^t\frac{\sqrt{e^{-t}+2}}{\sqrt{2}}$. The separating line is defined by $|\xi|=2h(t)$ for all $t>0$.
So, the hyperbolic region is very large. The elliptic region is large.
\end{exam}
\begin{exam}  \label{Example4}
Let us choose $g(t)=e^{-e^t}$. We study (\ref{modellinearauxiliary2}) with the function
$h=h(t)=e^{e^t}\frac{\sqrt{e^{-e^t}e^t+2}}{\sqrt{2}}$.
The separating line is defined by $|\xi|=2h(t)$ for all $t>0$.
So, the hyperbolic region is very large. The elliptic region is large.
\end{exam}
\begin{exam}  \label{Example10}
Let us choose $g(t)=\mu(1+t)$ with $\mu \in (0,2)$. We study (\ref{modellinearauxiliary2}) with the function $h=h(t)=\sqrt{\frac{2-\mu}{2\mu^2}} \frac{1}{1+t}$.
\end{exam}
\begin{exam}  \label{Example5}
Let us choose $g(t)=(1+t)^d$ with $d < 1$. We study (\ref{modellinearauxiliary2}) with the function
$h=h(t)=\frac{\sqrt{2-d(1+t)^{d-1}}}{\sqrt{2}(1+t)^d}$.
The separating line is defined by $|\xi|=2h(t)$ for all $t>0$.
If $d \in (0,1]$, then the hyperbolic region is small, the elliptic region is very large. If $d<0$, then
the hyperbolic region is very large. The elliptic region is large.
\end{exam}
\begin{exam}  \label{Example8}
We choose $g(t)=((1+t)\log(e+t))^{-1}$ and study (\ref{modellinearauxiliary2}) with $h=h(t)=\frac{\sqrt{\frac{\log(e+t)+\frac{1+t}{e+t}}{((1+t)\log(e+t))^2}+2}}{\sqrt{2}((1+t)\log(e+t))^{-1}}$. The separating line is defined by $|\xi|=2h(t)$ for all $t>0$.
So, the hyperbolic region is very large. The elliptic region is large.
\end{exam}

\section{Models with increasing time-dependent coefficient $g=g(t)$} \label{Section4.1}
We assume the following properties of the function $g=g(t)$:
\begin{enumerate}
\item[\textbf{(A1)}] $g(t)>0$ and $g'(t)>0$ for all $t \in [0,\infty)$,
\item[\textbf{(A2)}] $\dfrac{1}{g} \in L^1(0,\infty)$,
\item[\textbf{(A3)}] $|d_t^kg(t)|\leq C_kg(t)\Big( \dfrac{g(t)}{G(t)} \Big)^k$ for all $t \in [0,\infty)$, $k=1,2$, where $G(t):=\dfrac{1}{2}\displaystyle\int_0^t g(\tau)d\tau$ and $C_1$, $C_2$ are positive constants.
\end{enumerate}
\begin{theorem} \label{Theorem1}
Let us consider the Cauchy problem
\begin{equation*}
\begin{cases}
u_{tt}- \Delta u + g(t)(-\Delta)u_t=0, &(t,x) \in (0,\infty) \times \mathbb{R}^n, \\
u(0,x)= u_0(x),\quad u_t(0,x)= u_1(x), &x \in \mathbb{R}^n.
\end{cases}
\end{equation*}
We assume that the coefficient $g=g(t)$ satisfies the conditions \textbf{(A1)} to \textbf{(A3)} and $(u_0,u_1)\in \dot{H}^{|\beta|} \times \dot{H}^{|\beta|-2}$ with $|\beta|\geq 2$. Then, we have the following estimates for Sobolev solutions:
\begin{align*}
\big\||D|^{|\beta|} u(t,\cdot)\big\|_{L^2} & \lesssim  \|u_0\|_{\dot{H}^{|\beta|}} + \|u_1\|_{\dot{H}^{|\beta|-2}},\\
\big\||D|^{|\beta|-2} u_t(t,\cdot)\big\|_{L^2} & \lesssim g(t)\Big( \|u_0\|_{\dot{H}^{|\beta|}} + \|u_1\|_{\dot{H}^{|\beta|-2}} \Big).
\end{align*}
\end{theorem}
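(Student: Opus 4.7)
The plan is to carry out the programme sketched in Section \ref{Section1.3} for the elliptic regime. Under assumptions \textbf{(A1)}--\textbf{(A3)} the condition $1-g'(t)/2 = -h(t)^2 g(t)^2$ is met by a positive function $h=h(t)$ (cf.\ Examples \ref{Example1}, \ref{Example3}, \ref{Example6}, \ref{Example7}, \ref{Example9}), so after the partial Fourier transform and the dissipative change $\hat u(t,\xi) = e^{-|\xi|^2 G(t)} v(t,\xi)$ with $G(t)=\tfrac{1}{2}\int_0^t g(\tau)d\tau$, the auxiliary equation reads
\[ v_{tt} - \lambda(t,\xi)^2 v=0, \qquad \lambda(t,\xi) := g(t)|\xi|\sqrt{h(t)^2+|\xi|^2/4}, \]
with initial data $v_0=\hat u_0$, $v_1 = \tfrac{g(0)}{2}|\xi|^2 \hat u_0+\hat u_1$. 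Since the coefficient of $v$ is negative on the entire extended phase space, only elliptic WKB-analysis is needed.

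My second step is to apply the standard two-step diagonalization. Setting $V=(\lambda v,v_t)^{T}$ produces the system $V_t = \bigl(\begin{smallmatrix} \lambda_t/\lambda & \lambda\\ \lambda & 0\end{smallmatrix}\bigr)V$. A first diagonalization with the eigenvectors of the principal part gives $W_t = \diag(\lambda,-\lambda)W + \frac{\lambda_t}{2\lambda}\bigl(\begin{smallmatrix}1 & 1\\ 1 & 1\end{smallmatrix}\bigr)W$, and a second step (conjugation by $I+N(t,\xi)$ with $N$ chosen so that $N\mathcal{D}-\mathcal{D}N$ cancels the off-diagonal part) reduces the remainder to diagonal form plus an integrable perturbation. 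This requires verifying from \textbf{(A3)} that $\lambda_t/\lambda^2$ is uniformly small enough; thanks to $h^2 g^2 = g'(t)/2-1$ and the control $|g'|,|g''|\lesssim g(g/G)^k$, the quantities $h_t/h$ and $\lambda_t/\lambda$ behave like $g/G$, which integrated against $1/\lambda$ is manageable. The fundamental solution then has the shape
\[ \mathcal E(t,0;\xi) \approx \sqrt{\lambda(0,\xi)/\lambda(t,\xi)}\,\diag\!\Big(e^{\int_0^t \lambda d\tau},\, e^{-\int_0^t \lambda d\tau}\Big), \]
so that $|v(t,\xi)| \lesssim \sqrt{\lambda(0)/\lambda(t)}\, e^{\int_0^t \lambda d\tau}\bigl(|v_0|+|v_1|/\lambda(0)\bigr)$ and $|v_t(t,\xi)|\lesssim \lambda(t)|v(t,\xi)|$, the decaying branch being controlled analogously.

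The third step is to match the WKB growth with the dissipative exponential. Multiplying by $e^{-|\xi|^2 G(t)}$ gives
\[ |\hat u(t,\xi)| \lesssim \sqrt{\lambda(0,\xi)/\lambda(t,\xi)}\, \exp\!\Big( \int_0^t g(\tau)\bigl(|\xi|\sqrt{h(\tau)^2+|\xi|^2/4} - \tfrac{|\xi|^2}{2}\bigr)d\tau \Big)\bigl(|v_0|+|v_1|/\lambda(0)\bigr). \]
For the high-frequency regime $|\xi|\gtrsim h(t)$, I expand the square root, use $h^2 g^2 = g'/2 - 1$, and recognize $\int_0^t gh^2\,d\tau = \tfrac{1}{2}\log(g(t)/g(0)) - \int_0^t g^{-1}d\tau$, which by \textbf{(A2)} grows like $\tfrac12\log g$; combined with $\sqrt{\lambda(0)/\lambda(t)}\approx\sqrt{g(0)/g(t)}$ and $|v_1|/\lambda(0)\lesssim |\hat u_0|+|\hat u_1|/|\xi|^2$, the two logarithmic factors cancel exactly, giving $|\xi|^{|\beta|}|\hat u|\lesssim |\xi|^{|\beta|}|\hat u_0|+|\xi|^{|\beta|-2}|\hat u_1|$. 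For the low-frequency regime $|\xi|\lesssim h(t)$, the integrand in the phase behaves like $g|\xi|h$; from \textbf{(A3)} and $h\lesssim 1/\sqrt G$ one obtains $\int_0^t g h\,d\tau \lesssim \sqrt{G(t)}$, and then the elementary inequality $-|\xi|^2 G(t) + C|\xi|\sqrt{G(t)}\le C^2/4$ shows the exponent is uniformly bounded. Plancherel then produces the first estimate of the theorem.

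The $u_t$ bound follows from the identity $\hat u_t = -\tfrac{g(t)}{2}|\xi|^2 \hat u + e^{-|\xi|^2 G(t)} v_t$: the first summand picks up exactly the factor $g(t)$ after using the estimate already proved for $|\xi|^{|\beta|}\hat u$, while the second is controlled by $|v_t|\lesssim \lambda|v|$ and the same cancellation argument, again contributing at most $g(t)(\|u_0\|_{\dot H^{|\beta|}}+\|u_1\|_{\dot H^{|\beta|-2}})$. I expect the main technical obstacle to be the verification that the symbol hierarchy needed for the second diagonalization genuinely closes from \textbf{(A3)} uniformly in $\xi$, together with the patching between the two frequency regimes across $|\xi|\sim h(t)$, where $\lambda$ and its derivatives change asymptotic behaviour; both steps mirror the elliptic WKB machinery developed in \cite{Kainane2014,KainaneReissig2015-1,KainaneReissig2015-2}, to which the detailed calculations can be reduced.
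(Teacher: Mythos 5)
The central issue is that your WKB machinery is being asked to run on the whole extended phase space, but it only closes in the elliptic zone $Z_{\text{ell}}(N)=\{G(t)|\xi|^2\geq N\}$. Your remainder after the second diagonalization is of size roughly $\lambda_t/\lambda^2$, and in the regime where $|\xi|\lesssim h(t)\sim G(t)^{-1/2}$ one finds $\lambda\sim g(t)|\xi|h(t)$, so that $\lambda_t/\lambda^2\sim \bigl(G(t)|\xi|^2\bigr)^{-1/2}$: this quantity is not small and not integrable in $t$ near $t=0$ (where $G(0)=0$). That region is precisely the pseudo-differential zone $\Zpd(N)=\{G(t)|\xi|^2\leq N\}$ that the paper introduces, and the paper abandons WKB there entirely, instead going back to the original $\hat u$-system, deriving the integral representation for the entries of the fundamental solution, and closing with a Gronwall argument (Lemma \ref{Appendix.Gronwall}, Proposition \ref{CorEstPDZone}). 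Your "low-frequency regime $|\xi|\lesssim h(t)$" discussion only bounds the phase $\int_0^t\lambda\,d\tau - |\xi|^2 G(t)$; it does not establish that the corrector $Q$ is uniformly bounded there, so the estimate $|v|\lesssim\sqrt{\lambda(0)/\lambda(t)}\exp(\int_0^t\lambda\,d\tau)(\cdots)$ is simply unproven in that regime. Acknowledging this as a "technical obstacle ... reduced to \cite{Kainane2014,KainaneReissig2015-1,KainaneReissig2015-2}" does not help, because those references also introduce a pseudo-differential zone and treat it by a non-WKB argument — exactly the step missing here. You also still need the gluing of the $\Zpd$ and $\Zell$ estimates across the separating curve $t=t_\xi$, which is a real part of the paper's proof (Corollaries \ref{IncreasingCorZell}, \ref{IncreasingCorZpd} and Section \ref{Section2.3}) and is only gestured at in your last sentence.

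A secondary but genuine error is the opening assertion that \textbf{(A1)}--\textbf{(A3)} imply the pure elliptic factorization $1-g'(t)/2=-h(t)^2g(t)^2$ with $h>0$. This requires $g'(t)>2$ for all $t$, which is not a consequence of \textbf{(A1)}--\textbf{(A3)}: for example $g(t)=(1+t)^2/10$ satisfies all three conditions but has $g'(t)<2$ for $t<9$, so $h^2$ would be negative there. The paper avoids this by never writing $\lambda=g|\xi|\sqrt{h^2+|\xi|^2/4}$; instead it takes $\gamma(t,\xi)=g(t)|\xi|^2/2$ as the principal weight and treats the entire term $\bigl(\tfrac{g'(t)}{2}-1\bigr)|\xi|^2$ as a lower-order perturbation, whose relative size is $\lesssim (G(t)|\xi|^2)^{-1}\leq N^{-1}$ in $\Zell(N)$ by \textbf{(A3)}. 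This is why the zone decomposition by $G(t)|\xi|^2$ is the right one and why the proof works without any sign condition on $g'-2$. Your $\lambda$-based micro-energy can be made to work inside $\Zell(N)$, where $\lambda\sim\gamma$, but it does not remove the need for a separate pseudo-differential-zone argument, and it unnecessarily narrows the class of admissible $g$.

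The phase cancellation in your third step (the identity $\int_0^t gh^2\,d\tau=\tfrac12\log(g(t)/g(0))-\int_0^t g^{-1}\,d\tau$, and the resulting cancellation against $\sqrt{\lambda(0)/\lambda(t)}$) is a nice observation and is consistent with the $\tfrac{g(t)}{g(s)}\exp\bigl(\tfrac{|\xi|^2}{2}\int_s^t g\,d\tau\bigr)$ bound in Proposition \ref{Lem.Est.Ell.Zone}; that part of the argument would survive once you restrict it to $\Zell(N)$ and start the integration at $t_\xi$ rather than $0$.
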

\begin{remark}
The statements of Theorem \ref{Theorem1} imply that we do not have any parabolic effect, that is, higher order energies do not decay faster with
increasing order. Theorem \ref{Theorem1} can be applied to Examples \ref{Example1}, \ref{Example3} and \ref{Example6}.
\end{remark}
\begin{proof}[Proof of Theorem \ref{Theorem1}]
We write equation (\ref{modellinearauxiliary1}) in the form
\begin{equation} \label{auxiliaryproblem3}
D_t^2 v + \frac{g(t)^2}{4}|\xi|^4 v + \bigg( \frac{g'(t)}{2}-1 \bigg)|\xi|^2 v=0.
\end{equation}
The influence of the term $\dfrac{g'(t)}{2}|\xi|^2 v$ is dominant to the influence of the term $-|\xi|^2 v$. \\
\medskip

We divide the extended phase space $[0,\infty)\times \mathbb{R}^n$ into zones as follows:
\begin{itemize}
\item pseudodifferential zone:
\begin{align*} \label{zonesellipticcase}
\Zpd(N)=\Big\{ (t,\xi)\in [0,\infty)\times\mathbb{R}^n: G(t)|\xi|^2 \leq N \Big\},
\end{align*}
\item elliptic zone:
\begin{align*} \Zell(N)=\Big\{ (t,\xi)\in [0,\infty)\times \mathbb{R}^n: G(t)|\xi|^2 \geq N \Big\},
\end{align*}
\end{itemize}
where $N>0$ is sufficiently large.
The separating line $t_\xi=t(|\xi|)$ is defined by
\[ t_\xi=\Big\{ (t,\xi) \in [0,\infty) \times \mathbb{R}^n: G(t)|\xi|^2=N \Big\}.\]

%%%%%%%%%%%%%%%%%%%%%%%%%%%%%%%%%%%   Draw the Picture   %%%%%%%%%%%%%%%%%%%%%%%%%%%%%%%%%%%%%%%%%%%%%%%%%%%%%%%%%%%%%%%%%
\begin{figure}[H]
\begin{center}
\begin{tikzpicture}[>=latex,xscale=1.1]
	\draw[->] (0,0) -- (4,0)node[below]{$|\xi|$};
	\draw[->] (0,0) -- (0,4)node[left]{$t$};
    \node[below left] at(0,0){$0$};
    %%%%%%%%%%%%%%%%%%%%%%%%%%%%%%%%%%%%%%%%%%%%%%%%%%%%%%%%
    \node  at (.4,3.5) {$\textcolor{blue}{t_{\xi}}$};
	%%%%%%%%%%%%%%%%%%%%%%%%%%%%%%%%%%%%%%%%%%%%%%%%%%%%%%%%%%%%%%%%%%%%%%%%%%%%%%%%%%%%%%%%%%%%%%%%%
	\draw[domain=0.2:3.8,color=blue,variable=\t] plot ({4.5*exp(-\t/1)},\t);
	\node[color=black] at (2.5, 1.7){{\footnotesize $Z_{\text{ell}}$}};
    \node[color=black] at (2.5, 1.3){{\footnotesize $\big( G(t)|\xi|^2\geq N \big)$}};
	\node[color=black] at (.9,0.8){{\footnotesize $Z_{\text{pd}}$}};
    \node[color=black] at (1,0.4){{\footnotesize $\big( G(t)|\xi|^2\leq N \big)$}};
	%%%%%%%%%%%%%%%%%%%%%%%%%%%%%%%%%%%%%%%%%%%%%%%%%%%%%%%%%%%%%%%%%%%%%%%%%%%%%%%%%%%%%%
\end{tikzpicture}
\caption{Sketch of the zones for the case $g=g(t)$ is increasing}
\label{fig.zone1}
\end{center}
\end{figure}
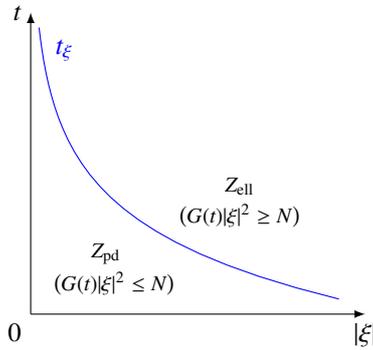
%%%%%%%%%%%%%%%%%%%%%%%%%%%%%%%%%%%%%%   END FIGURE   %%%%%%%%%%%%%%%%%%%%%%%%%%%%%%%%%%%%%%%%%

\subsection{Considerations in the elliptic zone $\Zell(N)$} \label{Section2.1}
Let us introduce the following family of symbol classes in the elliptic zone $\Zell(N)$.
\begin{definition} \label{Definition2.1}
A function $f=f(t,\xi)$ belongs to the elliptic symbol class $S_{\text{ell}}^\ell\{m_1,m_2\}$ if it holds
\begin{equation*} \label{Def.Symbol.Class.Ell.}
|D_t^kf(t,\xi)|\leq C_{k}\big( |\xi|^2g(t) \big)^{m_1}\Big( \frac{g(t)}{G(t)} \Big)^{m_2+k}
\end{equation*}
for all $(t,\xi)\in \Zell(N)$ and all $k\leq \ell$.
\end{definition}
Some useful rules of the symbolic calculus are collected in the following proposition.
\begin{proposition} \label{Prop.Symbol.Ell.}  The following statements are true:
\begin{itemize}
\item $S_{\text{ell}}^\ell\{m_1,m_2\}$ is a vector space for all nonnegative integers $\ell$;
% \item $S^\ell\{m_1,m_2\}\hookrightarrow S_N^{\ell'}\{m_1+k,m_2+k,m_3-k\}$ for all $\ell'\leq \ell$ and $k\geq 0$;%
\item $S_{\text{ell}}^\ell\{m_1,m_2\}\cdot S_{\text{ell}}^{\ell}\{m_1',m_2'\}\hookrightarrow S^{\ell}_{\text{ell}}\{m_1+m_1',m_2+m_2'\}$;
\item $D_t^kS_{\text{ell}}^\ell\{m_1,m_2\}\hookrightarrow S_{\text{ell}}^{\ell-k}\{m_1,m_2+k\}$
for all nonnegative integers $\ell$ with $k\leq \ell$;
\item $S_{\text{ell}}^{0}\{-1,2\}\hookrightarrow L_{\xi}^{\infty}L_t^1\big( \Zell(N) \big)$.
\end{itemize}
\end{proposition}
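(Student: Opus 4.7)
The plan is to verify the four items directly from the defining inequality in Definition 2.1, exploiting the algebraic nature of the symbol class together with the fact that the weights $|\xi|^2 g(t)$ and $g(t)/G(t)$ combine multiplicatively under the Leibniz rule. All four statements are standard symbolic-calculus properties, so each should follow by a short computation; the only one that requires actual analysis beyond counting powers is the last $L^1_t$-integrability claim.

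For item (1), the vector space property is immediate: given $f_1,f_2\in S_{\text{ell}}^\ell\{m_1,m_2\}$ and scalars $\lambda_1,\lambda_2$, apply the triangle inequality to $D_t^k(\lambda_1 f_1 + \lambda_2 f_2)$ and absorb $|\lambda_1|,|\lambda_2|$ into the constants $C_k$. For item (2), the Leibniz rule gives
\[ D_t^k(fg) = \sum_{j=0}^{k}\binom{k}{j} D_t^j f \cdot D_t^{k-j} g, \]
and each summand is bounded by a constant multiple of $(|\xi|^2g(t))^{m_1+m_1'}(g(t)/G(t))^{(m_2+j)+(m_2'+k-j)}$, where the $j$-dependence cancels and leaves precisely the weight required for $S_{\text{ell}}^\ell\{m_1+m_1',m_2+m_2'\}$. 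Item (3) is a trivial re-indexing: if $f\in S_{\text{ell}}^\ell\{m_1,m_2\}$, then for $j\leq \ell-k$ one has $D_t^j(D_t^k f)=D_t^{j+k}f$, which by the defining estimate is controlled by $(|\xi|^2g(t))^{m_1}(g(t)/G(t))^{m_2+k+j}$, and this is the condition for membership in $S_{\text{ell}}^{\ell-k}\{m_1,m_2+k\}$.

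The one substantive step is item (4). For $f\in S_{\text{ell}}^0\{-1,2\}$ the $k=0$ estimate reads
\[ |f(t,\xi)| \lesssim (|\xi|^2 g(t))^{-1}\Big(\frac{g(t)}{G(t)}\Big)^{2} = \frac{g(t)}{|\xi|^2 G(t)^2}, \]
and I need to show that $\int_{t_\xi}^{\infty} |f(t,\xi)|\,dt$ is bounded uniformly in $\xi$, where the integration starts at the entry time $t_\xi$ into $\Zell(N)$ determined by $G(t_\xi)|\xi|^2=N$. Since $G'(t)=g(t)/2$, the primitive of $g(t)/G(t)^2$ is $-2/G(t)$, so
\[ \int_{t_\xi}^{\infty} \frac{g(t)}{|\xi|^2 G(t)^2}\,dt = \frac{2}{|\xi|^2 G(t_\xi)} = \frac{2}{N}, \]
which is independent of $\xi$ and thus yields the desired embedding into $L^\infty_\xi L^1_t(\Zell(N))$. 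The main (mild) obstacle is recognizing that the exponents $m_1=-1$ and $m_2=2$ are precisely calibrated so that the substitution $G'=g/2$ produces a perfect antiderivative; no monotonicity or further assumption on $g$ beyond (A1)--(A3) is needed at this point, because the definition of $\Zell(N)$ already carries the boundary condition $G(t_\xi)|\xi|^2=N$ that makes the integral collapse to a constant.
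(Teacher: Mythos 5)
Your proof is correct and follows the same route as the paper: items (1)--(3) by direct inspection of the defining estimate together with the Leibniz rule, and item (4) by integrating the $k=0$ bound using $G'=g/2$ to obtain $C/(G(t_\xi)|\xi|^2)=C/N$. The paper only spells out item (4), and your computation matches it exactly.
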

\begin{proof}
We only verify the integrability statement. Indeed, if $f=f(t,\xi)\in S_{\text{ell}}^{0}\{-1,2\}$, then it holds
\begin{align*}
\int_{t_\xi}^{\infty}|f(\tau,\xi)|d\tau  & \lesssim \int_{t_\xi}^{\infty}\frac{1}{|\xi|^2g(\tau)}\Big( \frac{g(\tau)}{G(\tau)} \Big)^2 d\tau
\leq \frac{C}{G(t_{\xi})|\xi|^2} = \frac{C}{N},
\end{align*}
where we used the definition of the separating line $t_\xi$.
\end{proof}
With $\gamma=\gamma(t,\xi):=\dfrac{g(t)}{2}|\xi|^2$, we define the micro-energy $V(t,\xi):=\big( \gamma(t,\xi)v(t,\xi),D_tv(t,\xi) \big)^\text{T}$. Then, by \eqref{auxiliaryproblem3} we obtain that $V=V(t,\xi)$ satisfies the following system of first order:
\begin{equation} \label{Systemwithh}
D_tV=\underbrace{\left[ \left( \begin{array}{cc}
0 & \dfrac{g(t)}{2}|\xi|^2 \\
-\dfrac{g(t)}{2}|\xi|^2 & 0
\end{array} \right) + \left( \begin{array}{cc}
\dfrac{D_tg(t)}{g(t)} & 0 \\
-\dfrac{g'(t)-2}{g(t)} & 0
\end{array} \right)\right]}_{A_V}V
\end{equation}
with the initial condition $V(0,\xi)=\big( \gamma(0,\xi)v(0,\xi),D_tv(0,\xi) \big)^\text{T}$. We want to estimate the fundamental solution $E_V=E_V(t,s,\xi)$ to the system \eqref{Systemwithh}, namely, the solution to
\begin{equation*}
D_tE_V(t,s,\xi)=A_V(t,\xi)E_V(t,s,\xi), \quad E_V(s,s,\xi)=I \quad \mbox{for any} \quad t\geq s\geq t_\xi.
\end{equation*}

\noindent\textbf{Step 1.} \emph{Diagonalization procedure} \medskip

\noindent We denote by $M$ the matrix consisting of eigenvectors of the first matrix on the right-hand side and its inverse matrix
\[ M = \left( \begin{array}{cc}
i & -i \\
1 & 1
\end{array} \right), \qquad M^{-1}=\frac{1}{2}\left( \begin{array}{cc}
-i & 1 \\
i & 1
\end{array} \right). \]
Then, defining $V^{(0)}:=M^{-1}V$ we get the system
\begin{equation*}
D_tV^{(0)}=\big( \mathcal{D}(t,\xi)+\mathcal{R}(t) \big)V^{(0)},
\end{equation*}
where
\begin{align*}
\mathcal{D}(t,\xi) = \left( \begin{array}{cc}
-i\dfrac{g(t)}{2}|\xi|^2 & 0 \\
0 & i\dfrac{g(t)}{2}|\xi|^2
\end{array} \right) \qquad \text{and} \qquad \mathcal{R}(t) = \frac{1}{2} \left( \begin{array}{cc}
\dfrac{D_tg(t)}{2g(t)}-i\dfrac{g'(t)-2}{2g(t)} & -\dfrac{D_tg(t)}{2g(t)}+i\dfrac{g'(t)-2}{2g(t)} \\
-\dfrac{D_tg(t)}{2g(t)}-i\dfrac{g'(t)-2}{2g(t)} & \dfrac{D_tg(t)}{2g(t)}+i\dfrac{g'(t)-2}{2g(t)}
\end{array} \right),
\end{align*}
where $\mathcal{D}(t,\xi)\in S_{\text{ell}}^2\{1,0\}$ and $\mathcal{R}(t)\in S_{\text{ell}}^1\{0,1\}$.  Let us introduce $F_0(t)=\diag\mathcal{R}(t)$. Now we carry out the next step of diagonalization procedure. The difference of the diagonal entries of the matrix $\mathcal{D}(t,\xi)+F_0(t)$ is
\begin{equation*}
i\delta(t,\xi):=g(t)|\xi|^2 + \frac{g'(t)-2}{g(t)}\sim g(t)|\xi|^2
\end{equation*}
for $t\geq t_\xi$ if we choose the zone constant $N$ sufficiently large and apply condition \textbf{(A3)}. Now we choose a matrix $N^{(1)}=N^{(1)}(t,\xi)$ such that
\[ N^{(1)}(t,\xi)=\left( \begin{array}{cc}
0 & -\dfrac{\mathcal{R}_{12}}{\delta(t,\xi)} \\
\dfrac{\mathcal{R}_{21}}{\delta(t,\xi)} & 0
\end{array} \right)\sim \left( \begin{array}{cc}
0 & i\dfrac{D_tg(t)}{4g^2(t)|\xi|^2}-\dfrac{g'(t)-2}{4g^2(t)|\xi|^2} \\
i\dfrac{D_tg(t)}{4g^2(t)|\xi|^2}+\dfrac{g'(t)-2}{4g^2(t)|\xi|^2} & 0
\end{array} \right). \]
Taking into consideration the rules of the symbolic calculus we have
\[ N^{(1)}(t,\xi)\in S_{\text{ell}}^{1}\{-1,1\} \qquad \text{and} \qquad  N_1(t,\xi)=I+N^{(1)}(t,\xi)\in S_{\text{ell}}^{1}\{0,0\}. \]
For a sufficiently large zone constant $N$ and all $t\geq t_\xi$ the matrix $N_1=N_1(t,\xi)$ is invertible with uniformly bounded inverse $N_1^{-1}=N_1^{-1}(t,\xi)$. Indeed, in the elliptic zone $\Zell(N)$ it holds
\[ |N_1(t,\xi)-I| \leq \frac{C}{g(t)|\xi|^2}\frac{g(t)}{G(t)} = \frac{C}{G(t)|\xi|^2}\leq \frac{C}{N}. \]
Let
\begin{align*}
B^{(1)}(t,\xi) &= D_tN^{(1)}(t,\xi)-( \mathcal{R}(t)-F_0(t,\xi))N^{(1)}(t,\xi), \\
\mathcal{R}_1(t,\xi) &= -N_1^{-1}(t,\xi)B^{(1)}(t,\xi)\in S_{\text{ell}}^{0}\{-1,2\},
\end{align*}
where $N_{1}(t,\xi)=I+N^{(1)}(t,\xi)$. Then, we have the following operator identity:
\begin{equation*}
\big( D_t-\mathcal{D}(t,\xi)-\mathcal{R}(t) \big)N_1(t,\xi)=N_1(t,\xi)\big( D_t-\mathcal{D}(t,\xi)-F_0(t)-\mathcal{R}_1(t,\xi) \big).
\end{equation*}
\noindent\textbf{Step 2.} \emph{Construction of the fundamental solution}
\begin{proposition} \label{Lem.Est.Ell.Zone}
The fundamental solution $E_{\text{ell}}^{V}=E_{\text{ell}}^{V}(t,s,\xi)$ to the transformed operator
\[ D_t-\mathcal{D}(t,\xi)-F_0(t)-\mathcal{R}_1(t,\xi) \]
can be estimated by
\begin{equation*}
\big( |E_{\text{ell}}^{V}(t,s,\xi)| \big) \lesssim \frac{g(t)}{g(s)}\exp\bigg( \frac{|\xi|^2}{2}\int_{s}^{t}g(\tau)d\tau \bigg)
\left( \begin{array}{cc}
1 & 1 \\
1 & 1
\end{array} \right),
\end{equation*}
with $(t,\xi),(s,\xi)\in \Zell(N)$, $t_\xi\leq s\leq t$.
\end{proposition}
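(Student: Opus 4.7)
The strategy is to solve the diagonal ``principal'' operator $D_t - \mathcal{D}(t,\xi) - F_0(t)$ exactly, and then absorb the remainder $\mathcal{R}_1(t,\xi)$ via a Duhamel--Gronwall loop which exploits the integrability $\mathcal{R}_1\in S_{\text{ell}}^{0}\{-1,2\}\hookrightarrow L^{\infty}_\xi L^{1}_t(\Zell(N))$ proved in the last item of Proposition \ref{Prop.Symbol.Ell.}. Writing $E_0=E_0(t,s,\xi)$ for the fundamental solution of the diagonal operator, the full fundamental solution obeys
\[ E_{\text{ell}}^V(t,s,\xi) = E_0(t,s,\xi) + i\int_s^t E_0(t,\tau,\xi)\,\mathcal{R}_1(\tau,\xi)\,E_{\text{ell}}^V(\tau,s,\xi)\,d\tau, \]
so all non-trivial growth is carried by the explicit matrix $E_0$.

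Since $\mathcal{D}+F_0$ is diagonal, $E_0=\mathrm{diag}(e_1,e_2)$ with $e_j(t,s,\xi)=\exp\bigl(i\int_s^t(\mathcal{D}_{jj}+F_0^{(jj)})\,d\tau\bigr)$. Using $D_tg=-ig'$, a direct computation gives $i\mathcal{D}_{11}+iF_0^{(11)}=\tfrac{g(\tau)|\xi|^2}{2}+\tfrac{g'(\tau)}{2g(\tau)}-\tfrac{1}{2g(\tau)}$ and $i\mathcal{D}_{22}+iF_0^{(22)}=-\tfrac{g(\tau)|\xi|^2}{2}+\tfrac{1}{2g(\tau)}$. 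Assumption \textbf{(A2)} keeps $\int_s^t d\tau/g(\tau)$ bounded, so integration and exponentiation yield $|e_1(t,s,\xi)|\approx \sqrt{g(t)/g(s)}\exp\bigl(\tfrac{|\xi|^2}{2}\int_s^t g(\tau)d\tau\bigr)$ and $|e_2(t,s,\xi)|\lesssim \exp\bigl(-\tfrac{|\xi|^2}{2}\int_s^t g(\tau)d\tau\bigr)$. Since $g$ is increasing by \textbf{(A1)}, both entries are dominated entrywise by $\Phi(t,s,\xi):=\tfrac{g(t)}{g(s)}\exp\bigl(\tfrac{|\xi|^2}{2}\int_s^t g(\tau)d\tau\bigr)$, so $(|E_0(t,s,\xi)|)\leq C\,\Phi(t,s,\xi)\,\mathbf{1}$, where $\mathbf{1}$ denotes the $2\times 2$ all-ones matrix.

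Taking entrywise absolute values in the Duhamel identity, bounding $(|\mathcal{R}_1(\tau,\xi)|)\leq \rho(\tau,\xi)\,\mathbf{1}$ with $\rho(\tau,\xi):=\max_{i,j}|\mathcal{R}_1^{(ij)}(\tau,\xi)|$, and exploiting the multiplicativity $\Phi(t,\tau,\xi)\Phi(\tau,s,\xi)=\Phi(t,s,\xi)$ together with $\mathbf{1}\cdot\mathbf{1}=2\mathbf{1}$, the normalized quantity $q(t,s,\xi):=\max_{i,j}(|E_{\text{ell}}^V(t,s,\xi)|)_{ij}/\Phi(t,s,\xi)$ satisfies the scalar Gronwall loop
\[ q(t,s,\xi)\leq C + C'\int_s^t \rho(\tau,\xi)\,q(\tau,s,\xi)\,d\tau. \]
The last item of Proposition \ref{Prop.Symbol.Ell.} yields $\int_{t_\xi}^{\infty}\rho(\tau,\xi)\,d\tau\leq C/N$ uniformly in $\xi$, and Gronwall's lemma then gives $q\leq C''$ uniformly for $t_\xi\leq s\leq t$, which is equivalent to the claimed entrywise bound on $(|E_{\text{ell}}^V|)$.

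The main subtlety I expect is the bookkeeping for the matrix Gronwall step. A naive diagonalizing substitution $W=E_0^{-1}V^{(0)}$ produces the conjugated remainder $E_0^{-1}\mathcal{R}_1 E_0$, whose off-diagonal entries carry the exponentially growing ratio $e_1/e_2\sim \exp(|\xi|^2\!\int g)$ and cannot be controlled by the $L^1$-norm of $\mathcal{R}_1$ alone. The device above avoids this by dominating $(|E_0|)$ by its larger mode times $\mathbf{1}$; the semigroup identity $\Phi(t,\tau,\xi)\Phi(\tau,s,\xi)=\Phi(t,s,\xi)$ then factors the entire growth out of the integral, leaving only the integrable scalar weight $\rho$, which is precisely what the symbol class $S_{\text{ell}}^{0}\{-1,2\}$ supplies.
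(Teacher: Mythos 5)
Your proof is correct and follows the same essential strategy the paper invokes (by citing Wirth's Theorem~15, and as spelled out in detail for the companion Proposition~\ref{Lem.Est.Ell.Zone.Integrable}): extract the dominant scalar growth carried by the diagonal part and control the Duhamel correction using the uniform integrability of $\mathcal{R}_1\in S_{\text{ell}}^0\{-1,2\}$ over $\Zell(N)$. Your device of majorizing $(|E_0|)$ by the multiplicative weight $\Phi\,\mathbf{1}$ and closing with Gronwall is a minor bookkeeping variant of the paper's normalization $\mathcal{Q}_{\text{ell}}=\exp\bigl(-\int_s^t\beta\,d\tau\bigr)E_{\text{ell}}^V$ followed by a Neumann-series bound, and you correctly identify why the naive conjugation $E_0^{-1}\mathcal{R}_1E_0$ must be avoided.
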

\begin{proof}
To prove this proposition we can follow the proof to Theorem 15 of \cite{Wirth.eff.}.
\end{proof}
Now let us come back to
\begin{align} \label{IncZellBack}
V(t,\xi) = E_V(t,s,\xi)V(s,\xi),\quad \text{that is}, \quad \left( \begin{array}{cc}
\gamma(t,\xi)v(t,\xi) \\
D_t v(t,\xi)
\end{array} \right) = E_V(t,s,\xi)\left( \begin{array}{cc}
\gamma(s,\xi)v(s,\xi) \\
D_tv(s,\xi)
\end{array} \right) \quad \text{for} \quad t_\xi\leq s\leq t.
\end{align}
Therefore, from Proposition \ref{Lem.Est.Ell.Zone}, the backward transformations to the above used transformations and \eqref{IncZellBack} we may conclude the following estimates for $t_\xi \leq s \leq t$:
\begin{align*}
\gamma(t,\xi)|v(t,\xi)| & \lesssim \frac{g(t)}{g(s)}\exp\bigg( \frac{|\xi|^2}{2}\int_{s}^{t}g(\tau)d\tau \bigg)\Big( \gamma(s,\xi)|v(s,\xi)| + |v_t(s,\xi)| \Big), \\
|v_t(t,\xi)| & \lesssim \frac{g(t)}{g(s)}\exp\bigg( \frac{|\xi|^2}{2}\int_{s}^{t}g(\tau)d\tau \bigg)\Big( \gamma(s,\xi)|v(s,\xi)| + |v_t(s,\xi)| \Big).
\end{align*}
Using the backward transformation $v(t,\xi)=\exp\Big( \frac{|\xi|^2}{2} \int_0^t g(\tau) d\tau \Big)\hat{u}(t,\xi)$, we arrive immediately at the following result.
\begin{corollary} \label{IncreasingCorZell}
We have the following estimates in the elliptic zone $\Zell(N)$ for $t_\xi \leq s \leq t$:
\begin{align*}
|\xi|^{|\beta|}|\hat{u}(t,\xi)| & \lesssim |\xi|^{|\beta|}|\hat{u}(s,\xi)| + \frac{1}{g(s)}|\xi|^{|\beta|-2}|\hat{u}_t(s,\xi)| \quad \mbox{for} \quad |\beta|\geq 2, \\
|\xi|^{|\beta|}|\hat{u}_t(t,\xi)| & \lesssim g(t)|\xi|^{|\beta|+2}|\hat{u}(s,\xi)| + \frac{g(t)}{g(s)}|\xi|^{|\beta|}|\hat{u}_t(s,\xi)| \quad \mbox{for} \quad |\beta|\geq 0.
\end{align*}
\end{corollary}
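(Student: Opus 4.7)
The plan is to read off Corollary \ref{IncreasingCorZell} as a purely algebraic consequence of the two estimates that immediately precede it, once the backward transformation $\hat u \leftrightarrow v$ is inverted and the weights $\gamma(t,\xi)=\frac{g(t)}{2}|\xi|^2$ are unpacked. Nothing new about the fundamental solution or the symbolic calculus is needed; the work was already done in Proposition \ref{Lem.Est.Ell.Zone}.

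First I would differentiate the change of variables $\hat u(t,\xi)=\exp\bigl(-\frac{|\xi|^2}{2}\int_0^t g(\tau)d\tau\bigr)v(t,\xi)$ once in $t$ to obtain
\[
v(t,\xi)=\exp\!\Big(\tfrac{|\xi|^2}{2}\!\int_0^t\! g(\tau)d\tau\Big)\hat u(t,\xi),\qquad
v_t(t,\xi)=\exp\!\Big(\tfrac{|\xi|^2}{2}\!\int_0^t\! g(\tau)d\tau\Big)\bigl(\gamma(t,\xi)\hat u(t,\xi)+\hat u_t(t,\xi)\bigr),
\]
with the inverse relation $\hat u_t(t,\xi)=\exp\bigl(-\frac{|\xi|^2}{2}\int_0^t g(\tau)d\tau\bigr)\bigl(v_t(t,\xi)-\gamma(t,\xi)v(t,\xi)\bigr)$. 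These identities give two-sided comparisons between the pair $(\gamma v,v_t)$ at any time and the pair $(\gamma \hat u,\hat u_t)$ at the same time, up to the scalar weight $\exp\bigl(\frac{|\xi|^2}{2}\int_0^t g(\tau)d\tau\bigr)$.

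Next I would substitute these into the two estimates stated just before the corollary. On the left I rewrite $\gamma(t,\xi)|v(t,\xi)|=\exp\bigl(\frac{|\xi|^2}{2}\int_0^t g(\tau)d\tau\bigr)\gamma(t,\xi)|\hat u(t,\xi)|$ and similarly $|v_t(t,\xi)|\leq \exp\bigl(\frac{|\xi|^2}{2}\int_0^t g(\tau)d\tau\bigr)(\gamma(t,\xi)|\hat u(t,\xi)|+|\hat u_t(t,\xi)|)$; on the right I bound $\gamma(s,\xi)|v(s,\xi)|+|v_t(s,\xi)|\lesssim \exp\bigl(\frac{|\xi|^2}{2}\int_0^s g(\tau)d\tau\bigr)(\gamma(s,\xi)|\hat u(s,\xi)|+|\hat u_t(s,\xi)|)$. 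The key observation is that the three exponential weights combine as
\[
\exp\!\Big(\tfrac{|\xi|^2}{2}\!\int_s^t\! g(\tau)d\tau\Big)\exp\!\Big(\tfrac{|\xi|^2}{2}\!\int_0^s\! g(\tau)d\tau\Big)=\exp\!\Big(\tfrac{|\xi|^2}{2}\!\int_0^t\! g(\tau)d\tau\Big),
\]
and this factor then cancels the exponential that appeared on the left. What remains is the clean inequality
\[
\gamma(t,\xi)|\hat u(t,\xi)|\lesssim \frac{g(t)}{g(s)}\bigl(\gamma(s,\xi)|\hat u(s,\xi)|+|\hat u_t(s,\xi)|\bigr),\qquad |\hat u_t(t,\xi)|\lesssim \frac{g(t)}{g(s)}\bigl(\gamma(s,\xi)|\hat u(s,\xi)|+|\hat u_t(s,\xi)|\bigr).
\]
Substituting $\gamma=\frac{g}{2}|\xi|^2$ and dividing (respectively multiplying) by appropriate factors of $g$ and $|\xi|^2$ immediately gives, after multiplication by $|\xi|^{|\beta|-2}$ in the first line and $|\xi|^{|\beta|}$ in the second, exactly the two inequalities in the statement.

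The whole argument is bookkeeping, so there is no serious obstacle. The only point that must be watched is the exponential cancellation at the appropriate time $t$ rather than at $s$; this is what forces us to bring $|v_t(t,\xi)|+\gamma(t,\xi)|v(t,\xi)|$ (rather than the raw $|v_t(t,\xi)|$) onto the left when extracting $|\hat u_t(t,\xi)|$, and it is also why the condition $|\beta|\geq 2$ is natural in the first line (to absorb the $|\xi|^{-2}$ coming from $1/g(s)\cdot 1/|\xi|^2$ implicit in the second summand) while the second line admits any $|\beta|\geq 0$.
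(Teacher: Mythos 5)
Your proposal is correct and coincides with the paper's own argument: the paper derives the corollary by applying the backward transformation $v(t,\xi)=\exp\bigl(\frac{|\xi|^2}{2}\int_0^t g(\tau)d\tau\bigr)\hat u(t,\xi)$ to the two pointwise estimates produced by Proposition \ref{Lem.Est.Ell.Zone}, which is precisely the bookkeeping you carry out. Your explicit tracking of the exponential weights at $0$, $s$, and $t$, and the remark that $\hat u_t$ must be extracted from the combination $v_t-\gamma v$ (so that one needs bounds for both $|v_t(t,\xi)|$ and $\gamma(t,\xi)|v(t,\xi)|$ on the left), are exactly the steps the paper's terse ``we arrive immediately'' glosses over.
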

\subsection{Considerations in the pseudo-differential zone $\Zpd(N)$} \label{Section2.2}
We define the micro-energy $U=\big( \gamma(t,\xi)\hat{u},D_t\hat{u} \big)^\text{T}$ with $\gamma(t,\xi):=\dfrac{g(t)}{2}|\xi|^2$. Then, the Cauchy problem \eqref{modellinearviscoelasticdampedFourier} leads to the system of first order
\begin{equation} \label{system.diss.zone}
D_tU=\underbrace{\left( \begin{array}{cc}
\dfrac{D_t\gamma(t,\xi)}{\gamma(t,\xi)} & \gamma(t,\xi) \\
\dfrac{|\xi|^2}{\gamma(t,\xi)} & ig(t)|\xi|^2
\end{array} \right)}_{A(t,\xi)}U.
\end{equation}
We are interested in the fundamental solution $E_{\text{pd}}=E_{\text{pd}}(t,s,\xi)$ to the system \eqref{system.diss.zone}, that is, the solution of
\[ D_tE_{\text{pd}}(t,s,\xi)=A(t,\xi)E_{\text{pd}}(t,s,\xi), \quad E_{\text{pd}}(s,s,\xi)=I, \]
for all $0\leq s \leq t$ and $(t,\xi), (s,\xi) \in \Zpd(N)$. Thus, the solution $U=U(t,\xi)$ is represented as
\[ U(t,\xi)=E_{\text{pd}}(t,s,\xi)U(s,\xi). \]
We will use the auxiliary function
\[ \delta=\delta(t,\xi)=\exp\bigg( \frac{|\xi|^2}{2}\int_{0}^{t}g(\tau)d\tau \bigg) = \exp\Big( |\xi|^2G(t) \Big) \lesssim 1. \]
The entries $E_{\text{pd}}^{(k\ell)}(t,s,\xi)$, $k,\ell=1,2,$ of the fundamental solution $E_{\text{pd}}(t,s,\xi)$ satisfy the following system for $\ell=1,2$:
\begin{eqnarray*}
D_tE_{\text{pd}}^{(1\ell)}(t,s,\xi) &=& \frac{D_t\gamma(t,\xi)}{\gamma(t,\xi)}E_{\text{pd}}^{(1\ell)}(t,s,\xi)+\gamma(t,\xi)E_{\text{pd}}^{(2\ell)}(t,s,\xi), \\
D_tE_{\text{pd}}^{(2\ell)}(t,s,\xi) &=& \frac{|\xi|^2}{\gamma(t,\xi)}E_{\text{pd}}^{(1\ell)}(t,s,\xi)+ig(t)|\xi|^2E_{\text{pd}}^{(2\ell)}(t,s,\xi).
\end{eqnarray*}
Then, by straight-forward calculations (with $\delta_{k \ell}=1$ if $k=\ell$ and $\delta_{k \ell}=0$ otherwise), we get
\begin{align*}
E_{\text{pd}}^{(1\ell)}(t,s,\xi) & = \frac{\gamma(t,\xi)}{\gamma(s,\xi)}\delta_{1\ell}+i\gamma(t,\xi)\int_{s}^{t}E_{\text{pd}}^{(21)}(\tau,s,\xi)d\tau, \\
% E_{\text{pd}}^{(21)}(t,s,\xi) & = \frac{i|\xi|^2}{\delta^2(t)}\int_{s}^{t}\frac{1}{\gamma(\tau,\xi)}\delta^2(\tau)E_{\text{pd}}^{(11)}(\tau,s,\xi)d\tau, \\
% E_{\text{pd}}^{(12)}(t,s,\xi) & = i\gamma(t,\xi)\int_{s}^{t}E_{\text{pd}}^{(22)}(\tau,s,\xi)d\tau, \\
E_{\text{pd}}^{(2\ell)}(t,s,\xi) & = \frac{\delta^2(s,\xi)}{\delta^2(t,\xi)}\delta_{2\ell}+\frac{i|\xi|^2}{\delta^2(t,\xi)}\int_{s}^{t}\frac{1}{\gamma(\tau,\xi)}\delta^2(\tau,\xi)
E_{\text{pd}}^{(12)}(\tau,s,\xi)d\tau.
\end{align*}
To complete the proof of Proposition \ref{CorEstPDZone} the following lemma is useful.
\begin{lemma} [Gronwall's inequality] \label{Appendix.Gronwall}
Let $f$ and $h$ be continuous and nonnegative functions defined on $J=[a,b]$ and let $d$ be a continuous, positive and nondecreasing function defined on $J$. Then, the inequality
\[ f(t)\leq d(t)+\int_{a}^{t}h(r)f(r)dr, \quad t\in J, \]
implies that
\[ f(t)\leq d(t)\exp\left( \int_{a}^{t}h(r) dr \right), \quad t\in J. \]
\end{lemma}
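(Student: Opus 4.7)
The plan is to convert the given integral inequality into a first-order linear differential inequality and solve it by the integrating-factor method, then use the monotonicity of $d$ at the end. Concretely, I would introduce the auxiliary function
$$\phi(t) := \int_{a}^{t} h(r) f(r)\, dr, \qquad t \in J,$$
so that $\phi(a) = 0$ and, by continuity of $h$ and $f$, $\phi$ is continuously differentiable with $\phi'(t) = h(t) f(t)$. Substituting the hypothesis $f(t) \le d(t) + \phi(t)$ and using $h(t) \ge 0$ yields the linear differential inequality
$$\phi'(t) - h(t)\phi(t) \le h(t)\, d(t).$$

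Next I would multiply through by the integrating factor $e^{-H(t)}$, where $H(t) := \int_a^t h(r)\, dr$. This converts the left-hand side into the exact derivative $\frac{d}{dt}\bigl(e^{-H(t)} \phi(t)\bigr)$, so integrating from $a$ to $t$ and using $\phi(a) = 0$ gives
$$e^{-H(t)} \phi(t) \le \int_{a}^{t} h(s)\, d(s)\, e^{-H(s)}\, ds.$$
Now I would invoke monotonicity of $d$: since $d(s) \le d(t)$ for every $s \in [a,t]$, the right-hand side is bounded by $d(t) \int_a^t h(s)\, e^{-H(s)}\, ds = d(t)\bigl(1 - e^{-H(t)}\bigr)$, where the last equality follows because $h(s) e^{-H(s)} = -\frac{d}{ds} e^{-H(s)}$. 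Multiplying by $e^{H(t)}$ yields $\phi(t) \le d(t)\bigl(e^{H(t)} - 1\bigr)$, and feeding this back into $f(t) \le d(t) + \phi(t)$ produces the desired estimate
$$f(t) \le d(t)\, e^{H(t)} = d(t) \exp\!\left( \int_{a}^{t} h(r)\, dr \right).$$

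The proof is essentially routine; the only nontrivial point is the use of the monotonicity of $d$ to pull $d(t)$ out of the integral in the final step. Without that hypothesis one only obtains the sharper but less usable form $f(t) \le d(t) + \int_a^t h(s)\, d(s)\, \exp\!\bigl(\int_s^t h(r)\, dr\bigr)\, ds$, so monotonicity of $d$ is precisely what allows the clean exponential bound stated in the lemma. I would also briefly note that positivity of $d$ is only needed to ensure $d(t) > 0$ (nonnegativity suffices for the argument itself), and that continuity of $h$ and $f$ is used only to justify differentiating $\phi$ classically; integrability of $h$ and $f$ would be enough via an absolute continuity argument.
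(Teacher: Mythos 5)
Your proof is correct and complete: the integrating-factor argument for $\phi(t)=\int_a^t h(r)f(r)\,dr$ is the standard derivation of this classical Gronwall lemma, and your use of the monotonicity of $d$ to pull $d(t)$ out of the integral is exactly where that hypothesis enters. The paper states this lemma without proof (it is a textbook result invoked only to establish Proposition \ref{CorEstPDZone}), so there is nothing to compare against; your argument would serve as a valid proof of it.
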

\begin{proposition} \label{CorEstPDZone}
We have the following estimates in the pseudo-differential zone:
\begin{equation*}
(|E_{\text{pd}}(t,s,\xi)|) \lesssim \frac{g(t)}{g(s)}
\left( \begin{array}{cc}
1 & 1 \\
1 & 1
\end{array} \right)
\end{equation*}
with $(s,\xi),(t,\xi)\in\Zpd(N)$ and $0\leq s\leq t\leq t_\xi$.
\end{proposition}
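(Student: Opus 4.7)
The plan is to bootstrap the two coupled Volterra integral equations for $F_\ell := E_{\text{pd}}^{(1\ell)}$ and $H_\ell := E_{\text{pd}}^{(2\ell)}$ into a single scalar Gronwall-type inequality for $F_\ell$, to which Lemma~\ref{Appendix.Gronwall} applies, and then to feed the resulting bound back into the formula for $H_\ell$. The guiding observation is that throughout $\Zpd(N)$ the auxiliary weight $\delta^2(\tau,\xi)=e^{2|\xi|^2G(\tau)}$ satisfies $1\leq \delta^2(\tau,\xi)\leq e^{2N}$, so all quotients of $\delta^2$-values are comparable to $1$ by constants depending only on $N$.

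First I would insert the expression for $H_\ell(\tau)$ into the integral representation of $F_\ell(t)$, exchange the order of integration in the resulting double integral by Fubini, and bound the inner $\tau$-integral via the substitution $\sigma=2|\xi|^2G(\tau)$ combined with the monotonicity assumption \textbf{(A1)}, which gives
\[
\int_{r}^{t}\frac{d\tau}{\delta^2(\tau,\xi)}\leq \frac{1}{|\xi|^2 g(r)}\Big(\frac{1}{\delta^2(r,\xi)}-\frac{1}{\delta^2(t,\xi)}\Big)\leq \frac{1}{|\xi|^2 g(r)\delta^2(r,\xi)}.
\]
After cancellations with $\gamma(\tau)=g(\tau)|\xi|^2/2$ this yields the closed scalar inequality
\[
|F_\ell(t)|\leq C\,\frac{g(t)}{g(s)}+g(t)\int_{s}^{t}\frac{|F_\ell(r)|}{g(r)^2}\,dr.
\]
Dividing by $g(t)$ and setting $\Phi_\ell(t):=|F_\ell(t)|/g(t)$ brings the inequality into the form of Lemma~\ref{Appendix.Gronwall} with kernel $1/g(r)$; invoking \textbf{(A2)} to replace $\int_s^t dr/g(r)$ by $M:=\int_0^\infty dr/g(r)<\infty$ produces $\Phi_\ell(t)\leq (C/g(s))e^M$, i.e.\ $|F_\ell(t)|\lesssim g(t)/g(s)$.

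Feeding this back into the formula for $H_\ell(t)$ and using $\delta^2(s)/\delta^2(t)\leq 1$, $\delta^2(\tau)\leq\delta^2(t)$, the integrals simplify to $|H_\ell(t)|\lesssim \delta_{2\ell}+C(t-s)/g(s)$. To promote this to the claimed $|H_\ell(t)|\lesssim g(t)/g(s)$ I would use the elementary consequence of \textbf{(A1)} and \textbf{(A2)} that $t/g(t)$ is uniformly bounded on $[0,\infty)$: since $g$ is nondecreasing, $t/(2g(t))\leq \int_{t/2}^{t}d\tau/g(\tau)$, and the right-hand side is a tail of a convergent integral by \textbf{(A2)}, so $t/g(t)\to 0$. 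Hence $(t-s)/g(s)\leq t/g(s)\lesssim g(t)/g(s)$, and since $g(t)/g(s)\geq 1$ by \textbf{(A1)}, the $\delta_{2\ell}$-term is absorbed.

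The main obstacle is the first reduction step: matching $\delta^2(\tau)$ against its own derivative through the change of variables $\sigma=2|\xi|^2G(\tau)$ is precisely what eliminates the troublesome $|\xi|^2$-factors and produces the integrable kernel $1/g(r)$ needed to invoke \textbf{(A2)} in Gronwall's inequality. A secondary but indispensable subtlety is the extraction of $t\lesssim g(t)$ from \textbf{(A1)} and \textbf{(A2)}, without which the intermediate estimate for $|H_\ell(t)|$ cannot be upgraded to the target bound $g(t)/g(s)$.
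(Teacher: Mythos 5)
Your proof is correct and takes essentially the approach the paper intends: the paper sets up the coupled Volterra equations for the entries $E_{\text{pd}}^{(k\ell)}$ and cites Gronwall's inequality (Lemma~\ref{Appendix.Gronwall}), deferring the remaining computations to Lemma~3.10 of \cite{ReissigLu2009}. You supply those computations correctly — the substitution bound $\int_r^t\delta^{-2}(\tau,\xi)\,d\tau\leq\big(|\xi|^2g(r)\delta^2(r,\xi)\big)^{-1}$ that collapses the iterated integral into a scalar Gronwall inequality with integrable kernel $1/g$, and the auxiliary observation $t\lesssim g(t)$ (a consequence of \textbf{(A1)} and \textbf{(A2)}) that is indeed needed to absorb the factor $t-s$ arising in the bound for the second row.
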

\begin{proof}
To prove this proposition we can follow the proof to Lemma 3.10 of \cite{ReissigLu2009}.
\end{proof}
Now let us come back to
\begin{equation} \label{EqPseudoDiffZone}
U(t,\xi) = E(t,0,\xi)U(0,\xi) \quad \text{for all} \quad 0\leq t\leq t_{\xi}.
\end{equation}
Because of \eqref{EqPseudoDiffZone} and Proposition \ref{CorEstPDZone}, the following statement can be concluded.
\begin{corollary} \label{IncreasingCorZpd}
In the pseudo-differential zone $\Zpd(N)$ the following estimates hold for all $0\leq t\leq t_{\xi}$:
\begin{align*}
|\xi|^{|\beta|}|\hat{u}(t,\xi)| &\lesssim |\xi|^{|\beta|}|\hat{u}_0(\xi)| +|\xi|^{|\beta|-2}|\hat{u}_1(\xi)| \quad \mbox{for} \quad |\beta|\geq 2, \\
|\xi|^{|\beta|}|\hat{u}_t(t,\xi)| &\lesssim g(t)|\xi|^{|\beta|+2}|\hat{u}_0(\xi)| + g(t)|\xi|^{|\beta|}|\hat{u}_1(\xi)| \quad \mbox{for} \quad |\beta|\geq 0.
\end{align*}
\end{corollary}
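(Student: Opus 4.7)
The plan is to deduce Corollary \ref{IncreasingCorZpd} directly from Proposition \ref{CorEstPDZone} by evaluating the representation $U(t,\xi) = E_{\text{pd}}(t,0,\xi)U(0,\xi)$ at $s=0$ and reading off each component. Specifically, recalling the definition $U(t,\xi) = \bigl( \gamma(t,\xi)\hat{u}(t,\xi), D_t\hat{u}(t,\xi) \bigr)^{\text{T}}$ with $\gamma(t,\xi) = \frac{g(t)}{2}|\xi|^2$, I would first note that at $s=0$ the initial vector is $U(0,\xi) = \bigl(\frac{g(0)}{2}|\xi|^2 \hat{u}_0(\xi),\, -i\hat{u}_1(\xi)\bigr)^{\text{T}}$, so that $|U(0,\xi)| \approx |\xi|^2|\hat{u}_0(\xi)| + |\hat{u}_1(\xi)|$ since $g(0)$ is a positive constant by assumption \textbf{(A1)}.

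Next I would apply the componentwise matrix estimate from Proposition \ref{CorEstPDZone}, obtaining for $(t,\xi)\in\Zpd(N)$ the two scalar inequalities
\[
\gamma(t,\xi)|\hat{u}(t,\xi)| \lesssim \frac{g(t)}{g(0)}\Bigl( \tfrac{g(0)}{2}|\xi|^2|\hat{u}_0(\xi)| + |\hat{u}_1(\xi)| \Bigr), \qquad
|\hat{u}_t(t,\xi)| \lesssim \frac{g(t)}{g(0)}\Bigl( \tfrac{g(0)}{2}|\xi|^2|\hat{u}_0(\xi)| + |\hat{u}_1(\xi)| \Bigr),
\]
using that $|D_t\hat u(t,\xi)| = |\hat u_t(t,\xi)|$. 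Absorbing $g(0)$ into the implicit constant and substituting $\gamma(t,\xi)=\frac{g(t)}{2}|\xi|^2$ on the left-hand side of the first inequality, the factor $g(t)$ cancels and yields
\[
|\xi|^2|\hat{u}(t,\xi)| \lesssim |\xi|^2|\hat{u}_0(\xi)| + |\hat{u}_1(\xi)|,
\]
while the second inequality simply becomes $|\hat{u}_t(t,\xi)| \lesssim g(t)\bigl( |\xi|^2|\hat{u}_0(\xi)| + |\hat{u}_1(\xi)|\bigr)$.

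Finally, multiplying the first estimate by $|\xi|^{|\beta|-2}$ (which requires $|\beta|\geq 2$ to keep the power of $|\xi|$ on the $\hat{u}_1$ term non-negative only after pairing with the original data, although formally no lower bound on $|\beta|$ is needed here — the restriction is only recorded to match the hypothesis of Theorem \ref{Theorem1}) produces
\[
|\xi|^{|\beta|}|\hat{u}(t,\xi)| \lesssim |\xi|^{|\beta|}|\hat{u}_0(\xi)| + |\xi|^{|\beta|-2}|\hat{u}_1(\xi)|,
\]
and multiplying the second estimate by $|\xi|^{|\beta|}$ for any $|\beta|\geq 0$ gives
\[
|\xi|^{|\beta|}|\hat{u}_t(t,\xi)| \lesssim g(t)|\xi|^{|\beta|+2}|\hat{u}_0(\xi)| + g(t)|\xi|^{|\beta|}|\hat{u}_1(\xi)|,
\]
which are exactly the claimed estimates. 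There is essentially no obstacle here beyond careful bookkeeping of the factors $g(t)$, $g(0)$ and $|\xi|^2$ that appear through the definitions of $\gamma$ and $U$; the entire analytical content has already been absorbed into Proposition \ref{CorEstPDZone}, and the corollary is a pure algebraic unfolding of that matrix bound at $s=0$.
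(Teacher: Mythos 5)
Your proposal is correct and is exactly the argument the paper intends: the paper derives the corollary directly from the representation $U(t,\xi)=E_{\text{pd}}(t,0,\xi)U(0,\xi)$ and the matrix bound of Proposition \ref{CorEstPDZone}, and your unfolding of the components of $U$ (with the cancellation of $g(t)$ in the first line and the retained factor $g(t)$ in the second) is the correct bookkeeping. Your side remark is also right that the restriction $|\beta|\geq 2$ plays no role in this pointwise Fourier estimate and only matters for the Sobolev norms in Theorem \ref{Theorem1}.
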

\subsection{Conclusion} \label{Section2.3}
From the statements of Corollaries \ref{IncreasingCorZell} and \ref{IncreasingCorZpd} we derive the following estimates for $t>0$:
\begin{align*}
|\xi|^{|\beta|}|\hat{u}(t,\xi)| & \lesssim |\xi|^{|\beta|}|\hat{u}_0(\xi)| + |\xi|^{|\beta|-2}|\hat{u}_1(\xi)| \quad \mbox{for} \quad |\beta|\geq2, \\
|\xi|^{|\beta|}|u_t(t,\xi)| & \lesssim g(t)|\xi|^{|\beta|+2}|\hat{u}_0(\xi)| + g(t)|\xi|^{|\beta|}|\hat{u}_1(\xi)| \quad \mbox{for} \quad |\beta|\geq0.
\end{align*}
This completes the proof of Theorem \ref{Theorem1}.
\end{proof}

\section{Models with integrable and decaying time-dependent coefficient $g=g(t)$} \label{Section4.2}
We write (\ref{modellinearauxiliary2}) in the form
\begin{align*} \label{auxiliaryproblem4}
D_t^2 v + \frac{g(t)^2}{4}|\xi|^4 v - \bigg( 1-\frac{g'(t)}{2} \bigg)|\xi|^2 v=0.
\end{align*}
The influence of the term $-|\xi|^2 v$ is dominant to the influence of the term $\dfrac{g'(t)}{2}|\xi|^2 v$.
Examples for this case are given in Examples \ref{Example2}, \ref{Example4} and \ref{Example5} with $d<-1$. Here let us recall that damped wave models with integrable and decaying in time speed of propagation have been studied in \cite{EbertReissigJHDE}. For this reason, in the language of the paper \cite{EbertReissigJHDE}, Example \ref{Example4} is non-effectively damped and Example \ref{Example5} with $d<-1$ is effectively damped.
\medskip

In this case the extended phase space $[0,\infty)\times \mathbb{R}^n$ is divided into zones as follows:
\begin{itemize}
\item hyperbolic zone:
\[ \Zhyp(\varepsilon) = \Big\{ (t,\xi)\in[0,\infty)\times\mathbb{R}^n : g(t)|\xi|\leq \varepsilon \Big\}, \]
\item reduced zone:
\[ \Zred(\varepsilon,N) = \Big\{ (t,\xi)\in[0,\infty)\times\mathbb{R}^n : \varepsilon \leq g(t)|\xi| \leq N  \Big\}, \]
\item elliptic zone:
\[ \Zell(N) = \Big\{ (t,\xi)\in[0,\infty)\times\mathbb{R}^n : g(t)|\xi|\geq N \Big\}, \]
\end{itemize}
where $\varepsilon>0$ is sufficiently small and $N>0$ is sufficiently large.
%%%%%%%%%%%%%%%%%%%%%%%%%%%%%%%%%%%%%%%%%%%%%%%%%%%%%%%%%%%%%%%%%%%%%%%%%%%
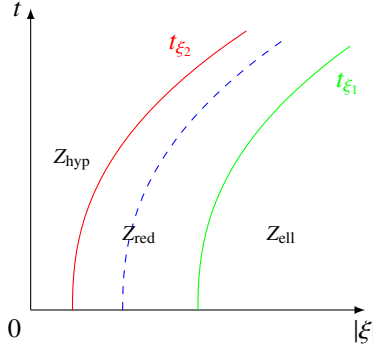
\begin{figure}[H]
\begin{center}
\begin{tikzpicture}[>=latex,xscale=1.1]
	%%%%%%%%%%%%%%%%%%%%%%%%%%%%%%%%%%%%%%%%%%%%%%%%%%%%%%%%%%%%%%%%%%%%%%%%%%
	\draw[->] (7,0) -- (11,0)node[below]{$|\xi|$};
	\draw[->] (7,0) -- (7,4)node[left]{$t$};
	\node[below left] at(7,0){$0$};
    %%%%%%%%%%%%%%%%%%%%%%%%%%%%%%%%%%%%%%%%%%%%%%%%%%%%%%%%%%%%%%%%%%%%%%%%%
    %\draw[-, color=red]  (7.4,4.4) -- (8,4.4)node[right]{$\textcolor{red}{t_{\xi_2}}$};
	%\draw[-, color=green]  (9,4.4) -- (9.6,4.4)node[right]{$\textcolor{green}{t_{\xi_1}}$};
	\node  at (8.8,3.5) {$\textcolor{red}{t_{\xi_2}}$};
	\node  at (10.8,3) {$\textcolor{green}{t_{\xi_1}}$};
	%%%%%%%%%%%%%%%%%%%%%  Draw Gamma  %%%%%%%%%%%%%%%%%%%%%%%%%%%%%%%%%%%%%%%
    \draw[dashed, domain=0:3.6,color=blue,variable=\t] plot ({8.1 + 0.09*pow(\t,2.4)},\t);
    %%%%%%%%%%%%%%%%%%%%%%%%%%%%%%%%%%%%%%%%%%%%%%%%%%%%%%%%%%%%%%%%%%%%%%%%
	\node[color=black] at (10, 1){{\footnotesize $Z_{\text{ell}}$}};
	%%%%%%%%%%%%%%%%%%%%  Draw Z_red  %%%%%%%%%%%%%%%%%%%%%%%%%%%%%%%%%%%%%%%%%
	\node[color=black] at (8.3, 1){{\footnotesize $Z_{\text{red}}$}};
	\draw[domain=0:3.5,color=green,variable=\t] plot ({9 + 0.09*pow(\t,2.4)},\t);
	%%%%%%%%%%%%%%%%%%%%% Draw Z_ell %%%%%%%%%%%%%%%%%%%%%%%%%%%%%%%%%%%%%%%%%%%
	\draw[domain=0:3.7,color=red,variable=\t] plot ({7.5 + 0.09*pow(\t,2.4)},\t);
	\node[color=black] at (7.5,2){{\footnotesize $Z_{\text{hyp}}$}};
\end{tikzpicture}
\caption{Sketch of the zones for the case $g\in L^1(0,\infty)$}
\label{fig.zone2}
\end{center}
\end{figure}
%%%%%%%%%%%%%%%%%%%%%%%%%%%%%%%%%%%%%%%%%%%%%%%%%%%%%%%%%%%%%%%%%%%%%%%%%%%%%
We denote the separating line between elliptic and reduced zone as $t_{\xi_1}$ and that between hyperbolic zone and reduced zone as $t_{\xi_2}$.
The blue dashed line denotes the separating line between the hyperbolic and the elliptic region.
\subsection{Sub-decaying integrable time-dependent coefficient $g=g(t)$} \label{SectSubDecreasing}
We assume that the coefficient $g=g(t)$ satisfies the following conditions:
\begin{enumerate}
\item[\textbf{(B1)}] $g(t)>0$ and $g'(t)<0$ for all $t \in [0,\infty)$,
\item[\textbf{(B2)}] $g \in L^1(0,\infty)$,
\item[\textbf{(B3)}] $|g'(t)|\leq C_1g(t)$ and $|g''(t)|\leq C_2g(t)$ for all $t \in [0,\infty)$ with positive constants $C_1$ and $C_2$.
\end{enumerate}

Examples for this case are $g(t)=(1+t)^{-d}$ with $d>1$, and $g(t)=e^{-Ct}$ with $C>0$.
\begin{theorem} \label{TheoremIntegrableCase}
Let us consider the Cauchy problem
\begin{equation*}
\begin{cases}
u_{tt}- \Delta u + g(t)(-\Delta)u_t=0, &(t,x) \in (0,\infty) \times \mathbb{R}^n, \\
u(0,x)= u_0(x),\quad u_t(0,x)= u_1(x), &x \in \mathbb{R}^n.
\end{cases}
\end{equation*}
We assume that the coefficient $g=g(t)$ satisfies the conditions \textbf{(B1)} to \textbf{(B3)} and $(u_0,u_1)\in \dot{H}^{|\beta|} \times \dot{H}^{|\beta|-2}$ with $|\beta|\geq 2$. Then, we have the following estimates for Sobolev solutions:
\begin{align*}
\big\||D|^{|\beta|} u(t,\cdot)\big\|_{L^2} & \lesssim \|u_0\|_{\dot{H}^{|\beta|}} + \|u_1\|_{\dot{H}^{|\beta|-2}}, \\
\big\||D|^{|\beta|-2} u_t(t,\cdot)\big\|_{L^2} & \lesssim \|u_0\|_{\dot{H}^{|\beta|}} + \|u_1\|_{\dot{H}^{|\beta|-2}}.
\end{align*}
\end{theorem}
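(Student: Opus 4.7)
The plan is to apply WKB-analysis in each of the three zones (elliptic, reduced, hyperbolic) introduced in this section and patch the resulting pointwise Fourier estimates at the separating lines $t_{\xi_1},t_{\xi_2}$, following the same general philosophy as the proof of Theorem \ref{Theorem1}, with two modifications: a reduced zone now appears between the hyperbolic and elliptic regions, and the elliptic zone plays the role that the pseudodifferential zone played there (namely handling small $t$). In the elliptic zone $\Zell(N)$ I would follow the scheme of Section \ref{Section2.1}: introduce an elliptic symbol class with weights adapted to conditions \textbf{(B1)}--\textbf{(B3)}, define the micro-energy $V=(\gamma v, D_t v)^{\text{T}}$ with $\gamma=g(t)|\xi|^2/2$ coming from \eqref{modellinearauxiliary2}, perform a two-step diagonalization, and bound the fundamental solution as in Proposition \ref{Lem.Est.Ell.Zone}. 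Since $g$ is now decreasing, the ratio $g(t)/g(s)\leq 1$ along $s\leq t$, and when we return to $\hat u$ via $\hat u=e^{-|\xi|^2 G(t)}v$ the exponential factor $\exp\bigl(\tfrac{|\xi|^2}{2}\int_s^t g(\tau)\,d\tau\bigr)$ cancels identically. This delivers, in analogy with Corollary \ref{IncreasingCorZell}, pointwise bounds of the form $|\xi|^{|\beta|}|\hat u(t,\xi)|\lesssim |\xi|^{|\beta|}|\hat u_0(\xi)|+|\xi|^{|\beta|-2}|\hat u_1(\xi)|$ and $|\xi|^{|\beta|-2}|\hat u_t(t,\xi)|\lesssim|\xi|^{|\beta|}|\hat u_0(\xi)|+|\xi|^{|\beta|-2}|\hat u_1(\xi)|$ throughout $\Zell(N)$, with no growth factor on the right-hand side.

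In the hyperbolic zone $\Zhyp(\varepsilon)$ the principal part of \eqref{modellinearauxiliary2} is wave-type. I would pass to the hyperbolic micro-energy $V=(|\xi|v,D_t v)^{\text{T}}$ and diagonalize the associated first-order system; its diagonal entries are the two purely imaginary eigenvalues of size $|\xi|$, while the off-diagonal remainder contains terms of type $g(t)|\xi|^2$ and $g'(t)$, both integrable in time by \textbf{(B2)} and \textbf{(B3)}. Consequently the fundamental solution in $\Zhyp$ is uniformly bounded, and since the backward transformation contributes the factor $e^{-|\xi|^2 G(t)}\leq 1$ no loss occurs. As a complementary device, especially for frequencies which already lie in the hyperbolic zone from $t=0$, the natural wave energy $|\hat u_t|^2+|\xi|^2|\hat u|^2$ is monotone non-increasing on \eqref{modellinearviscoelasticdampedFourier}, since a direct computation gives $\partial_t\bigl(|\hat u_t|^2+|\xi|^2|\hat u|^2\bigr)=-2g(t)|\xi|^2|\hat u_t|^2\leq 0$.

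The reduced zone $\Zred(\varepsilon,N)$ connects the two. For fixed $|\xi|$ its time-length can grow polynomially in $|\xi|$ (for instance $t_{\xi_2}-t_{\xi_1}\sim|\xi|^{1/d}$ when $g(t)\sim(1+t)^{-d}$), so a naive Gronwall loop would cost $\exp\bigl(c|\xi|^{1+1/d}\bigr)$ and is inadequate. Instead I would carry out a Liouville--Green phase integral inside $\Zred$: the coefficient $p(t,\xi)=|\xi|^2\bigl(1-g^2|\xi|^2/4-g'/2\bigr)$ stays positive and of order $|\xi|^2$ for appropriate choices of $\varepsilon,N$, so the change of variables $v=p^{-1/4}w$ with phase $\tau(t,\xi)=\int_{t_{\xi_1}}^{t}\sqrt{p(s,\xi)}\,ds$ reduces \eqref{modellinearauxiliary2} to a perturbed harmonic oscillator whose remainder is integrable uniformly in $\xi$ thanks to \textbf{(B3)}. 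This produces a fundamental solution bounded by a universal constant on crossing $\Zred$.

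Glueing the three zone estimates at $t_{\xi_1}$ and $t_{\xi_2}$, combined with the monotone wave energy for the low frequencies that remain hyperbolic throughout, yields pointwise Fourier bounds whose squares, integrated in $\xi$, give the two $L^2$-estimates of the theorem. I expect the reduced zone step to be the main technical obstacle: unlike in Theorem \ref{Theorem1} there is a genuine intermediate zone whose time-length is unbounded in $|\xi|$, and only a careful WKB-reduction, not a Gronwall loop, keeps the loss uniform across all frequencies. Once that is in place, the elliptic and hyperbolic parts are direct adaptations of the machinery already developed in Section \ref{Section2.1}.
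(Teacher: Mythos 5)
The overall three-zone architecture matches the paper, but your reduced-zone argument has a genuine gap, and it is the decisive one. You propose a Liouville--Green reduction $v = p^{-1/4}w$ with phase $\int\sqrt{p}$, claiming that $p(t,\xi) = |\xi|^2\bigl(1 - g(t)^2|\xi|^2/4 - g'(t)/2\bigr)$ ``stays positive and of order $|\xi|^2$ for appropriate choices of $\varepsilon,N$.'' This is false by construction: $\Zred(\varepsilon,N) = \{\varepsilon \leq g(t)|\xi| \leq N\}$ with $\varepsilon$ small and $N$ large straddles the separating line $1 - g^2|\xi|^2/4 - g'/2 = 0$ between $\Pi_{hyp}$ and $\Pi_{ell}$, so $p$ changes sign inside the zone. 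The quantities $p^{-1/4}$ and $\int\sqrt{p}$ become singular and then complex at that turning point, which is precisely the degeneracy the reduced zone is introduced to isolate; a Liouville--Green ansatz cannot cross it. The paper's resolution bypasses the transformed $v$-equation entirely: on the original Fourier equation \eqref{modellinearviscoelasticdampedFourier} the energy $\mathcal{E}(t,\xi) = \tfrac{1}{2}\bigl(|\xi|^2|\hat u|^2 + |\hat u_t|^2\bigr)$ satisfies $\tfrac{d}{dt}\mathcal{E} = -g(t)|\xi|^2|\hat u_t|^2 \leq 0$, hence is monotone non-increasing, which gives the required pointwise bound across $\Zred$ regardless of its time-length, with no loss and no Gronwall loop. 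You write down exactly this monotone-energy identity, but confine it to a ``complementary device'' for low frequencies that remain hyperbolic for all time; in the paper it \emph{is} the reduced-zone argument.

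A secondary remark concerns the hyperbolic zone. You assert that after diagonalizing the $v$-system the off-diagonal remainder is ``integrable in time by (B2), (B3)'' so the fundamental solution is ``uniformly bounded.'' The remainder is of size $|p_t/p| \lesssim g^2|\xi|^2 + g$, and in $\Zhyp(\varepsilon)$ one only gets $\int g^2|\xi|^2\,d\tau \leq \varepsilon|\xi|\,\|g\|_{L^1}$, which grows linearly in $|\xi|$: the Neumann-series bound is not uniform in $\xi$. The paper sidesteps this by working directly with the $\hat u$-system, whose diagonal part already carries the dissipative factor $\exp(-\tfrac{1}{2}|\xi|^2\int_s^t g)$, strong enough to absorb the remainder contribution $\exp(\tfrac{\varepsilon}{8}|\xi|^2\int_s^t g)$ for small $\varepsilon$ and yield $\exp(-\tfrac{3}{8}|\xi|^2\int_s^t g)$. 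Your route can likely be repaired by letting the backward-transformation factor $e^{-|\xi|^2\int_s^t g/2}$ absorb the $|\xi|$-dependent loss, but the intermediate claim of uniform boundedness of the $v$-system's fundamental solution, before the backward transformation, is not correct as stated.
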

\begin{remark}
Theorem \ref{TheoremIntegrableCase} implies that we do not have any parabolic effect.
\end{remark}

\begin{proof}[Proof of Theorem \ref{TheoremIntegrableCase}]
$ $
\subsubsection{Considerations in the hyperbolic zone $\Zhyp(\varepsilon)$} \label{Section3.1}
Let us turn to the equation \eqref{modellinearviscoelasticdampedFourier} in the following form:
\begin{equation} \label{EqIntegrableCasewithD}
D_t^2 \hat u - |\xi|^2 \hat u - ig(t)|\xi|^2D_t \hat u = 0.
\end{equation}
We define the micro-energy $U=\big( |\xi|\hat{u},D_t\hat{u} \big)^\text{T}$. Then, the equation \eqref{EqIntegrableCasewithD} leads to the system of first order
\begin{equation*}
D_tU=\underbrace{\left( \begin{array}{cc}
0 & |\xi| \\
|\xi| & ig(t)|\xi|^2
\end{array} \right)}_{A(t,\xi)}U.
\end{equation*}
\begin{proposition} \label{LemEstHypZoneIntegrable}
The fundamental solution $E_U=E_U(t,s,\xi)$ corresponding to this system of first order satisfies the following estimate in $\Zhyp(\varepsilon)$:
\begin{equation*}
\big( |E_U(t,s,\xi)| \big) \lesssim \exp\bigg( -\frac{3}{8}|\xi|^2\int_s^tg(\tau)d\tau \bigg)
\left( \begin{array}{cc}
1 & 1 \\
1 & 1
\end{array} \right)
\end{equation*}
with $(t,\xi),(s,\xi)\in \Zhyp(\varepsilon)$ and $s \leq t$.
\end{proposition}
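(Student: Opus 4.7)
The plan is to apply hyperbolic WKB-analysis to the first-order system $D_t U = A(t,\xi) U$, parallel in structure to the elliptic diagonalization carried out in Section \ref{Section2.1} but with symbol classes adapted to the hyperbolic regime $g(t)|\xi|\leq\varepsilon$. The principal symbol $|\xi|\bigl(\begin{smallmatrix}0&1\\1&0\end{smallmatrix}\bigr)$ has eigenvalues $\pm|\xi|$ (hence spectral gap $2|\xi|$), while the dissipative correction $ig(t)|\xi|^2$ is strictly subordinate in $\Zhyp(\varepsilon)$, so two diagonalization steps should suffice. For Step~1, diagonalize the principal part with the constant matrix $M=\bigl(\begin{smallmatrix}1&1\\-1&1\end{smallmatrix}\bigr)$; setting $V^{(0)}=M^{-1}U$, a direct computation gives
\[
D_tV^{(0)}=\bigl(\mathcal{D}(\xi)+F_0(t,\xi)+\mathcal{R}(t,\xi)\bigr)V^{(0)},
\]
with $\mathcal{D}(\xi)=\diag(-|\xi|,|\xi|)$, diagonal dissipation $F_0(t,\xi)=\tfrac{i}{2}g(t)|\xi|^2\,I$ (which produces the expected decay factor $\exp(-\tfrac{1}{2}|\xi|^2\int_s^t g\,d\tau)$), and off-diagonal remainder $\mathcal{R}(t,\xi)=-\tfrac{i}{2}g(t)|\xi|^2\bigl(\begin{smallmatrix}0&1\\1&0\end{smallmatrix}\bigr)$ of the \emph{same order} as $F_0$.

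For Step~2, use the spectral gap to perform a perturbative diagonalization. Let $N^{(1)}(t,\xi)$ solve the algebraic commutator equation $[\mathcal{D}(\xi),N^{(1)}]=\mathcal{R}(t,\xi)$, which forces its off-diagonal entries to be $\pm\tfrac{i}{4}g(t)|\xi|$. In the hyperbolic zone, $|N^{(1)}|\lesssim g(t)|\xi|\leq\varepsilon$, so $N_1=I+N^{(1)}$ is invertible with uniformly bounded inverse once $\varepsilon$ is chosen sufficiently small. Setting $V^{(1)}=N_1^{-1}V^{(0)}$ yields the transformed system
\[
D_tV^{(1)}=(\mathcal{D}+F_0+\mathcal{R}_1)V^{(1)}, \qquad \mathcal{R}_1=-N_1^{-1}\bigl(D_tN^{(1)}-\mathcal{R}\, N^{(1)}\bigr).
\]
Assumption \textbf{(B3)} controls $|D_tN^{(1)}|\lesssim g(t)|\xi|$, and $|\mathcal{R}\,N^{(1)}|\lesssim g(t)^2|\xi|^3\leq\varepsilon g(t)|\xi|^2$; together they place $\mathcal{R}_1$ in a hyperbolic symbol class whose $L^1_t$-norm on $\Zhyp(\varepsilon)$ can be made an arbitrarily small fraction of $|\xi|^2\int_s^t g(\tau)\,d\tau$ by taking $\varepsilon$ small.

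For Step~3, factor the fundamental solution of the diagonalized system as $E_1(t,s,\xi)\,\mathcal{Q}(t,s,\xi)$, where $E_1$ solves the explicit diagonal part $\mathcal{D}+F_0$ and satisfies $|E_1(t,s,\xi)|\leq\exp(-\tfrac{1}{2}|\xi|^2\int_s^t g\,d\tau)$, while $\mathcal{Q}$ solves $D_t\mathcal{Q}=E_1^{-1}\mathcal{R}_1 E_1\,\mathcal{Q}$ with $\mathcal{Q}(s,s)=I$. Because $E_1$ is a scalar damping factor times the oscillatory matrix $\diag(e^{\pm i|\xi|(t-s)})$, the conjugation $E_1^{-1}\mathcal{R}_1 E_1$ preserves the matrix norm of $\mathcal{R}_1$; hence Gronwall's inequality (Lemma \ref{Appendix.Gronwall}) gives $|\mathcal{Q}|\leq\exp\bigl(\int_s^t|\mathcal{R}_1|\,d\tau\bigr)\leq\exp(\tfrac{1}{8}|\xi|^2\int_s^t g\,d\tau)$ for $\varepsilon$ small enough. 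Combining both bounds and undoing the uniformly bounded transformations $N_1$ and $M$ yields the matrix estimate on $E_U(t,s,\xi)$ claimed in the proposition. The main obstacle is precisely the same-order off-diagonal remainder $\mathcal{R}$ produced by the first diagonalization: the secondary perturbative step is unavoidable, and it forces one to surrender a small portion of the ideal decay rate $\tfrac{1}{2}$; the residual rate $\tfrac{3}{8}$ appearing in the statement is the price paid.
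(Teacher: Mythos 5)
Your proposal follows essentially the same two-step diagonalization plus Peano--Baker/Gronwall argument as the paper, with the same constant matrix $M$, the same commutator-equation choice of $N^{(1)}$ with off-diagonal entries $\pm\tfrac{i}{4}g(t)|\xi|$, and the same splitting of the fundamental solution into the diagonal part (giving $\exp(-\tfrac12|\xi|^2\int_s^t g\,d\tau)$) and a Gronwall correction $\mathcal{Q}$ controlled by $\int_s^t|\mathcal{R}_1|\,d\tau$. One small imprecision worth noting: the contribution to $\int_s^t|\mathcal{R}_1|\,d\tau$ coming from $D_tN^{(1)}$ is $\tfrac14|\xi|\,(g(s)-g(t))\leq\tfrac{\varepsilon}{4}$, which is a bounded constant rather than ``an arbitrarily small fraction of $|\xi|^2\int_s^t g\,d\tau$'' (the ratio blows up as $|\xi|\to 0$, and small $|\xi|$ do live in $\Zhyp(\varepsilon)$), but this is harmless since the proposition asserts only $\lesssim$, and the paper absorbs exactly this constant the same way.
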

\begin{proof}
\noindent\textbf{Step 1.} \emph{Diagonalization procedure} \medskip

Let us carry out the first step of diagonalization. For this reason we set
\[ M = \left( \begin{array}{cc}
1 & -1 \\
1 & 1
\end{array} \right) \qquad \text{and} \qquad  M^{-1} = \frac{1}{2}\left( \begin{array}{cc}
1 & 1 \\
-1 & 1
\end{array} \right). \]
We define $U^{(0)}:=M^{-1}U$. Then, we arrive at the system
\begin{equation*}
D_tU^{(0)}=\big( \mathcal{D}(\xi)+\mathcal{R}(t,\xi) \big)U^{(0)},
\end{equation*}
where
\begin{align*}
\mathcal{D}(\xi) = \left( \begin{array}{cc}
\tau_1 & 0 \\
0 & \tau_2
\end{array} \right) =\left( \begin{array}{cc}
-|\xi| & 0 \\
0 & |\xi|
\end{array} \right) \qquad \mbox{and} \qquad \mathcal{R}(t,\xi) = \frac{1}{2}\left( \begin{array}{cc}
ig(t)|\xi|^2 & -ig(t)|\xi|^2 \\
-ig(t)|\xi|^2 & ig(t)|\xi|^2
\end{array} \right).
\end{align*}
Let $F_0=F_0(t,\xi)$ be the diagonal part of $\mathcal{R}=\mathcal{R}(t,\xi)$. To carry out the second step of diagonalization procedure, we introduce
\begin{align*}
N^{(1)}(t,\xi) = \left( \begin{array}{cc}
0 & \dfrac{\mathcal{R}_{12}}{\tau_1-\tau_2} \\
\dfrac{\mathcal{R}_{21}}{\tau_2-\tau_1} & 0
\end{array} \right) = \frac{1}{4}\left( \begin{array}{cc}
0 & ig(t)|\xi| \\
-ig(t)|\xi| & 0
\end{array} \right),
\end{align*}
and $N_1(t,\xi)=I+N^{(1)}(t,\xi)$. For all $(t,\xi) \in \Zhyp(\varepsilon)$ the matrix $N_1=N_1(t,\xi)$ is invertible with uniformly bounded inverse $N_1^{-1}=N_1^{-1}(t,\xi)$. Indeed, in $\Zhyp(\varepsilon)$ it holds
\[  |N^{(1)}(t,\xi)| \leq \frac{g(t)|\xi|}{4}\leq \frac{\varepsilon}{4}. \]
We set
\begin{align*}
B^{(1)}(t,\xi) = D_tN^{(1)}(t,\xi)-\big( \mathcal{R}(t,\xi)-F_0(t,\xi) \big)N^{(1)}(t,\xi) = \frac{1}{8}\left( \begin{array}{cc}
g^2(t)|\xi|^3 & 2g'(t)|\xi| \\
-2g'(t)|\xi| & -g^2(t)|\xi|^3
\end{array} \right),
\end{align*}
and, then
\[ \mathcal{R}_1(t,\xi)=-N_1^{-1}(t,\xi)B^{(1)}(t,\xi). \]
Thus, we may conclude
\begin{equation*}
\big( D_t-\mathcal{D}(\xi)-\mathcal{R}(t,\xi) \big)N_1(t,\xi)=N_1(t,\xi)\big( D_t-\mathcal{D}(\xi)-F_0(t,\xi)-\mathcal{R}_1(t,\xi) \big).
\end{equation*}
\noindent\textbf{Step 2.} \emph{Construction of the fundamental solution} \medskip

We turn to  $U^{(1)}=U^{(1)}(t,\xi)$ as the solution to the system
\[ \big( D_t-\mathcal{D}(\xi)-F_0(t,\xi)-\mathcal{R}_1(t,\xi) \big)U^{(1)}(t,\xi) = 0. \]
We can write $U^{(1)}(t,\xi) = E_{U,1}(t,s,\xi)U^{(1)}(s,\xi)$. Here $E_{U,1}=E_{U,1}(t,s,\xi)$ is the fundamental solution to the system
\begin{align*}
\big( D_t-\mathcal{D}(\xi)-F_0(t,\xi)-\mathcal{R}_1(t,\xi) \big)E_{U,1}(t,s,\xi) = 0, \quad E_{U,1}(s,s,\xi) = I.
\end{align*}
The solution $E_0 = E_0(t,s,\xi)$ of the ``principal diagonal part'' satisfies
\begin{align*}
D_tE_0(t,s,\xi) = \big( \mathcal{D}(\xi)+F_0(t,\xi) \big)E_0(t,s,\xi), \quad E_{0}(s,s,\xi) = I,
\end{align*}
with $t\geq s$ and $(t,\xi), (s,\xi) \in \Zhyp(\varepsilon)$. Consequently, we have
\[ E_0(t,s,\xi) = \exp \bigg( i\int_s^t\big( \mathcal{D}(\xi)+F_0(\tau,\xi) \big)d\tau \bigg), \]
and we can estimate
\[ |E_0(t,s,\xi)| \lesssim \exp \bigg( -\frac{1}{2}|\xi|^2\int_s^tg(\tau)d\tau \bigg). \]
Let us set
\begin{align*}
\mathcal{R}_2(t,s,\xi) &= E_0^{-1}(t,s,\xi)\mathcal{R}_1(t,\xi)E_0(t,s,\xi), \\
Q(t,s,\xi) &= I + \sum_{k=1}^\infty i^k\int_s^t\mathcal{R}_2(t_1,s,\xi)\int_s^{t_1}\mathcal{R}_2(t_2,s,\xi)\cdots\int_s^{t_{k-1}}\mathcal{R}_2(t_k,s,\xi)dt_k\cdots dt_2dt_1.
\end{align*}
Then, $Q=Q(t,s,\xi)$ solves the Cauchy problem
\begin{align*}
D_tQ(t,s,\xi) = \mathcal{R}_2(t,s,\xi)Q(t,s,\xi), \quad Q(s,s,\xi) = I.
\end{align*}
The fundamental solution $E_{U,1}=E_{U,1}(t,s,\xi)$ is representable in the form $E_{U,1}(t,s,\xi) = E_0(t,s,\xi)Q(t,s,\xi)$. Furthermore, we see that
\[ |Q(t,s,\xi)|\leq \exp \bigg( \int_s^t|\mathcal{R}_1(\tau,\xi)|d\tau \bigg) \lesssim \exp \bigg( \frac{1}{8}\int_s^t\big( g^2(\tau)|\xi|^3 - 2g'(\tau)|\xi| \big)d\tau \bigg) \lesssim \exp \bigg( \frac{1}{8}\int_s^t g^2(\tau)|\xi|^3 d\tau \bigg) \]
by using the definition of $\Zhyp(\varepsilon)$. Therefore, we get
\begin{align*}
|E_{U,1}(t,s,\xi)| &\leq |E_0(t,s,\xi)|\,|Q(t,s,\xi)| \\
& \leq \exp \bigg( -\frac{1}{2}|\xi|^2\int_s^tg(\tau)d\tau+\frac{1}{8}\int_s^t g^2(\tau)|\xi|^3 d\tau \bigg) \\
& \leq \exp \bigg( -\frac{1}{2}|\xi|^2\int_s^tg(\tau)\left( 1-\frac{1}{4}g(\tau)|\xi| \right)d\tau \bigg) \\
& \leq \exp\bigg( -\frac{4-\varepsilon}{8}|\xi|^2\int_s^tg(\tau)d\tau \bigg),
\end{align*}
where we used the definition of $\Zhyp(\varepsilon)$. The backward transformation leads to the same estimate for $E_U=E_U(t,s,\xi)$ for all $(t,\xi), (s,\xi) \in \Zhyp(\varepsilon)$ and $t \geq s$. This completes the proof.
\end{proof}
\begin{corollary} \label{IntegrableCorZhyp}
We have the following estimates for $s\leq t$, $(s,\xi), (t,\xi) \in \Zhyp(\varepsilon)$:
\begin{align*}
|\xi|^{|\beta|}|\hat{u}(t,\xi)| & \lesssim \exp\bigg( -\frac{3}{8}|\xi|^2\int_s^tg(\tau)d\tau \bigg)\big( |\xi|^{|\beta|}|\hat{u}(s,\xi)| + |\xi|^{|\beta|-1}|\hat{u}_t(s,\xi)| \big) \quad \mbox{for} \quad |\beta|\geq 1, \\
|\xi|^{|\beta|}|\hat{u}_t(t,\xi)| & \lesssim \exp\bigg( -\frac{3}{8}|\xi|^2\int_s^tg(\tau)d\tau \bigg)\big( |\xi|^{|\beta|+1}|\hat{u}(s,\xi)| + |\xi|^{|\beta|}|\hat{u}_t(s,\xi)| \big) \quad \mbox{for} \quad |\beta|\geq 0.
\end{align*}
\end{corollary}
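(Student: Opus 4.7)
The plan is to read off the Corollary directly from the matrix estimate in Proposition \ref{LemEstHypZoneIntegrable} together with the representation of the solution through the fundamental solution of the first-order system. Since the substantive work (diagonalization, bounding $Q$, exponential estimate) is already packaged into the proposition, what remains is only bookkeeping of the components of the micro-energy vector.

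First, I would recall the definition of the micro-energy $U(t,\xi) = (|\xi|\hat u(t,\xi), D_t\hat u(t,\xi))^{\mathrm T}$ and the fact that $U(t,\xi) = E_U(t,s,\xi)\,U(s,\xi)$ holds for all $s\le t$ with $(s,\xi),(t,\xi)\in\Zhyp(\varepsilon)$. Writing this componentwise and using $|D_t\hat u|=|\hat u_t|$ yields
\[
|\xi|\,|\hat u(t,\xi)| \le |E_U^{11}|\,|\xi|\,|\hat u(s,\xi)| + |E_U^{12}|\,|\hat u_t(s,\xi)|,
\]
\[
|\hat u_t(t,\xi)| \le |E_U^{21}|\,|\xi|\,|\hat u(s,\xi)| + |E_U^{22}|\,|\hat u_t(s,\xi)|.
\]
Next, I would invoke Proposition \ref{LemEstHypZoneIntegrable}, which bounds each entry of $E_U$ by $\exp\bigl(-\tfrac{3}{8}|\xi|^2\int_s^t g(\tau)\,d\tau\bigr)$ up to a multiplicative constant (this uses the definition of $\Zhyp(\varepsilon)$ so that $\tfrac{4-\varepsilon}{8}\ge \tfrac{3}{8}$ for $\varepsilon$ small enough). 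Substituting these entrywise bounds gives the two basic inequalities for $|\hat u(t,\xi)|$ and $|\hat u_t(t,\xi)|$.

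Finally, I would multiply the first inequality by $|\xi|^{|\beta|-1}$ and the second by $|\xi|^{|\beta|}$. The former requires $|\beta|\ge 1$ to keep the power of $|\xi|$ nonnegative, while the latter already works for $|\beta|\ge 0$. This reproduces exactly the two estimates claimed in Corollary \ref{IntegrableCorZhyp}.

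There is no real obstacle here: once Proposition \ref{LemEstHypZoneIntegrable} is available, the Corollary is a direct algebraic consequence of the representation formula. The only point that deserves a brief comment is the asymmetric range of $|\beta|$ in the two bounds, which is a purely scaling matter caused by the extra power of $|\xi|$ attached to $\hat u$ in the micro-energy $U$.
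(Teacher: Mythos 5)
Your proof is correct and takes essentially the same (indeed, the only natural) approach: the paper states the corollary without a separate argument, since it follows directly from Proposition \ref{LemEstHypZoneIntegrable} by writing out $U(t,\xi)=E_U(t,s,\xi)U(s,\xi)$ componentwise and multiplying by the appropriate power of $|\xi|$. Your remark that $\tfrac{4-\varepsilon}{8}\ge\tfrac{3}{8}$ requires $\varepsilon\le1$, and your observation about the asymmetric $|\beta|$-ranges coming from the extra $|\xi|$ in the first component of the micro-energy, are both accurate.
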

\subsubsection{Considerations in the reduced zone $\Zred(\varepsilon,N)$} \label{Section3.3}
We define the micro-energy $U=\big( |\xi|\hat{u},D_t\hat{u} \big)^\text{T}$. Then, the Cauchy problem \eqref{modellinearviscoelasticdampedFourier} leads to the system of first order
\begin{equation} \label{SystemPseudoIntegrable}
D_tU=\underbrace{\left( \begin{array}{cc}
0 & |\xi| \\
|\xi| & ig(t)|\xi|^2
\end{array} \right)}_{A(t,\xi)}U.
\end{equation}
We can define for all $(t,\xi) \in \Zred$ the following energy of the solutions to \eqref{SystemPseudoIntegrable}:
\begin{equation*}
\mathcal{E}(t,\xi) := \frac{1}{2}\big( |\xi|^2|\hat{u}(t,\xi)|^2 + |\hat{u}_t(t,\xi)|^2 \big).
\end{equation*}
If we differentiate the energy $\mathcal{E}=\mathcal{E}(t,\xi)$ with respect to $t$ and use our equation \eqref{modellinearviscoelasticdampedFourier}, it follows
\[ \frac{d}{dt}\mathcal{E}(t,\xi) = -g(t)|\xi|^2|\hat{u}_t(t,\xi)|^2 \leq 0. \]
Consequently, $\mathcal{E}=\mathcal{E}(t,\xi)$ is monotonically decreasing in $t$. Therefore, we have $\mathcal{E}(t,\xi)\leq \mathcal{E}(s,\xi)$ for $s\leq t$ and $(s,\xi), (t,\xi) \in \Zred(\varepsilon,N)$. This implies
\begin{align*}
|\xi||\hat{u}(t,\xi)| &\leq \sqrt{\mathcal{E}(s,\xi)} \leq |\xi||\hat{u}(s,\xi)| + |\hat{u}_t(s,\xi)|, \\
|\hat{u}_t(t,\xi)| &\leq \sqrt{\mathcal{E}(s,\xi)} \leq |\xi||\hat{u}(s,\xi)| + |\hat{u}_t(s,\xi)|.
\end{align*}
Taking into consideration, that the solution $U=U(t,\xi)$ is represented as $U(t,\xi)=E_{\text{red}}(t,s,\xi)U(s,\xi)$, we arrive at the following result.
\begin{corollary} \label{CorEstPDZoneIntegrable}
We have the following estimates in $\Zred(\varepsilon,N)$ with $(s,\xi), (t,\xi)\in\Zred(\varepsilon,N)$ and $s\leq t$ :
\begin{align*}
|\xi|^{|\beta|}|\hat{u}(t,\xi)| &\lesssim |\xi|^{|\beta|}|\hat{u}(s,\xi)| + |\xi|^{|\beta|-1}|\hat{u}_t(s,\xi)|\,\,\,\mbox{for}\,\,\,|\beta| \geq 1, \\
|\xi|^{|\beta|}|\hat{u}_t(t,\xi)| &\lesssim |\xi|^{|\beta|+1}|\hat{u}(s,\xi)| + |\xi|^{|\beta|}|\hat{u}_t(s,\xi)| \,\,\,\mbox{for}\,\,\,|\beta| \geq 0.
\end{align*}
\end{corollary}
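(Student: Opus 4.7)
The plan is to derive the corollary directly from the dissipative energy argument already foreshadowed in the text immediately preceding the statement. The natural energy on the Fourier side is $\mathcal{E}(t,\xi) := \tfrac12(|\xi|^2|\hat u(t,\xi)|^2 + |\hat u_t(t,\xi)|^2)$, and I expect that its monotone non-increase in $t$, combined with a purely algebraic upgrade in the power of $|\xi|$, suffices to produce the stated bounds for both components.

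First I would verify the dissipation identity $\frac{d}{dt}\mathcal{E}(t,\xi) = -g(t)|\xi|^2|\hat u_t(t,\xi)|^2$ by differentiating $\mathcal{E}$ in $t$ and substituting $\hat u_{tt} = -|\xi|^2 \hat u - g(t)|\xi|^2 \hat u_t$ from \eqref{modellinearviscoelasticdampedFourier}. The cross terms $|\xi|^2 \hat u\,\overline{\hat u_t}$ and $|\xi|^2 \overline{\hat u}\,\hat u_t$ cancel after taking real parts, and positivity of $g$ from \textbf{(B1)} yields a non-positive derivative. Monotonicity then gives $\mathcal{E}(t,\xi) \leq \mathcal{E}(s,\xi)$ for all $s \leq t$ with $(s,\xi),(t,\xi) \in \Zred(\varepsilon,N)$.

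Second, I would isolate each summand of $\mathcal{E}(t,\xi)$ on the left to obtain $|\xi||\hat u(t,\xi)| \leq \sqrt{2\mathcal{E}(s,\xi)}$ and $|\hat u_t(t,\xi)| \leq \sqrt{2\mathcal{E}(s,\xi)}$, then apply the elementary bound $\sqrt{a^2+b^2}\leq a+b$ for $a,b\geq 0$ to majorize the right-hand side by $|\xi||\hat u(s,\xi)| + |\hat u_t(s,\xi)|$. This yields the base-case inequalities corresponding to $|\beta|=1$ and $|\beta|=0$ respectively, which are exactly the estimates displayed in the text just before the corollary.

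Finally, to obtain the stated estimates for general $|\beta|$, I would multiply the first base estimate by $|\xi|^{|\beta|-1}$ (requiring $|\beta|\geq 1$ as hypothesized, so no spurious singular factor appears) and the second by $|\xi|^{|\beta|}$ (valid for $|\beta|\geq 0$), distributing the extra factor across the two summands on the right. No step requires the quantitative zone constraint $\varepsilon\leq g(t)|\xi|\leq N$, nor any structural hypothesis on $g$ beyond positivity; the only role of the reduced zone is to guarantee that the path $[s,t]\times\{\xi\}$ lies in a region where the energy identity may be applied. Since the whole argument is a few lines of energy algebra, the main obstacle — if any — is merely bookkeeping the allowable range of $|\beta|$ so that the multiplicative upgrade remains legitimate.
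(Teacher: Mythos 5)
Your proposal is correct and is essentially identical to the paper's own argument: the authors likewise differentiate $\mathcal{E}(t,\xi)=\tfrac12(|\xi|^2|\hat u|^2+|\hat u_t|^2)$, obtain $\frac{d}{dt}\mathcal{E}=-g(t)|\xi|^2|\hat u_t|^2\le 0$ from \eqref{modellinearviscoelasticdampedFourier}, conclude $\mathcal{E}(t,\xi)\le\mathcal{E}(s,\xi)$, and then multiply the resulting base estimates by the appropriate power of $|\xi|$. Your observation that the zone constraint plays no quantitative role here matches the paper as well.
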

\subsubsection{Considerations in the elliptic zone $\Zell(N)$} \label{Section3.2}

We write the equation (\ref{modellinearauxiliary}) in the following form:
\begin{equation} \label{modellinearauxiliary111}
D_t^2 v + \Big( \underbrace{\dfrac{g(t)^2}{4}|\xi|^4-|\xi|^2}_{=:d^2(t,\xi)} \Big)v + \underbrace{\frac{g'(t)}{2}|\xi|^2}_{=:m(t,\xi)}v=0.
\end{equation}
\begin{remark} \label{RemarkIntegrableCase}
We have the following inequalities:
\begin{align} \label{EqEstimatesDerivativeIntegrable}
d^2(t,\xi) \leq \frac{1}{4}g^2(t)|\xi|^4 \qquad \text{and} \qquad d^2(t,\xi)\geq \Big( \frac{1}{4}-\frac{1}{N^2} \Big)g^2(t)|\xi|^4
\end{align}
with $N$ sufficiently large. Therefore, we get $d(t,\xi)\approx g(t)|\xi|^2$. Moreover, it holds
\begin{align*}
|d_t(t,\xi)| =  \bigg|\frac{1}{2}\frac{g'(t)g(t)|\xi|^4}{\sqrt{\frac{g^2(t)}{4}|\xi|^4-|\xi|^2}} \bigg| \lesssim - \frac{g'(t)g(t)|\xi|^4}{g(t)|\xi|^2} = -Cg'(t)|\xi|^2.
\end{align*}
On the other hand, we have
\begin{align*}
d_t^2(t,\xi) &=  \frac{1}{2}\frac{\Big( g''(t)g(t)|\xi|^4+(g'(t))^2|\xi|^4 \Big)\sqrt{\frac{g^2(t)}{4}|\xi|^4-|\xi|^2}}{\frac{g^2(t)}{4}|\xi|^4-|\xi|^2} - \frac{1}{8}\frac{\big( g'(t)g(t)|\xi|^4 \big)^2}{\Big( \frac{g^2(t)}{4}|\xi|^4-|\xi|^2 \Big)\sqrt{\frac{g^2(t)}{4}|\xi|^4-|\xi|^2}}.
\end{align*}
Employing condition \textbf{(B3)} and estimates \eqref{EqEstimatesDerivativeIntegrable}, we arrive at
\begin{align*}
|d_t^2(t,\xi)| &\leq  \frac{1}{2}\frac{|g''(t)|g(t)|\xi|^4+|g'(t)|^2|\xi|^4}{d(t,\xi)} + \frac{1}{8}\frac{\big( |g'(t)|g(t)|\xi|^4 \big)^2}{d^3(t,\xi)} \leq \Bigg( \frac{C_2+C_1^2}{\Big( 1-\frac{4}{N^2} \Big)^{\frac{1}{2}}} + \frac{C_1^2}{\Big( 1-\frac{4}{N^2} \Big)^{\frac{3}{2}}}  \Bigg)g(t)|\xi|^2 \leq C_N g(t)|\xi|^2 .
\end{align*}
\end{remark}
It is reasonable to introduce the micro-energy
\[ V=V(t,\xi):=\big( d(t,\xi)v,D_t v \big)^{\text{T}} \qquad \mbox{with}  \qquad d(t,\xi) := \sqrt{\frac{g^2(t)}{4}|\xi|^4-|\xi|^2}. \]
Thus, we have to apply tools from elliptic WKB-analysis. Transformation to a system of first order from \eqref{modellinearauxiliary111} leads to
\begin{equation*}
D_tV=\left( \begin{array}{cc}
0 & d(t,\xi) \\
-d(t,\xi) & 0
\end{array} \right)V + \left( \begin{array}{cc}
\dfrac{D_td(t,\xi)}{d(t,\xi)} & 0 \\
-\dfrac{m(t,\xi)}{d(t,\xi)} & 0
\end{array} \right)V.
\end{equation*}
Using $V=MV^{(0)}$, $M=\begin{pmatrix} i & 1 \\ -i & 1\end{pmatrix}$, then after the first step of diagonalization we obtain
\[ D_tV^{(0)} = \big( \mathcal{D}(t,\xi) + \mathcal{R}(t,\xi) \big)V^{(0)}, \]
where
\begin{align*}
\mathcal{D}(t,\xi) = \left( \begin{array}{cc}
-id(t,\xi) & 0 \\
0 & id(t,\xi)
\end{array} \right) \qquad \text{and} \qquad \mathcal{R}(t,\xi) = \frac{1}{2} \left( \begin{array}{cc}
\dfrac{D_td(t,\xi)}{d(t,\xi)}-i\dfrac{m(t,\xi)}{d(t,\xi)} & -\dfrac{D_td(t,\xi)}{d(t,\xi)}+i\dfrac{m(t,\xi)}{d(t,\xi)} \\
-\dfrac{D_td(t,\xi)}{d(t,\xi)}-i\dfrac{m(t,\xi)}{d(t,\xi)} & \dfrac{D_td(t,\xi)}{d(t,\xi)}+i\dfrac{m(t,\xi)}{d(t,\xi)}
\end{array} \right).
\end{align*}
From Remark \ref{RemarkIntegrableCase} we find the estimates
\begin{align*}
\Big| \frac{D_td(t,\xi)}{d(t,\xi)} \Big| \lesssim -\frac{g'(t)}{g(t)} \qquad \mbox{and} \qquad \Big| \frac{m(t,\xi)}{d(t,\xi)} \Big| \lesssim -\frac{g'(t)}{g(t)}.
\end{align*}
Let us introduce $F_0(t,\xi)=\diag\mathcal{R}(t,\xi)$. Now we carry out the next step of diagonalization procedure. The difference of the diagonal entries of the matrix $\mathcal{D}(t,\xi)+F_0(t,\xi)$ is
\begin{equation*}
i\delta(t,\xi):=2d(t,\xi) + \frac{m(t,\xi)}{d(t,\xi)}\sim d(t,\xi)
\end{equation*}
for $t\leq t_{\xi_1}$ if we choose the zone constant $N$ sufficiently large and apply condition \textbf{(B3)}. Now we choose a matrix $N^{(1)}=N^{(1)}(t,\xi)$ such that
\[ N^{(1)}(t,\xi)=\left( \begin{array}{cc}
0 & -\dfrac{\mathcal{R}_{12}}{\delta(t,\xi)} \\
\dfrac{\mathcal{R}_{21}}{\delta(t,\xi)} & 0
\end{array} \right)\sim \left( \begin{array}{cc}
0 & i\dfrac{D_td(t,\xi)}{2d^2(t)}-\dfrac{m(t,\xi)}{2d^2(t,\xi)} \\
i\dfrac{D_td(t,\xi)}{2d^2(t,\xi)}+\dfrac{m(t,\xi)}{2d^2(t,\xi)} & 0
\end{array} \right). \]
We put $N_1=N_1(t,\xi):=I+N^{(1)}(t,\xi)$. For a sufficiently large zone constant $N$ and all $t\leq t_{\xi_1}$ the matrix $N_1=N_1(t,\xi)$ is invertible with uniformly bounded inverse $N_1^{-1}=N_1^{-1}(t,\xi)$. Indeed, in the elliptic zone $\Zell(N)$, for large $N$ it holds
\[ |N_1(t,\xi)-I| \leq \frac{1}{|\xi|^2g(t)}\frac{-g'(t)}{g(t)} \leq -\frac{1}{N^2}g'(t) \leq \frac{1}{2}. \]
Let
\begin{align*}
B^{(1)}(t,\xi) &= D_tN^{(1)}(t,\xi)-( \mathcal{R}(t,\xi)-F_0(t,\xi))N^{(1)}(t,\xi), \\
\mathcal{R}_1(t,\xi) &= -N_1^{-1}(t,\xi)B^{(1)}(t,\xi).
\end{align*}
Consequently, we have the following operator identity:
\begin{equation*}
\big( D_t-\mathcal{D}(t,\xi)-\mathcal{R}(t,\xi) \big)N_1(t,\xi)=N_1(t,\xi)\big( D_t-\mathcal{D}(t,\xi)-F_0(t,\xi)-\mathcal{R}_1(t,\xi) \big).
\end{equation*}
\noindent\textbf{Step 2.} \emph{Construction of the fundamental solution}
\begin{proposition} \label{Lem.Est.Ell.Zone.Integrable}
The fundamental solution $E_{\text{ell}}^{V}=E_{\text{ell}}^{V}(t,s,\xi)$ to the transformed operator
\[ D_t-\mathcal{D}(t,\xi)-F_0(t,\xi)-\mathcal{R}_1(t,\xi) \]
can be estimated by
\begin{equation*}
(|E_{\text{ell}}^{V}(t,s,\xi)|) \lesssim \frac{g(t)}{g(s)}\exp\bigg( \frac{|\xi|^2}{2}\int_{s}^{t}g(\tau)d\tau \bigg)
\left( \begin{array}{cc}
1 & 1 \\
1 & 1
\end{array} \right),
\end{equation*}
with $(t,\xi),(s,\xi)\in \Zell(N)$ and $s\leq t$.
\end{proposition}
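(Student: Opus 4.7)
The plan is to mimic the two-step strategy already used in Section \ref{Section2.1} for Proposition \ref{Lem.Est.Ell.Zone}: separate the ``principal diagonal part'' $\mathcal{D}(t,\xi)+F_0(t,\xi)$, which can be exponentiated explicitly, from the remainder $\mathcal{R}_1(t,\xi)$, which is treated perturbatively by a Peano--Baker series. The single substantive difference from the earlier case is that here $g=g(t)$ is decreasing and integrable, so the growth-bookkeeping must be done with $d(t,\xi)\approx \tfrac{g(t)}{2}|\xi|^2$ rather than with $\gamma$, and the zone is $\Zell(N)$ defined through $g(t)|\xi|\geq N$.

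First, I would solve the Cauchy problem
\[ D_t E_0(t,s,\xi)=\bigl(\mathcal{D}(t,\xi)+F_0(t,\xi)\bigr)E_0(t,s,\xi),\qquad E_0(s,s,\xi)=I, \]
explicitly by exponentiation, obtaining the diagonal matrix
\[ E_0(t,s,\xi)=\mathrm{diag}\!\left(\exp\!\Bigl(-\!\int_s^t d(\tau,\xi)\,d\tau+i\!\int_s^t F_0^{11}(\tau,\xi)\,d\tau\Bigr),\ \exp\!\Bigl(\int_s^t d(\tau,\xi)\,d\tau+i\!\int_s^t F_0^{22}(\tau,\xi)\,d\tau\Bigr)\right). \]
The bound $d(\tau,\xi)\approx \tfrac{g(\tau)}{2}|\xi|^2$ from Remark \ref{RemarkIntegrableCase} yields the dominant factor $\exp\!\bigl(\tfrac{|\xi|^2}{2}\int_s^t g(\tau)\,d\tau\bigr)$. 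For the contribution of $F_0$ I would use $F_0^{kk}=\tfrac{1}{2}\bigl(\tfrac{D_t d}{d}\pm i\tfrac{m}{d}\bigr)$: the $\tfrac{D_t d}{d}$ term integrates to $\log\bigl(d(t,\xi)/d(s,\xi)\bigr)\sim \log\bigl(g(t)/g(s)\bigr)$, producing the prefactor $g(t)/g(s)$, while the $m/d$ term is purely imaginary and contributes only a unimodular phase. Collecting,
\[ \bigl(|E_0(t,s,\xi)|\bigr)\lesssim \frac{g(t)}{g(s)}\exp\!\Bigl(\tfrac{|\xi|^2}{2}\int_s^t g(\tau)\,d\tau\Bigr)\begin{pmatrix}1 & 0\\ 0 & 1\end{pmatrix}. \]

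Second, I would handle the remainder by writing the fundamental solution as $E_{\text{ell}}^{V}(t,s,\xi)=E_0(t,s,\xi)Q(t,s,\xi)$, where $Q$ solves $D_t Q=\mathcal{R}_2 Q$, $Q(s,s,\xi)=I$, with the conjugated remainder $\mathcal{R}_2(t,s,\xi):=E_0^{-1}(t,s,\xi)\mathcal{R}_1(t,\xi)E_0(t,s,\xi)$. The key point is that the exponentials in $E_0$ and $E_0^{-1}$ cancel on the diagonal entries of $\mathcal{R}_2$, so only the off-diagonal entries are amplified, but then one only needs $|\mathcal{R}_1(t,\xi)|$ itself to be in $L^1_t(\Zell(N))$ uniformly in $\xi$ (using the analogue of the last item of Proposition \ref{Prop.Symbol.Ell.} adapted to the decreasing-coefficient symbol classes). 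Representing $Q$ by the Peano--Baker series
\[ Q(t,s,\xi)=I+\sum_{k=1}^{\infty}i^k\int_s^t\mathcal{R}_2(t_1,s,\xi)\int_s^{t_1}\mathcal{R}_2(t_2,s,\xi)\cdots\int_s^{t_{k-1}}\mathcal{R}_2(t_k,s,\xi)\,dt_k\cdots dt_1, \]
Gronwall's inequality (Lemma \ref{Appendix.Gronwall}) immediately gives $|Q(t,s,\xi)|\lesssim \exp\!\bigl(\int_s^t |\mathcal{R}_1(\tau,\xi)|\,d\tau\bigr)\lesssim 1$. Multiplying the bounds for $E_0$ and $Q$ then produces the claimed estimate.

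The step I expect to be most delicate is the verification that $\mathcal{R}_1(t,\xi)$ is indeed uniformly integrable in time on $\Zell(N)$. In the earlier increasing-coefficient case this followed from $\mathcal{R}_1\in S^0_{\text{ell}}\{-1,2\}$ and the definition of the separating line through $G(t)|\xi|^2=N$. Here the elliptic zone is defined instead by $g(t)|\xi|\geq N$, so I would need to recompute carefully the symbol class of $\mathcal{R}_1$ and check, via the estimates $|D_t d/d|\lesssim -g'(t)/g(t)$, $|m/d|\lesssim -g'(t)/g(t)$ from Remark \ref{RemarkIntegrableCase} and condition \textbf{(B3)}, that
\[ \int_s^{t}|\mathcal{R}_1(\tau,\xi)|\,d\tau\ \lesssim\ \int_s^{t}\frac{1}{g(\tau)|\xi|^{2}}\Bigl(\frac{g'(\tau)}{g(\tau)}\Bigr)^{2}d\tau \]
(or the analogous bound arising from the computation of $D_t N^{(1)}$ and $(\mathcal{R}-F_0)N^{(1)}$), and that this integral is controlled by $1/N$ via the definition $g(t_{\xi_1})|\xi|=N$ of the separating line. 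Once this integrability is established, the rest of the argument is essentially identical to the proof of Proposition \ref{Lem.Est.Ell.Zone}, to which one can refer to avoid repetition.
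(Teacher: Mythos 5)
Your outline correctly identifies the goal and the role of $\beta$ and of the integrability of $\mathcal{R}_1$, but the central perturbative step is wrong in the elliptic zone, and this is a genuine gap, not a gloss.

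You propose to write $E_{\text{ell}}^V=E_0Q$ with $E_0$ the fundamental solution of the \emph{full} diagonal part $\mathcal{D}+F_0$ and $Q$ solving $D_tQ=\mathcal{R}_2Q$ with $\mathcal{R}_2(t,s,\xi)=E_0^{-1}(t,s,\xi)\mathcal{R}_1(t,\xi)E_0(t,s,\xi)$, and you claim that Gronwall gives $|Q|\lesssim\exp\bigl(\int_s^t|\mathcal{R}_1|\,d\tau\bigr)$. This works in the hyperbolic zone (Proposition \ref{LemEstHypZoneIntegrable}) because there the two diagonal entries of $E_0$ have the \emph{same} modulus, so conjugation by $E_0$ does not change the size of $\mathcal{R}_1$. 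Here it fails: $i\mathcal{D}+iF_0$ has real parts $\beta_1(t,\xi)\sim +d(t,\xi)$ and $\beta_2(t,\xi)\sim -d(t,\xi)$, so $(E_0)_{11}$ grows like $\exp\bigl(\int_s^t d\bigr)$ while $(E_0)_{22}$ decays like $\exp\bigl(-\int_s^t d\bigr)$. Consequently
\[
(\mathcal{R}_2)_{21}(t,s,\xi)=(E_0^{-1})_{22}(t,s,\xi)\,(\mathcal{R}_1)_{21}(t,\xi)\,(E_0)_{11}(t,s,\xi)
\sim\exp\Bigl(2\int_s^t d(\tau,\xi)\,d\tau\Bigr)(\mathcal{R}_1)_{21}(t,\xi),
\]
which grows exponentially in $t$ and is not dominated by $|\mathcal{R}_1|$. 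Gronwall applied to $|Q|\le 1+\int_s^t|\mathcal{R}_2(\theta,s,\xi)||Q(\theta,s,\xi)|\,d\theta$ therefore yields only $\exp\bigl(\int_s^t|\mathcal{R}_2|\bigr)$, which is not finite, and your asserted bound on $Q$ does not hold. Your remark that ``only the off-diagonal entries are amplified, but then one only needs $|\mathcal{R}_1|$ itself to be in $L^1$'' is precisely the false step: the amplification of $(\mathcal{R}_2)_{21}$ is fatal.

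The paper avoids this by extracting only the \emph{dominant} scalar weight $\exp\bigl(\int_s^t\beta\,d\tau\bigr)$, $\beta(t,\xi)=d+\tfrac{d_t}{2d}+\tfrac{m}{2d}$, and setting $\mathcal{Q}_{\text{ell}}:=\exp\bigl(-\int_s^t\beta\bigr)E_{\text{ell}}^V$. Then $\mathcal{Q}_{\text{ell}}$ satisfies a Volterra equation whose kernel is
$H(t,\theta,\xi)\mathcal{R}_1(\theta,\xi)$ with
\[
H(t,\theta,\xi)=\exp\Bigl(\int_\theta^t\bigl(i\mathcal{D}+iF_0-\beta I\bigr)d\tau\Bigr)=\diag\Bigl(1,\ \exp\Bigl(\int_\theta^t\bigl(-2d-\tfrac{m}{d}\bigr)d\tau\Bigr)\Bigr),
\]
whose entries are bounded by $1$ (the second one tends to $0$). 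Now the Peano--Baker/Neumann series and Gronwall do apply, using only $\int|\mathcal{R}_1|<\infty$, and give a \emph{bounded} $\mathcal{Q}_{\text{ell}}$. The estimate for $E_{\text{ell}}^V$ is then obtained from $\exp\bigl(\int_s^t\beta\bigr)\le\frac{d(t,\xi)}{d(s,\xi)}\exp\bigl(\int_s^t d\bigr)$ (using $m\le\partial_t d$) and $d\approx\tfrac{g|\xi|^2}{2}$, which produce the prefactor $\frac{g(t)}{g(s)}$ and the exponential $\exp\bigl(\tfrac{|\xi|^2}{2}\int_s^t g\bigr)$. So the fix is a re-ordering of your two steps: do not conjugate by the full $E_0$; factor out only the scalar $\exp\bigl(\int\beta\bigr)$, so that the remaining diagonal kernel is bounded rather than unbounded.
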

\begin{proof}
We transform the system for $E_{\text{ell}}^{V}=E_{\text{ell}}^{V}(t,s,\xi)$ to an integral equation for a new matrix-valued function $\mathcal{Q}_{\text{ell}}=\mathcal{Q}_{\text{ell}}(t,s,\xi)$. Following the same idea as in the proof of Proposition \ref{Lem.Est.Ell.Zone}, we obtain that $E_{\text{ell}}^{V}=E_{\text{ell}}^{V}(t,s,\xi)$ satisfies the following integral equation:
\begin{align*}
E_{\text{ell}}^{V}(t,s,\xi) & = \exp\bigg\{ i\int_{s}^{t}\big( \mathcal{D}(\tau,\xi)+F_0(\tau,\xi) \big)d\tau \bigg\}E_{\text{ell}}^{V}(s,s,\xi)\\
& \quad + i\int_{s}^{t} \exp \bigg\{ i\int_{\theta}^{t}\big( \mathcal{D}(\tau,\xi)+F_0(\tau,\xi) \big)d\tau \bigg\}\mathcal{R}_1(\theta,\xi)E_{\text{ell}}^{V}(\theta,s,\xi)\,d\theta.
\end{align*}
We define
\[ \mathcal{Q}_{\text{ell}}(t,s,\xi):=\exp\bigg\{ -\int_{s}^{t}\beta(\tau,\xi)d\tau \bigg\} E_{\text{ell}}^{V}(t,s,\xi), \]
where $\beta=\beta(t,\xi)$ is chosen from the main entries of the diagonal matrix $i\mathcal{D}(t,\xi)+iF_0(t,\xi)$ as follows:
\[ \beta(t,\xi)=d(t,\xi)+\frac{d_t(t,\xi)}{2d(t,\xi)}+\frac{m(t,\xi)}{2d(t,\xi)}. \]
It satisfies the new integral equation
\begin{align*}
\mathcal{Q}_{\text{ell}}(t,s,\xi)=&\exp \bigg\{ \int_{s}^{t}\big( i\mathcal{D}(\tau,\xi)+iF_0(\tau,\xi)-\beta(\tau,\xi)I \big)d\tau \bigg\}\\
& \quad + \int_{s}^{t} \exp \bigg\{ \int_{\theta}^{t}\big( i\mathcal{D}(\tau,\xi)+iF_0(\tau,\xi)-\beta(\tau,\xi)I \big)d\tau \bigg\}\mathcal{R}_1(\theta,\xi)\mathcal{Q}_{\text{ell}}(\theta,s,\xi)\,d\theta.
\end{align*}
Using our conditions \textbf{(B1)} to \textbf{(B3)} and Remark \ref{RemarkIntegrableCase}, one may see that  $\mathcal{R}_1=\mathcal{R}_1(\theta,\xi)$ is uniformly integrable over the elliptic zone. It follows
\begin{align*}
H(t,s,\xi) & =\exp \bigg\{ \int_{s}^{t}\big( i\mathcal{D}(\tau,\xi)+iF_0(\tau,\xi)-\beta(\tau,\xi)I \big)d\tau \bigg\}\\
& = \diag \bigg( 1, \exp \bigg\{ \int_{s}^{t}\bigg( -2d(\tau,\xi)-\frac{m(\tau,\xi)}{d(\tau,\xi)} \bigg)d\tau \bigg\} \bigg)\rightarrow \left( \begin{array}{cc}
1 & 0 \\
0 & 0
\end{array} \right)
\end{align*}
as $t\rightarrow \infty$. Hence, the matrix $H=H(t,s,\xi)$ is uniformly bounded for $(s,\xi),(t,\xi)\in \Zell(N)$. So, the representation of $\mathcal{Q}_{\text{ell}}=\mathcal{Q}_{\text{ell}}(t,s,\xi)$ by a Neumann series gives
\begin{align*}
\mathcal{Q}_{\text{ell}}(t,s,\xi)=H(t,s,\xi)+&\sum_{k=1}^{\infty}i^k\int_{s}^{t}H(t,t_1,\xi)\mathcal{R}_1(t_1,\xi)\int_{s}^{t_1}H(t_1,t_2,\xi)\mathcal{R}_1(t_2,\xi)\cdots \int_{s}^{t_{k-1}}H(t_{k-1},t_k,\xi)\mathcal{R}_1(t_k,\xi)dt_k\cdots dt_2dt_1.
\end{align*}
Then, convergence of this series is obtained from the symbol estimates, since $\mathcal{R}_1=\mathcal{R}_1(t,\xi)$ is uniformly integrable over $\Zell(N)$. Hence, from the last considerations we may conclude
\begin{align*}
E_{\text{ell}}^{V}(t,s,\xi)&=\exp \bigg\{ \int_{s}^{t}\beta(\tau,\xi)d\tau \bigg\}\mathcal{Q}_{\text{ell}}(t,s,\xi)\\
& = \exp \bigg\{ \int_{s}^{t}\bigg( d(\tau,\xi)+\frac{\partial_\tau d(\tau,\xi)}{2d(\tau,\xi)}+\frac{m(\tau,\xi)}{2d(\tau,\xi)} \bigg)d\tau \bigg\}\mathcal{Q}_{\text{ell}}(t,s,\xi) \\
& \leq \frac{d(t,\xi)}{d(s,\xi)} \exp\bigg( \int_{s}^{t} d(\tau,\xi)\,d\tau \bigg)\mathcal{Q}_{\text{ell}}(t,s,\xi),
\end{align*}
where we used $m(t,\xi)\leq \partial_td(t,\xi)$ and $\mathcal{Q}_{\text{ell}}=\mathcal{Q}_{\text{ell}}(t,s,\xi)$ is a uniformly bounded matrix. Then, it follows
\begin{align*}
(|E_{\text{ell}}^{V}(t,s,\xi)|) & \lesssim \frac{g(t)}{g(s)} \exp\bigg( |\xi|^2\int_{s}^{t} \frac{g(\tau)}{2}d\tau \bigg)\left( \begin{array}{cc}
1 & 1 \\
1 & 1
\end{array} \right).
\end{align*}
This completes the proof.	
\end{proof}

Using the backward transformation we arrive at the following result.
\begin{corollary} \label{IntegrableCorZell}
In $\Zell(N)$ we have the following estimates for $(s,\xi), (t,\xi) \in \Zell(N)$ and $0 \leq s \leq t$:
\begin{align*}
\frac{g(t)}{2}|\xi|^{|\beta|}|\hat{u}(t,\xi)| & \lesssim \frac{g(t)}{g(s)} \Big( g(s)|\xi|^{|\beta|}|\hat{u}(s,\xi)| + |\xi|^{|\beta|-2}|\hat{u}_t(s,\xi)| \Big)\quad \mbox{for} \quad |\beta| \geq 2, \\
|\xi|^{|\beta|}|\hat{u}_t(t,\xi)| & \lesssim \frac{g(t)}{g(s)} \Big( g(s)|\xi|^{|\beta|+2}|\hat{u}(s,\xi)| + |\xi|^{|\beta|}|\hat{u}_t(s,\xi)| \Big) \quad \mbox{for} \quad |\beta| \geq 0.
\end{align*}
\end{corollary}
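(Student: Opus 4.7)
The plan is to derive the corollary from Proposition~\ref{Lem.Est.Ell.Zone.Integrable} by undoing the two transformations that led to the system for $V$. First, since the matrices $M$ and $N_1$ used in the diagonalization are uniformly bounded with uniformly bounded inverses on $\Zell(N)$, the estimate on the fundamental solution of the transformed operator transfers, after multiplication from left and right by these matrices, to the same-type bound for $V(t,\xi)=(d(t,\xi)v(t,\xi),D_t v(t,\xi))^{\mathrm{T}}$. Together with $V(t,\xi)=E_V(t,s,\xi)V(s,\xi)$, this gives
\begin{equation*}
d(t,\xi)|v(t,\xi)| + |v_t(t,\xi)| \lesssim \frac{g(t)}{g(s)}\exp\Big(\tfrac{|\xi|^2}{2}\int_s^t g(\tau)\,d\tau\Big)\big(d(s,\xi)|v(s,\xi)| + |v_t(s,\xi)|\big).
\end{equation*}

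Second, I would invert the exponential substitution introduced at the very beginning, namely $v(t,\xi)=\exp(\tfrac{|\xi|^2}{2}\int_0^t g(\tau)\,d\tau)\hat{u}(t,\xi)$. Differentiating produces the corrected identity
\begin{equation*}
v_t(t,\xi) = \exp\Big(\tfrac{|\xi|^2}{2}\int_0^t g(\tau)\,d\tau\Big)\Big(\hat{u}_t(t,\xi) + \tfrac{g(t)|\xi|^2}{2}\hat{u}(t,\xi)\Big).
\end{equation*}
Substituting these into the previous inequality, the exponentials $\exp(\tfrac{|\xi|^2}{2}\int_0^t g)$ on the left and $\exp(\tfrac{|\xi|^2}{2}\int_s^t g)\cdot\exp(\tfrac{|\xi|^2}{2}\int_0^s g)$ on the right telescope exactly, leaving no exponential factor. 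Using $d(t,\xi)\approx g(t)|\xi|^2/2$ from Remark~\ref{RemarkIntegrableCase} and multiplying through by $|\xi|^{|\beta|-2}$, one immediately arrives at the first claimed estimate (the additional $\tfrac{g(s)|\xi|^2}{2}\hat{u}(s,\xi)$ coming from the right-hand side merges with the $g(s)|\xi|^{|\beta|}|\hat{u}(s,\xi)|$ term).

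For the second estimate on $|\hat{u}_t(t,\xi)|$, the same substitution applied to $|v_t(t,\xi)|\lesssim\cdots$ yields a bound on $|\hat{u}_t(t,\xi)+\tfrac{g(t)|\xi|^2}{2}\hat{u}(t,\xi)|$. I would then use the triangle inequality to write
\begin{equation*}
|\hat{u}_t(t,\xi)| \leq \Big|\hat{u}_t(t,\xi) + \tfrac{g(t)|\xi|^2}{2}\hat{u}(t,\xi)\Big| + \tfrac{g(t)|\xi|^2}{2}|\hat{u}(t,\xi)|,
\end{equation*}
and absorb the last summand by applying the first estimate of the corollary. Multiplication by $|\xi|^{|\beta|}$ then delivers the second inequality. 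The main obstacle I anticipate is the careful bookkeeping of the correction term $\tfrac{g(t)|\xi|^2}{2}\hat{u}(t,\xi)$: it is precisely this term, produced by differentiating the exponential substitution, that makes the large factor $\exp(\tfrac{|\xi|^2}{2}\int_s^t g(\tau)\,d\tau)$ from Proposition~\ref{Lem.Est.Ell.Zone.Integrable} cancel against the backward exponential, turning an apparently diverging estimate into the polynomial bound stated in the corollary. Any sloppy identification of $v_t$ with $\hat{u}_t$ would destroy this cancellation.
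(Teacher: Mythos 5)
Your proposal is correct and follows exactly the route the paper takes (the paper compresses it into ``Using the backward transformation we arrive at the following result''). You correctly transfer the bound of Proposition~\ref{Lem.Est.Ell.Zone.Integrable} to $V(t,\xi)=(d(t,\xi)v,D_tv)^{\mathrm T}$ via the uniformly bounded matrices $M$, $N_1$ and their inverses, use $d(t,\xi)\approx g(t)|\xi|^2$ from Remark~\ref{RemarkIntegrableCase}, substitute $v=\exp\bigl(\tfrac{|\xi|^2}{2}\int_0^tg\bigr)\hat u$ so that the exponential $\exp\bigl(\tfrac{|\xi|^2}{2}\int_s^tg\bigr)\exp\bigl(\tfrac{|\xi|^2}{2}\int_0^sg\bigr)=\exp\bigl(\tfrac{|\xi|^2}{2}\int_0^tg\bigr)$ telescopes, and finish with the triangle inequality for the $\hat u_t$ bound. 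One small conceptual quibble with your closing remark: the cancellation of the exponentials is not produced by the correction term $\tfrac{g(t)|\xi|^2}{2}\hat u(t,\xi)$, but by the fact that $v$ and $v_t$ carry the \emph{same} exponential prefactor $\exp\bigl(\tfrac{|\xi|^2}{2}\int_0^tg\bigr)$; that prefactor would cancel even under the ``sloppy'' identification $v_t\leftrightarrow\exp(\cdots)\hat u_t$. The correction term matters only for obtaining the correct bound on $|\hat u_t(t,\xi)|$ via the triangle inequality, not for the cancellation itself. This does not affect the validity of your proof.
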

\subsubsection{Conclusions} \label{Section3.4}
From the statements of Corollaries \ref{IntegrableCorZhyp}, \ref{CorEstPDZoneIntegrable} and \ref{IntegrableCorZell} we derive our desired statements.

\noindent \textit{Case 1:} $t\leq t_{\xi_1}.$ Due to Corollary \ref{IntegrableCorZell} we have
\begin{align*}
\frac{g(t)}{2}|\xi|^{|\beta|}|\hat{u}(t,\xi)| & \lesssim g(t)\Big(|\xi|^{|\beta|}|\hat{u}_0(\xi)| + |\xi|^{|\beta|-2}|\hat{u}_1(\xi)| \Big), \\
|\xi|^{|\beta|}|\hat{u}_t(t,\xi)| &\lesssim g(t)\Big(|\xi|^{|\beta|+2}|\hat{u}_0(\xi)| + |\xi|^{|\beta|}|\hat{u}_1(\xi)| \Big).
\end{align*}
\noindent \textit{Case 2:} $t_{\xi_1}\leq t\leq t_{\xi_2}.$ In this case we apply Corollaries \ref{CorEstPDZoneIntegrable} and \ref{IntegrableCorZell} to get
\begin{align*}
|\xi|^{|\beta|}|\hat{u}(t,\xi)| & \leq |\xi|^{|\beta|}|\hat{u}(t_{\xi_1},\xi)| + |\xi|^{|\beta|-1}|\hat{u}_t(t_{\xi_1},\xi)|
\lesssim |\xi|^{|\beta|}|\hat{u}_0(\xi)| + |\xi|^{|\beta|-2}|\hat{u}_1(\xi)|, \\
|\xi|^{|\beta|}|\hat{u}_t(t,\xi)| &\leq |\xi|^{|\beta|+1}|\hat{u}(t_{\xi_1},\xi)| + |\xi|^{|\beta|}|\hat{u}_t(t_{\xi_1},\xi)|
\lesssim |\xi|^{|\beta|+1}|\hat{u}_0(\xi)| + |\xi|^{|\beta|-1}|\hat{u}_1(\xi)|.
\end{align*}
\noindent \textit{Case 3:} $t\geq t_{\xi_2}$. In this case we use Corollaries \ref{IntegrableCorZhyp}, \ref{CorEstPDZoneIntegrable} and \ref{IntegrableCorZell}. Then, it holds
\begin{align*}
|\xi|^{|\beta|}|\hat{u}(t,\xi)| & \leq \exp\bigg( -\frac{3}{8}|\xi|^2\int_{t_{\xi_2}}^tg(\tau)d\tau \bigg)\Big( |\xi|^{|\beta|}|\hat{u}(t_{\xi_2},\xi)| + |\xi|^{|\beta|-1}|\hat{u}_t(t_{\xi_2},\xi)| \Big) \\
& \leq\exp \bigg( -\frac{3}{8}|\xi|^2\int_{t_{\xi_2}}^tg(\tau)d\tau \bigg)\Big( |\xi|^{|\beta|}|\hat{u}(t_{\xi_1},\xi)| + |\xi|^{|\beta|-1}|\hat{u}_t(t_{\xi_1},\xi)| \Big) \\
& \lesssim \exp \bigg( -\frac{3}{8}|\xi|^2\int_{t_{\xi_2}}^tg(\tau)d\tau \bigg)\Big( |\xi|^{|\beta|}|\hat{u}_0(\xi)| + |\xi|^{|\beta|-2}|\hat{u}_1(\xi)| \Big), \\
|\xi|^{|\beta|}|\hat{u}_t(t,\xi)| &\leq \exp \bigg( -\frac{3}{8}|\xi|^2\int_{t_{\xi_2}}^tg(\tau)d\tau \bigg)\Big( |\xi|^{|\beta|+1}|\hat{u}(t_{\xi_2},\xi)| + |\xi|^{|\beta|}|\hat{u}_t(t_{\xi_2},\xi)| \Big) \\
& \lesssim \exp \bigg( -\frac{3}{8}|\xi|^2\int_{t_{\xi_2}}^tg(\tau)d\tau \bigg)\Big( |\xi|^{|\beta|+1}|\hat{u}(t_{\xi_1},\xi)| + |\xi|^{|\beta|}|\hat{u}_t(t_{\xi_1},\xi)| \Big) \\
& \lesssim \exp \bigg( -\frac{3}{8}|\xi|^2\int_{t_{\xi_2}}^tg(\tau)d\tau \bigg)\Big( |\xi|^{|\beta|+1}|\hat{u}_0(\xi)| + |\xi|^{|\beta|-1}|\hat{u}_1(\xi)| \Big).
\end{align*}
Consequently, the proof of Theorem \ref{TheoremIntegrableCase} is completed.
\end{proof}
%%%%%%%%%%%%%%%%%%%%%%%%%%%%%%%%%%%%%%%%%%%%%%%%%%%%%%%%%%%%%%%%%%%

\subsection{Super-decaying integrable time-dependent coefficient $g=g(t)$}
We assume that the coefficient $g=g(t)$ satisfies the following conditions:
\begin{enumerate}
\item[\textbf{(C1)}] $g(t)>0$ and $g'(t)<0$ for all $t \in [0,\infty)$,
\item[\textbf{(C2)}] $g \in L^1(0,\infty)$.
\end{enumerate}
Examples for this case are $g(t)=e^{-e^{t}}$ and $g(t)=e^{-e^{e^{t}}}$.
\begin{theorem} \label{TheoremIntegrableCaseSuper}
Let us consider the Cauchy problem
\begin{equation*}
\begin{cases}
u_{tt}- \Delta u + g(t)(-\Delta)u_t=0, &(t,x) \in (0,\infty) \times \mathbb{R}^n, \\
u(0,x)= u_0(x),\quad u_t(0,x)= u_1(x), &x \in \mathbb{R}^n.
\end{cases}
\end{equation*}
We assume that the coefficient $g=g(t)$ satisfies the conditions \textbf{(C1)} and \textbf{(C2)}, and $(u_0,u_1)\in \dot{H}^{|\beta|+\kappa+1} \times \dot{H}^{|\beta|+\kappa-1}$ with $|\beta|\geq 1$ and arbitrarily small $\kappa>0$. Then, we have the following estimates for Sobolev solutions:
\begin{align*}
\big\||D|^{|\beta|} u(t,\cdot)\big\|_{L^2} & \lesssim \|u_0\|_{\dot{H}^{|\beta|+\kappa+1}} + \|u_1\|_{\dot{H}^{|\beta|+\kappa-1}}, \\
\big\||D|^{|\beta|-1} u_t(t,\cdot)\big\|_{L^2} & \lesssim \|u_0\|_{\dot{H}^{|\beta|+\kappa+1}} + \|u_1\|_{\dot{H}^{|\beta|+\kappa-1}}.
\end{align*}
\end{theorem}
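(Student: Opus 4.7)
The plan is to imitate the three-zone argument of Theorem \ref{TheoremIntegrableCase}, with the same splitting of the extended phase space via $g(t)|\xi|$ into hyperbolic, reduced and elliptic zones. An inspection of Sections \ref{Section3.1} and \ref{Section3.3} shows that \textbf{(B3)} is not really used there: the hyperbolic estimate only needs $\int_s^t|g'(\tau)|\,d\tau=g(s)-g(t)\leq g(0)$ together with $g(\tau)|\xi|\leq\varepsilon$ on $[s,t]$, while the reduced-zone bound rests solely on the monotonicity of the classical energy $\mathcal{E}(t,\xi)=\tfrac{1}{2}\bigl(|\xi|^2|\hat u|^2+|\hat u_t|^2\bigr)$, which already follows from \textbf{(C1)}. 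Therefore I would start by reproducing Proposition \ref{LemEstHypZoneIntegrable} and Corollary \ref{CorEstPDZoneIntegrable} verbatim under the weaker hypotheses \textbf{(C1)}, \textbf{(C2)}.

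The genuinely new work concerns the elliptic zone $\Zell(N)$, since the iterative diagonalisation of Section \ref{Section3.2} exploited \textbf{(B3)} to secure $\mathcal{R}_1(t,\xi)\in L^1_t$ and to close the Neumann series. Without pointwise control of $g'$ and $g''$ by $g$ this iteration cannot be carried out. My strategy is to renounce the elliptic WKB construction inside $\Zell(N)$ and to use instead the same monotone energy identity that governed the reduced zone, which depends only on $g>0$. That identity transfers at once the pointwise bound
\[
|\xi|\,|\hat u(t_{\xi_1},\xi)|+|\hat u_t(t_{\xi_1},\xi)|\lesssim |\xi|\,|\hat u_0(\xi)|+|\hat u_1(\xi)|
\]
from $t=0$ up to the separating line $t_{\xi_1}$ between the elliptic and reduced zones. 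Then, pointwise in $\xi$, the three regimes are stitched together exactly as in Section \ref{Section3.4}: the energy identity on $[0,t_{\xi_1}]$, Corollary \ref{CorEstPDZoneIntegrable} on $[t_{\xi_1},t_{\xi_2}]$ and Corollary \ref{IntegrableCorZhyp} on $[t_{\xi_2},t]$, the last of which contributes only a harmless exponential damping factor.

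The extra $\kappa+1$ derivatives required from the data are the price paid for replacing the precise elliptic fundamental solution by the rough energy identity. A full derivative is lost because the energy identity controls only the pair $(|\xi|\hat u,\hat u_t)$ rather than the finer pair $(\gamma(t,\xi)\hat u,\hat u_t)$ naturally associated with the visco-elastic term, as already happens in Corollary \ref{IntegrableCorZhyp} for $|\beta|\geq 1$. The auxiliary $\kappa>0$ enters to make the zone-crossing estimates summable in $\xi$: inside $\Zell(N)$ the inequality $|\xi|\geq N/g(t)$ forces the weight $|\xi|^{-\kappa}\lesssim (g(t)/N)^{\kappa}$ to be uniformly bounded, so an extra factor $|\xi|^{\kappa}$ can safely be absorbed into the norms of the data. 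The main obstacle will be verifying that this rougher elliptic analysis matches the hyperbolic and reduced bounds at the separating lines $t_{\xi_1}$ and $t_{\xi_2}$ with constants uniform in $\xi$ and in $t$, without spoiling the exponential decay harvested in the hyperbolic zone and without secretly reintroducing any derivative information on $g$ that is not available under \textbf{(C1)}, \textbf{(C2)}.
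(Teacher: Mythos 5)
Your core idea is genuinely different from the paper's and is correct as far as it goes, but your accompanying explanation of where the extra $\kappa$ derivatives come from is wrong. The monotone energy identity $\tfrac{d}{dt}\mathcal{E}(t,\xi)=-g(t)|\xi|^2|\hat u_t(t,\xi)|^2\le 0$ holds for all $(t,\xi)\in(0,\infty)\times\mathbb{R}^n$ as soon as $g>0$; it is not a ``rough elliptic'' device restricted to $\Zell(N)$, and it does not have to be spliced to the hyperbolic and reduced estimates at the separating lines. Integrating $\mathcal{E}(t,\xi)\le\mathcal{E}(0,\xi)$ against $|\xi|^{2(|\beta|-1)}\,d\xi$ directly gives
\[
\big\||D|^{|\beta|}u(t,\cdot)\big\|_{L^2}+\big\||D|^{|\beta|-1}u_t(t,\cdot)\big\|_{L^2}\lesssim\|u_0\|_{\dot H^{|\beta|}}+\|u_1\|_{\dot H^{|\beta|-1}},
\]
with no zone decomposition and no $\kappa$. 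Your argument therefore does not ``pay'' $\kappa+1$ derivatives at all; the paragraph where you assert that the energy identity costs one derivative and that $\kappa$ is needed for summability in $\xi$ does not correspond to anything in your own construction (the zone-crossing estimates are pointwise in $\xi$, so there is no summability issue). Note also that your estimate has $\dot H^{|\beta|}\times\dot H^{|\beta|-1}$ on the right-hand side, which is not comparable to the theorem's $\dot H^{|\beta|+\kappa+1}\times\dot H^{|\beta|+\kappa-1}$ (homogeneous spaces are not nested), so strictly speaking you have proved a \emph{different} estimate, not the one stated in Theorem \ref{TheoremIntegrableCaseSuper}.

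The paper's route is genuinely the opposite of yours. It retains the elliptic WKB machinery inside $\Zell(N)$ but stops after a single step of diagonalisation; the antidiagonal remainder $\mathcal{R}_1=\antidiag\mathcal{R}$ is then bounded only by $-g'/g$, which need not be integrable. The Peano--Baker series (Proposition \ref{Lem.Est.Ell.Zone.IntegrableSuper}) converts $\int_s^t|\mathcal{R}_1|\,d\tau\le\tfrac{1}{\sqrt{1-4/N^2}}\log\tfrac{g(s)}{g(t)}$ into the factor $(g(s)/g(t))^{1/\sqrt{1-4/N^2}}$, and after the backward transformation the residual power $\kappa=1/\sqrt{1-4/N^2}-1$ is converted into $|\xi|^{\kappa+1}$ via $g(t)|\xi|\ge N$ on $\Zell(N)$. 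That is the true origin of the $\kappa$ loss, and it is intrinsic to the WKB route; it does not appear in your argument. What the paper's WKB route buys in exchange for the $\kappa$ is the ability to separate the contributions of $u_0$ and $u_1$ (the $|\xi|^{|\beta|-2}\hat u_1$ factor in Corollary \ref{IntegrableCorZellSuper}), consistent with the parallel structure of Theorem \ref{TheoremIntegrableCase}, whereas the crude energy identity ties $\hat u$ and $\hat u_t$ together with the fixed weight $|\xi|$. If you wish to present a proof of this theorem along your lines, you should either state the clean $\dot H^{|\beta|}\times\dot H^{|\beta|-1}$ estimate that the energy identity actually yields, or explain carefully why the theorem as stated follows from it under the paper's standing hypotheses; in its present form your justification of $\kappa$ is a non sequitur.
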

\begin{remark}
Theorem \ref{TheoremIntegrableCaseSuper} implies that we do not have any parabolic effect.
\end{remark}
\begin{proof}[Proof of Theorem \ref{TheoremIntegrableCaseSuper}]
$ $
\subsubsection{Considerations in the hyperbolic zone $\Zhyp(\varepsilon)$} \label{Section3.1Super}
We have the same statement from Proposition \ref{LemEstHypZoneIntegrable} in Subsection \ref{SectSubDecreasing}.
\begin{corollary} \label{IntegrableCorZhypSuper}
We have the following estimates for $s\leq t$, $(s,\xi), (t,\xi) \in \Zhyp(\varepsilon)$:
\begin{align*}
|\xi|^{|\beta|}|\hat{u}(t,\xi)| & \lesssim \exp\bigg( -\frac{3}{8}|\xi|^2\int_s^tg(\tau)d\tau \bigg)\Big( |\xi|^{|\beta|}|\hat{u}(s,\xi)| + |\xi|^{|\beta|-1}|\hat{u}_t(s,\xi)| \Big) \quad \mbox{for} \quad |\beta|\geq 1, \\
|\xi|^{|\beta|}|\hat{u}_t(t,\xi)| & \lesssim \exp\bigg( -\frac{3}{8}|\xi|^2\int_s^tg(\tau)d\tau \bigg)\Big( |\xi|^{|\beta|+1}|\hat{u}(s,\xi)| + |\xi|^{|\beta|}|\hat{u}_t(s,\xi)| \Big) \quad \mbox{for} \quad |\beta|\geq 0.
\end{align*}
\end{corollary}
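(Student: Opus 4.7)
The plan is to reduce this corollary directly to Proposition~\ref{LemEstHypZoneIntegrable} and then translate the fundamental-solution estimate into pointwise bounds for $\hat u$ and $\hat u_t$. I would first define the same micro-energy $U(t,\xi):=(|\xi|\hat{u}(t,\xi),D_t\hat{u}(t,\xi))^{\mathrm T}$ used in Section~\ref{Section3.1}, so that \eqref{modellinearviscoelasticdampedFourier} rewrites as the first-order system $D_tU=A(t,\xi)U$ with the same matrix $A(t,\xi)$ as before. A key observation is that this system matrix does not involve any derivatives of $g$ at all, so the two-step diagonalization procedure and the construction of the fundamental solution $E_U(t,s,\xi)$ go through verbatim.

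Next I would audit the proof of Proposition~\ref{LemEstHypZoneIntegrable} to confirm that condition \textbf{(B3)} was not essential once one stays inside $\Zhyp(\varepsilon)$. The coefficient $g'$ enters only via the off-diagonal entries of $B^{(1)}(t,\xi)$, and there it contributes the bounded term
\[
\tfrac{1}{4}\int_s^t(-g'(\tau))|\xi|\,d\tau=\tfrac{1}{4}|\xi|\bigl(g(s)-g(t)\bigr)\le\tfrac{\varepsilon}{4},
\]
using only the monotonicity \textbf{(C1)} and the hyperbolic-zone condition $g(s)|\xi|\le\varepsilon$; no pointwise bound on $g'$ is required. The dominant term $\tfrac{1}{8}\int_s^t g^2(\tau)|\xi|^3\,d\tau\le\tfrac{\varepsilon}{8}|\xi|^2\int_s^t g(\tau)\,d\tau$ is again controlled by the zone definition, and combined with the diagonal contribution $-\tfrac{1}{2}|\xi|^2\int_s^t g(\tau)\,d\tau$ from $E_0(t,s,\xi)$ yields the exponent $-\tfrac{4-\varepsilon}{8}|\xi|^2\int_s^t g(\tau)\,d\tau\le -\tfrac{3}{8}|\xi|^2\int_s^t g(\tau)\,d\tau$ for $\varepsilon$ sufficiently small. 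Thus Proposition~\ref{LemEstHypZoneIntegrable} applies unchanged under the weaker hypotheses \textbf{(C1)}--\textbf{(C2)}.

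Finally, the identity $U(t,\xi)=E_U(t,s,\xi)U(s,\xi)$ combined with the componentwise matrix bound
\[
\bigl(|E_U(t,s,\xi)|\bigr)\lesssim \exp\!\Big(-\tfrac{3}{8}|\xi|^2\!\int_s^t g(\tau)\,d\tau\Big)\begin{pmatrix}1 & 1\\ 1 & 1\end{pmatrix}
\]
yields
\[
|\xi|\,|\hat u(t,\xi)|+|\hat u_t(t,\xi)|\lesssim \exp\!\Big(-\tfrac{3}{8}|\xi|^2\!\int_s^t g(\tau)\,d\tau\Big)\bigl(|\xi|\,|\hat u(s,\xi)|+|\hat u_t(s,\xi)|\bigr).
\]
Multiplying the bound for $|\xi||\hat u(t,\xi)|$ by $|\xi|^{|\beta|-1}$, which requires $|\beta|\ge 1$, and the bound for $|\hat u_t(t,\xi)|$ by $|\xi|^{|\beta|}$, valid for $|\beta|\ge 0$, produces exactly the two inequalities stated in the corollary. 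The only genuine obstacle in this argument is the verification in the second paragraph that none of the symbolic-calculus estimates from the earlier proof silently relied on \textbf{(B3)}; once that point is settled, the rest is a mechanical transcription of the sub-decaying hyperbolic-zone analysis.
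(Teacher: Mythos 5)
Your proposal is correct and follows the same route as the paper, which simply invokes Proposition~\ref{LemEstHypZoneIntegrable} (see the remark at the start of Section~\ref{Section3.1Super}). Your additional step of auditing where $g'$ enters the hyperbolic-zone diagonalization and checking that the off-diagonal contribution $\tfrac{1}{4}\int_s^t(-g'(\tau))|\xi|\,d\tau = \tfrac{1}{4}|\xi|\bigl(g(s)-g(t)\bigr)\le \tfrac{\varepsilon}{4}$ is already controlled by monotonicity \textbf{(C1)} and the zone bound $g(s)|\xi|\le\varepsilon$, without invoking \textbf{(B3)}, is a genuine sharpening that the paper leaves implicit when it cites a proposition proved under the stronger hypotheses \textbf{(B1)}--\textbf{(B3)}.
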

\subsubsection{Considerations in the reduced zone $\Zred(\varepsilon,N)$} \label{Section3.3Super}
We have the same estimates from Corollary \ref{CorEstPDZoneIntegrable} in Subsection \ref{SectSubDecreasing}.
\begin{corollary} \label{CorEstPDZoneIntegrableSuper}
The following estimates hold in the reduced zone $\Zred(\varepsilon,N)$ with $(s,\xi), (t,\xi) \in \Zred(\varepsilon,N)$ and $s \leq t$:
\begin{align*}
|\xi|^{|\beta|}|\hat{u}(t,\xi)| &\lesssim |\xi|^{|\beta|}|\hat{u}(s,\xi)| + |\xi|^{|\beta|-1}|\hat{u}_t(s,\xi)|\,\,\,\mbox{for}\,\,\,|\beta| \geq 1, \\
|\xi|^{|\beta|}|\hat{u}_t(t,\xi)| &\lesssim |\xi|^{|\beta|+1}|\hat{u}(s,\xi)| + |\xi|^{|\beta|}|\hat{u}_t(s,\xi)| \,\,\,\mbox{for}\,\,\,|\beta| \geq 0.
\end{align*}
\end{corollary}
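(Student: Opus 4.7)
The plan is to repeat verbatim the energy-monotonicity argument used for Corollary \ref{CorEstPDZoneIntegrable}, since that proof never exploits the specific conditions \textbf{(B1)}--\textbf{(B3)} nor any bound on $g(t)|\xi|$; it uses only the positivity of $g$ together with the structure of the damping term. Hypotheses \textbf{(C1)}--\textbf{(C2)} furnish exactly what is needed, so the restriction $(s,\xi),(t,\xi)\in\Zred(\varepsilon,N)$ plays no role in the estimate itself and enters only later when one glues the reduced zone to the hyperbolic and elliptic zones.

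Concretely, I would proceed as follows. First, return to the Fourier-image equation $\hat{u}_{tt}+|\xi|^2\hat{u}+g(t)|\xi|^2\hat{u}_t=0$ and introduce the standard (hyperbolic) energy
\[
\mathcal{E}(t,\xi) := \tfrac{1}{2}\bigl(|\xi|^2|\hat{u}(t,\xi)|^2+|\hat{u}_t(t,\xi)|^2\bigr).
\]
Differentiating in $t$ and substituting $\hat{u}_{tt}=-|\xi|^2\hat{u}-g(t)|\xi|^2\hat{u}_t$, the mixed terms cancel in real part and one obtains
\[
\tfrac{d}{dt}\mathcal{E}(t,\xi) = -g(t)|\xi|^2|\hat{u}_t(t,\xi)|^2 \le 0,
\]
since $g(t)>0$ by \textbf{(C1)}. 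Therefore $\mathcal{E}(\cdot,\xi)$ is non-increasing, so $\mathcal{E}(t,\xi)\le \mathcal{E}(s,\xi)$ whenever $s\le t$.

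From $|\xi||\hat{u}(t,\xi)|\le\sqrt{2\mathcal{E}(s,\xi)}\le |\xi||\hat{u}(s,\xi)|+|\hat{u}_t(s,\xi)|$ and the analogous bound for $|\hat{u}_t(t,\xi)|$, multiplying through by $|\xi|^{|\beta|-1}$ (respectively $|\xi|^{|\beta|}$) yields precisely the two inequalities asserted in the corollary. I do not expect any genuine obstacle: the argument is a direct one-line energy estimate, the only substantive observation being that conditions \textbf{(B3)} (bounds on $g'$ and $g''$) are never needed here, so the weaker assumptions \textbf{(C1)}--\textbf{(C2)} of the super-decaying setting are already sufficient to reproduce the reduced-zone statement.
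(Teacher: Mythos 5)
Your proposal is correct and is essentially the paper's own argument: the paper simply refers back to Corollary \ref{CorEstPDZoneIntegrable}, whose proof is exactly the energy-monotonicity computation $\frac{d}{dt}\mathcal{E}(t,\xi)=-g(t)|\xi|^2|\hat{u}_t(t,\xi)|^2\le 0$ that you reproduce, and you correctly observe that only the positivity of $g$ from \textbf{(C1)} is needed.
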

\subsubsection{Considerations in the elliptic zone $\Zell(N)$} \label{Section3.2Super}
Let us write the equation (\ref{modellinearauxiliary}) in the following form:
\begin{equation*} \label{modellinearauxiliary111Super}
D_t^2 v + \bigg( \underbrace{\dfrac{g(t)^2}{4}|\xi|^4-|\xi|^2}_{=:d^2(t,\xi)} \bigg)v + \underbrace{\frac{g'(t)}{2}|\xi|^2}_{=:m(t,\xi)}v=0.
\end{equation*}
\begin{remark} \label{RemarkIntegrableCaseSuper}
We have the following inequalities with sufficiently large $N$:
\begin{align*} \label{EqEstimatesDerivativeIntegrableSuper}
d^2(t,\xi) \leq \frac{1}{4}g^2(t)|\xi|^4 \qquad \text{and} \qquad d^2(t,\xi)\geq \Big( \frac{1}{4}-\frac{1}{N^2} \Big)g^2(t)|\xi|^4.
\end{align*}
Therefore, we get $d(t,\xi)\approx g(t)|\xi|^2$. Furthermore, it holds
\begin{align*}
|d_t(t,\xi)| =  \bigg| \frac{1}{4}\frac{g'(t)g(t)|\xi|^4}{\sqrt{\frac{g^2(t)}{4}|\xi|^4-|\xi|^2}} \bigg| \leq  -\frac{1}{2\sqrt{1-\frac{4}{N^2}}}g'(t)|\xi|^2.
\end{align*}
Finally, we have
\begin{align*}
-\frac{g'(t)}{g(t)}d(t,\xi) \leq -m(t,\xi) \leq -\frac{1}{\sqrt{1-\frac{4}{N^2}}}\frac{g'(t)}{g(t)}d(t,\xi).
\end{align*}
\end{remark}
We introduce the micro-energy
\[ V=V(t,\xi):=\big( d(t,\xi)v,D_t v \big)^{\text{T}} \qquad \mbox{with} \qquad d(t,\xi) := \sqrt{\frac{g^2(t)}{4}|\xi|^4-|\xi|^2}. \]
Thus, we have to apply tools from elliptic WKB-analysis as we did in Subsection \ref{SectSubDecreasing} with only one step of diagonalization procedure. Transforming \eqref{modellinearauxiliary111} to a system of first order for $V=V(t,\xi)$  gives
\begin{equation*}
D_tV=\left( \begin{array}{cc}
0 & d(t,\xi) \\
-d(t,\xi) & 0
\end{array} \right)V + \left( \begin{array}{cc}
\dfrac{D_td(t,\xi)}{d(t,\xi)} & 0 \\
-\dfrac{m(t,\xi)}{d(t,\xi)} & 0
\end{array} \right)V.
\end{equation*}
Using $V=MV^{(0)}$, $M=\begin{pmatrix} i & 1 \\ -i & 1\end{pmatrix}$, then after the first step of diagonalization we obtain
\[ D_tV^{(0)} = \big( \mathcal{D}(t,\xi) + \mathcal{R}(t,\xi) \big)V^{(0)}, \]
where
\begin{align*}
\mathcal{D}(t,\xi) &= \left( \begin{array}{cc}
-id(t,\xi) & 0 \\
0 & id(t,\xi)
\end{array} \right) \qquad \mbox{and} \qquad \mathcal{R}(t,\xi) = \frac{1}{2} \left( \begin{array}{cc}
\dfrac{D_td(t,\xi)}{d(t,\xi)}-i\dfrac{m(t,\xi)}{d(t,\xi)} & -\dfrac{D_td(t,\xi)}{d(t,\xi)}+i\dfrac{m(t,\xi)}{d(t,\xi)} \\
-\dfrac{D_td(t,\xi)}{d(t,\xi)}-i\dfrac{m(t,\xi)}{d(t,\xi)} & \dfrac{D_td(t,\xi)}{d(t,\xi)}+i\dfrac{m(t,\xi)}{d(t,\xi)}
\end{array} \right).
\end{align*}
From Remark \ref{RemarkIntegrableCaseSuper} we find the estimates
\begin{align} \label{EstimateRemaindersSuper}
\Big|\frac{D_td(t,\xi)}{d(t,\xi)}\Big| \leq -\frac{1}{\sqrt{1-\frac{4}{N^2}}}\frac{g'(t)}{g(t)} \qquad \mbox{and} \qquad -\frac{g'(t)}{g(t)} \leq -\frac{m(t,\xi)}{d(t,\xi)} \leq -\frac{1}{\sqrt{1-\frac{4}{N^2}}}\frac{g'(t)}{g(t)}.
\end{align}
Let us introduce $F_0(t,\xi):=\diag\mathcal{R}(t,\xi)$ and $\mathcal{R}_1(t,\xi):=\antidiag\mathcal{R}(t,\xi)$.\medskip

\noindent\textbf{Step 2.} \emph{Construction of the fundamental solution}
\begin{proposition} \label{Lem.Est.Ell.Zone.IntegrableSuper}
The fundamental solution $E_{\text{ell}}^{V}=E_{\text{ell}}^{V}(t,s,\xi)$ to the transformed operator
\[ D_t-\mathcal{D}(t,\xi)-F_0(t,\xi)-\mathcal{R}_1(t,\xi) \]
can be estimated by
\begin{equation*}
(|E_{\text{ell}}^{V}(t,s,\xi)|) \lesssim \Big( \frac{g(s)}{g(t)} \Big)^{\frac{1}{\sqrt{1-\frac{4}{N^2}}}-1}\exp\bigg( \frac{|\xi|^2}{2}\int_{s}^{t}g(\tau)d\tau \bigg)
\left( \begin{array}{cc}
1 & 1 \\
1 & 1
\end{array} \right),
\end{equation*}
with $s \leq t$ and $(t,\xi),(s,\xi)\in \Zell(N)$.
\end{proposition}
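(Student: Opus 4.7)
The plan is to follow the scheme of Proposition \ref{Lem.Est.Ell.Zone.Integrable}, but to take into account that conditions \textbf{(C1)}--\textbf{(C2)} do not impose any bound on $g''$, so we perform only one step of diagonalization and cannot expect $\mathcal{R}_1(t,\xi)$ to be uniformly integrable over $\Zell(N)$. Instead, the estimates \eqref{EstimateRemaindersSuper} yield only the weaker pointwise bound $|\mathcal{R}_1(t,\xi)| \lesssim -g'(t)/g(t)$. The main point is that, although $\int_s^t |\mathcal{R}_1(\tau,\xi)|\,d\tau$ is not uniformly bounded, it grows only logarithmically in $g(s)/g(t)$, and a Gronwall-type argument then produces exactly the extra factor $(g(s)/g(t))^{1/\sqrt{1-4/N^2}-1}$ appearing in the statement.

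Following the approach of Proposition \ref{Lem.Est.Ell.Zone.Integrable}, I would introduce the dominant phase $\beta(t,\xi) := d(t,\xi) + \dfrac{d_t(t,\xi)}{2d(t,\xi)} + \dfrac{m(t,\xi)}{2d(t,\xi)}$ and set $\mathcal{Q}_{\text{ell}}(t,s,\xi) := \exp\bigl(-\int_s^t \beta(\tau,\xi)\,d\tau\bigr)E_{\text{ell}}^V(t,s,\xi)$. Rewriting $D_t E_{\text{ell}}^V = (\mathcal{D} + F_0 + \mathcal{R}_1)E_{\text{ell}}^V$ in integral form then leads to
\[
\mathcal{Q}_{\text{ell}}(t,s,\xi) = H(t,s,\xi) + i\int_s^t H(t,\theta,\xi)\mathcal{R}_1(\theta,\xi)\mathcal{Q}_{\text{ell}}(\theta,s,\xi)\,d\theta,
\]
where $H(t,s,\xi) = \diag\bigl(1,\exp\bigl(-\int_s^t(2d(\tau,\xi)+m(\tau,\xi)/d(\tau,\xi))\,d\tau\bigr)\bigr)$; since $d>0$ and $-m \geq 0$ on $\Zell(N)$, the matrix $H$ is uniformly bounded by $1$.

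The hard step is the Gronwall estimate, which is where the failure of integrability manifests itself. Using \eqref{EstimateRemaindersSuper} we have $\int_s^t |\mathcal{R}_1(\tau,\xi)|\,d\tau \lesssim (1/\sqrt{1-4/N^2})\log(g(s)/g(t))$, which is not uniformly bounded but grows only logarithmically. Applying Lemma \ref{Appendix.Gronwall} to $|\mathcal{Q}_{\text{ell}}|$ then yields $|\mathcal{Q}_{\text{ell}}(t,s,\xi)| \lesssim (g(s)/g(t))^{1/\sqrt{1-4/N^2}}$. To conclude, I would estimate the dominant phase using Remark \ref{RemarkIntegrableCaseSuper}: from $d(t,\xi) \approx g(t)|\xi|^2/2$ together with $m/d \leq g'/g$ one gets $\sqrt{d(t,\xi)/d(s,\xi)}\exp\bigl(\tfrac{1}{2}\int_s^t (m/d)(\tau,\xi)\,d\tau\bigr) \lesssim g(t)/g(s)$ and $\exp\bigl(\int_s^t d(\tau,\xi)\,d\tau\bigr) \leq \exp\bigl(\tfrac{|\xi|^2}{2}\int_s^t g(\tau)\,d\tau\bigr)$, so that $\exp\bigl(\int_s^t \beta(\tau,\xi)\,d\tau\bigr) \lesssim (g(t)/g(s))\exp\bigl(\tfrac{|\xi|^2}{2}\int_s^t g(\tau)\,d\tau\bigr)$. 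Multiplying this by the Gronwall bound on $\mathcal{Q}_{\text{ell}}$ produces precisely the claimed estimate with the exponent $1/\sqrt{1-4/N^2}-1$.
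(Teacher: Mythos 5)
Your proposal is correct and matches the paper's proof in all essentials: the same weight $\beta(t,\xi)=d+d_t/(2d)+m/(2d)$, the same normalization $\mathcal{Q}_{\text{ell}}=\exp(-\int_s^t\beta)\,E_{\text{ell}}^V$, the same use of \eqref{EstimateRemaindersSuper} to bound $\int_s^t|\mathcal{R}_1|$ by $\frac{1}{\sqrt{1-4/N^2}}\log\frac{g(s)}{g(t)}$, and the same final factorization combining $d\approx g|\xi|^2/2$ with $m/d\leq g'/g$. The only cosmetic difference is that you invoke Lemma \ref{Appendix.Gronwall} to bound $\mathcal{Q}_{\text{ell}}$, while the paper bounds the Peano--Baker series directly by $\exp\bigl(\int_s^t|\mathcal{R}_1|\,d\tau\bigr)$; these are equivalent.
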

\begin{proof}
We transform the system for $E_{\text{ell}}^{V}=E_{\text{ell}}^{V}(t,s,\xi)$ to an integral equation for a new matrix-valued function $\mathcal{Q}_{\text{ell}}=\mathcal{Q}_{\text{ell}}(t,s,\xi)$ as in the proof of Proposition \ref{Lem.Est.Ell.Zone}. We define
\[ \mathcal{Q}_{\text{ell}}(t,s,\xi):=\exp\bigg\{ -\int_{s}^{t}\beta(\tau,\xi)d\tau \bigg\} E_{\text{ell}}^{V}(t,s,\xi), \]
where $\beta=\beta(t,\xi)$ is chosen from the main entries of the diagonal matrix $i\mathcal{D}(t,\xi)+iF_0(t,\xi)$ as follows:
\[ \beta(t,\xi)=d(t,\xi)+\frac{d_t(t,\xi)}{2d(t,\xi)}+\frac{m(t,\xi)}{2d(t,\xi)}. \]
The standard construction of $\mathcal{Q}_{\text{ell}}=\mathcal{Q}_{\text{ell}}(t,s,\xi)$ in terms of a Peano-Baker series implies
\begin{align} \label{EstimateQSuperExponential}
|\mathcal{Q}_{\text{ell}}(t,s,\xi)| \leq \exp\bigg\{ \int_s^t|\mathcal{R}_1(\tau,\xi)|d\tau  \bigg\} & \leq \exp\bigg\{ \int_s^t \bigg( \Big|\frac{D_\tau d(\tau,\xi)}{2d(\tau,\xi)} \Big|+\Big|\frac{m(\tau,\xi)}{2d(\tau,\xi)}\Big| \bigg)d\tau \bigg\} \nonumber \\
& \leq \exp\bigg\{ \int_s^t \bigg( -\frac{1}{\sqrt{1-\frac{4}{N^2}}}\frac{g'(\tau)}{g(\tau)} \bigg)d\tau \bigg\}\leq \Big(\frac{g(s)}{g(t)}\Big)^{\frac{1}{\sqrt{1-\frac{4}{N^2}}}},
\end{align}
where we used estimates in \eqref{EstimateRemaindersSuper}. Hence, from the last considerations we may conclude
\begin{align*}
E_{\text{ell}}^{V}(t,s,\xi)&=\exp \bigg\{ \int_{s}^{t}\beta(\tau,\xi)d\tau \bigg\}\mathcal{Q}_{\text{ell}}(t,s,\xi) \\
& = \exp \bigg\{ \int_{s}^{t}\bigg( d(\tau,\xi)+\frac{\partial_\tau d(\tau,\xi)}{2d(\tau,\xi)}+\frac{m(\tau,\xi)}{2d(\tau,\xi)} \bigg)d\tau \bigg\}\mathcal{Q}_{\text{ell}}(t,s,\xi) \\
& \leq  \exp \bigg\{ \int_{s}^{t}\bigg( d(\tau,\xi)+\frac{\partial_\tau d(\tau,\xi)}{2d(\tau,\xi)}+ \frac{g'(\tau)}{2g(\tau)} \bigg)d\tau \bigg\}\mathcal{Q}_{\text{ell}}(t,s,\xi) \\
& \leq \sqrt{\frac{d(t,\xi)g(t)}{d(s,\xi)g(s)}} \exp \bigg( \int_{s}^{t}d(\tau,\xi)d\tau \bigg)\mathcal{Q}_{\text{ell}}(t,s,\xi).
\end{align*}
Then, using the estimate of $\mathcal{Q}_{\text{ell}}=\mathcal{Q}_{\text{ell}}(t,s,\xi)$ from \eqref{EstimateQSuperExponential}, it follows
\begin{align*}
(|E_{\text{ell}}^{V}(t,s,\xi)|) & \lesssim \frac{g(t)}{g(s)}\exp \bigg( |\xi|^2\int_{s}^{t} \frac{g(\tau)}{2}d\tau \bigg) \left( \begin{array}{cc}
1 & 1 \\
1 & 1
\end{array} \right)|\mathcal{Q}_{\text{ell}}(t,s,\xi)| \\
& \lesssim \Big( \frac{g(s)}{g(t)} \Big)^{\frac{1}{\sqrt{1-\frac{4}{N^2}}}-1}\exp \bigg( |\xi|^2\int_{s}^{t} \frac{g(\tau)}{2}d\tau \bigg)\left( \begin{array}{cc}
1 & 1 \\
1 & 1
\end{array} \right).
\end{align*}
This completes the proof.
\end{proof}
Using the backward transformation we arrive at the following result.
\begin{corollary} \label{IntegrableCorZellSuper}
In $\Zell(N)$ we have the following estimates for $(s,\xi), (t,\xi) \in \Zell(N)$ and $0 \leq s \leq t$:
\begin{align*}
\frac{g(t)}{2}|\xi|^{|\beta|}|\hat{u}(t,\xi)| & \lesssim \Big( \frac{g(s)}{g(t)} \Big)^\kappa \Big( g(s)|\xi|^{|\beta|}|\hat{u}(s,\xi)| + |\xi|^{|\beta|-2}|\hat{u}_t(s,\xi)| \Big) \quad \mbox{for} \quad |\beta| \geq 2, \\
|\xi|^{|\beta|}|\hat{u}_t(t,\xi)| & \lesssim \Big( \frac{g(s)}{g(t)} \Big)^\kappa \Big( g(s)|\xi|^{|\beta|+2}|\hat{u}(s,\xi)| + |\xi|^{|\beta|}|\hat{u}_t(s,\xi)| \Big) \quad \mbox{for} \quad |\beta| \geq 0,
\end{align*}
where $\kappa=\frac{1}{\sqrt{1-\frac{4}{N^2}}}-1$ is an arbitrarily small exponent for arbitrarily large $N=N(\kappa)$ for all $\kappa>0$.
\end{corollary}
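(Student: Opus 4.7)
The plan is to translate the componentwise bound for the fundamental solution $E^V_{\text{ell}}$ obtained in Proposition \ref{Lem.Est.Ell.Zone.IntegrableSuper} into bounds on $\hat u$ and $\hat u_t$ through two backward transformations: first from the diagonalized system back to the micro-energy $V=(d(t,\xi)v,D_tv)^{\text{T}}$, and second from the auxiliary function $v$ back to $\hat u$ via $v(t,\xi)=\exp\big(\tfrac{|\xi|^2}{2}\int_0^t g(\tau)\,d\tau\big)\hat u(t,\xi)$.

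First I would write the relation $V(t,\xi)=E^V_{\text{ell}}(t,s,\xi)V(s,\xi)$ for $t_{\xi_1}\geq t\geq s$ in the elliptic zone, and apply Proposition \ref{Lem.Est.Ell.Zone.IntegrableSuper} componentwise to obtain
\begin{align*}
d(t,\xi)|v(t,\xi)| &\lesssim \Big(\tfrac{g(s)}{g(t)}\Big)^\kappa \exp\Big(\tfrac{|\xi|^2}{2}\int_s^t g(\tau)d\tau\Big)\bigl(d(s,\xi)|v(s,\xi)|+|v_t(s,\xi)|\bigr),\\
|v_t(t,\xi)| &\lesssim \Big(\tfrac{g(s)}{g(t)}\Big)^\kappa \exp\Big(\tfrac{|\xi|^2}{2}\int_s^t g(\tau)d\tau\Big)\bigl(d(s,\xi)|v(s,\xi)|+|v_t(s,\xi)|\bigr).
\end{align*}
Then I would substitute $|v(t,\xi)|=e^{(|\xi|^2/2)\int_0^t g\,d\tau}|\hat u(t,\xi)|$ and, differentiating, $v_t(t,\xi)=e^{(|\xi|^2/2)\int_0^t g\,d\tau}\bigl(\tfrac{g(t)|\xi|^2}{2}\hat u(t,\xi)+\hat u_t(t,\xi)\bigr)$, both at time $t$ and at time $s$. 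Since $\int_0^t g\,d\tau=\int_0^s g\,d\tau+\int_s^t g\,d\tau$, the exponentials on the right cancel the exponential from the proposition against the exponential on the left, so the estimate is purely polynomial in $|\xi|$ and $g$ after this step.

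Next, using $d(t,\xi)\approx \tfrac{g(t)}{2}|\xi|^2$ from Remark \ref{RemarkIntegrableCaseSuper}, the first inequality simplifies to
\[
\tfrac{g(t)}{2}|\xi|^2|\hat u(t,\xi)|\lesssim \Big(\tfrac{g(s)}{g(t)}\Big)^\kappa \Bigl(g(s)|\xi|^2|\hat u(s,\xi)|+\bigl|\tfrac{g(s)|\xi|^2}{2}\hat u(s,\xi)+\hat u_t(s,\xi)\bigr|\Bigr)\lesssim \Big(\tfrac{g(s)}{g(t)}\Big)^\kappa \bigl(g(s)|\xi|^2|\hat u(s,\xi)|+|\hat u_t(s,\xi)|\bigr),
\]
and multiplying both sides by $|\xi|^{|\beta|-2}$ gives the first claimed bound for $|\beta|\geq 2$. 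For the second claim, the bound for $|v_t(t,\xi)|$ combined with the backward transformation produces
\[
\bigl|\tfrac{g(t)|\xi|^2}{2}\hat u(t,\xi)+\hat u_t(t,\xi)\bigr|\lesssim \Big(\tfrac{g(s)}{g(t)}\Big)^\kappa \bigl(g(s)|\xi|^2|\hat u(s,\xi)|+|\hat u_t(s,\xi)|\bigr),
\]
and the triangle inequality together with the already established bound on $g(t)|\xi|^2|\hat u(t,\xi)|$ allows extracting $|\hat u_t(t,\xi)|$. Multiplying by $|\xi|^{|\beta|}$ yields the second estimate for $|\beta|\geq 0$.

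I do not expect a genuine obstacle here, since the real analytic work sits in Proposition \ref{Lem.Est.Ell.Zone.IntegrableSuper}; the plan is pure bookkeeping. The one place where attention is required is the cancellation of the three exponential factors $e^{(|\xi|^2/2)\int_0^t g}$, $e^{(|\xi|^2/2)\int_0^s g}$ and $e^{(|\xi|^2/2)\int_s^t g}$ — this cancellation is the reason no exponential growth/decay in $|\xi|$ appears in the final estimate, and it must be checked with care. The factor $(g(s)/g(t))^\kappa$, arising from the non-integrability of $g'/g$ in the absence of a condition like \textbf{(B3)}, is simply inherited from the proposition and makes no further problems once $\kappa=\frac{1}{\sqrt{1-4/N^2}}-1$ is recorded as a function of the zone constant $N$.
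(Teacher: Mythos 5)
Your proposal is correct and follows the same route the paper takes: the paper's proof is simply the one line ``Using the backward transformation we arrive at the following result,'' and your write-up just spells out that bookkeeping — invoking Proposition \ref{Lem.Est.Ell.Zone.IntegrableSuper}, applying the uniformly bounded change of basis $V=MV^{(0)}$ and the substitution $v(t,\xi)=\exp\big(\tfrac{|\xi|^2}{2}\int_0^t g\,d\tau\big)\hat u(t,\xi)$, cancelling $\exp\big(\tfrac{|\xi|^2}{2}\int_0^t g\big)=\exp\big(\tfrac{|\xi|^2}{2}\int_0^s g\big)\exp\big(\tfrac{|\xi|^2}{2}\int_s^t g\big)$, using $d(t,\xi)\approx\tfrac{g(t)}{2}|\xi|^2$ from Remark \ref{RemarkIntegrableCaseSuper}, and extracting $|\hat u_t|$ by the triangle inequality. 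No gap.
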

\subsubsection{Conclusion} \label{Section3.4Super}
From the statements of Corollaries \ref{IntegrableCorZhypSuper}, \ref{CorEstPDZoneIntegrableSuper} and \ref{IntegrableCorZellSuper} we derive our desired statements.

\noindent \textit{Case 1:} $t\leq t_{\xi_1}.$ Due to Corollary \ref{IntegrableCorZellSuper} and $g(t_{\xi_1})|\xi|=N$, we have
\begin{align*}
|\xi|^{|\beta|}|\hat{u}(t,\xi)| & \lesssim \frac{1}{g(t)^{\kappa+1}}\Big( |\xi|^{|\beta|}|\hat{u}_0(\xi)| + |\xi|^{|\beta|-2}|\hat{u}_1(\xi)| \Big) \lesssim |\xi|^{|\beta|+\kappa+1}|\hat{u}_0(\xi)| + |\xi|^{|\beta|+\kappa-1}|\hat{u}_1(\xi)|, \\
|\xi|^{|\beta|}|\hat{u}_t(t,\xi)| &\lesssim \frac{1}{g(t)^{\kappa}}\Big( |\xi|^{|\beta|+2}|\hat{u}_0(\xi)| + |\xi|^{|\beta|}|\hat{u}_1(\xi)| \Big) \lesssim |\xi|^{|\beta|+\kappa+2}|\hat{u}_0(\xi)| + |\xi|^{|\beta|+\kappa}|\hat{u}_1(\xi)|.
\end{align*}
\noindent \textit{Case 2:} $t_{\xi_1}\leq t\leq t_{\xi_2}.$ In this case we apply Corollary \ref{CorEstPDZoneIntegrableSuper} and use $g(t_{\xi_1})|\xi|=N$. Then, we get
\begin{align*}
|\xi|^{|\beta|}|\hat{u}(t,\xi)| & \leq |\xi|^{|\beta|}|\hat{u}(t_{\xi_1},\xi)| + |\xi|^{|\beta|-1}|\hat{u}_t(t_{\xi_1},\xi)| \lesssim |\xi|^{|\beta|+\kappa+1}|\hat{u}_0(\xi)| + |\xi|^{|\beta|+\kappa-1}|\hat{u}_1(\xi)|, \\
|\xi|^{|\beta|}|\hat{u}_t(t,\xi)| &\leq |\xi|^{|\beta|+1}|\hat{u}(t_{\xi_1},\xi)| + |\xi|^{|\beta|}|\hat{u}_t(t_{\xi_1},\xi)| \lesssim |\xi|^{|\beta|+\kappa+2}|\hat{u}_0(\xi)| + |\xi|^{|\beta|+\kappa}|\hat{u}_1(\xi)|.
\end{align*}
\noindent \textit{Case 3:} $t\geq t_{\xi_2}$. In this case we apply Corollary \ref{IntegrableCorZhypSuper}. It holds
\begin{align*}
|\xi|^{|\beta|}|\hat{u}(t,\xi)| & \leq \exp\bigg( -\frac{3}{8}|\xi|^2\int_{t_{\xi_2}}^tg(\tau)d\tau \bigg)\Big( |\xi|^{|\beta|}|\hat{u}(t_{\xi_2},\xi)| + |\xi|^{|\beta|-1}|\hat{u}_t(t_{\xi_2},\xi)| \Big) \\
& \leq\exp\bigg( -\frac{3}{8}|\xi|^2\int_{t_{\xi_2}}^tg(\tau)d\tau \bigg)\Big( |\xi|^{|\beta|}|\hat{u}(t_{\xi_1},\xi)| + |\xi|^{|\beta|-1}|\hat{u}_t(t_{\xi_1},\xi)| \Big) \\
& \leq \exp\bigg( -\frac{3}{8}|\xi|^2\int_{t_{\xi_2}}^tg(\tau)d\tau \bigg)\Big( |\xi|^{|\beta|+\kappa+1}|\hat{u}_0(\xi)| + |\xi|^{|\beta|+\kappa-1}|\hat{u}_1(\xi)| \Big), \\
|\xi|^{|\beta|}|\hat{u}_t(t,\xi)| &\leq \exp\bigg( -\frac{3}{8}|\xi|^2\int_{t_{\xi_2}}^tg(\tau)d\tau \bigg)\Big( |\xi|^{|\beta|+1}|\hat{u}(t_{\xi_2},\xi)| + |\xi|^{|\beta|}|\hat{u}_t(t_{\xi_2},\xi)| \Big) \\
& \lesssim \exp\bigg( -\frac{3}{8}|\xi|^2\int_{t_{\xi_2}}^tg(\tau)d\tau \bigg)\Big( |\xi|^{|\beta|+1}|\hat{u}(t_{\xi_1},\xi)| + |\xi|^{|\beta|}|\hat{u}_t(t_{\xi_1},\xi)| \Big) \\
& \lesssim \exp\bigg( -\frac{3}{8}|\xi|^2\int_{t_{\xi_2}}^tg(\tau)d\tau \bigg)\Big( |\xi|^{|\beta|+\kappa+2}|\hat{u}_0(\xi)| + |\xi|^{|\beta|+\kappa}|\hat{u}_1(\xi)| \Big).
\end{align*}
Thus, the proof of Theorem \ref{TheoremIntegrableCaseSuper} is completed.
\end{proof}

%%%%%%%%%%%%%%%%%%%%%%%%%%%%%%%%%%%%%%%%%%%%    NON-INTEGRABLE   %%%%%%%%%%%%%%%%%%%%%%%%%%%%%%%%%%%%%%%%%

\section{Models with non-integrable and decreasing time-dependent coefficient $g=g(t)$} \label{Section4.3}
We assume that function $g=g(t)$ satisfies the following conditions:
\begin{enumerate}
\item[\textbf{(D1)}] $g(t)>0$, $g'(t)\leq 0$ and $g''(t)\geq 0$ for all $t\in[0,\infty)$,
\item[\textbf{(D2)}] $g\notin L^1(0,\infty)$,
\item[\textbf{(D3)}] $|d_t^kg(t)|\leq C_kg(t)\Big( \dfrac{1}{1+t} \Big)^k$ for all $t\in[0,\infty)$, $k=1,2$ and $C_1$, $C_2$ are positive constants.
\end{enumerate}
\begin{theorem} \label{ThmDecreasing}
Let us consider the Cauchy problem \eqref{modellinearviscoelasticdamped}, where the coefficient $g=g(t)$ satisfies the conditions \textbf{(D1)} to \textbf{(D3)}. Then, we have the following estimates for Sobolev solutions:
\begin{align*}
\big\||D|^{|\beta|}u(t,\cdot)\big\|_{L^2} &\lesssim \bigg( 1+\int_0^tg(\tau)d\tau \bigg)^{-\frac{|\beta|}{2}}\|u_0\|_{H^{|\beta|}} + \bigg( 1+\int_0^tg(\tau)d\tau \bigg)^{-\frac{|\beta|-1}{2}}\|u_1\|_{H^{|\beta|-1}} \quad \mbox{for} \quad |\beta|\geq 1, \\
\big\||D|^{|\beta|}u_t(t,\cdot)\big\|_{L^2} &\lesssim \bigg( 1+\int_0^tg(\tau)d\tau \bigg)^{-\frac{|\beta|+1}{2}}\|u_0\|_{H^{|\beta|+1}} + \bigg( 1+\int_0^tg(\tau)d\tau \bigg)^{-\frac{|\beta|}{2}}\|u_1\|_{H^{|\beta|}} \quad \mbox{for} \quad |\beta|\geq 0.
\end{align*}
\end{theorem}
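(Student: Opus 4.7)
The strategy is to extend the WKB-analysis framework of Sections \ref{Section4.1} and \ref{Section4.2} to the effective damping regime in which $g \notin L^1$ is decreasing. This is precisely the case where one expects a genuine parabolic (diffusion-type) effect in the higher order energy estimates, giving rise to the decay rate $(1+G(t))^{-|\beta|/2}$ with $G(t) := \int_0^t g(\tau)\,d\tau$; note that condition \textbf{(D2)} ensures $G(t)\to\infty$.

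First, I would decompose the extended phase space according to the scale $G(t)|\xi|^2$, setting
\[
\Zpd(N) := \{(t,\xi): G(t)|\xi|^2 \leq N\}, \qquad \Zhyp(N) := \{(t,\xi): G(t)|\xi|^2 \geq N\},
\]
with separating line $t_\xi$ defined by $G(t_\xi)|\xi|^2 = N$. In the pseudo-differential zone $\Zpd(N)$ the damping has not yet dominated, and I would recast \eqref{modellinearviscoelasticdampedFourier} as a first-order system for the micro-energy $U = (|\xi|\hat u, D_t \hat u)^{\text{T}}$, bounding the fundamental solution by means of Gronwall's inequality (Lemma \ref{Appendix.Gronwall}) in analogy with Proposition \ref{CorEstPDZone}. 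The outcome should be a uniform estimate $|E_{\text{pd}}(t,s,\xi)| \lesssim 1$ (possibly up to a harmless $g(t)/g(s)$ factor that is bounded in this zone by \textbf{(D3)}).

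In the hyperbolic zone $\Zhyp(N)$, I would perform two steps of diagonalization on the same first-order system, in the spirit of Proposition \ref{LemEstHypZoneIntegrable}, using a symbol class adapted to $\Zhyp(N)$ in which derivatives of $g$ are controlled via factors $(1+t)^{-k}$ provided by \textbf{(D3)}. The principal diagonal part furnishes the dissipative factor $\exp\bigl(-\tfrac{|\xi|^2}{2}\int_s^t g(\tau)\,d\tau\bigr)$, whereas the second step improves the remainder $\mathcal{R}_1(t,\xi)$ to a symbol whose $L^1_t$-norm over the zone is finite; convergence of the associated Peano-Baker series then yields the pointwise bounds
\begin{align*}
|\xi|^{|\beta|}|\hat u(t,\xi)| &\lesssim \exp\bigl(-c|\xi|^2(G(t)-G(s))\bigr)\bigl(|\xi|^{|\beta|}|\hat u(s,\xi)| + |\xi|^{|\beta|-1}|\hat u_t(s,\xi)|\bigr), \\
|\xi|^{|\beta|}|\hat u_t(t,\xi)| &\lesssim \exp\bigl(-c|\xi|^2(G(t)-G(s))\bigr)\bigl(|\xi|^{|\beta|+1}|\hat u(s,\xi)| + |\xi|^{|\beta|}|\hat u_t(s,\xi)|\bigr),
\end{align*}
for $s = t_\xi$. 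Gluing this estimate to the $\Zpd(N)$-estimate at $t=t_\xi$ produces pointwise bounds on $|\hat u|$ and $|\hat u_t|$ valid for all $t\geq 0$.

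The final step is Plancherel's identity. In $\Zhyp(N)$, the elementary inequality $|\xi|^{2|\beta|}\exp(-c|\xi|^2 G(t)) \lesssim (1+G(t))^{-|\beta|}$ extracts exactly the rate $(1+G(t))^{-|\beta|/2}$ multiplied by an $L^2$-norm of the data; in $\Zpd(N)$, the direct bound $|\xi|^{2|\beta|} \leq (N/G(t))^{|\beta|}$ furnishes the same decay once one integrates $|\hat u_0|^2, |\hat u_1|^2$ against the appropriate Sobolev weights, which is why the $H^{|\beta|}$ and $H^{|\beta|-1}$ norms (rather than the homogeneous ones) appear on the right-hand side. I expect the principal technical obstacle to be the verification that the second-level remainder $\mathcal{R}_1$ is uniformly integrable over $\Zhyp(N)$: this demands a careful balance between the monotonicity and convexity properties \textbf{(D1)}, the non-integrability \textbf{(D2)}, and the derivative bounds \textbf{(D3)}, since the zone itself changes shape depending on whether $g$ decays slowly, like $(1+t)^{-\alpha}$ with $\alpha<1$, or tends to a positive limit. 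Once this integrability and the matching of the estimates at the separating line are established, the rest follows the same routine as in Sections \ref{Section4.1} and \ref{Section4.2}.
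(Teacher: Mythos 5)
Your zone decomposition does not respect the hyperbolic/elliptic structure of the transformed equation. After the change of variables $\hat u = \exp\bigl(-\tfrac{1}{2}\int_0^t g|\xi|^2\bigr)v$, the auxiliary equation is
\begin{equation*}
v_{tt}+\Bigl(h(t)|\xi|^2-\tfrac{g(t)^2}{4}|\xi|^4\Bigr)v=0,\qquad h(t)=1-\tfrac{g'(t)}{2}\ge 1,
\end{equation*}
so the natural separating curve is $g(t)^2|\xi|^2\approx 4h(t)$, i.e.\ $g(t)|\xi|\approx\mathrm{const}$ (since $h$ is bounded), not $G(t)|\xi|^2\approx N$ with $G(t)=\int_0^t g$. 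For $g(t)=(1+t)^{-1/2}$ the first curve is $|\xi|\sim(1+t)^{1/2}$ while the second is $|\xi|\sim(1+t)^{-1/4}$; they are unrelated. This matters concretely: in your region $G(t)|\xi|^2\ge N$, near the separating line and for large $|\xi|$ one has $t_\xi\to 0$ and hence $g(t_\xi)|\xi|\approx g(0)|\xi|\to\infty$. The hyperbolic-style diagonalizer from Proposition \ref{LemEstHypZoneIntegrable} has remainder $N^{(1)}(t,\xi)=\tfrac{1}{4}\bigl(\begin{smallmatrix}0& ig|\xi|\\ -ig|\xi|&0\end{smallmatrix}\bigr)$, which is small exactly when $g(t)|\xi|$ is small; in the part of your ``hyperbolic'' zone where $g(t)|\xi|$ is large the matrix $N_1=I+N^{(1)}$ is not invertible and the diagonalization collapses. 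The issue is not, as you anticipate, the integrability of $\mathcal{R}_1$: the construction already breaks a step earlier.

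The paper instead splits the phase space into a genuine hyperbolic zone ($1-\tfrac{g^2|\xi|^2}{4h}\ge\tfrac14$), a reduced zone and an elliptic zone ($1-\tfrac{g^2|\xi|^2}{4h}\le-\tfrac14$). In the hyperbolic zone the backward transformation yields the dissipative factor $\exp\bigl(-\tfrac{|\xi|^2}{2}\int_s^t g\bigr)$ that you correctly identified, and for small frequencies this alone produces the polynomial rate $(1+G(t))^{-|\beta|/2}$ by taking the maximum of $|\xi|^{2|\beta|}\exp(-c|\xi|^2G(t))$. But for large frequencies the solution starts in the elliptic zone, where the characteristic roots are imaginary and a completely different two-step diagonalization (Propositions \ref{propEstimateEjk}--\ref{LemmaRefinedEstimatesZellDecreasing}) gives decay $\exp\bigl(-C\int_s^t\frac{1}{g(\tau)}\,d\tau\bigr)$, a rate that has no $|\xi|^2$ prefactor. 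The crucial gluing step (Lemma \ref{LemmaDecreasingAuxiliary}) uses $-\tfrac{C}{g}\le -C_1 g$ (valid precisely because $g$ is decreasing) to absorb the elliptic-zone decay into $\exp(-C_1 G(t))$, which is genuinely exponential in $G(t)$ and hence stronger than any polynomial rate. This is also why the theorem requires inhomogeneous Sobolev data $H^{|\beta|}\times H^{|\beta|-1}$: the large-frequency part is traded against regularity. Your proposal contains no elliptic-zone analysis, no reduced zone, and no mechanism corresponding to Lemma \ref{LemmaDecreasingAuxiliary}, so the high-frequency contribution is not controlled.
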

\begin{remark} \label{Remarkparabolicdecreasing}
The statements of Theorem \ref{ThmDecreasing} show that we have the parabolic effect. This means that the energies of higher order of Sobolev solutions decay faster and faster with increasing order.
\end{remark}
\begin{exam} \label{Example4.1}
Let us choose $g(t)=(1+t)^{-\gamma}$, $\gamma\in (0,1]$. Then, $g=g(t)$ satisfies the assumptions of Theorem \ref{ThmDecreasing}. Consequently, the following estimates for Sobolev solutions hold: \\
$\gamma\in (0,1)$:
\begin{align*}
\big\||D|^{|\beta|}u(t,\cdot)\big\|_{L^2} &\lesssim (1+t)^{-\frac{|\beta|(1-\gamma)}{2}}\|u_0\|_{H^{|\beta|}} + (1+t)^{-\frac{(|\beta|-1)(1-\gamma)}{2}}\|u_1\|_{H^{|\beta|-1}} \quad \mbox{for} \quad |\beta|\geq 1, \\
\big\||D|^{|\beta|}u_t(t,\cdot)\big\|_{L^2} &\lesssim (1+t)^{-\frac{(|\beta|+1)(1-\gamma)}{2}}\|u_0\|_{H^{|\beta|+1}} + (1+t)^{-\frac{|\beta|(1-\gamma)}{2}}\|u_1\|_{H^{|\beta|}} \quad \mbox{for} \quad |\beta|\geq 0;
\end{align*}
$\gamma=1$:
\begin{align*}
\big\||D|^{|\beta|}u(t,\cdot)\big\|_{L^2} &\lesssim \Big( \log(e+t) \Big)^{-\frac{|\beta|}{2}}\|u_0\|_{H^{|\beta|}} + \Big( \log(e+t) \Big)^{-\frac{|\beta|-1}{2}}\|u_1\|_{H^{|\beta|-1}} \quad \mbox{for} \quad |\beta|\geq 1, \\
\big\||D|^{|\beta|}u_t(t,\cdot)\big\|_{L^2} &\lesssim \Big( \log(e+t) \Big)^{-\frac{|\beta|+1}{2}}\|u_0\|_{H^{|\beta|+1}} + \Big( \log(e+t) \Big)^{-\frac{|\beta|}{2}}\|u_1\|_{H^{|\beta|}} \quad \mbox{for} \quad |\beta|\geq 0.
\end{align*}
\end{exam}
\begin{exam} \label{Example4.2}
Let us consider $g(t)=\left( (e^2+t)\log(e^2+t) \right)^{-1}$. Then, $g=g(t)$ satisfies the assumptions of Theorem \ref{ThmDecreasing} and the following estimates for Sobolev solutions are given as follows:
\begin{align*}
\big\||D|^{|\beta|}u(t,\cdot)\big\|_{L^2} &\lesssim \Big( \log\big( \log(e^2+t) \big) \Big)^{-\frac{|\beta|}{2}}\|u_0\|_{H^{|\beta|}} + \Big( \log\big( \log(e^2+t) \big) \Big)^{-\frac{|\beta|-1}{2}}\|u_1\|_{H^{|\beta|-1}} \quad \mbox{for} \quad |\beta|\geq 1, \\
\big\||D|^{|\beta|}u_t(t,\cdot)\big\|_{L^2} &\lesssim \Big( \log\big( \log(e^2+t) \big) \Big)^{-\frac{|\beta|+1}{2}}\|u_0\|_{H^{|\beta|+1}} +  \Big( \log\big( \log(e^2+t) \big) \Big)^{-\frac{|\beta|}{2}}\|u_1\|_{H^{|\beta|}} \quad \mbox{for} \quad |\beta|\geq 0.
\end{align*}
\end{exam}
\begin{exam} \label{Example4.3}
Let us consider $g(t)=\dfrac{\log(e^2+t)}{e^2+t}$. Then, $g=g(t)$ satisfies the assumptions of Theorem \ref{ThmDecreasing} and the following estimates for Sobolev solutions are given as follows:
\begin{align*}
\big\||D|^{|\beta|}u(t,\cdot)\big\|_{L^2} &\lesssim \Big( \big( \log(e^2+t) \big)^2 \Big)^{-\frac{|\beta|}{2}}\|u_0\|_{H^{|\beta|}} + \Big( \big( \log(e^2+t) \big)^2 \Big)^{-\frac{|\beta|-1}{2}}\|u_1\|_{H^{|\beta|-1}} \quad \mbox{for} \quad |\beta|\geq 1, \\
\big\||D|^{|\beta|}u_t(t,\cdot)\big\|_{L^2} &\lesssim \Big( \big( \log(e^2+t) \big)^2 \Big)^{-\frac{|\beta|+1}{2}}\|u_0\|_{H^{|\beta|+1}} +  \Big( \big( \log(e^2+t) \big)^2 \Big)^{-\frac{|\beta|}{2}}\|u_1\|_{H^{|\beta|}} \quad \mbox{for} \quad |\beta|\geq 0.
\end{align*}
\end{exam}\begin{remark} \label{Remark4.3}
Let us consider the visco-elastic damped case in the paper \cite{DAbbiccoEbert2016} with $g(t)=(1+t)^{-\gamma}$, $\gamma\in(-1,1)$. Then, the dissipation is non-effective and there exist is a parabolic effect in this case. One may also see that the regularity of the data in this paper is given by $(u_0,u_1)\in H^{|\beta|} \times H^{|\beta|-2}$ (see the decay estimate and more details in Remark 20 of \cite{DAbbiccoEbert2016}).
\end{remark}

\begin{proof}[Proof of Theorem \ref{ThmDecreasing}]
We write the equation in (\ref{modellinearauxiliary2}) in the following form:
\begin{equation*} \label{auxiliaryproblem5}
D_t^2 v + \frac{g(t)^2}{4}|\xi|^4 v - \bigg( 1-\frac{g'(t)}{2} \bigg)|\xi|^2v=0.
\end{equation*}
We introduce $h=h(t)=1-\dfrac{g'(t)}{2} \geq 1$. The function $h$ is bounded if $t$ tends to infinity. We consider
\begin{equation*} \label{auxiliaryproblem6}
D_t^2 v + \frac{g(t)^2}{4}|\xi|^4 v - h(t)|\xi|^2v=0.
\end{equation*}
Due to condition \textbf{(D1)} the function $f=f(t)=\dfrac{g(t)^2}{h(t)}$ is monotonically decreasing.
Thus, we have a separating line $t_\xi$ as solution of the implicit equation $\dfrac{g(t)^2}{4h(t)}|\xi|^2=1$.
%Examples are given in Example \ref{Example5} with $d \in [-1,0]$ and Example \ref{Example8}.
\medskip

Let us divide the extended phase space into the following zones:
\begin{itemize}
\item hyperbolic zone:
\[ \Zhyp = \Big\{ (t,\xi)\in[0,\infty)\times\mathbb{R}^n : 1- \frac{g(t)^2|\xi|^2}{4h(t)}\geq \frac{1}{4} \Big\}, \]
\item reduced zone:
\[ \Zred = \Big\{ (t,\xi)\in[0,\infty)\times\mathbb{R}^n : -\frac{1}{4} \leq 1- \frac{g(t)^2|\xi|^2}{4h(t)}\leq \frac{1}{4} \Big\}, \]
\item elliptic zone:
\[ \Zell = \Big\{ (t,\xi)\in[0,\infty)\times\mathbb{R}^n : 1- \frac{g(t)^2|\xi|^2}{4h(t)}\leq -\frac{1}{4} \Big\}. \]
\end{itemize}

%%%%%%%%%%%%%%%%%%%%%%%%%%%%%%%%%%%   Draw the Picture   %%%%%%%%%%%%%%%%%%%%%%%%%%%%%%%%%%%%%%%%%%%%
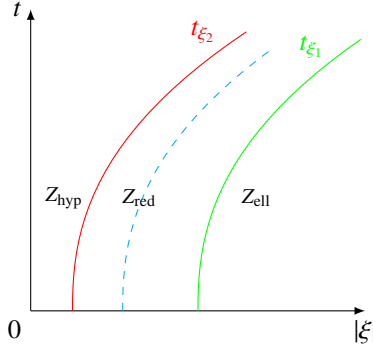
\begin{figure}[H]
\begin{center}
\begin{tikzpicture}[>=latex,xscale=1.1]
    \draw[->] (0,0) -- (4,0)node[below]{$|\xi|$};
    \draw[->] (0,0) -- (0,4)node[left]{$t$};
    \node[below left] at(0,0){$0$};
    %%%%%%%%%%%%%%%%%%%%%%%%%%%%%%%%%%%%%%%%%%%%%%%%%%%%%%%%%%%%%%%%%%%%%%%%%%%
    \node[right] at (1.8,3.7) {$\textcolor{red}{t_{\xi_2}}$};
    \node[right] at (3.1,3.5) {$\textcolor{green}{t_{\xi_1}}$};
	%%%%%%%%%%%%%%%%%%%%%  Draw Gamma  %%%%%%%%%%%%%%%%%%%%%%%%%%%%%%%%%%%%%%%
    \draw[dashed, domain=0:3.5,color=cyan,variable=\t] plot ({1.1 + 0.09*pow(\t,2.4)},\t);
    %%%%%%%%%%%%%%%%%%%%%%%%%%%%%%%%%%%%%%%%%%%%%%%%%%%%%%%%%%%%%%%%%%%%%%%%
	\node[color=black] at (2.7, 1.5){{\footnotesize $Z_{\text{ell}}$}};
	%%%%%%%%%%%%%%%%%%%%  Draw Z_red  %%%%%%%%%%%%%%%%%%%%%%%%%%%%%%%%%%%%%%%%%
	\node[color=black] at (1.3, 1.5){{\footnotesize $Z_{\text{red}}$}};
	\draw[domain=0:3.6,color=green,variable=\t] plot ({2 + 0.09*pow(\t,2.4)},\t);
	%%%%%%%%%%%%%%%%%%%%% Draw Z_ell %%%%%%%%%%%%%%%%%%%%%%%%%%%%%%%%%%%%%%%%%%%
	\draw[domain=0:3.7,color=red,variable=\t] plot ({0.5 + 0.09*pow(\t,2.4)},\t);
	\node[color=black] at (0.4,1.5){{\footnotesize $Z_{\text{hyp}}$}};	
	%%%%%%%%%%%%%%%%%%%%%% Draw Z_osc %%%%%%%%%%%%%%%%%%%%%%%%%%%%%%%%%%%%%%%%%%%
\end{tikzpicture}
\caption{Sketch of the zones for the case $g=g(t)$ is non-integrable and decreasing}
\label{fig.zone.decreasing}
\end{center}
\end{figure}

We denote the separating line between elliptic and reduced zone as $t_{\xi_1}$ and that between hyperbolic zone and reduced zone as $t_{\xi_2}$.
The blue dashed line denotes the separating line between the hyperbolic and the elliptic region.

%%%%%%%%%%%%%%%%%%%%%%%%%%%%%%%%%%%%%%   END FIGURE   %%%%%%%%%%%%%%%%%%%%%%%%%%%%%%%%%%%%%%%%%
\subsection{Considerations in the hyperbolic zone $\Zhyp$} \label{SecDecresingZhyp}
\begin{proposition} \label{PropDecreasingHyp}
The following estimates hold for all $t_{\xi_2}\leq s \leq t$, where $t_{\xi_2}=0$ for small frequencies:
\begin{align*}
|\xi|^{|\beta|}|\hat{u}(t,\xi)| \lesssim \exp\bigg( -\frac{|\xi|^2}{2}\int_s^tg(\tau)d\tau \bigg)\Big( |\xi|^{|\beta|}|\hat{u}(s,\xi)| + |\xi|^{|\beta|-1}|\hat{u}_t(s,\xi)| \Big) \quad \mbox{for} \quad |\beta|\geq 1, \\
|\xi|^{|\beta|}|\hat{u}_t(t,\xi)| \lesssim \exp\bigg( -\frac{|\xi|^2}{2}\int_s^tg(\tau)d\tau \bigg)\Big( |\xi|^{|\beta|+1}|\hat{u}(s,\xi)| + |\xi|^{|\beta|}|\hat{u}_t(s,\xi)| \Big) \quad \mbox{for} \quad |\beta|\geq 0.
\end{align*}
\end{proposition}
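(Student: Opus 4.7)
The plan is to prove Proposition \ref{PropDecreasingHyp} by WKB-type diagonalization applied to the auxiliary equation for $v(t,\xi) = e^{|\xi|^2 G(t)}\hat u(t,\xi)$, namely $v_{tt} + \omega^2(t,\xi)v = 0$ with $\omega^2(t,\xi) := |\xi|^2(h(t) - g(t)^2|\xi|^2/4)$. Here $G(t) = \tfrac{1}{2}\int_0^t g(\tau)d\tau$ as introduced in Section 1.3 and $h(t) = 1 - g'(t)/2$. The definition of $\Zhyp$ and the conditions \textbf{(D1)}, \textbf{(D3)} guarantee $\omega^2 \geq |\xi|^2 h(t)/4 \geq |\xi|^2/4$ and that $h(t)$ stays bounded in $[1,H_0]$, so $\omega(t,\xi) \sim |\xi|$ uniformly in $\Zhyp$. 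The exponential decay factor in the statement is precisely what the inverse substitution $\hat u = e^{-|\xi|^2 G(t)}v$ produces, so the task reduces to showing that $v$ and $v_t$ do not grow in $\Zhyp$ relative to their values at the entry time $s$.

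First I would introduce the micro-energy $V(t,\xi) := (\omega v, D_t v)^T$ and write the first-order system $D_t V = A(t,\xi) V$ with principal part $\begin{pmatrix} 0 & \omega \\ \omega & 0 \end{pmatrix}$ and residual $\begin{pmatrix} D_t\omega/\omega & 0 \\ 0 & 0 \end{pmatrix}$. A first diagonalization via $M = \begin{pmatrix} 1 & -1 \\ 1 & 1 \end{pmatrix}$ reduces the principal part to $\mathcal D(t,\xi) = \diag(\omega,-\omega)$ and produces a remainder $\mathcal R(t,\xi)$ whose diagonal part $F_0 = \diag \mathcal R$ integrates to a bounded amplitude $\sqrt{\omega(t)/\omega(s)} \sim 1$. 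A second diagonalization step then absorbs the off-diagonal part of $\mathcal R$ into a finer remainder $\mathcal R_1$ of order $|D_t\omega|/\omega^2$, using \textbf{(D3)} together with the hyperbolic-zone bounds to estimate $|\partial_t\omega^2| \lesssim |\xi|^2(g(t)/(1+t)^2 + g(t)^2|\xi|^2/(1+t)) \lesssim |\xi|^2 h(t)/(1+t)$, hence $|\omega'/\omega| \lesssim 1/(1+t)$.

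Next I would construct the fundamental solution of the twice-diagonalized operator by the Peano-Baker series, verify its uniform boundedness on $\Zhyp$, and compose with the bounded transformations $M$ and $N_1 = I + N^{(1)}$ to deduce $|V(t,\xi)| \lesssim |V(s,\xi)|$ for $t_{\xi_2} \leq s \leq t$. Backward transformation to $v$ and then to $\hat u$ through $\hat u_t = e^{-|\xi|^2 G(t)}(v_t - g(t)|\xi|^2 v/2)$, combined with $g(t)|\xi| \lesssim 1$ in $\Zhyp$, converts this boundedness into the desired inequalities $|\xi|^{|\beta|}|\hat u(t,\xi)| \lesssim e^{-|\xi|^2(G(t)-G(s))}\big( |\xi|^{|\beta|}|\hat u(s,\xi)| + |\xi|^{|\beta|-1}|\hat u_t(s,\xi)| \big)$ and analogously for $\hat u_t$, after multiplication by $|\xi|^{|\beta|}$.

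The hard part will be controlling the second-step remainder $\mathcal R_1$ uniformly over the hyperbolic zone. Since $g \notin L^1(0,\infty)$ and $\Zhyp$ is unbounded in time, the strategy of Proposition \ref{LemEstHypZoneIntegrable}, which exploited $g \in L^1$ to bound the Peano-Baker series, does not apply. Instead, I would rely on the hyperbolic-zone lower bound $\omega \sim |\xi|$ together with \textbf{(D3)} to show that $\int_s^t |\mathcal R_1(\tau,\xi)|\,d\tau$ is bounded by a fixed small multiple of $|\xi|^2 \int_s^t g(\tau)\,d\tau$, so that any exponential growth from the Peano-Baker bound is dominated by the inverse-substitution factor $e^{-|\xi|^2(G(t)-G(s))}$ without degrading the leading decay rate $|\xi|^2/2$.
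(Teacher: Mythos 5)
Your overall framework (micro-energy $V=(pv,D_tv)^{\mathrm T}$ with $p=\omega$, one-step diagonalization via $M=\begin{pmatrix}1&-1\\1&1\end{pmatrix}$, Peano--Baker series, backward substitution) is the same as the paper's, but there is a genuine gap in the middle that the paper's proof does not share. After the first diagonalization the remainder $\mathcal R(t,\xi)$ has entries of size $|D_t p/p|$, and the paper's whole point is that this is \emph{already uniformly integrable over} $\Zhyp$, so a single step suffices and $|E_1(t,s,\xi)|\leq C$. You instead estimate $|p'/p|\lesssim 1/(1+t)$ pointwise and conclude it is not integrable, which is why you reach for a second diagonalization step and an absorption argument. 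This pointwise bound is too crude: writing $p^2=h|\xi|^2-g^2|\xi|^4/4$ and using \textbf{(D1)} (so $h'=-g''/2\le 0$ and $-gg'\ge0$), the derivative of $\log p$ is a sum of two terms of fixed sign, and one can integrate them exactly,
\begin{align*}
\int_s^t\frac{|p'(\tau,\xi)|}{p(\tau,\xi)}\,d\tau
&\lesssim \int_s^t\frac{-h'(\tau)}{h(\tau)}\,d\tau + |\xi|^2\int_s^t\big(-g(\tau)g'(\tau)\big)\,d\tau
= \log\frac{h(s)}{h(t)} + \frac{|\xi|^2}{2}\big(g^2(s)-g^2(t)\big),
\end{align*}
and both terms are uniformly bounded in $\Zhyp$ (since $1\le h(t)\le h(0)$ and $g^2(s)|\xi|^2\le 3h(s)\le 3h(0)$ there). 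Missing this is not merely an inefficiency.

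It actually breaks your proposed repair in two ways. First, the second diagonalization step requires $D_tN^{(1)}$, whose entries contain $h''$ and hence $g'''$, but conditions \textbf{(D1)}--\textbf{(D3)} only control $g'$ and $g''$ ($k=1,2$ in \textbf{(D3)}); the construction you describe cannot be bounded by the hypotheses. Second, even granting such a step, your absorption strategy — bounding $\int_s^t|\mathcal R_1|\,d\tau$ by a small multiple of $|\xi|^2\int_s^t g\,d\tau$ and letting the inverse substitution factor swallow it — necessarily replaces the prefactor $\exp\!\big(-\tfrac{|\xi|^2}{2}\int_s^t g\big)$ by $\exp\!\big(-(1-\epsilon)\tfrac{|\xi|^2}{2}\int_s^t g\big)$ for some $\epsilon>0$, i.e. it degrades the exponent that the proposition actually asserts, contrary to your claim that the leading rate is preserved. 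The fix is to drop the second step entirely and use the direct-integration bound above to get a uniformly bounded fundamental solution, as the paper does.
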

\begin{proof}
Let us consider the equation
\begin{equation*}
v_{tt} + \bigg( \underbrace{h(t)|\xi|^2-\dfrac{g(t)^2}{4}|\xi|^4}_{:=p^2(t,\xi)} \bigg)v=0.
\end{equation*}
We define the micro-energy
\[ V(t,\xi) = \big( p(t,\xi)v, D_tv \big)^{\text{T}}, \qquad p(t,\xi) := \sqrt{h(t)|\xi|^2-\frac{g^2(t)}{4}|\xi|^4}, \qquad p(t,\xi)\approx \sqrt{h(t)}|\xi| \,\,\,\mbox{for}\,\,\,(t,\xi) \in \Zhyp. \]
Thus, we have to apply tools from hyperbolic WKB-analysis. Transformation to a system of first order from \eqref{modellinearauxiliary} gives
\begin{equation*}
D_tV=\left( \begin{array}{cc}
0 & p(t,\xi) \\
p(t,\xi) & 0
\end{array} \right)V + \left( \begin{array}{cc}
\dfrac{D_tp(t,\xi)}{p(t,\xi)} & 0 \\
0 & 0
\end{array} \right)V.
\end{equation*}
Using $V=MV^{(0)}$ with $M=\begin{pmatrix} 1 & -1 \\ 1 &1\end{pmatrix}$, then after the first step of diagonalization we obtain
\[ D_tV^{(0)} = \big( \mathcal{D}(t,\xi) + \mathcal{R}(t,\xi) \big)V^{(0)}, \]
where
\begin{align*}
\mathcal{D}(t,\xi) &= \left( \begin{array}{cc}
p(t,\xi) & 0 \\
0 & p(t,\xi)
\end{array} \right) \qquad \mbox{and} \qquad \mathcal{R}(t,\xi) = \frac{1}{2} \left( \begin{array}{cc}
\dfrac{D_tp(t,\xi)}{p(t,\xi)} & -\dfrac{D_tp(t,\xi)}{p(t,\xi)} \\
-\dfrac{D_tp(t,\xi)}{p(t,\xi)} & \dfrac{D_tp(t,\xi)}{p(t,\xi)}
\end{array} \right).
\end{align*}
After the first step of diagonalization procedure the entries of the matrix $\mathcal{R}(t,\xi)$ are uniformly integrable over the hyperbolic zone $\Zhyp$. We can write $V^{(1)}(t,\xi) = E_1(t,s,\xi)V^{(1)}(s,\xi)$, where $E_1=E_1(t,s,\xi)$ is the fundamental solution, that is, the solution of the system
\begin{align*}
D_tE_1(t,s,\xi) = \big( \mathcal{D}(t,\xi) + \mathcal{R}(t,\xi) \big)E_1(t,s,\xi), \quad E_1(s,s,\xi) = I,
\end{align*}
for all $t\geq s$ and $(s,\xi)\in\Zhyp$. Straightforward calculations imply (see Proposition 3.1 of \cite{KainaneReissig2015-1})
\[ |E_1(t,s,\xi)| \leq C \quad \text{for all} \quad t\geq s \quad \text{and} \quad (s,\xi)\in\Zhyp. \]
Finally, we obtain the following estimate for the transformed micro-energy $V^{(1)}(t,\xi)$ in the hyperbolic zone:
\[ |V^{(1)}(t,\xi)| \lesssim |V^{(1)}(s,\xi)|,  \qquad \bigg|\left( \begin{array}{cc}
p(t,\xi)v(t,\xi)| \\
D_tv(t,\xi)
\end{array} \right)\bigg|\lesssim \bigg|\left( \begin{array}{cc}
p(s,\xi)v(s,\xi) &  \\
D_tv(s,\xi)
\end{array} \right)\bigg| \]
uniformly for all $t\geq s$ and $(t,\xi),(s,\xi)\in \Zhyp$. From the backward transformation
\[ \hat{u}(t,\xi) = \exp\bigg( -\frac{|\xi|^2}{2}\int_0^tg(\tau)d\tau \bigg)v(t,\xi), \]
and the equivalence $p(t,\xi)\approx \sqrt{h(t)}|\xi|$, where $h=h(t)$ is bounded for large time $t$, gives the desired estimates.
\end{proof}
\subsection{Considerations in the reduced zone $\Zred$} \label{SecDecresingZred}
\begin{proposition} \label{PropDecreasingRed}
The following estimates hold for all $(t,\xi), (s,\xi) \in \Zred$ with $s \leq t$:
\begin{align*}
|\xi|^{|\beta|}|\hat{u}(t,\xi)| \lesssim \exp\bigg( -\frac{|\xi|^2}{6}\int_s^tg(\tau)d\tau \bigg)\Big( |\xi|^{|\beta|}|\hat{u}(s,\xi)| + |\xi|^{|\beta|-1}|\hat{u}_t(s,\xi)| \Big) \quad \mbox{for} \quad |\beta|\geq 1, \\
|\xi|^{|\beta|}|\hat{u}_t(t,\xi)| \lesssim \exp\bigg( -\frac{|\xi|^2}{6}\int_s^tg(\tau)d\tau \bigg)\Big( |\xi|^{|\beta|+1}|\hat{u}(s,\xi)| + |\xi|^{|\beta|}|\hat{u}_t(s,\xi)| \Big) \quad \mbox{for} \quad |\beta|\geq 0.
\end{align*}
\end{proposition}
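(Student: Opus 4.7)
The plan is to adapt the hyperbolic zone argument of Proposition \ref{PropDecreasingHyp} to the reduced zone, where the ``hyperbolic frequency'' $p(t,\xi) = \sqrt{h(t)|\xi|^2 - g^2(t)|\xi|^4/4}$ used there degenerates and changes sign, so a direct energy identity must replace the diagonalization. I would work with the $v$-equation coming from (\ref{modellinearauxiliary2}) and exploit the two reduced-zone inequalities
\[ \Big|\tfrac{g^2(t)|\xi|^4}{4} - h(t)|\xi|^2\Big| \leq \tfrac{h(t)|\xi|^2}{4}, \qquad g(t)|\xi| \geq \sqrt{3h(t)}, \]
together with the backward change of variables $\hat{u}(t,\xi) = e^{-\frac{|\xi|^2}{2}\int_0^t g(\tau)\,d\tau} v(t,\xi)$, which already carries the exponential factor of the desired form.

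First, I would introduce the micro-energy $E(t,\xi) := |v_t(t,\xi)|^2 + h(t)|\xi|^2 |v(t,\xi)|^2$ and differentiate it along solutions. Using the equation for $v$,
\[ \tfrac{d}{dt}E(t,\xi) = 2\Big( 2h(t)|\xi|^2 - \tfrac{g^2(t)|\xi|^4}{4} \Big)\text{Re}(v\,\overline{v_t}) + h'(t)|\xi|^2|v|^2, \]
whose bracketed factor is bounded in absolute value by $\tfrac{5}{4}h(t)|\xi|^2$ in $\Zred$. An AM-GM estimate then yields $h(t)|\xi|^2|v||v_t| \leq \tfrac{1}{2}\sqrt{h(t)}|\xi|\,E(t,\xi)$, and condition \textbf{(D3)} combined with the boundedness of $h$ reduces the second term to an $L^1$-remainder times $E$. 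Gronwall's inequality, followed by the reduced-zone comparison $\sqrt{h(\tau)}|\xi| \leq g(\tau)|\xi|^2/\sqrt{3}$, converts this into
\[ E(t,\xi) \lesssim E(s,\xi)\exp\!\Big( \alpha\,|\xi|^2 \int_s^t g(\tau)\,d\tau \Big), \qquad \alpha := C_0/\sqrt{3}. \]

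Finally, I would push the bound back to $\hat{u}$. Since $|\hat{u}(t,\xi)|^2 = e^{-|\xi|^2\int_0^t g\,d\tau}|v(t,\xi)|^2$ and, in the reduced zone, $g^2(s)|\xi|^4/4 \approx h(s)|\xi|^2 \approx |\xi|^2$ gives the equivalence $E(s,\xi) \lesssim e^{|\xi|^2\int_0^s g\,d\tau}\big( |\hat{u}_t(s,\xi)|^2 + |\xi|^2|\hat{u}(s,\xi)|^2 \big)$, the factor $e^{-|\xi|^2\int_s^t g}$ from the transformation absorbs the growth $e^{\alpha|\xi|^2\int_s^t g}$ of $E$, producing
\[ |\hat{u}_t(t,\xi)|^2 + |\xi|^2|\hat{u}(t,\xi)|^2 \lesssim e^{-(1-\alpha)|\xi|^2\int_s^t g(\tau)\,d\tau}\big( |\hat{u}_t(s,\xi)|^2 + |\xi|^2|\hat{u}(s,\xi)|^2 \big). \]
Taking square roots and multiplying by $|\xi|^{|\beta|}$ or $|\xi|^{|\beta|-1}$ yields the two stated inequalities, provided $(1-\alpha)/2 \geq 1/6$, i.e., $\alpha \leq 2/3$.

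The main obstacle is precisely this quantitative requirement $\alpha \leq 2/3$: the growth rate of $E(v)$ must be kept strictly below half the weight $|\xi|^2 g(\tau)$ coming from the backward transformation, which forces the AM-GM step to be applied sharply. If the simple energy above is not quite sharp enough, I would refine it by one step of diagonalization on the first-order system for $V = (|\xi|v,v_t)^{\mathrm{T}}$, whose coefficient matrix has eigenvalues of size at most $|\xi|\sqrt{h(t)}/2$; this refinement already gives $\alpha \leq 1/(2\sqrt{3}) < 2/3$ and hence the claimed rate $|\xi|^2/6$. The control of the $h'(t)$ remainder is secondary and is exactly what condition \textbf{(D3)} together with the asymptotic behavior $h(t) \to 1$ is designed to handle.
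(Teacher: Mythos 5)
Your route (scalar micro-energy for $v$, Gronwall, backward transformation) is in the same spirit as the paper's, which uses the vector micro-energy $V = \big( \tfrac{g(t)}{4}|\xi|^2 v, D_t v \big)^{\mathrm{T}}$, estimates the first-order coefficient matrix entrywise by $\tfrac{g(t)}{3}|\xi|^2$, and reads off the exponent $\tfrac{1}{3}$ from a Peano--Baker/Gronwall bound, so that $\tfrac{1}{2} - \tfrac{1}{3} = \tfrac{1}{6}$ emerges directly. However, there are three concrete problems with your write-up. First, the energy identity is incorrect: since $\operatorname{Re}(v\overline{v_t}) = \operatorname{Re}(v_t\overline{v})$, the coefficient of $\operatorname{Re}(v\overline{v_t})$ that comes out of differentiating $E = |v_t|^2 + h|\xi|^2|v|^2$ is $\tfrac{g^2(t)|\xi|^4}{2}$, not $2\big(2h|\xi|^2 - \tfrac{g^2|\xi|^4}{4}\big)$; the two expressions coincide only on the separating line $g^2|\xi|^2 = 4h$. (Fortunately both lie in $[\tfrac{3}{2}h|\xi|^2,\tfrac{5}{2}h|\xi|^2]$ on $\Zred$, so the numerical bound you use survives by accident.) Second, your chain of inequalities $\tfrac{5}{4}h|\xi|^2 \to \tfrac{5}{4}\sqrt{h}|\xi| \to \tfrac{5}{4\sqrt{3}}g|\xi|^2$ is lossy because it first invokes $g^2|\xi|^2 \le 5h$ and then $g^2|\xi|^2 \ge 3h$ in opposite directions, giving $\alpha = \tfrac{5}{4\sqrt{3}} \approx 0.72 > \tfrac{2}{3}$; you correctly notice this fails. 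But the scalar energy \emph{does} close with a one-step comparison: with $\epsilon = \sqrt{h}|\xi|$ in AM--GM one gets $\tfrac{g^2|\xi|^4}{2}|v||v_t| \le \tfrac{g^2|\xi|^3}{4\sqrt{h}}E = \tfrac{g|\xi|^2}{4}\cdot\tfrac{g|\xi|}{\sqrt{h}}E \le \tfrac{\sqrt{5}}{4}g(t)|\xi|^2 E$, so $\alpha = \tfrac{\sqrt{5}}{4} \approx 0.56 < \tfrac{2}{3}$, and no further refinement is needed.

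Third, and most seriously, your fallback plan of ``one step of diagonalization on the first-order system for $V=(|\xi|v,v_t)^{\mathrm{T}}$'' cannot work in $\Zred$: the discriminant $h(t)|\xi|^2 - \tfrac{g^2(t)}{4}|\xi|^4$ vanishes along the separating line $t_\xi$ which passes through the interior of the reduced zone, so there the two eigenvalues of the coefficient matrix coalesce (and change from purely imaginary to real). The diagonalizing matrix $M(t,\xi)$ is singular on that curve and $M^{-1}\partial_t M$ is not integrable near it, which is exactly the reason the reduced zone is introduced as a separate zone in which diagonalization is \emph{not} performed. So the quantitative requirement $\alpha \le 2/3$ must be met by the raw Gronwall bound; it can be, as noted above, but your diagonalization escape hatch is not available. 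In short: fix the energy identity, replace the two-step bounding chain by the direct comparison against $g(t)|\xi|^2$, and drop the diagonalization fallback.
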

\begin{proof}
In the reduced zone we have $\sqrt{h(t)}|\xi|\approx \dfrac{g(t)}{2}|\xi|^2$. Employing the transformed equation
\begin{equation*}
v_{tt} + \bigg( h(t)|\xi|^2-\dfrac{g^2(t)}{4}|\xi|^4\bigg)v=0,
\end{equation*}
we can estimate
\[ \Big| h(t)|\xi|^2-\frac{g^2(t)}{4}|\xi|^4 \Big|\leq \frac{g^2(t)}{12}|\xi|^4 \qquad \text{taking account of} \qquad \frac{g^2(t)}{5}|\xi|^4 \leq h(t)|\xi|^2 \leq \frac{g^2(t)}{3}|\xi|^4. \]
Thus, we define the micro-energy
\[ V(t,\xi) = \Big( \frac{g(t)}{4}|\xi|^2v, D_tv \Big)^{\text{T}} \quad \mbox{for all} \quad t\geq t_{\xi_2}  \quad \mbox{and} \quad (t,\xi)\in\Zred.  \]
Then, we get the following system of first order:
\begin{equation} \label{DecreasingSystemZred}
D_tV(t,\xi)=\underbrace{\left( \begin{array}{cc}
\dfrac{D_t g(t)}{g(t)} & \dfrac{g(t)}{4}|\xi|^2 \\
\dfrac{h(t)|\xi|^2-\frac{g^2(t)}{4}|\xi|^4}{\frac{g(t)}{4}|\xi|^2} & 0 \end{array} \right)}_{A_V(t,\xi)} V(t,\xi).
\end{equation}
To estimate the entries of this matrix we will use
\[ \frac{\Big| h(t)|\xi|^2-\frac{g^2(t)}{4}|\xi|^4 \Big|}{\frac{g(t)}{4}|\xi|^2}\leq \frac{g(t)}{3}|\xi|^2. \]
\begin{corollary} \label{Corollary4.2}
The fundamental solution $E=E(t,s,\xi)$ to \eqref{DecreasingSystemZred} for all $t\geq s$ and $(t,\xi), (s,\xi)\in\Zred$ satisfies
\[ |E(t,s,\xi)|\leq \exp\bigg( \frac{|\xi|^2}{3} \int_s^t g(\tau)d\tau \bigg). \]
\end{corollary}
From the backward transformation and the equivalence $\dfrac{g(t)}{2}|\xi|^2\approx h(t)|\xi|$ in $\Zred$, we may conclude the desired statements of the proposition.
%From the backward transformation and the equivalence $\dfrac{g(t)}{2}|\xi|^2\approx h(t)|\xi|$ in $\Zred$, we conclude the following estimates for all $t\geq s$ and $(t,\xi), (s,\xi)\in\Zred$:
%\begin{align*}
%|\xi|^{|\beta|}|\hat{u}(t,\xi)| \lesssim \exp\bigg( -\frac{|\xi|^2}{6}\int_{s}^tg(\tau)d\tau \bigg)\Big( |\xi|^{|\beta|}|\hat{u}(s,\xi)| + |\xi|^{|\beta|-1}|\hat{u}_t(s,\xi)| \Big) \quad \mbox{for} \quad |\beta|\geq 1, \\
%|\xi|^{|\beta|}|\hat{u}_t(t,\xi)| \lesssim \exp\bigg( -\frac{|\xi|^2}{6}\int_{s}^tg(\tau)d\tau \bigg)\Big( |\xi|^{|\beta|+1}|\hat{u}(s,\xi)| + |\xi|^{|\beta|}|\hat{u}_t(s,\xi)| \Big) \quad \mbox{for} \quad |\beta|\geq 0.
%\end{align*}
%These estimates imply the desired statements of the proposition.
\end{proof}

\subsection{Considerations in the elliptic zone $\Zell$} \label{SecDecresingZell}

To estimate $-g'(t)$ we use the definition of the elliptic zone. We get
\begin{equation*} \label{estimategprime}
-\frac{g'(t)}{2} \leq 1-\frac{g'(t)}{2}=h(t)\leq \frac{g(t)^2|\xi|^2}{5}.
\end{equation*}
To estimate  $g''(t)$ we use assumption \textbf{(D3)}.

Let us define the following classes of symbols related to the properties of $g=g(t)$ and $\Zell$.
\begin{definition} \label{Definition4.1}
A function $f=f(t,\xi)$ belongs to the elliptic symbol class $S_{\text{ell}}^\ell\{m_1,m_2,m_3\}$ if it holds
\begin{equation*} \label{DecreasingSymbolClassEll}
|D_t^kf(t,\xi)|\leq C_{k}|\xi|^{m_1}g(t)^{m_2}\bigg( \frac{1}{1+t} \bigg)^{m_3+k}
\end{equation*}
for all $(t,\xi)\in \Zell$ and all $k\leq \ell$.
\end{definition}
The further considerations are basing on the following rules of the symbolic calculus.
\begin{proposition} \label{DecreasingPropSymbolEll}
The following statements are true:
\begin{itemize}
\item $S_{\text{ell}}^\ell\{m_1,m_2,m_3\}$ is a vector space for all nonnegative integers $\ell$;
%\item $S_{\text{ell}}^\ell\{m_1,m_2,m_3\}\hookrightarrow S_{\text{ell}}^{\ell}\{m_1+2k,m_2+k,m_3-k\}$ for all $k\geq 0$;
\item $S_{\text{ell}}^\ell\{m_1,m_2+k,m_3\}\hookrightarrow S_{\text{ell}}^{\ell}\{m_1,m_2,m_3\}$ for $k\geq 0$;
\item $S_{\text{ell}}^\ell\{m_1,m_2,m_3\}\cdot S_{\text{ell}}^{\ell}\{m_1',m_2',m_3'\}\hookrightarrow S^{\ell}_{\text{ell}}\{m_1+m_1',m_2+m_2',m_3+m_3'\}$;
\item $D_t^kS_{\text{ell}}^\ell\{m_1,m_2,m_3\}\hookrightarrow S_{\text{ell}}^{\ell-k}\{m_1,m_2,m_3+k\}$
for all nonnegative integers $\ell$ with $k\leq \ell$.
\end{itemize}
\end{proposition}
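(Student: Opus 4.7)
The plan is to verify each of the four statements by a direct unfolding of Definition \ref{Definition4.1}, using only elementary properties of $g=g(t)$ together with the Leibniz rule. The one analytic input needed is that $g$ is bounded on $[0,\infty)$; this follows from condition \textbf{(D1)}, which gives $g'\leq 0$ and hence $0<g(t)\leq g(0)$ for all $t\geq 0$. Beyond this, no structural information about the geometry of $\Zell$ enters.

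First, I would handle the vector space property by linearity of $D_t^k$ together with the triangle inequality: if $f_1,f_2\in S_{\text{ell}}^\ell\{m_1,m_2,m_3\}$ with constants $C_{1,k}$ and $C_{2,k}$, then $\alpha_1 f_1+\alpha_2 f_2$ belongs to the same class with constants $|\alpha_1|C_{1,k}+|\alpha_2|C_{2,k}$. For the embedding in the second assertion, I would write $g(t)^{m_2+k}=g(t)^{m_2}g(t)^k\leq g(0)^k g(t)^{m_2}$ for $k\geq 0$, absorbing $g(0)^k$ into the symbol constants uniformly in $(t,\xi)\in\Zell$; the sign of $m_2$ is irrelevant since the multiplicative factor $g(t)^k$ is bounded.

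For the product rule, I would apply the Leibniz formula
\begin{equation*}
D_t^k(fh)(t,\xi)=\sum_{j=0}^k\binom{k}{j}D_t^j f(t,\xi)\,D_t^{k-j}h(t,\xi),
\end{equation*}
and bound each summand termwise using the defining estimates for $f\in S_{\text{ell}}^\ell\{m_1,m_2,m_3\}$ and $h\in S_{\text{ell}}^\ell\{m_1',m_2',m_3'\}$; the powers of $|\xi|$ and $g(t)$ add, while $(1+t)^{-(m_3+j)}(1+t)^{-(m_3'+k-j)}=(1+t)^{-(m_3+m_3'+k)}$ yields exactly the desired decay in $t$. Finally, the derivative rule is bookkeeping: if $f\in S_{\text{ell}}^\ell\{m_1,m_2,m_3\}$ and $k\leq\ell$, then for $0\leq j\leq \ell-k$ one has $D_t^j(D_t^k f)=D_t^{j+k}f$, and the hypothesis provides exactly the bound with weight $(1+t)^{-((m_3+k)+j)}$ required for membership in $S_{\text{ell}}^{\ell-k}\{m_1,m_2,m_3+k\}$.

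No step presents a genuine obstacle; the content is formal and essentially identical to that of analogous statements for the symbol classes $S_{\text{ell}}^\ell\{m_1,m_2\}$ considered earlier (Proposition \ref{Prop.Symbol.Ell.}). The only potential pitfall is notational, namely keeping track of which of the three parameters $m_1,m_2,m_3$ shifts under each operation, and ensuring that all constants are uniform in $\xi$ and depend only on the order of differentiation.
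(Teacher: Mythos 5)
The paper states Proposition \ref{DecreasingPropSymbolEll} without proof, so there is no argument to compare against; your proposal supplies the expected one and is correct. You rightly identify the only nontrivial input, namely that $g$ is bounded above (a consequence of $g'\leq 0$ in \textbf{(D1)}, giving $g(t)\leq g(0)$), which is exactly what the second embedding needs, while the remaining three items are the standard linearity, Leibniz, and index-shift bookkeeping.
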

Let us turn to the equation \eqref{modellinearviscoelasticdampedFourier} in the following form:
\begin{equation} \label{EqNonIntegrableCasewithD}
D_t^2u - |\xi|^2u - ig(t)|\xi|^2D_tu = 0.
\end{equation}
If we introduce the micro-energy $U=U(t,\xi)$ in $\Zell$ by $U=(|\xi|\hat{u},D_t\hat{u})^\text{T}$, then the corresponding first-order system of \eqref{EqNonIntegrableCasewithD} leads to
\begin{equation} \label{EqDecreasingSzstemU}
D_tU=\underbrace{\left( \begin{array}{cc}
0 & |\xi| \\
|\xi| & ig(t)|\xi|^2
\end{array} \right)}_{A(t,\xi)}U.
\end{equation}
\begin{proposition} \label{PropDecreasingEll}
The following estimates hold for the solutions to (\ref{EqDecreasingSzstemU}) for all $t\in[0,t_{\xi_1}]$:
\begin{align*}
|\xi|^{|\beta|}|\hat{u}(t,\xi)| &\lesssim \exp\bigg( -C\int_0^t\frac{1}{g(\tau)}d\tau \bigg)\Big( |\xi|^{|\beta|}|\hat{u}_0(\xi)| + |\xi|^{|\beta|-1|}|\hat{u}_1(\xi)| \Big) \quad \mbox{for} \quad |\beta|\geq1, \\
|\xi|^{|\beta|}|\hat{u}_t(t,\xi)| &\lesssim \exp\bigg( -C\int_{0}^t\frac{1}{g(\tau)}d\tau \bigg)\Big( |\xi|^{|\beta|+1}|\hat{u}_0(\xi)| + |\xi|^{|\beta|}|\hat{u}_1(\xi)| \Big) + \exp\bigg( -\frac{|\xi|^2}{2}\int_{0}^tg(\tau)d\tau \bigg)|\xi|^{|\beta|}|\hat{u}_1(\xi)| \quad \mbox{for} \quad |\beta|\geq0.
\end{align*}
\end{proposition}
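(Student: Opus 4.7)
The plan is to apply the two-step elliptic WKB diagonalization scheme directly to the system $D_tU = A(t,\xi)U$ in the elliptic zone, paralleling the proof of Proposition~\ref{Lem.Est.Ell.Zone.Integrable} but exploiting the fact that here both eigenvalues of $A(t,\xi)$ are purely imaginary. Solving $\tau^2 - ig(t)|\xi|^2\tau - |\xi|^2 = 0$ yields
\[ \tau_{1,2}(t,\xi) = \tfrac{i}{2}\big(g(t)|\xi|^2 \mp \sqrt{g(t)^2|\xi|^4 - 4|\xi|^2}\big), \]
and in $\Zell$ one checks $\mathrm{Im}\,\tau_1 \geq 1/g(t)$ (slow dissipative mode) and $\mathrm{Im}\,\tau_2 \geq g(t)|\xi|^2/2$ (fast dissipative mode). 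These are exactly the two decay rates appearing in the final statement, and the asymmetry between the $\hat u$- and $\hat u_t$-estimates will be explained by how the initial data $(|\xi|\hat u_0, -i\hat u_1)^T$ project onto the two eigenvectors.

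First I would form the eigenvector matrix $M(t,\xi)$, normalized so that $\det M \approx ig(t)|\xi|$ and $M$, $M^{-1}$ have uniformly bounded entries in $\Zell$, and compute the remainder $\mathcal R := -M^{-1}D_tM$. Using condition \textbf{(D3)} its entries are bounded by $C/(1+t)$. Extracting $F_0 := \diag\mathcal R$ and performing a second diagonalization through
\[ N_1(t,\xi) = I + \begin{pmatrix} 0 & -\mathcal R_{12}/(\tau_1-\tau_2) \\ -\mathcal R_{21}/(\tau_2-\tau_1) & 0 \end{pmatrix}, \]
which is invertible with uniformly bounded inverse since $|\tau_1-\tau_2| \approx g|\xi|^2 \gtrsim 1$ in $\Zell$, the new remainder $\mathcal R_1$ gains an extra factor $1/(g|\xi|^2(1+t))$ and is uniformly integrable. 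The fundamental solution of the transformed operator then decomposes as $E_{\text{ell}}(t,s,\xi) = H(t,s,\xi)\mathcal Q_{\text{ell}}(t,s,\xi)$ with $H = \exp\big(i\int(\mathcal D + F_0)\big)$ and $|\mathcal Q_{\text{ell}}| \lesssim 1$ via the Peano--Baker series, exactly as in Section~\ref{Section3.2}.

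A careful sign-tracking of the diagonal correction $F_0$ gives $|H_{11}(t,0,\xi)| \lesssim \exp\big(-C\int_0^t d\tau/g(\tau)\big)$ and $|H_{22}(t,0,\xi)| \lesssim (g(0)/g(t))\exp\big(-(|\xi|^2/2)\int_0^t g(\tau)d\tau\big)$ (the $g(0)/g(t)$-factor coming from $\int F_{0,2}d\tau \sim \log(g(0)/g(t))$). The backward transformation $U = MN_1U^{(1)}$ applied to $U(0,\xi) = (|\xi|\hat u_0, -i\hat u_1)^T$ produces slow-mode and fast-mode initial amplitudes of order $|\xi|\hat u_0 + O(1)\hat u_1/(g(0)|\xi|)$ and $O\big(\hat u_0/(g(0)^2|\xi|)\big) + O\big(\hat u_1/(g(0)|\xi|)\big)$, respectively. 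Reading off $U_1 = |\xi|\hat u$ immediately yields the stated $\hat u$-estimate with only the slow rate. Reading off $U_2 = D_t\hat u$ produces the factor $\tau_2/|\xi| \approx ig(t)|\xi|$ for the fast mode; combined with the $g(0)/g(t)$ from $H_{22}$, the $g$-dependence cancels and the clean term $\exp\big(-(|\xi|^2/2)\int g\big)|\hat u_1|$ of the statement emerges.

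The main obstacle is the absorption of the $\hat u_0$-portion of the fast mode, of magnitude $\sim (1/g(0))\exp\big(-(|\xi|^2/2)\int g\big)|\hat u_0|$, into the slow-rate term $\exp(-C\int 1/g)|\xi|^{|\beta|+1}|\hat u_0|$, since the statement carries no fast-decay $\hat u_0$-contribution. This requires the non-trivial $\Zell$-inequality
\[ |\xi|^2 g(\tau) \geq \frac{5}{g(\tau)} \qquad \text{for all } \tau \leq t \text{ with } (t,\xi) \in \Zell, \]
which follows from $g(t)^2|\xi|^2 \geq 5$ together with monotonicity $g(\tau) \geq g(t)$. Integration gives $(|\xi|^2/2)\int_0^t g \geq (5/2)\int_0^t 1/g$, so the fast-decay exponential dominates the slow one up to a factor that can be absorbed into the $|\xi|^{|\beta|+1}/|\xi|^{|\beta|}$ gap (using also $|\xi| \geq \sqrt 5/g(0)$ in $\Zell$), provided the constant $C$ in $\exp(-C\int 1/g)$ is chosen strictly less than $5/2$. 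All remaining steps follow the standard two-step elliptic WKB template.
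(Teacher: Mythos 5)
Your proposal is essentially correct and follows the same skeleton as the paper's argument: two-step elliptic diagonalization of $D_tU = A(t,\xi)U$ with the eigenvalue matrix $M$, remainder $\mathcal R$, diagonal part $F_0$, and a Peano--Baker representation $E = MN_1 H\mathcal Q_{\text{ell}}N_1^{-1}M^{-1}$ with $\mathcal Q_{\text{ell}}$ uniformly bounded. Where you diverge is the final step. The paper passes through the refined matrix estimate of Proposition~\ref{LemmaRefinedEstimatesZellDecreasing} (whose proof is cited to Lemma~3.9 of \cite{ReissigLu2009}), which replaces the crude $(21)$, $(22)$ entries $g(t)|\xi|$ and $g(t)/g(s)$ from the straight-forward estimate by $1/(g(t)|\xi|)$ and $1/(g(t)g(s)|\xi|^2)$, with an extra fast-decay term $\exp(-|\xi|^2\int_s^tg)$ in the $(22)$-entry, and then sets $s=0$. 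You instead project the data $(|\xi|\hat u_0,-i\hat u_1)^{\mathrm T}$ onto the slow and fast eigendirections, track the $g(0)/g(t)$-factors produced by $F_0$ through $M$, $N_1$, $E_d$, and finish with an explicit absorption: the superfluous fast-decaying $\hat u_0$-contribution is dominated by the slow-rate term because in $\Zell$ one has $g(\tau)^2|\xi|^2\ge 5h(\tau)\ge 5$ for all $\tau\le t\le t_{\xi_1}$, hence $(|\xi|^2/2)\int_0^tg\ge(5/2)\int_0^tg^{-1}$, together with $|\xi|\gtrsim 1/g(0)$. This reconstructs the content of the refined estimate from scratch, which makes the cancellation mechanism more transparent and keeps the proof self-contained rather than citing an external lemma.

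Two small points you should tighten. First, the absorption is also needed for the $\hat u$-estimate, not only for $\hat u_t$: the fast mode also feeds into $U_1=|\xi|\hat u$ with amplitude $\sim \exp(-|\xi|^2\int_0^tg)\big(\hat u_0/(g(0)g(t)|\xi|)+\hat u_1/(g(t)|\xi|)\big)$, which is then absorbed by the same zone inequalities; the phrase ``immediately yields the stated $\hat u$-estimate with only the slow rate'' glosses over this. Second, your requirement that the constant $C$ in $\exp(-C\int g^{-1})$ be chosen less than $5/2$ is harmless but should be justified: the slow-mode estimate $|E_d^{(22)}|\lesssim\exp(-C'\int g^{-1})$ gives some specific $C'>0$, and since $\int_0^t g^{-1}\ge 0$ one may always weaken to any $C\in(0,\min(C',5/2)]$, so the absorption inequality and the stated decay are simultaneously satisfied.
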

\begin{proof}
The proof is divided into two steps.\medskip

\noindent \textbf{Step 1.} \emph{A straight-forward estimate for the fundamental solution $E=E(t,s,\xi)$}
\begin{proposition} \label{propEstimateEjk}
The fundamental solution $E=E(t,s,\xi)$ to (\ref{EqDecreasingSzstemU}) satisfies for all $t\geq s$ and $(t,\xi)$, $(s,\xi)\in\Zell$ the following estimates:
\begin{align*} \label{EqPropEstEjk}
\left(\begin{array}{cc}
|E^{(11)}(t,s,\xi)| & |E^{(12)}(t,s,\xi)| \\
|E^{(21)}(t,s,\xi)| & |E^{(22)}(t,s,\xi)|
\end{array}\right) \lesssim \exp\bigg( -C\int_s^t\frac{1}{g(\tau)}d\tau \bigg)\left(\begin{array}{cc}
1 & \dfrac{1}{g(s)|\xi|} \\
g(t)|\xi| & \dfrac{g(t)}{g(s)}
\end{array}\right),
\end{align*}
where the constant $C$ is independent of $(s,\xi), (t,\xi)\in\Zell$.
\end{proposition}
\begin{proof}
Let us carry out the first step of diagonalization. The eigenvalues of the matrix $A=A(t,\xi)$ are
\[ \lambda_k(t,\xi) = \frac{ig(t)|\xi|^2 + (-1)^{k-1}i\sqrt{g^2(t)|\xi|^4-4|\xi|^2}}{2}, \quad k=1,2. \]
In the further calculations we use the following properties of $\lambda_1(t,\xi)$ and $\lambda_2(t,\xi)$.
\begin{lemma} \label{LemmaDecresingImaginary}
It holds
\begin{enumerate}
\item $\Im\lambda_1(t,\xi)+\Im\lambda_2(t,\xi) = g(t)|\xi|^2$, \,\,\, $\Im\lambda_1(t,\xi)\Im\lambda_2(t,\xi) =|\xi|^2$,
\item $\Im\lambda_1(t,\xi)\geq\Im\lambda_2(t,\xi) \geq0$, \,\,\, $|\lambda_1(t,\xi)|\geq |\lambda_2(t,\xi)|$,
\item $\dfrac{g(t)}{2}|\xi|^2 \leq \Im\lambda_1(t,\xi) \leq g(t)|\xi|^2$, \,\,\, $\dfrac{1}{g(t)}\leq \Im\lambda_2(t,\xi)\leq \dfrac{2}{g(t)}$.
\end{enumerate}
\end{lemma}
Then, we introduce the corresponding matrix of eigenvectors $M=M(t,\xi)$ and $M^{-1}=M^{-1}(t,\xi)$ as
\begin{align*}
M(t,\xi) := \left(\begin{array}{cc}
1 & 1 \\
\lambda_1(t,\xi)|\xi|^{-1} & \lambda_2(t,\xi)|\xi|^{-1}
\end{array}\right), \qquad M^{-1}(t,\xi) := \frac{i}{\sqrt{g^2(t)|\xi|^2-4}}\left(\begin{array}{cc}
\lambda_2(t,\xi)|\xi|^{-1} & -1 \\
-\lambda_1(t,\xi)|\xi|^{-1} & 1
\end{array}\right).
\end{align*}
Setting $U^{(1)}(t,\xi):=M^{-1}(t,\xi)U(t,\xi)$ for all $t\geq s$ and $(t,\xi)\in\Zell$ we obtain the following system:
\begin{equation} \label{EqDecreasingSystemU1}
D_tU^{(1)}(t,\xi)=M^{-1}(t,\xi)A(t,\xi)M(t,\xi)U^{(1)}(t,\xi) - M^{-1}(t,\xi)D_tM(t,\xi)U^{(1)}(t,\xi).
\end{equation}
Straight-forward calculations imply
\begin{align}
\label{EqDecreasingD}
\mathcal{D}(t,\xi) &= M^{-1}(t,\xi)A(t,\xi)M(t,\xi) = \left(\begin{array}{cc}
\lambda_1(t,\xi) & 0 \\
0 & \lambda_2(t,\xi)
\end{array} \right),\\
\label{EqDecreasingR}
\mathcal{R}(t,\xi) &= M^{-1}(t,\xi)D_tM(t,\xi) = -\frac{1}{2} \left(\begin{array}{cc}
a+b & a-b \\
-a-b & -a+b
\end{array} \right),
\end{align}
where
\begin{align*} \label{EqDecreasingab}
a := \frac{g'(t)|\xi|^2}{i\sqrt{g^2(t)|\xi|^4-4|\xi|^2}}, \qquad b := \frac{g(t)g'(t)|\xi|^4}{i(g^2(t)|\xi|^4-4|\xi|^2)}.
\end{align*}
The system \eqref{EqDecreasingSystemU1} has diagonal principal part $\mathcal{D}\in S_{\text{ell}}^2\{2,1,0\}$ with the remainder $\mathcal{R}\in S_{\text{ell}}^1\{0,0,1\}$. We carry out one more step of diagonalization procedure. Let
\begin{equation} \label{EqNonIntegrableDecMatrN1}
N^{(1)} := \left( \begin{array}{cc}
0 & \dfrac{R_{12}}{\lambda_2-\lambda_1} \\
\dfrac{R_{21}}{\lambda_1-\lambda_2} & 0
\end{array}\right)\in S_{\text{ell}}^1\{-2,-1,1\}.
\end{equation}
Now we set $N_1(t,\xi) := I + N^{(1)}(t,\xi)$. In order to prove the invertibility of $N_1$ we need to estimate the entries of $N^{(1)}$. We have
\begin{align*}
\dfrac{R_{21}}{\lambda_1-\lambda_2} &= -\frac{1}{2}\frac{g'(t)}{g^2(t)|\xi|^2-4} - \frac{1}{2}\frac{g(t)g'(t)|\xi|}{(g^2(t)|\xi|^2-4)^{\frac{3}{2}}}, \\
\dfrac{R_{12}}{\lambda_2-\lambda_1} &= -\frac{1}{2}\frac{g'(t)}{g^2(t)|\xi|^2-4} + \frac{1}{2}\frac{g(t)g'(t)|\xi|}{(g^2(t)|\xi|^2-4)^{\frac{3}{2}}}.
\end{align*}
From the definition of $\Zell$, we have $g^2(t)|\xi|^2 \geq 5h(t)$. This implies the following estimates:
\begin{align} \label{Eq:EstimatesforInvert}
\frac{1}{g^2(t)|\xi|^2} \leq \frac{1}{5h(t)} \qquad \text{and} \qquad \dfrac{1}{1-\frac{4}{g^2(t)|\xi|^2}} \leq \dfrac{1}{1-\frac{4}{5h(t)}}.
\end{align}
Thus, using the estimates in \eqref{Eq:EstimatesforInvert} we get the following estimates:
\begin{align*}
-\frac{1}{2}\frac{g'(t)}{g^2(t)|\xi|^2-4} &= -\frac{g'(t)}{2}\frac{1}{g^2(t)|\xi|^2\Big( 1-\frac{4}{g^2(t)|\xi|^2} \Big)} \leq -\frac{g'(t)}{2}\frac{1}{5h(t)\Big( 1-\frac{4}{5h(t)} \Big)} = -\frac{g'(t)}{2-5g'(t)} \leq \frac{1}{5}.
\end{align*}
Using the previous estimate, we have
\begin{align*}
-\frac{1}{2}\frac{g(t)g'(t)|\xi|}{(g^2(t)|\xi|^2-4)^{\frac{3}{2}}} &= -\frac{g'(t)}{2}\frac{1}{g^2(t)|\xi|^2\Big( 1-\frac{4}{g^2(t)|\xi|^2} \Big)^{\frac{3}{2}}} \leq -\frac{g'(t)}{2-5g'(t)}\frac{1}{\Big( 1-\frac{4}{5h(t)} \Big)^{\frac{1}{2}}} \leq \frac{1}{5}\frac{1}{\sqrt{1-\frac{4}{5}}} = \frac{1}{\sqrt{5}}.
\end{align*}
Therefore, the previous two inequalities guarantee that the matrix $N_1=N_1(t,\xi)$ is invertible.

Let $\mathcal{R}_1:=-N_1^{-1}\big( (D_t-\mathcal{R})N^{(1)}+N^{(1)}F^{(1)} \big)$. Then, in $\Zell$ we have $\mathcal{R}_1\in S_{\text{ell}}^0\{-2,1,2\}$ such that the following identity holds:
\[ \big( D_t-\mathcal{D}(t,\xi)-\mathcal{R}(t,\xi) \big)N_1(t,\xi) = N_1(t,\xi)\big( D_t-\mathcal{D}(t,\xi)-F^{(1)}(t,\xi)-\mathcal{R}_1(t,\xi) \big), \]
where $F^{(1)}=\diag \mathcal{R}$ and, $\mathcal{D}$ and $\mathcal{R}$ are defined in \eqref{EqDecreasingD} and \eqref{EqDecreasingR}, respectively.

The representation of $\mathcal{R}_1$ follows immediately from the fact that $[N^{(1)},\mathcal{D}]=\mathcal{R}-F^{(1)}$. By taking into consideration the matrices $F^{(1)}\in S_{\text{ell}}^1\{0,0,1\}$, $N^{(1)}\in S_{\text{ell}}^1\{-2,-1,1\}$ and $\mathcal{R}\in S_{\text{ell}}^1\{0,0,1\}$, and using the property of symbol classes from Proposition \ref{DecreasingPropSymbolEll} we may conclude that $\mathcal{R}_1\in S_{\text{ell}}^{0}\{-2,-1,2\}$.

Let $U^{(2)}(t,\xi):=N_1^{-1}(t,\xi)M^{-1}(t,\xi)U(t,\xi)$, then we obtain the following equivalent problem to \eqref{EqDecreasingSzstemU} for $U^{(2)}(t,\xi)$:
\begin{equation*}
\big( D_t-\mathcal{D}(t,\xi)-F^{(1)}(t,\xi)-\mathcal{R}_1(t,\xi) \big)U^{(2)}(t,\xi) = 0,
\end{equation*}
for all $t\geq s$ and $(t,\xi), (s,\xi)\in\Zell$. Thus, we have obtained in $\Zell$ the diagonalization of the system \eqref{EqDecreasingSzstemU} modulo remainder $\mathcal{R}_1\in S_{\text{ell}}^{0}\{-2,-1,2\}$. Namely, the entries of the matrix $\mathcal{R}_1=\mathcal{R}_1(t,\xi)$ are uniformly integrable over the elliptic zone $\Zell$. For this reason, the matrix $\mathcal{R}_1=\mathcal{R}_1(t,\xi)$ belongs to $L^1_{\text{loc}}(\Zell)$. Hence, we can find the solution to the system
\[ \big( D_t-\mathcal{D}(t,\xi)-F^{(1)}(t,\xi)-\mathcal{R}_1(t,\xi) \big)U^{(2)}(t,\xi) = 0. \]
We can write $U^{(2)}(t,\xi)=E_2(t,s,\xi)U^{(2)}(s,\xi)$, where $E_2=E_2(t,s,\xi)$ is the fundamental solution to the system
\[ D_tE_2(t,s,\xi) = \big( \mathcal{D}(t,\xi)+F^{(1)}(t,\xi)+\mathcal{R}_1(t,\xi) \big)E_2(t,s,\xi), \quad E_2(s,s,\xi) = I, \]
for all $t\geq s$ and $(t,\xi), (s,\xi)\in\Zell$. First, we estimate $E_d=E_d(t,s,\xi)$ as the fundamental solution of the diagonal part of this system, that is,
\[ D_tE_d(t,s,\xi) = \big( \mathcal{D}(t,\xi)+F^{(1)}(t,\xi) \big)E_d(t,s,\xi), \quad E_d(s,s,\xi) = I, \]
for all $t\geq s$ and $(t,\xi), (s,\xi)\in\Zell$. Thus,
\begin{align*}
E_d^{(11)}(t,s,\xi) &= \exp\bigg\{ -\frac{1}{2}\int_s^t\bigg( 1+\frac{g'(\tau)|\xi|^2}{g^2(\tau)|\xi|^4-4|\xi|^2} \bigg)\bigg( \sqrt{g^2(\tau)|\xi|^4-4|\xi|^2}+g(\tau)|\xi|^2 \bigg)d\tau \bigg\}, \\
E_d^{(22)}(t,s,\xi) &= \exp\bigg\{ \frac{1}{2}\int_s^t\bigg( 1+\frac{g'(\tau)|\xi|^2}{g^2(\tau)|\xi|^4-4|\xi|^2} \bigg)\bigg( \sqrt{g^2(\tau)|\xi|^4-4|\xi|^2}-g(\tau)|\xi|^2 \bigg)d\tau \bigg\} ,\\
E_d^{(12)}(t,s,\xi) &= E_d^{(21)}(t,s,\xi) = 0.
\end{align*}
\begin{proposition} \label{LemmaEstimateEd}
We have the following estimate for all $(t,\xi), (s,\xi) \in\Zell$:
\[ |E_d(t,s,\xi)|\lesssim \exp\bigg( -C\int_s^t\frac{1}{g(\tau)}d\tau \bigg) \]
with a positive constant $C$ which is independent of $(t,\xi), (s,\xi)\in\Zell$.
\end{proposition}
\begin{proof}
The estimate for $E_d=E_d(t,s,\xi)$ will be determined by the estimate of $E_d^{(22)}=E_d^{(22)}(t,s,\xi)$. By applying the definition of the elliptic zone and Lemma \ref{LemmaDecresingImaginary}, we get the following estimates:
\begin{align*}
\frac{1}{2}\bigg( \sqrt{g^2(t)|\xi|^4-4|\xi|^2}-g(t)|\xi|^2 \bigg)\leq -\frac{1}{g(t)} \qquad \mbox{and} \qquad \frac{|g'(\tau)||\xi|^2}{g^2(\tau)|\xi|^4-4|\xi|^2}\leq \frac{2}{5} \quad \mbox{by (\ref{Eq:EstimatesforInvert})}.
\end{align*}
This completes the proof.
\end{proof}
The fundamental solution $E_2=E_2(t,s,\xi)$ satisfies
\[ \big( D_t-\mathcal{D}(t,\xi)-F^{(1)}(t,\xi)-\mathcal{R}_1(t,\xi) \big)E_2(t,s,\xi) = 0, \quad E_2(s,s,\xi) = I \]
for all $t\geq s$ and $(t,\xi), (s,\xi)\in\Zell$. Following the same procedure as in the proof of Proposition \ref{Lem.Est.Ell.Zone}, we have
\begin{align*}
E_2(t,s,\xi) &= \exp\bigg( i\int_s^t\Big( \mathcal{D}(\tau,\xi)+F^{(1)}(\tau,\xi) \Big)d\tau \bigg)E_2(s,s,\xi) \\
& \quad + i\int_s^t\exp\bigg(i\int_\theta^t \Big( \mathcal{D}(\tau,\xi)+F^{(1)}(\tau,\xi) \Big)d\tau \bigg)\mathcal{R}_1(\theta,\xi)E_2(t,s,\xi)d\theta.
\end{align*}
Then, using the expression for  $E_d^{(22)}=E_d^{(22)}(t,s,\xi)$ we introduce the weight
\begin{equation} \label{Definition_Beta}
\beta(t,\xi) = \frac{1}{2}\bigg( 1+\frac{g'(t)|\xi|^2}{g^2(t)|\xi|^4-4|\xi|^2} \bigg)\bigg( \sqrt{g^2(t)|\xi|^4-4|\xi|^2}-g(t)|\xi|^2 \bigg).
\end{equation}
Let us define
\begin{equation} \label{Eq:NonintegrableDecDefQ}
Q_{\text{ell}}=Q_{\text{ell}}(t,s,\xi):=\exp\bigg( -\int_s^t\beta(\tau,\xi)d\tau \bigg)E_2(t,s,\xi).
\end{equation}
Then, we get
\begin{align*}
\mathcal{Q}_{\text{ell}}(t,s,\xi) &= \exp \bigg\{ \int_{s}^{t}\Big( i\mathcal{D}(\tau,\xi)+iF^{(1)}(\tau,\xi)-\beta(\tau,\xi)I \Big)d\tau \bigg\}\\
& \quad + \int_{s}^{t} \exp \bigg\{ \int_{\theta}^{t}\Big( i\mathcal{D}(\tau,\xi)+iF^{(1)}(\tau,\xi)-\beta(\tau,\xi)I \Big)d\tau \bigg\}\mathcal{R}_1(\theta,\xi)\mathcal{Q}_{\text{ell}}(\theta,s,\xi)\,d\theta.
\end{align*}
Furthermore,
\begin{align*}
H(t,s,\xi) & =\exp \bigg\{ \int_{s}^{t}\big( i\mathcal{D}(\tau,\xi)+iF^{(1)}(\tau,\xi)-\beta(\tau,\xi)I \big)d\tau \bigg\}\\
& = \diag \bigg( \exp \bigg\{ -\frac{1}{2}\int_{s}^{t}\Big( \sqrt{g^2(\tau)|\xi|^4-|\xi|^2}+\frac{g'(\tau)|\xi|^2}{\sqrt{g^2(\tau)|\xi|^4-4|\xi|^2}} \bigg)d\tau \bigg\}, 1 \bigg).
\end{align*}
Hence, the matrix $H=H(t,s,\xi)$ is uniformly bounded for $(s,\xi), (t,\xi)\in\Zell$. Taking account of $\mathcal{R}_1\in S_{\text{ell}}^{0}\{-2,-1,2\}$, the matrix $Q_{\text{ell}}=Q_{\text{ell}}(t,s,\xi)$ which is given by the matrix representation, is uniformly bounded in $\Zell$. From the last consideration we may conclude
\begin{align*}
\left(\begin{array}{cc}
|E_2^{(11)}(t,s,\xi)| & |E_2^{(12)}(t,s,\xi)| \\
|E_2^{(21)}(t,s,\xi)| & |E_2^{(22)}(t,s,\xi)|
\end{array}\right) \lesssim \exp\bigg( -C\int_s^t\frac{1}{g(\tau)}d\tau \bigg)\left(\begin{array}{cc}
1 & 1 \\
1 & 1
\end{array}\right)
\end{align*}
for all $t\geq s$ and $(s,\xi), (t,\xi)\in\Zell$. From $U^{(2)}(t,\xi)=N_1^{-1}(t,\xi)M^{-1}(t,\xi)U(t,\xi)$ the backward transformation gives the representation
\[ E(t,s,\xi) = M(t,\xi)N_1(t,\xi)E_2(t,s,\xi)N_1^{-1}(s,\xi)M^{-1}(s,\xi). \]
Due to Proposition \ref{LemmaEstimateEd} and the uniform bounded behavior of $Q_{\text{ell}}$ and $N_1$ we have
\begin{align*}
(|E(t,s,\xi)|) &\lesssim |M(t,\xi)|\left(\begin{array}{cc}
1 & 1 \\
1 & 1
\end{array}\right)\exp\bigg( -C\int_s^t\frac{1}{g(\tau)}d\tau \bigg)\left(\begin{array}{cc}
1 & 1 \\
1 & 1
\end{array}\right)|M^{-1}(s,\xi)| \\
&\lesssim \exp\bigg( -C\int_s^t\frac{1}{g(\tau)}d\tau \bigg)\left(\begin{array}{cc}
1 & \dfrac{1}{g(s)|\xi|} \\
g(t)|\xi| & \dfrac{g(t)}{g(s)}
\end{array}\right),
\end{align*}
where we used $|\lambda_1(t,\xi)|\approx g(t)|\xi|^2$, $|\lambda_2(t,\xi)|\approx \dfrac{1}{g(t)}$, $|\det M(t,\xi)|\approx g(t)|\xi|$, the definition of $\Zell$ and the fact that $g$ is decreasing for $s\leq t$. These estimates give the desired statements and the proof is completed.
\end{proof}
\noindent \textbf{Step 2.} \emph{A refined estimate for the fundamental solution $E=E(t,s,\xi)$}
\begin{proposition} \label{LemmaRefinedEstimatesZellDecreasing}
The fundamental solution $E=E(t,s,\xi)$ satisfies for all $(t,\xi)$, $(s,\xi)\in\Zell$ the following estimates:
\begin{align*}
& \left(\begin{array}{cc}
|E^{(11)}(t,s,\xi)| & |E^{(12)}(t,s,\xi)| \\
|E^{(21)}(t,s,\xi)| & |E^{(22)}(t,s,\xi)|
\end{array}\right) \\ & \qquad \qquad \lesssim \exp\bigg( -C\int_s^t\frac{1}{g(\tau)}d\tau \bigg)\left(\begin{array}{cc}
1 & \dfrac{1}{g(s)|\xi|} \\
\dfrac{1}{g(t)|\xi|} & \dfrac{1}{g(t)g(s)|\xi|^2}
\end{array}\right)
+ \exp\bigg( -|\xi|^2\int_s^tg(\tau)d\tau \bigg)\left(\begin{array}{cc}
0 & 0 \\
0 & 1
\end{array} \right),
\end{align*}
where the constant $C$ is independent of $(s,\xi), (t,\xi)\in\Zell$.
\end{proposition}
\begin{proof} Using conditions \textbf{(D1)}, \textbf{(D2)} and \textbf{(D3)}, we may follow the techniques of the proof of Lemma 3.9 in \cite{ReissigLu2009}.
\end{proof}
\begin{corollary}
We have the following representation of solutions to (\ref{EqDecreasingSzstemU}) for all $t\geq s$ and $(t,\xi), (s,\xi)\in \Zell$:
\begin{align*}
\left(\begin{array}{cc}
|\xi|\hat{u}(t,\xi) \\
D_t\hat{u}(t,\xi)
\end{array}\right) = \exp\bigg( \int_s^t\beta(s,\xi)ds \bigg)M(t,\xi)N_1(t,\xi)Q_{\text{ell}}(t,s,\xi) N_1^{-1}(s,\xi)M^{-1}(s,\xi)\left(\begin{array}{cc}
|\xi|\hat{u}(s,\xi) \\
D_t\hat{u}(s,\xi)
\end{array}\right),
\end{align*}
where from \eqref{Eq:NonintegrableDecDefQ} we used $E_2(t,s,\xi) = \exp\Big( \int_s^t\beta(\tau,\xi)d\tau \Big)Q_{\text{ell}}(t,s,\xi)$ with $\beta=\beta(t,\xi)$ in \eqref{Definition_Beta}.
\end{corollary}
Taking account of the representation of solutions from the previous corollary with $s=0$ and the refined estimates from Proposition \ref{LemmaRefinedEstimatesZellDecreasing}, it follows
\begin{align*}
|\xi||\hat{u}(t,\xi)| &\lesssim \exp\bigg( -C\int_0^t\frac{1}{g(\tau)}d\tau \bigg)\Big( |\xi||\hat{u}_0(\xi)| + |\hat{u}_1(\xi)| \Big), \\
|\hat{u}_t(t,\xi)| &\lesssim  \exp\bigg( -C\int_0^t\frac{1}{g(\tau)}d\tau \bigg)\Big( |\xi||\hat{u}_0(\xi)| + |\hat{u}_1(\xi)| \Big) + \exp\bigg( -|\xi|^2\int_0^tg(\tau)d\tau \bigg)|\hat{u}_1(\xi)|.
\end{align*}
This completes the proof of Proposition \ref{PropDecreasingEll}.
\end{proof}
\subsection{Gluing procedure} \label{Section4.44}
For large frequencies we have to glue the statements from Proposition \ref{PropDecreasingHyp}, Proposition \ref{PropDecreasingRed} and Proposition \ref{PropDecreasingEll}. We are able to extend the estimates from $\Zhyp$ in Proposition \ref{PropDecreasingHyp} to $\Zred$ in Proposition \ref{PropDecreasingRed}. For this reason, we obtain the following statement.
\begin{corollary} \label{CorDecreasingGlueHypRed}
The following estimates hold for all $t \geq s \geq t_{\xi_1}$:
\begin{align*}
|\xi|^{|\beta|}|\hat{u}(t,\xi)| \lesssim \exp\bigg( -\frac{|\xi|^2}{6}\int_s^tg(\tau)d\tau \bigg)\Big( |\xi|^{|\beta|}|\hat{u}(s,\xi)| + |\xi|^{|\beta|-1}|\hat{u}_t(s,\xi)| \Big) \quad \mbox{for} \quad |\beta|\geq 1, \\
|\xi|^{|\beta|}|\hat{u}_t(t,\xi)| \lesssim \exp\bigg( -\frac{|\xi|^2}{6}\int_s^tg(\tau)d\tau \bigg)\Big( |\xi|^{|\beta|+1}|\hat{u}(s,\xi)| + |\xi|^{|\beta|}|\hat{u}_t(s,\xi)| \Big) \quad \mbox{for} \quad |\beta|\geq 0.
\end{align*}
\end{corollary}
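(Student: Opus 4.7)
The strategy is to observe that the hyperbolic estimate in Proposition \ref{PropDecreasingHyp} involves the exponential decay rate $\tfrac{|\xi|^2}{2}$ while the reduced-zone estimate in Proposition \ref{PropDecreasingRed} only yields rate $\tfrac{|\xi|^2}{6}$. Since $\tfrac{1}{2}>\tfrac{1}{6}$, monotonicity of $x\mapsto e^{-x}$ gives
\[
\exp\bigg(-\frac{|\xi|^2}{2}\int_{a}^{b}g(\tau)d\tau\bigg)\;\le\;\exp\bigg(-\frac{|\xi|^2}{6}\int_{a}^{b}g(\tau)d\tau\bigg)
\]
whenever $a\le b$, so the weaker rate $\tfrac{|\xi|^2}{6}$ can be used uniformly across both zones. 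Thus everything reduces to a concatenation argument along the trajectory $\{(t,\xi):t\ge s\}$ for fixed $\xi$.

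I would then distinguish three sub-cases according to the location of $s$ and $t$ relative to the separating line $t_{\xi_2}$ between $\Zred$ and $\Zhyp$ (recall that $t_{\xi_1}\le t_{\xi_2}$):
\emph{(i)} $t_{\xi_1}\le s\le t\le t_{\xi_2}$: apply Proposition \ref{PropDecreasingRed} directly;
\emph{(ii)} $t_{\xi_2}\le s\le t$: apply Proposition \ref{PropDecreasingHyp} and weaken the exponential factor from rate $\tfrac12$ to rate $\tfrac16$ by the monotonicity above;
\emph{(iii)} $t_{\xi_1}\le s\le t_{\xi_2}\le t$: split the interval at $t_{\xi_2}$, use Proposition \ref{PropDecreasingHyp} on $[t_{\xi_2},t]$ and Proposition \ref{PropDecreasingRed} on $[s,t_{\xi_2}]$, and feed the former into the latter via the continuity of $\hat{u}(\cdot,\xi)$ and $\hat{u}_t(\cdot,\xi)$ at $t=t_{\xi_2}$. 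The multiplicative structure of the two exponentials gives
\[
\exp\bigg(-\frac{|\xi|^2}{2}\int_{t_{\xi_2}}^{t}g\,d\tau\bigg)\exp\bigg(-\frac{|\xi|^2}{6}\int_{s}^{t_{\xi_2}}g\,d\tau\bigg)\le \exp\bigg(-\frac{|\xi|^2}{6}\int_{s}^{t}g\,d\tau\bigg),
\]
which matches the desired right-hand side.

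No real obstacle is expected: the argument is pure bookkeeping, the only subtlety being that the data terms $|\xi|^{|\beta|}|\hat{u}(t_{\xi_2},\xi)|+|\xi|^{|\beta|-1}|\hat{u}_t(t_{\xi_2},\xi)|$ produced at the intermediate time must be re-estimated by $|\xi|^{|\beta|}|\hat{u}(s,\xi)|+|\xi|^{|\beta|-1}|\hat{u}_t(s,\xi)|$; this is precisely what Proposition \ref{PropDecreasingRed} delivers on $[s,t_{\xi_2}]$, and an analogous bookkeeping is done for the derivative estimate with $|\beta|\ge 0$. Combining the three cases yields the claimed inequalities in Corollary \ref{CorDecreasingGlueHypRed}.
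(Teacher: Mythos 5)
Your proposal is correct and matches the approach the paper implicitly intends (the paper states Corollary \ref{CorDecreasingGlueHypRed} without writing out the casework, pointing only to the statement ``we are able to extend the estimates from $\Zhyp$ to $\Zred$''). Your three-case split at the separating line $t_{\xi_2}$, combined with weakening the hyperbolic rate from $\tfrac{1}{2}$ to $\tfrac{1}{6}$, is exactly the intended gluing; the only slip is the phrase ``feed the former into the latter,'' which should read the other way around (the reduced-zone bound on $\hat u(t_{\xi_2},\xi),\hat u_t(t_{\xi_2},\xi)$ is substituted into the hyperbolic estimate), but the inequalities you write are in the correct order.
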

Finally, we have to glue estimates from Corollary \ref{CorDecreasingGlueHypRed} and estimates from Proposition \ref{PropDecreasingEll} for $t=t_{\xi_1}$.
\begin{corollary} \label{CorDecreasingGlueHypRedEll}
The following estimates hold for all $t\in[t_{\xi_1},\infty)$:
\begin{align*}
|\xi|^{|\beta|}|\hat{u}(t,\xi)| &\lesssim \exp\bigg( -\frac{|\xi|^2}{6}\int_{t_{\xi_1}}^tg(\tau)d\tau \bigg)\exp\bigg( -C\int_0^{t_{\xi_1}}\frac{1}{g(\tau)}d\tau \bigg)\Big( |\xi|^{|\beta|}|\hat{u}_0(\xi)| + |\xi|^{|\beta|-1}|\hat{u}_1(\xi)| \Big) \\
& \quad + \exp\bigg( -\frac{|\xi|^2}{6}\int_{0}^tg(\tau)d\tau \bigg)|\xi|^{|\beta|-1}|\hat{u}_1(\xi)| \quad \mbox{for} \quad |\beta|\geq1, \\
|\xi|^{|\beta|}|\hat{u}_t(t,\xi)| &\lesssim \exp\bigg( -\frac{|\xi|^2}{6}\int_{t_{\xi_1}}^tg(\tau)d\tau \bigg)\exp\bigg( -C\int_0^{t_{\xi_1}}\frac{1}{g(\tau)}d\tau \bigg)\Big( |\xi|^{|\beta|+1}|\hat{u}_0(\xi)| + |\xi|^{|\beta|}|\hat{u}_1(\xi)| \Big) \\
&\quad + \exp\bigg( -\frac{|\xi|^2}{6}\int_{0}^tg(\tau)d\tau \bigg)|\xi|^{|\beta|}|\hat{u}_1(\xi)| \quad \mbox{for} \quad |\beta|\geq0.
\end{align*}
\end{corollary}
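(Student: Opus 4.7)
The plan is simply to apply Corollary \ref{CorDecreasingGlueHypRed} at the matching time $s=t_{\xi_1}$ and to substitute the estimates for $\hat u(t_{\xi_1},\xi)$ and $\hat u_t(t_{\xi_1},\xi)$ supplied by Proposition \ref{PropDecreasingEll}. The point $t=t_{\xi_1}$ lies on the common boundary of $\Zell$ and $\Zred$, so both statements can be evaluated there; by transitivity of the representation $U(t,\xi)=E_\text{hyp/red}(t,t_{\xi_1},\xi)\,U(t_{\xi_1},\xi)$ and $U(t_{\xi_1},\xi)=E_\text{ell}(t_{\xi_1},0,\xi)\,U(0,\xi)$, no new micro-energy calculations are needed.

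Concretely, I first feed the inequalities of Proposition \ref{PropDecreasingEll} (evaluated at $t=t_{\xi_1}$) into the right-hand sides of Corollary \ref{CorDecreasingGlueHypRed} (with $s=t_{\xi_1}$). This produces for the displacement term
\[
|\xi|^{|\beta|}|\hat u(t,\xi)|\lesssim \exp\!\Bigl(-\tfrac{|\xi|^2}{6}\!\int_{t_{\xi_1}}^t g\,d\tau\Bigr)\Bigl(|\xi|^{|\beta|}|\hat u(t_{\xi_1},\xi)|+|\xi|^{|\beta|-1}|\hat u_t(t_{\xi_1},\xi)|\Bigr),
\]
and similarly for $\hat u_t$. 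Proposition \ref{PropDecreasingEll} dominates $|\xi|^{|\beta|}|\hat u(t_{\xi_1},\xi)|$ and the first term of $|\xi|^{|\beta|}|\hat u_t(t_{\xi_1},\xi)|$ by a common factor $\exp\!\bigl(-C\!\int_0^{t_{\xi_1}}\tfrac{1}{g}\,d\tau\bigr)$ times the data in $|\hat u_0|$ and $|\hat u_1|$ with the expected powers of $|\xi|$; those pieces produce the first summands in the claimed estimate without further work.

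The main (and only real) subtlety is the additional term
\[
\exp\!\Bigl(-\tfrac{|\xi|^2}{2}\!\int_0^{t_{\xi_1}} g\,d\tau\Bigr)|\xi|^{|\beta|}|\hat u_1(\xi)|
\]
appearing in the bound of $|\xi|^{|\beta|}|\hat u_t(t_{\xi_1},\xi)|$ from Proposition \ref{PropDecreasingEll}. After multiplication by the hyperbolic/reduced factor $\exp\!\bigl(-\tfrac{|\xi|^2}{6}\!\int_{t_{\xi_1}}^t g\,d\tau\bigr)$ and by $|\xi|^{-1}$ (when it is propagated via the $|\hat u_t(t_{\xi_1},\xi)|$ slot of the $\hat u$-estimate of Corollary \ref{CorDecreasingGlueHypRed}), it must be combined with the outside exponential. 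I would use the trivial comparison
\[
\exp\!\Bigl(-\tfrac{|\xi|^2}{2}\!\int_0^{t_{\xi_1}} g\,d\tau\Bigr)\exp\!\Bigl(-\tfrac{|\xi|^2}{6}\!\int_{t_{\xi_1}}^t g\,d\tau\Bigr)\leq \exp\!\Bigl(-\tfrac{|\xi|^2}{6}\!\int_0^{t} g\,d\tau\Bigr),
\]
which is valid since $\tfrac12\geq\tfrac16$ and $g\geq 0$; this yields the second summand in the stated bound. The analogous computation for $\hat u_t$, where the extra term is absorbed without the $|\xi|^{-1}$-loss, produces the corresponding second summand in the second inequality. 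Gathering these contributions and keeping track of the resulting powers of $|\xi|$ gives exactly the two formulas of Corollary \ref{CorDecreasingGlueHypRedEll}; no further obstacle arises.
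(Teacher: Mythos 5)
Your proof is correct and follows essentially the same route as the paper: apply Corollary \ref{CorDecreasingGlueHypRed} at $s=t_{\xi_1}$ and substitute the bounds from Proposition \ref{PropDecreasingEll}. The paper leaves the merging of the two exponentials implicit, whereas you spell out the comparison $\exp\bigl(-\tfrac{|\xi|^2}{2}\int_0^{t_{\xi_1}} g\bigr)\exp\bigl(-\tfrac{|\xi|^2}{6}\int_{t_{\xi_1}}^{t} g\bigr)\leq \exp\bigl(-\tfrac{|\xi|^2}{6}\int_0^{t} g\bigr)$; this is exactly the right observation and there is nothing more to add.
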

\begin{proof}
Let us begin to estimate $|\xi|^{|\beta|}\hat{u}(t,\xi)$. The statement of Corollary \ref{CorDecreasingGlueHypRed} implies
\[ |\xi|^{|\beta|}|\hat{u}(t,\xi)| \lesssim \exp\bigg( -\frac{|\xi|^2}{6}\int_{t_{\xi_1}}^tg(\tau)d\tau \bigg)\Big( |\xi|^{|\beta|}|\hat{u}(t_{\xi_1},\xi)| + |\xi|^{|\beta|-1}|\hat{u}_t(t_{\xi_1},\xi)| \Big). \]
Using the estimates for $|\xi|^{|\beta|}|\hat{u}(t_{\xi_1},\xi)|$ and $|\xi|^{|\beta|}|\hat{u}_t(t_{\xi_1},\xi)|$ from Proposition \ref{PropDecreasingEll} we have
\begin{align*}
|\xi|^{|\beta|}|\hat{u}(t,\xi)| &\lesssim \exp\bigg( -\frac{|\xi|^2}{6}\int_{t_{\xi_1}}^tg(\tau)d\tau \bigg)\exp\bigg( -C\int_0^{t_{\xi_1}}\frac{1}{g(\tau)}d\tau \bigg)\Big( |\xi|^{|\beta|}|\hat{u}_0(\xi)| + |\xi|^{|\beta|-1}|\hat{u}_1(\xi)| \Big) \\
& \quad + \exp\bigg( -\frac{|\xi|^2}{6}\int_{0}^tg(\tau)d\tau \bigg)|\xi|^{|\beta|-1}|\hat{u}_1(\xi)|, \\
|\xi|^{|\beta|}|\hat{u}_t(t,\xi)| &\lesssim \exp\bigg( -\frac{|\xi|^2}{6}\int_{t_{\xi_1}}^tg(\tau)d\tau \bigg)\exp\bigg( -C\int_0^{t_{\xi_1}}\frac{1}{g(\tau)}d\tau \bigg)\Big( |\xi|^{|\beta|+1}|\hat{u}_0(\xi)| + |\xi|^{|\beta|}|\hat{u}_1(\xi)| \Big)\\
& \quad + \exp\bigg( -\frac{|\xi|^2}{6}\int_{0}^tg(\tau)d\tau \bigg)|\xi|^{|\beta|}|\hat{u}_1(\xi)|.
\end{align*}
This completes the proof.
\end{proof}
For small frequencies we may use the estimates from Proposition \ref{PropDecreasingHyp}, because $t_{\xi_2}=0$.
\begin{corollary} \label{CorDecreasingGlueHypN}
The following estimates hold for all $t\in[0,\infty)$ and $0<|\xi| \leq \frac{\sqrt{3h(0)}}{g(0)}$:
\begin{align*}
|\xi|^{|\beta|}|\hat{u}(t,\xi)| &\lesssim \exp\bigg( -\frac{|\xi|^2}{2}\int_{0}^tg(\tau)d\tau \bigg)\Big( |\xi|^{|\beta|}|\hat{u}_0(\xi)| + |\xi|^{|\beta|-1}|\hat{u}_1(\xi)| \Big) \quad \mbox{for} \quad |\beta|\geq 1, \\
|\xi|^{|\beta|}|\hat{u}_t(t,\xi)| &\lesssim \exp\bigg( -\frac{|\xi|^2}{2}\int_{0}^tg(\tau)d\tau \bigg)\Big( |\xi|^{|\beta|+1}|\hat{u}_0(\xi)| + |\xi|^{|\beta|}|\hat{u}_1(\xi)| \Big) \quad \mbox{for} \quad |\beta|\geq 0.
\end{align*}
\end{corollary}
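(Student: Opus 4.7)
The plan is to reduce the statement to a direct application of Proposition \ref{PropDecreasingHyp} with starting time $s=0$. For this I need to verify that the smallness condition $|\xi|\leq \frac{\sqrt{3h(0)}}{g(0)}$ on the frequency is exactly the condition which forces the whole ray $\{(t,\xi): t\geq 0\}$ to remain inside the hyperbolic zone $\Zhyp$, i.e., that the separating line $t_{\xi_2}$ collapses to the origin for such $\xi$.

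The first step is to rewrite the defining inequality $1-\frac{g(t)^2|\xi|^2}{4h(t)}\geq \frac{1}{4}$ of $\Zhyp$ in the equivalent form $f(t)|\xi|^2\leq 3$, where $f(t):=g(t)^2/h(t)$. Under this reformulation the hypothesis on $\xi$ becomes $f(0)|\xi|^2\leq 3$, hence $(0,\xi)\in\Zhyp$. The second step is to invoke the monotonicity of $f=g^2/h$, which was recorded just before the zone decomposition at the beginning of Section \ref{Section4.3} as a consequence of condition \textbf{(D1)}: since $f$ is nonincreasing on $[0,\infty)$, the inequality $f(t)|\xi|^2\leq f(0)|\xi|^2\leq 3$ is inherited for every $t\geq 0$. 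Consequently $(t,\xi)\in\Zhyp$ for all $t\geq 0$, so that indeed $t_{\xi_2}=0$ for the frequencies under consideration.

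Once $t_{\xi_2}=0$, Proposition \ref{PropDecreasingHyp} is applicable on the full interval $[0,t]$ with $s=0$, and the two inequalities of the corollary follow verbatim upon substituting the initial conditions $\hat{u}(0,\xi)=\hat{u}_0(\xi)$ and $\hat{u}_t(0,\xi)=\hat{u}_1(\xi)$; no polynomial weights of $|\xi|$ or integration limits need to be adjusted. I do not anticipate any real difficulty: the corollary is a pure specialization of the hyperbolic-zone estimate to frequencies for which the solution never exits $\Zhyp$. The only delicate ingredient is the monotonicity of $f=g^2/h$ used in the second step, but this has already been established in the set-up of Section \ref{Section4.3} from the sign conditions in \textbf{(D1)} and need not be re-derived here.
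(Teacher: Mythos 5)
Your proof is correct and follows exactly the route the paper itself takes: the paper states only that $t_{\xi_2}=0$ for small frequencies so that Proposition \ref{PropDecreasingHyp} applies on $[0,t]$, and you have simply made explicit the algebra showing that $|\xi|\leq \sqrt{3h(0)}/g(0)$ is equivalent to $f(0)|\xi|^2\leq 3$ and that, by the already-recorded monotone decrease of $f=g^2/h$, the whole ray $t\geq 0$ stays in $\Zhyp$.
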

\subsection{Energy estimates} \label{Section4.45}
To derive the corresponding energy estimates for large frequencies, using Corollary \ref{CorDecreasingGlueHypRedEll}, we need to estimate the term
\[ \exp\bigg( -C\int_0^{t_{\xi_1}}\frac{1}{g(\tau)}d\tau \bigg) \exp\bigg( -\frac{|\xi|^2}{6}\int_{t_{\xi_1}}^tg(\tau)d\tau \bigg). \]
%This term explains in this part of the extended phase space the competition of influences from different phase functions.
\begin{lemma} \label{LemmaDecreasingAuxiliary}
To a given positive constant $C$ we find a sufficiently small positive constant $C_1$ such that the following estimate holds for $t\geq t_{\xi_1}$:
\[\exp\bigg( -C\int_0^{t_{\xi_1}}\frac{1}{g(\tau)}d\tau \bigg)\exp\bigg( -C_1\int_{t_{\xi_1}}^tg(\tau)d\tau \bigg)\lesssim \exp\Big( -C_1G(t) \Big), \]
where $G=G(t)$ is defined as follows:
\begin{equation} \label{Eq:DefinitionG}
G(t) := 1+\int_0^tg(\tau)d\tau.
\end{equation}
\end{lemma}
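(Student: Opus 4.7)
The plan is to split $G(t)$ at the intermediate time $t_{\xi_1}$ and cancel the common factor on both sides. Writing $G(t)=1+\int_0^{t_{\xi_1}} g(\tau)\,d\tau+\int_{t_{\xi_1}}^{t}g(\tau)\,d\tau$, the right-hand side becomes
\[
\exp(-C_1)\exp\!\Big(-C_1\!\int_0^{t_{\xi_1}}\!g(\tau)\,d\tau\Big)\exp\!\Big(-C_1\!\int_{t_{\xi_1}}^{t}\!g(\tau)\,d\tau\Big),
\]
and the last exponential is exactly the one appearing on the left-hand side. Thus the claim reduces to showing, with a suitable implicit constant independent of $\xi$ and $t$, the estimate
\[
\exp\!\Big(-C\!\int_0^{t_{\xi_1}}\!\frac{d\tau}{g(\tau)}\Big)\;\lesssim\;\exp\!\Big(-C_1\!\int_0^{t_{\xi_1}}\!g(\tau)\,d\tau\Big),
\]
which in turn is equivalent to
\[
C_1\!\int_0^{t_{\xi_1}}\!g(\tau)\,d\tau\;\leq\;C\!\int_0^{t_{\xi_1}}\!\frac{d\tau}{g(\tau)}.
\]

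The key observation that gives this reduced inequality is the monotonicity of $g$ from assumption \textbf{(D1)}: since $g$ is decreasing, $0<g(\tau)\le g(0)$ for every $\tau\ge 0$, so dividing by $g(\tau)$ yields $g(\tau)\le g(0)^2/g(\tau)$. Integrating over $[0,t_{\xi_1}]$ gives the pointwise-to-integral bound
\[
\int_0^{t_{\xi_1}}\!g(\tau)\,d\tau\;\leq\;g(0)^2\!\int_0^{t_{\xi_1}}\!\frac{d\tau}{g(\tau)}.
\]
Choosing any $C_1\in(0,C/g(0)^2]$ turns this into the reduced inequality above.

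Re-multiplying by the common factor $\exp(-C_1\int_{t_{\xi_1}}^{t} g(\tau)\,d\tau)$ and absorbing the harmless constant $e^{C_1}$ (which is independent of $t$ and $\xi$) into the ``$\lesssim$'' completes the proof. There is essentially no obstacle: the argument relies only on the decomposition of $G(t)$ at $t_{\xi_1}$ and the elementary consequence of $g$ being bounded above by $g(0)$; none of the more delicate properties of the separating line or the derivative conditions \textbf{(D2)}--\textbf{(D3)} are needed here. The only point where one has to be slightly careful is ensuring that the choice of $C_1$ depends only on $C$ and on $g(0)$, and not on $\xi$ or $t_{\xi_1}$, which is automatic from the bound above.
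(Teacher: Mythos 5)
Your proof is correct and takes essentially the same route as the paper: both decompose $G(t)$ at $t_{\xi_1}$ and exploit the monotone decrease of $g$ (so $g(\tau)\le g(0)$) to obtain the pointwise comparison $C_1\,g(\tau)\le C/g(\tau)$ once $C_1\le C/g(0)^2$, then integrate and absorb the harmless factor $e^{C_1}$.
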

\begin{proof}
Using the decreasing behavior of $g=g(t)$ implies for all $t\geq 0$ with a suitable $C_1>0$ the relations
\[ -C\frac{1}{g(t)}\leq -C_1g(t), \qquad \mbox{hence,} \qquad  -C\int_0^{t_{\xi_1}}\frac{1}{g(\tau)}d\tau\leq -C_1\int_0^{t_{\xi_1}}g(\tau)d\tau. \]
Considering the definition of $G=G(t)$ we get
\[ -C\int_0^{t_{\xi_1}}\frac{1}{g(\tau)}d\tau\leq C_1\Big( 1-G(t_{\xi_1}) \Big) \qquad \mbox{and} \qquad -C_1\int_{t_{\xi_1}}^tg(\tau)d\tau =  C_1\Big( G(t_{\xi_1})-G(t) \Big). \]
This implies what we wanted to show.
\end{proof}
From Corollary \ref{CorDecreasingGlueHypRedEll} and Lemma \ref{LemmaDecreasingAuxiliary} we obtain for large frequencies the following statement about ``an exponential type decay'' for large frequencies.
\begin{corollary} \label{CorDecreasingGlueAllLarge}
The following estimates hold for all $t\in[0,\infty)$:
\begin{align*}
|\xi|^{|\beta|}|\hat{u}(t,\xi)| &\lesssim \exp\Big( -CG(t) \Big)\Big( |\xi|^{|\beta|}|\hat{u}_0(\xi)| + |\xi|^{|\beta|-1}|\hat{u}_1(\xi)| \Big) \quad \mbox{for} \quad |\beta|\geq 1, \\
|\xi|^{|\beta|}|\hat{u}_t(t,\xi)| &\lesssim \exp\Big( -CG(t) \Big)\Big( |\xi|^{|\beta|+1}|\hat{u}_0(\xi)| + |\xi|^{|\beta|}|\hat{u}_1(\xi)| \Big) \quad \mbox{for} \quad |\beta|\geq 0,
\end{align*}
where $G=G(t)$ is given in \eqref{Eq:DefinitionG}.
\end{corollary}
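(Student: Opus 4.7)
The plan is to combine the statement of Corollary \ref{CorDecreasingGlueHypRedEll} with Lemma \ref{LemmaDecreasingAuxiliary}, exploiting that we are in the large frequency range where $|\xi|$ is bounded away from $0$. Concretely, Corollary \ref{CorDecreasingGlueHypRedEll} applies precisely for those $|\xi|$ for which $t_{\xi_1}>0$, and this leaves room for a positive lower bound $|\xi|\geq C_0$ (for instance, one may take $C_0$ to be the threshold $\tfrac{\sqrt{3h(0)}}{g(0)}$ that separates the large-frequency regime from the small-frequency regime handled in Corollary \ref{CorDecreasingGlueHypN}).

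Starting from Corollary \ref{CorDecreasingGlueHypRedEll}, each of the two estimates splits into two contributions. The first contribution has the form
\[
\exp\bigg( -\tfrac{|\xi|^2}{6}\int_{t_{\xi_1}}^{t}g(\tau)d\tau \bigg)\exp\bigg( -C\int_{0}^{t_{\xi_1}}\tfrac{1}{g(\tau)}d\tau \bigg),
\]
arising from the gluing of the elliptic estimate at $t_{\xi_1}$ with the hyperbolic/reduced estimate on $[t_{\xi_1},t]$. The second contribution is $\exp\big(-\tfrac{|\xi|^2}{6}\int_{0}^{t}g(\tau)d\tau\big)$, which was the leftover term attached to the datum $\hat{u}_1$ in Proposition \ref{PropDecreasingEll}.

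The key step is to apply Lemma \ref{LemmaDecreasingAuxiliary} to the first contribution. Since $\tfrac{|\xi|^2}{6}\geq \tfrac{C_0^2}{6}=:\tilde C$ on the large-frequency range, monotonicity of the exponential gives
\[
\exp\bigg(-\tfrac{|\xi|^2}{6}\int_{t_{\xi_1}}^{t}g(\tau)d\tau\bigg)\leq \exp\bigg(-\tilde C\int_{t_{\xi_1}}^{t}g(\tau)d\tau\bigg),
\]
and, if necessary, I would shrink $\tilde C$ further so that it is admissible as the constant $C_1$ in Lemma \ref{LemmaDecreasingAuxiliary} associated with the given $C$. This lemma then collapses the product of exponentials into $\exp(-\tilde C G(t))$. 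For the second contribution the same lower bound $|\xi|\geq C_0$ and the definition $G(t)=1+\int_0^t g(\tau)d\tau$ yield directly $\exp\big(-\tfrac{|\xi|^2}{6}\int_{0}^{t}g(\tau)d\tau\big)\leq e^{\tilde C}\exp(-\tilde C G(t))$, and the extra multiplicative constant $e^{\tilde C}$ is absorbed into the implicit constant of $\lesssim$.

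Combining both contributions with a common constant $C>0$ (for instance $C=\tilde C$) delivers the two claimed estimates with the expected coefficients on $|\hat{u}_0|$ and $|\hat{u}_1|$. I do not anticipate a genuine obstacle: the argument is essentially bookkeeping together with one careful choice of a uniform small constant. The only subtle point is ensuring the uniformity of $C$ across the large-frequency range, and this is precisely what the lower bound $|\xi|\geq C_0$ and the resulting uniform lower bound on $|\xi|^2/6$ provide, so that the inequalities in Lemma \ref{LemmaDecreasingAuxiliary} apply with constants that do not depend on $\xi$.
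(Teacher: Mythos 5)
Your argument reproduces the paper's route: combine Corollary \ref{CorDecreasingGlueHypRedEll} with Lemma \ref{LemmaDecreasingAuxiliary}, and use the lower bound on $|\xi|$ in the large-frequency regime to freeze the exponent $|\xi|^2/6$ into a constant small enough to serve as the $C_1$ of the lemma. The handling of the two contributions, including absorbing the factor $e^{\tilde C}$ coming from $G(t)=1+\int_0^t g(\tau)\,d\tau$, is correct.

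One point worth noting, which the paper itself also glosses over: Corollary \ref{CorDecreasingGlueHypRedEll} only covers $t\geq t_{\xi_1}$, whereas the statement you are proving asserts the bound for all $t\in[0,\infty)$. For $t\in[0,t_{\xi_1}]$ (elliptic zone, large frequency) one falls back on Proposition \ref{PropDecreasingEll}, which gives decay of the form $\exp\big(-C\int_0^t\frac{1}{g(\tau)}\,d\tau\big)$; the inequality $\frac{1}{g(\tau)}\geq C_1\,g(\tau)$ used in the proof of Lemma \ref{LemmaDecreasingAuxiliary} (valid since $g$ is decreasing) converts this directly into $\exp(-C_1(G(t)-1))\lesssim \exp(-C_1G(t))$, so the same conclusion holds on that initial interval. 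With this one-line patch your proof is complete.
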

For small frequencies we may use the estimates from Corollary \ref{CorDecreasingGlueHypN}.
\subsection{Conclusion} \label{Section4.46}
By Corollary \ref{CorDecreasingGlueHypN} and Corollary \ref{CorDecreasingGlueAllLarge} the proof of Theorem \ref{ThmDecreasing} is completed.
\end{proof}

\section{Models with non-integrable and slowly increasing time-dependent coefficient $g=g(t)$} \label{Section4.4}
We assume that  the function $g=g(t)$ satisfies the following conditions:
\begin{enumerate}
\item[\textbf{(E1)}] $g(t)>0$, $0\leq g'(t)\leq 1$ and $g''(t)\leq 0$ for all $t\in [0,\infty)$,
\item[\textbf{(E2)}] $\dfrac{1}{g}\notin L^1(0,\infty)$,
\item[\textbf{(E3)}] $|g'(t)|\leq C_1g(t)\dfrac{g(t)}{G(t)}$ for all $t\in [0,\infty)$, where $G(t):=1+\displaystyle\int_0^tg(\tau)d\tau$ and $C_1>0$, moreover, $|g''(t)| \leq \dfrac{g'(t)}{g(t)}$ with $t\geq t_0$, where $t_0$ is large.
\end{enumerate}

\begin{theorem} \label{ThmIncreasing}
Let us consider the Cauchy problem \eqref{modellinearviscoelasticdamped}, where the coefficient $g=g(t)$ satisfies the conditions \textbf{(E1)} to \textbf{(E3)}. Then, the Sobolev solution $u=u(t,x)$ satisfies the following estimates:
\begin{align*}
\big\||D|^{|\beta|}u(t,\cdot)\big\|_{L^2} &\lesssim \max\bigg\{ \bigg( 1+\int_0^tg(\tau)d\tau \bigg)^{-\frac{|\beta|}{2}}, \exp\bigg( -C\int_0^t\frac{1}{g(\tau)}d\tau \bigg) \bigg\}\|u_0\|_{H^{|\beta|}} \\
& \quad + \max\bigg\{ \bigg( 1+\int_0^tg(\tau)d\tau \bigg)^{-\frac{|\beta|-1}{2}}, \exp\bigg( -C\int_0^t\frac{1}{g(\tau)}d\tau \bigg) \bigg\}\|u_1\|_{H^{|\beta|-1}} \quad \mbox{for} \quad |\beta|\geq 1, \\
\big\||D|^{|\beta|}u_t(t,\cdot)\big\|_{L^2} &\lesssim  \max\bigg\{ \bigg( 1+\int_0^tg(\tau)d\tau \bigg)^{-\frac{|\beta|+1}{2}}, \exp\bigg( -C\int_0^t\frac{1}{g(\tau)}d\tau \bigg) \bigg\} \|u_0\|_{H^{|\beta|+1}} \\
& \quad + \max\bigg\{ \bigg( 1+\int_0^tg(\tau)d\tau \bigg)^{-\frac{|\beta|}{2}}, \exp\bigg( -C\int_0^t\frac{1}{g(\tau)}d\tau \bigg) \bigg\}\|u_1\|_{H^{|\beta|}}  \quad \mbox{for} \quad |\beta|\geq 0.
\end{align*}
\end{theorem}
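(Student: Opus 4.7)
The plan is to run a zone-based WKB argument that closely mirrors the proof of Theorem \ref{ThmDecreasing}, the main difference being that the monotonicity of $g$ is reversed and the weight $1/(1+t)$ from condition \textbf{(D3)} is replaced by the weight $g/G$ from condition \textbf{(E3)}. First, I would pass to the auxiliary equation \eqref{modellinearauxiliary} through the standard substitution $\hat u(t,\xi)=\exp\bigl(-\tfrac{|\xi|^2}{2}\int_0^t g(\tau)\,d\tau\bigr)v(t,\xi)$ and introduce $h=h(t)$ by $1-g'(t)/2=h(t)^2 g(t)^2$. Conditions \textbf{(E1)}--\textbf{(E3)} guarantee $h>0$, $h$ is smooth, and since $g'\in[0,1]$ one has $h(t)^2\in\bigl[\tfrac{1}{2g(t)^2},\tfrac{1}{g(t)^2}\bigr]$, so that $h\approx 1/g$ is decreasing. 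The separating curve $|\xi|=2h(t)$ then cuts the extended phase space into a hyperbolic zone $\Zhyp=\{|\xi|\le(1-\varepsilon)\cdot 2h(t)\}$, a reduced zone $\Zred$, and an elliptic zone $\Zell=\{|\xi|\ge(1+\varepsilon)\cdot 2h(t)\}$ for a fixed small $\varepsilon>0$; for small enough $|\xi|$ the whole trajectory stays in $\Zhyp$, while for large $|\xi|$ it lies in $\Zell$ from time zero.

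In the hyperbolic zone I would apply one step of hyperbolic WKB-diagonalization to the micro-energy $V=(p\,v,D_t v)^{\mathrm T}$ with $p^2(t,\xi)=g^2|\xi|^2(h^2-|\xi|^2/4)\approx|\xi|^2$, exactly as in Subsection \ref{SecDecresingZhyp}. The remainder matrix has entries controlled by $D_t p/p$, which by \textbf{(E3)} is uniformly integrable over $\Zhyp$, so the resulting fundamental solution is bounded. Unwinding the substitution produces the factor $\exp\bigl(-\tfrac{|\xi|^2}{2}\int_s^t g(\tau)\,d\tau\bigr)$ in front of the estimate for $\hat u$, which after integration against small-frequency data yields the polynomial decay $G(t)^{-|\beta|/2}$ of the energy of order $|\beta|$, following the usual Haraux-type frequency-splitting argument.

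In the elliptic zone I would instead return to the $\hat u$-system $U=(|\xi|\hat u,D_t\hat u)^{\mathrm T}$, $D_t U=A(t,\xi)U$ of \eqref{EqDecreasingSzstemU}, whose eigenvalues $\lambda_{1,2}$ admit the same bounds as in Lemma \ref{LemmaDecresingImaginary}. I would introduce an increasing-case analogue of Definition \ref{Definition4.1} with the weight $g(t)/G(t)$ replacing $1/(1+t)$ in the third slot, carry out two diagonalization steps, and obtain a remainder $\mathcal{R}_1\in S^0_{\mathrm{ell}}\{-2,-1,2\}$ that is uniformly integrable in $t$ over $\Zell$. The Peano--Baker series produces a uniformly bounded $\mathcal{Q}_{\mathrm{ell}}(t,s,\xi)$, and the exponential weight $\beta(t,\xi)$ from \eqref{Definition_Beta} gives the dominant factor $\exp\bigl(-C\int_s^t d\tau/g(\tau)\bigr)$. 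The refined splitting of Proposition \ref{LemmaRefinedEstimatesZellDecreasing} then reproduces both summands of the statement, with the ``fast'' piece $\exp(-|\xi|^2\int g\,d\tau)$ absorbing the Cauchy datum $\hat u_1$; the reduced zone contributes only a harmless multiplicative constant via the monotone energy argument of Subsection \ref{SecDecresingZred}, and a gluing step identical to Corollaries \ref{CorDecreasingGlueHypRed}--\ref{CorDecreasingGlueAllLarge} handles the crossings at $t_{\xi_1}$ and $t_{\xi_2}$.

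The final $L^2$ estimate is assembled by Plancherel: small frequencies contribute the polynomial term $G(t)^{-|\beta|/2}\|u_0\|_{H^{|\beta|}}$ (respectively $G(t)^{-(|\beta|-1)/2}\|u_1\|_{H^{|\beta|-1}}$), while large frequencies contribute the exponential term $\exp\bigl(-C\int_0^t d\tau/g(\tau)\bigr)$ times the same Sobolev norms; the stated $\max$ is the pointwise envelope of the two. The main obstacle will be the elliptic step, because conditions \textbf{(E1)}--\textbf{(E3)} are genuinely weaker than \textbf{(D1)}--\textbf{(D3)}: $g$ is now increasing, so naively $g'/g^2$ need not be as small as the analogous decreasing quantity, and one must use $|g'|\le C_1 g^2/G$ together with $|g''|\le g'/g$ for $t\ge t_0$ to verify that the matrix $N^{(1)}$ in \eqref{EqNonIntegrableDecMatrN1} is a genuine perturbation of $I$ on $\Zell$ and that $\mathcal{R}_1$ inherits the critical symbol class $S^0_{\mathrm{ell}}\{-2,-1,2\}$. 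Once this verification is complete, the remainder of the argument transcribes Section \ref{Section4.3} without essential change.
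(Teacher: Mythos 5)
Your proposal fails on two related structural points that are specific to the increasing case and cannot be absorbed by "transcribing Section \ref{Section4.3} without essential change."

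First, you assert that for small enough $|\xi|$ the whole trajectory stays in $\Zhyp$. This is the decreasing case, and it is exactly backwards here. With $g$ increasing (and $g\to\infty$ in every example treated), your auxiliary $h\approx 1/g$ is decreasing, so the boundary $|\xi|=2h(t)$ collapses toward $|\xi|=0$ as $t\to\infty$: a fixed nonzero frequency eventually enters the elliptic region. Conversely, it is only large frequencies that start and stay elliptic ($t_{\xi_1}=0$). Consequently there is no purely hyperbolic small-frequency region for $t$ large, and the "usual Haraux-type frequency splitting" alone does not produce the polynomial factor $(1+\int_0^t g)^{-|\beta|/2}$; the dominant small-frequency contribution comes from a trajectory that has transitioned into the elliptic zone, carrying both the phase $\exp(-|\xi|^2\int_0^{t_{\xi_1}}g)$ collected before crossing and the phase $\exp(-C\int_{t_{\xi_1}}^t g^{-1})$ collected after. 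The crux of the paper's proof is Proposition \ref{LemmaIncreasingAuxiliary}: an optimization over $|\xi|$ of the product
\[
S_r(t,|\xi|)=|\xi|^r\exp\bigg(-C\int_{t_{\xi_1}}^t\frac{d\tau}{g(\tau)}\bigg)\exp\bigg(-C_N|\xi|^2\int_0^{t_{\xi_1}}g(\tau)\,d\tau\bigg),
\]
showing the maximum is attained when $t=t_{\xi_1}$ and hence bounded by $\big(1+\int_0^t g\big)^{-r/2}$. This computation uses \textbf{(E1)}--\textbf{(E3)} nontrivially (concavity of $g$, $|g''|\leq g'/g$, $\tfrac12\le h\le1$) to control $d_{|\xi|}t_{\xi_1}$; it is genuinely different from Lemma \ref{LemmaDecreasingAuxiliary} of the decreasing case, which relies on $g$ decreasing to compare $1/g$ against $g$. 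Your proposal does not acknowledge this step, and without it the stated $\max$ of polynomial and exponential decay is not obtained.

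Second, the paper inserts a fourth zone, the pseudo-differential zone $\Zpd(N)=\{-N\le 1-\tfrac{g^2|\xi|^2}{4h}\le-\tfrac14\}$, between $\Zred$ and the genuine elliptic zone $\Zell(N)=\{1-\tfrac{g^2|\xi|^2}{4h}\le-N\}$. The diagonalizer $N_1=I+N^{(1)}$ from \eqref{EqNonIntegrableDecMatrN1} is only invertible once $g^2|\xi|^2\ge4(N+1)h$ with $N$ large, and for the increasing coefficient the structure of $g'$ no longer provides the free sign cancellation that made a modest elliptic zone suffice in Section \ref{Section4.3}. The region between $-1/4$ and $-N$ is handled by a direct Gronwall bound on the system \eqref{NonIntegrableIncSystemZpd} (Proposition \ref{PropNonIntegrableIncZpd}), producing exactly the phase $\exp(-\tfrac{1}{4(N+1)}|\xi|^2\int g)$ that enters the optimization above. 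Your three-zone decomposition leaves this annulus untreated: either your $\varepsilon$ is small enough that the "elliptic" WKB diagonalization fails there, or it is large enough that a gap opens between the reduced and elliptic zones. The remaining pieces of your sketch (hyperbolic and reduced zones, the two-step elliptic diagonalization and the refined estimate, the large-frequency $\exp(-C\int g^{-1})$ bound) do match the paper's Propositions \ref{NonIntegrableIncrCorZhyp}, \ref{propEstimateEjkIncreasing} and \ref{Proposition5.4}, so once you add the missing zone and the $S_r$ optimization the argument closes.
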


\begin{remark} \label{Remarkparaboliceffect2}
The statements of Theorem \ref{ThmIncreasing} show that we have the parabolic effect if the term $\exp\Big( -C\int_0^t\frac{1}{g(\tau)}d\tau \Big)$ does not determine the decay. Namely, higher order energies decay faster with increasing order. We see this property in the following examples Example \ref{Exam1NonIntIncreasing}, \ref{Exam2NonIntIncreasing} and \ref{Exam3NonIntIncreasing}. On the contrary, if the term $\exp\Big( -C\int_0^t\frac{1}{g(\tau)}d\tau \Big)$ determines the decay, then we do not have any parabolic effect.
\end{remark}

\begin{exam} \label{Exam1NonIntIncreasing}
Let us choose $g(t)= (1 + t)^\gamma$ with $\gamma\in(0,1)$ (see Example \ref{Example5}). Then, $g=g(t)$ satisfies the assumptions of Theorem \ref{ThmIncreasing}. Consequently, the Sobolev solution $u=u(t,x)$ satisfies the following estimates:
\begin{align*}
\big\||D|^{|\beta|}u(t,\cdot)\big\|_{L^2} &\lesssim (1+t)^{-\frac{|\beta|(1+\gamma)}{2}}\|u_0\|_{H^{|\beta|}} + (1+t)^{-\frac{(|\beta|-1)(1+\gamma)}{2}}\|u_1\|_{H^{|\beta|-1}} \quad \mbox{for} \quad |\beta|\geq 1, \\
\big\||D|^{|\beta|}u_t(t,\cdot)\big\|_{L^2} &\lesssim (1+t)^{-\frac{(|\beta|+1)(1+\gamma)}{2}}\|u_0\|_{H^{|\beta|+1}} + (1+t)^{-\frac{|\beta|(1+\gamma)}{2}}\|u_1\|_{H^{|\beta|}} \quad \mbox{for} \quad |\beta|\geq 0.
\end{align*}
The term $\exp\Big( -C\int_0^t\frac{1}{g(\tau)}d\tau \Big)$ implies a faster decay. \end{exam}
\begin{exam} \label{Exam2NonIntIncreasing}
Let us consider $g(t)=\dfrac{e+t}{\log(e+t)}$. We have
\begin{align*}
\int_0^tg(s)ds  = \int_0^t  \frac{e+s}{\log(e+s)} ds =\frac{1}{2}\frac{(e+t)^2}{\log(e+t)} - \frac{e^2}{2} + \frac{1}{2}\int_0^t  \frac{e+s}{(\log(e+s))^2} ds.
\end{align*}
So, we get the estimate
\[ \int_0^t  \frac{e+s}{\log(e+s)} ds \approx \frac{(e+t)^2}{\log(e+t)}.\]
On the other hand, we have
\[ \int_0^t\frac{1}{g(s)}ds = \int_0^t\frac{\log(e+s)}{e+s}ds = \frac{1}{2}\Big( \log(e+t) \Big)^2-\frac{1}{2}. \]
Then, the Sobolev solution $u=u(t,x)$ satisfies the following estimates:
\begin{align*}
\big\||D|^{|\beta|}u(t,\cdot)\big\|_{L^2} &\lesssim \bigg( \frac{(e+t)^2}{\log(e+t)} \bigg)^{-\frac{|\beta|}{2}} \|u_0\|_{H^{|\beta|}}
+ \bigg( \frac{(e+t)^2}{\log(e+t)} \bigg)^{-\frac{|\beta|-1}{2}} \|u_1\|_{H^{|\beta|-1}} \quad \mbox{for} \quad |\beta|\geq 1, \\
\big\||D|^{|\beta|}u_t(t,\cdot)\big\|_{L^2} &\lesssim \bigg( \frac{(e+t)^2}{\log(e+t)} \bigg)^{-\frac{|\beta|+1}{2}} \|u_0\|_{H^{|\beta|+1}}
+ \bigg( \frac{(e+t)^2}{\log(e+t)} \bigg)^{-\frac{|\beta|}{2}} \|u_1\|_{H^{|\beta|}} \quad \mbox{for} \quad |\beta|\geq 0.
\end{align*}
The term $\exp\Big( -C\int_0^t\frac{1}{g(\tau)}d\tau \Big)$ implies a faster decay.
\end{exam}
\begin{exam} \label{Exam3NonIntIncreasing}
We consider now $g(t)=\dfrac{1+t}{\nu(1+t)}$, where $\nu=\nu(s)$ is a positive strictly increasing function which tends to $\infty$ for $s \to \infty$ with
\[ |\nu^{(k)}(1+t)|\leq C_k \Big( \frac{1}{1+t} \Big)^{k}, \,\, k=1,2 \,\,\, \mbox{and} \,\,\, C_1, C_2 \,\,\, \mbox{are positive constants.} \]
Then, we have
\begin{align*}
\int_0^tg(s)ds = \int_0^t\frac{1+s}{\nu(1+s)}ds &= \frac{1}{2}\frac{(1+s)^2}{\nu(1+s)}\Big|_0^t + \frac{1}{2}\int_0^t(1+s)^2\frac{\nu'(1+s)}{\nu^2(1+s)}ds
\approx \frac{1}{2}\frac{(1+t)^2}{\nu(1+t)}.
\end{align*}
This implies the following estimate:
\begin{align*}
\int_0^t\frac{1+s}{\nu(1+s)}ds \approx \frac{(1+t)^2}{\nu(1+t)}.
\end{align*}
On the other hand, we have
\begin{align*}
\int_0^t\frac{1}{g(s)}ds = \int_0^t\frac{\nu(1+s)}{1+s}ds = C(t_0) + \int_{t_0}^t\frac{\nu(1+s)}{1+s} ds
\geq C(t_0) + \nu(1+t_0) \log \frac{1+t}{1+t_0}.
\end{align*}
Here we use the strictly increasing behavior of $\nu$. Then, for sufficiently large time $t$ and $t_0$ we arrive at the estimate
\begin{align*}
-\int_0^t\frac{\nu(1+s)}{1+s}ds \leq -C(t_0)-\nu(1+t_0)\log \frac{1+t}{1+t_0}.
\end{align*}
Using $\lim_{s \to \infty} \nu(s)=\infty$ we see that the following estimates for Sobolev solution $u=u(t,x)$ are valid:
\begin{align*}
\big\||D|^{|\beta|}u(t,\cdot)\big\|_{L^2} &\lesssim \bigg( \frac{(1+t)^2}{\nu(1+t)} \bigg)^{-\frac{|\beta|}{2}} \|u_0\|_{H^{|\beta|}} + \bigg( \frac{(1+t)^2}{\nu(1+t)} \bigg)^{-\frac{|\beta|-1}{2}}\|u_1\|_{H^{|\beta|-1}} \quad \mbox{for} \quad |\beta|\geq 1, \\
\big\||D|^{|\beta|}u_t(t,\cdot)\big\|_{L^2} &\lesssim \bigg( \frac{(1+t)^2}{\nu(1+t)} \bigg)^{-\frac{|\beta|+1}{2}} \|u_0\|_{H^{|\beta|+1}}
+ \bigg( \frac{(1+t)^2}{\nu(1+t)} \bigg)^{-\frac{|\beta|}{2}} \|u_1\|_{H^{|\beta|}} \quad \mbox{for} \quad |\beta|\geq 0.
\end{align*}
The term $\exp\Big( -C\int_0^t\frac{1}{g(\tau)}d\tau \Big)$ implies a faster decay.
\end{exam}
\begin{exam} \label{Exam4NonIntIncreasing}
We consider now $g(t)=\mu(1+t)$ with $\mu \in (0,1)$. For this example we have no parabolic effect because now the term
$\Big( 1+\int_0^tg(\tau)d\tau \Big)^{-\frac{|\beta|}{2}}$ decays faster with increasing $|\beta|$ than the term  $\exp\Big( -C\int_0^t\frac{1}{g(\tau)}d\tau \Big)$.
\end{exam}
\begin{proof}[Proof of Theorem \ref{ThmIncreasing}]
We write the equation in (\ref{modellinearauxiliary2}) in the following form:
\begin{equation*} \label{auxiliaryproblemNonInt1}
D_t^2 v + \frac{g(t)^2}{4}|\xi|^4 v - \bigg( 1-\frac{g'(t)}{2} \bigg)|\xi|^2v=0.
\end{equation*}
We introduce $h=h(t)=1-\dfrac{g'(t)}{2}$. Notice, that $\frac{1}{2}\leq h(t)\leq1$. Then, we consider
\begin{equation*} \label{auxiliaryproblemNonInt2}
D_t^2 v + \frac{g(t)^2}{4}|\xi|^4 v - h(t)|\xi|^2v=0.
\end{equation*}
Due to condition \textbf{(E1)} the function $f=f(t)=\dfrac{g(t)^2}{h(t)}$ is monotonically increasing.
Thus, we have a separating line $t_\xi$ as the solution of the implicit equation $\dfrac{g(t)^2}{4h(t)}|\xi|^2=1$.
\medskip

Let us divide the extended phase space into the following zones:
\begin{itemize}
\item hyperbolic zone:
\[ \Zhyp = \Big\{ (t,\xi)\in[0,\infty)\times\mathbb{R}^n : 1- \frac{g(t)^2|\xi|^2}{4h(t)}\geq \frac{1}{4} \Big\}, \]
\item reduced zone:
\[ \Zred = \Big\{ (t,\xi)\in[0,\infty)\times\mathbb{R}^n : -\frac{1}{4} \leq 1- \frac{g(t)^2|\xi|^2}{4h(t)}\leq \frac{1}{4} \Big\}. \]
We introduce now a part of the elliptic region $\Pi_{ell}$, namely the region
\[ R_{\text{ell}} = \Big\{ (t,\xi)\in[0,\infty)\times\mathbb{R}^n : 1- \frac{g(t)^2|\xi|^2}{4h(t)}\leq -\frac{1}{4} \Big\}, \]
and divide this region $R_{\text{ell}}$ into the following zones:
\item pseudo-differential zone:
\[ \Zpd(N) = \Big\{ (t,\xi)\in[0,\infty)\times\mathbb{R}^n : -N \leq 1- \frac{g(t)^2|\xi|^2}{4h(t)}\leq - \frac{1}{4} \Big\}, \]
\item elliptic zone:
\[ \Zell(N) = \Big\{ (t,\xi)\in[0,\infty)\times\mathbb{R}^n : 1- \frac{g(t)^2|\xi|^2}{4h(t)}\leq -N \Big\}, \]
\end{itemize}
where $N>0$ is sufficiently large.

%%%%%%%%%%%%%%%%%%%%%%%%%%%%%%%%%%%   Draw the Picture   %%%%%%%%%%%%%%%%%%%%%%%%%%%%%%%%%%%%%%%%%%%%
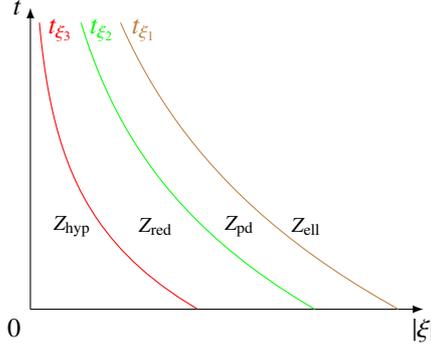
\begin{figure}[H]
\begin{center}
\begin{tikzpicture}[>=latex,xscale=1.1]
%%%%%%%%%%%%%%%%%%%%%%%%%%%%%%%%%%%%%%%%  INCREASING  %%%%%%%%%%%%%%%%%%%%%%%%%%%%%%%%%%%%%%%%%%%%%%
	\draw[->] (0,0) -- (4.7,0)node[below]{$|\xi|$};
	\draw[->] (0,0) -- (0,4)node[left]{$t$};
    \node[below left] at(0,0){$0$};
    %%%%%%%%%%%%%%%%%%%%%%%%%%%%%%%%%%%%%%%%%%%%%%%%%%%%%%%%
    \node[right]  at (0.1,3.7) {$\textcolor{red}{t_{\xi_3}}$};
    \node[right]  at (0.6,3.7) {$\textcolor{green}{t_{\xi_2}}$};
    \node[right]  at (1.1,3.7) {$\textcolor{brown}{t_{\xi_1}}$};
	%%%%%%%%%%%%%%%%%%%%%%%%Draw Z_osc%%%%%%%%%%%%%%%%%%%%%%%%%%%%%%%%%%%%%%%%%%%%%%%%%%%%%%%%%%%%%%
	\draw[domain=0:3.8,color=green,variable=\t] plot ({3.4*exp(-\t/2.2)},\t);
	\draw[domain=0:3.8,color=brown,variable=\t] plot ({4.4*exp(-\t/2.7)},\t);
	\node[color=black] at (3.3, 1.1){{\footnotesize $Z_{\text{ell}}$}};
	\node[color=black] at (2.5, 1.1){{\footnotesize $Z_{\text{pd}}$}};
%    \draw[dashed, domain=0:3.8,color=blue,variable=\t] plot ({2.6*exp(-\t/2.2)},\t);
	%%%%%%%%%%%%%%%%%%%%%Draw Z red%%%%%%%%%%%%%%%%%%%%%%%%%%%%%%%%%%%%%%%%%%%%%%%%%%%%%%%%%%%%%%%%%
	\node[color=black] at (1.5,1.1){{\footnotesize $Z_{\text{red}}$}};
	\draw[domain=0:3.8,color=red,variable=\t] plot ({2*exp(-\t/1.3)},\t);
	%%%%%%%%%%%%%%%%%%%%%%%%%%%%%%%%%%%%%%%%%%%%%%%%%%%%%%%%%%%%%%%%%%%%%%%%%%%%%%%%
	\node[color=black] at (.5,1.1){{\footnotesize $Z_{\text{hyp}}$}};
\end{tikzpicture}
\caption{Sketch of the zones for the case $g=g(t)$ is non-integrable and slowly increasing}
\label{fig.zone.increasing}
\end{center}
\end{figure}
%%%%%%%%%%%%%%%%%%%%%%%%%%%%%%%%%%%%%%   END FIGURE   %%%%%%%%%%%%%%%%%%%%%%%%%%%%%%%%%%%%%%%%%
We denote the separating line between elliptic and pseudo-differential zone as $t_{\xi_1}$, between pseudo-differential zone and reduced zone as $t_{\xi_2}$ and that between reduced zone and hyperbolic zone as $t_{\xi_3}$.

\subsection{Considerations in the elliptic zone $\Zell(N)$} \label{Section5.2}
\begin{proposition} \label{PropIncreasingEll}
The following estimates hold for all $t\in[t_{\xi_1},\infty)$:
\begin{align*}
|\xi|^{|\beta|}|\hat{u}(t,\xi)| &\lesssim \exp\bigg( -C\int_{t_{\xi_1}}^t\frac{1}{g(\tau)}d\tau \bigg)\Big( |\xi|^{|\beta|}|\hat{u}(t_{\xi_1},\xi)| + |\xi|^{|\beta|-1}|\hat{u}_t(t_{\xi_1},\xi)| \Big) \quad \mbox{for} \quad |\beta|\geq 1, \\
|\xi|^{|\beta|}|\hat{u}_t(t,\xi)| &\lesssim \exp\bigg( -C\int_{t_{\xi_1}}^t\frac{1}{g(\tau)}d\tau \bigg)\Big( |\xi|^{|\beta|+1}|\hat{u}(t_{\xi_1},\xi)| + |\xi|^{|\beta|}|\hat{u}_t(t_{\xi_1},\xi)| \Big) \\
& \quad + \exp\bigg( -\frac{|\xi|^2}{2}\int_{t_{\xi_1}}^tg(\tau)d\tau \bigg)|\xi|^{|\beta|}|\hat{u}_t(t_{\xi_1},\xi)| \quad \mbox{for} \quad |\beta|\geq 0.
\end{align*}
\end{proposition}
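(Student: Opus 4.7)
The plan is to adapt the elliptic WKB-analysis carried out in Section \ref{SecDecresingZell} to the setting of a slowly increasing non-integrable coefficient, with conditions \textbf{(E1)}--\textbf{(E3)} replacing \textbf{(D1)}--\textbf{(D3)}. Starting from the first-order system \eqref{EqDecreasingSzstemU} for the micro-energy $U=(|\xi|\hat{u},D_t\hat{u})^{\mathrm{T}}$, I observe that the eigenvalue structure of $A(t,\xi)$ is unchanged: Lemma \ref{LemmaDecresingImaginary} remains valid on $\Zell(N)$, giving $|\lambda_1(t,\xi)|\approx g(t)|\xi|^2$ and $|\lambda_2(t,\xi)|\approx 1/g(t)$. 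First I would introduce a symbol class $S_{\text{ell}}^\ell\{m_1,m_2,m_3\}$ patterned on Definition \ref{Definition4.1} but with the natural time-weight $g(t)/G(t)$ coming from \textbf{(E3)} in place of $1/(1+t)$. Under this scale, the rules of the symbolic calculus of Proposition \ref{DecreasingPropSymbolEll} carry over unchanged.

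Then I would perform two steps of diagonalization using the same matrices $M(t,\xi)$ and $N_1(t,\xi)=I+N^{(1)}(t,\xi)$ from \eqref{EqNonIntegrableDecMatrN1}. The invertibility of $N_1$ proceeds exactly as in the decreasing case: $0\leq g'(t)\leq 1$ by \textbf{(E1)} and $g^2(t)|\xi|^2\gtrsim Nh(t)$ on $\Zell(N)$ give $|N^{(1)}(t,\xi)|\lesssim 1/N$, so $N_1$ has uniformly bounded inverse for $N$ large. Estimating the diagonal fundamental solution as in Proposition \ref{LemmaEstimateEd} then yields $|E_d(t,s,\xi)|\lesssim \exp\bigl(-C\int_s^t g^{-1}(\tau)\,d\tau\bigr)$, the change of sign of $g'$ only affecting the constant in the bound for the correction factor $1+g'(\tau)|\xi|^2/(g^2(\tau)|\xi|^4-4|\xi|^2)$. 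Building $\mathcal{Q}_{\mathrm{ell}}$ via the weight $\beta(t,\xi)$ of \eqref{Definition_Beta} and a Peano--Baker series delivers the straight-forward estimate
\[
(|E(t,s,\xi)|)\lesssim \exp\Bigl(-C\int_s^t\frac{d\tau}{g(\tau)}\Bigr)\begin{pmatrix} 1 & 1/(g(s)|\xi|) \\ g(t)|\xi| & g(t)/g(s) \end{pmatrix}
\]
in analogy with Proposition \ref{propEstimateEjk}, from which the bound on $|\xi|^{|\beta|}|\hat{u}(t,\xi)|$ follows by setting $s=t_{\xi_1}$.

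For the bound on $|\xi|^{|\beta|}|\hat{u}_t(t,\xi)|$, the extra heat-kernel-type term $\exp\bigl(-\tfrac{|\xi|^2}{2}\int_{t_{\xi_1}}^t g(\tau)\,d\tau\bigr)$ comes from a refined analysis of the $(2,2)$-entry of $E$ that isolates the contribution of the fast eigenvalue $\lambda_1(t,\xi)\approx ig(t)|\xi|^2$, exactly as in the proof of Lemma 3.9 of \cite{ReissigLu2009} used for Proposition \ref{LemmaRefinedEstimatesZellDecreasing}. The main obstacle is to verify the uniform integrability of the remainder $\mathcal{R}_1$ over $\Zell(N)$ under the new assumptions: while Section \ref{SecDecresingZell} exploited $|g'(t)|\lesssim g(t)/(1+t)$, here \textbf{(E3)} only yields $|g'(t)|\lesssim g^2(t)/G(t)$, so quantities of the form $g'(\tau)/(g^2(\tau)|\xi|^2)$ must be handled more delicately. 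Using $g^2(t)|\xi|^2\gtrsim N$ throughout the elliptic zone together with the second-derivative control $|g''(t)|\leq g'(t)/g(t)$ from \textbf{(E3)} allows one to integrate by parts against $g(t)/G(t)$ and reduce the remainder contribution to a boundary term at $t_{\xi_1}$ plus an integrable tail. Once this integrability is secured, convergence of the Peano--Baker series for $\mathcal{Q}_{\mathrm{ell}}$, followed by the backward transformation $U\mapsto MN_1U^{(2)}$ evaluated at $s=t_{\xi_1}$, completes the proof.
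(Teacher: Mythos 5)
Your proposal follows the same route as the paper: the paper's own proof of Proposition~\ref{PropIncreasingEll} simply declares that Step 1 (the straight-forward estimate) ``coincides with the proof to Proposition~\ref{propEstimateEjk}'' and Step 2 (the refined estimate) ``is the same as the proof to Proposition~\ref{LemmaRefinedEstimatesZellDecreasing},'' adding only two remarks: the invertibility of $N_1$ now uses $g^2(t)|\xi|^2\geq 4(N+1)h(t)$, and in the refined estimate the combination $\frac{1}{g(t)}+\frac{1}{g(s)}\exp(-C|\xi|^2\int_s^t g)$ is now bounded by $\frac{1}{g(s)}$ rather than $\frac{1}{g(t)}$ because $g$ is increasing --- this produces the $(2,1)$ and $(2,2)$ entries $\frac{1}{g(s)|\xi|}$ and $\frac{1}{g^2(s)|\xi|^2}$ in Proposition~\ref{Proposition5.4}, which differ from the decreasing case. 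Your sketch captures the diagonalization structure, the invertibility argument and the evaluation at $s=t_{\xi_1}$ correctly, but you do not explicitly register this $\frac{1}{g(t)}\to\frac{1}{g(s)}$ switch, which is the only substantive change the paper highlights.

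Your flagging of the integrability of $\mathcal{R}_1$ as the ``main obstacle'' is a legitimate observation and in fact more careful than the paper, which leaves this implicit when it reuses the symbol class built around the $1/(1+t)$ weight of condition~\textbf{(D3)} --- a weight that is not comparable to $g(t)/G(t)$ from~\textbf{(E3)} when $g$ is unbounded. However, the integration-by-parts mechanism you describe is more elaborate than what the situation actually requires. The dominant terms of $\mathcal{R}_1$ are of size $\frac{|g''(t)|}{g^2(t)|\xi|^2}$ and $\frac{(g'(t))^2}{g^3(t)|\xi|^2}$; the first one is controlled by $|g''|/N$ (integrable since $g''\leq 0$ gives $\int_0^\infty|g''|\leq g'(0)\leq 1$), while for the second one can directly use $(g'(\tau))^2\leq g'(\tau)$ from~\textbf{(E1)} to compute $\int_{t_{\xi_1}}^t\frac{g'(\tau)}{g^3(\tau)|\xi|^2}d\tau\leq\frac{1}{2g^2(t_{\xi_1})|\xi|^2}\leq\frac{1}{8(N+1)}$, the zone boundary $g^2(t_{\xi_1})|\xi|^2\approx 4(N+1)h(t_{\xi_1})$ supplying the uniform bound. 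So the boundary-at-$t_{\xi_1}$ phenomenon you intuit is correct, but it falls out of a one-line primitive rather than an integration by parts against $g/G$. Also be careful with the claim that the symbolic rules of Proposition~\ref{DecreasingPropSymbolEll} ``carry over unchanged'': the embedding $S_{\text{ell}}^\ell\{m_1,m_2+k,m_3\}\hookrightarrow S_{\text{ell}}^\ell\{m_1,m_2,m_3\}$ for $k\geq 0$ relies on $g(t)\leq g(0)$ and so reverses direction when $g$ is increasing; fortunately it is not needed for the remainder estimate here.
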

\begin{proof}

\noindent \textbf{Step 1.} \emph{A straight-forward estimate for the fundamental solution $E=E(t,s,\xi)$}

\begin{proposition} \label{propEstimateEjkIncreasing}
The fundamental solution $E=E(t,s,\xi)$ satisfies the following estimate:
\begin{align*} \label{EqPropEstEjkIncreasing}
\left(\begin{array}{cc}
|E^{(11)}(t,s,\xi)| & |E^{(12)}(t,s,\xi)| \\
|E^{(21)}(t,s,\xi)| & |E^{(22)}(t,s,\xi)|
\end{array}\right) \lesssim \exp\bigg( -C\int_s^t\frac{1}{g(\tau)}d\tau \bigg)\left(\begin{array}{cc}
1 & \dfrac{1}{g(s)|\xi|} \\
g(t)|\xi| & \dfrac{g(t)}{g(s)}
\end{array}\right),
\end{align*}
for all $t\geq s$ and $(t,\xi)$, $(s,\xi)\in\Zell(N)$, where the constant $C$ is independent of $(s,\xi), (t,\xi)\in\Zell(N)$.
\end{proposition}
\begin{proof} The proof coincides with the proof to Proposition \ref{propEstimateEjk}. In order to guarantee the invertibility of $N_1(t,\xi) = I + N^{(1)}(t,\xi)$, where $N^{(1)} = N^{(1)}(t,\xi)$ is defined in \eqref{EqNonIntegrableDecMatrN1}, we use the definition of the elliptic zone $g^2(t)|\xi|^2\geq 4(N+1)h(t)$ with sufficiently large $N$. In this way, we may conclude that the matrix $N^{(1)} = N^{(1)}(t,\xi)$ is invertible.
\end{proof}
\noindent\textbf{Step 2.} \emph{A refined estimate for the fundamental solution $E=E(t,s,\xi)$}
\begin{proposition} \label{Proposition5.4}
The fundamental solution $E=E(t,s,\xi)$ satisfies the following estimate:
\begin{align*}
& \left(\begin{array}{cc}
|E^{(11)}(t,s,\xi)| & |E^{(12)}(t,s,\xi)| \\
|E^{(21)}(t,s,\xi)| & |E^{(22)}(t,s,\xi)|
\end{array}\right) \lesssim \exp\bigg( -C\int_s^t\frac{1}{g(\tau)}d\tau \bigg)\left(\begin{array}{cc}
1 & \dfrac{1}{g(s)|\xi|} \\
\dfrac{1}{g(s)|\xi|} & \dfrac{1}{g^2(s)|\xi|^2}
\end{array}\right)
+ \exp\bigg( -|\xi|^2\int_s^tg(\tau)d\tau \bigg)\left(\begin{array}{cc}
0 & 0 \\
0 & 1
\end{array} \right)
\end{align*}
for all $(t,\xi)$, $(s,\xi)\in\Zell(N)$, where the constant $C$ is independent of $(s,\xi), (t,\xi)\in\Zell(N)$.
\end{proposition}
\begin{proof}
The proof is the same as the proof to Proposition \ref{LemmaRefinedEstimatesZellDecreasing} after using conditions \textbf{(E1)} and  \textbf{(E3)}. The only difference is the following: in the proof of Proposition \ref{LemmaRefinedEstimatesZellDecreasing} we used the decreasing behavior of $g=g(t)$ with $s\leq t$ to estimate
\[ \frac{1}{g(t)}+\frac{1}{g(s)}\exp\bigg( -C|\xi|^2\int_s^tg(\tau)d\tau \bigg)\lesssim \frac{1}{g(t)}. \]
But now we estimate
\[ \frac{1}{g(t)}+\frac{1}{g(s)}\exp\bigg( -C|\xi|^2\int_s^tg(\tau)d\tau \bigg)\lesssim \frac{1}{g(s)}. \]
For this reason the refined estimate for the entries of $E=E(t,s,\xi)$ differs to the estimate for the entries from Proposition \ref{LemmaRefinedEstimatesZellDecreasing}.
\end{proof}
This completes the proof to Proposition \ref{PropIncreasingEll}.
\end{proof}

\subsection{Considerations in the pseudo-differential zone $\Zpd(N)$} \label{SecNonIntegrableIncZpd}
\begin{proposition} \label{PropNonIntegrableIncZpd}
The following estimates hold for all $(t,\xi), (s,\xi) \in \Zpd(N)$ with $s \leq t$:
\begin{align*}
|\xi|^{|\beta|}|\hat{u}(t,\xi)| \lesssim \exp\bigg(  -\frac{1}{4(N+1)} |\xi|^2 \int_{s}^tg(\tau)d\tau \bigg)\Big( |\xi|^{|\beta|}|\hat{u}(s,\xi)| + |\xi|^{|\beta|-1}|\hat{u}_t(s,\xi)| \Big) \quad \mbox{for} \quad |\beta|\geq 1, \\
|\xi|^{|\beta|}|\hat{u}_t(t,\xi)| \lesssim \exp\bigg( -\frac{1}{4(N+1)} |\xi|^2 \int_{s}^tg(\tau)d\tau  \bigg)\Big( |\xi|^{|\beta|+1}|\hat{u}(s,\xi)| + |\xi|^{|\beta|}|\hat{u}_t(s,\xi)| \Big) \quad \mbox{for} \quad |\beta|\geq 0.
\end{align*}
\end{proposition}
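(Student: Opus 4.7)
The plan is to mirror the elliptic-zone WKB procedure of Proposition \ref{Lem.Est.Ell.Zone.Integrable}, adapted to the wider pseudo-differential zone $\Zpd(N)$. The key observation is that the definition of $\Zpd(N)$ gives the two-sided bound
\[ \frac{5}{4} \,\leq\, \frac{g^2(t)|\xi|^2}{4h(t)} \,\leq\, N+1 \quad \text{for } (t,\xi)\in\Zpd(N), \]
which, combined with $h(t)=1-g'(t)/2\in[\tfrac12,1]$, means that the elliptic frequency
\[ d(t,\xi) := \sqrt{\frac{g^2(t)|\xi|^4}{4} - h(t)|\xi|^2} \]
is positive and enjoys the crucial upper bound $d(t,\xi) \le \sqrt{N/(N+1)}\cdot \frac{g(t)|\xi|^2}{2}$, while $d(t,\xi) \approx g(t)|\xi|^2$.

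First, I would start from the transformed equation $v_{tt} - d^2(t,\xi)v + \frac{g'(t)}{2}|\xi|^2 v = 0$ and introduce the elliptic micro-energy $V=(d(t,\xi)v,\,D_tv)^{\mathrm T}$. Reducing to a first-order system and carrying out the diagonalization exactly as in the proofs of Proposition \ref{Lem.Est.Ell.Zone.Integrable} and Proposition \ref{Proposition5.4}, one obtains a principal part with eigenvalues $\pm id(t,\xi)$ plus a remainder whose entries are controlled by $|d_t/d|+|m/d|$ with $m(t,\xi)=\frac{g'(t)}{2}|\xi|^2$. Condition \textbf{(E3)} together with the lower bound $d\gtrsim g|\xi|^2$ coming from $\frac{g^2(t)|\xi|^2}{4h(t)}\geq \tfrac{5}{4}$ ensures invertibility of the diagonalizer $N_1$ and integrability of the second-step remainder $\mathcal R_1$ along each vertical slice of $\Zpd(N)$; note that since $g$ is increasing, the slice is traversed in a finite $t$-interval $[t_{\xi_2},t_{\xi_1}]$ for every fixed $|\xi|$, which further helps the integrability.

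Second, the fundamental solution is built by a Peano–Baker/Neumann series exactly as in Proposition \ref{Lem.Est.Ell.Zone.Integrable}, giving
\[ |E_V(t,s,\xi)| \,\lesssim\, \frac{d(t,\xi)}{d(s,\xi)}\,\exp\!\bigg(\int_s^t d(\tau,\xi)\,d\tau\bigg), \qquad t_{\xi_2}\le s\le t\le t_{\xi_1}. \]
Inserting the upper bound $d(\tau,\xi)\le \sqrt{N/(N+1)}\cdot\frac{g(\tau)|\xi|^2}{2}$ and applying the backward transformation $\hat u(t,\xi)=\exp\!\big(-\tfrac{|\xi|^2}{2}\int_0^t g(\tau)d\tau\big)v(t,\xi)$, which contributes the factor $\exp\!\big(-\tfrac{|\xi|^2}{2}\int_s^t g\big)$, one finds a net exponent $-\bigl(1-\sqrt{N/(N+1)}\bigr)\,\tfrac{|\xi|^2}{2}\int_s^t g(\tau)\,d\tau$. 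The elementary identity $1-\sqrt{N/(N+1)}=\frac{1/(N+1)}{1+\sqrt{N/(N+1)}}\geq \frac{1}{2(N+1)}$ then produces precisely the claimed decay factor $\exp\!\big(-\tfrac{|\xi|^2}{4(N+1)}\int_s^t g(\tau)d\tau\big)$. Passing from $V$-bounds to the quantities $|\xi|^{|\beta|}|\hat u|$ and $|\xi|^{|\beta|}|\hat u_t|$ via $d\approx g|\xi|^2$ and $|\hat u_t|\lesssim |\xi||v|+|v_t|$ yields the two displayed estimates.

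The step I expect to be the main obstacle is the uniform integrability of the remainder $\mathcal R_1$ on $\Zpd(N)$: unlike in $\Zell(N)$, where smallness of $(g|\xi|)^{-1}$ drives both invertibility of $N_1$ and integrability \emph{for free}, in $\Zpd(N)$ the product $g(t)|\xi|$ is only pinched in a fixed (but $N$-dependent) interval, so one has to lean on the finite $t$-length of the zone together with condition \textbf{(E3)} to close the argument. Once this technical bookkeeping is done, the rest follows the WKB pattern already used several times in the paper.
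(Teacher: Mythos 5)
Your proposal is correct but takes a genuinely different route from the paper's own proof, which is considerably more elementary. The paper does \emph{not} diagonalize or build a Peano--Baker series in $\Zpd(N)$ at all: it introduces the micro-energy $V=\big(\tfrac{g(t)}{C_N}|\xi|^2 v,\;D_tv\big)^{\text{T}}$ with a cleverly tuned scaling constant $C_N=\tfrac{2N+1}{N}$, writes the corresponding first-order system, bounds the magnitude of \emph{every entry} of the coefficient matrix by $\tfrac{4N+2}{4N+4}\cdot\tfrac{g(t)}{2}|\xi|^2$ (the diagonal $g'/g$ term only contributes a constant since $g(t_{\xi_1})/g(t_{\xi_2})$ is bounded over the zone), and then applies the crudest possible Gronwall-type estimate $|E(t,s,\xi)|\le \exp\big(\tfrac{2N+1}{4N+4}|\xi|^2\int_s^t g\big)$. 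The backward transformation factor $\exp\big(-\tfrac{|\xi|^2}{2}\int_s^t g\big)$ then yields exactly $\exp\big(-\tfrac{1}{4(N+1)}|\xi|^2\int_s^t g\big)$, since $\tfrac{2N+1}{4N+4}-\tfrac12=-\tfrac{1}{4(N+1)}$. Your argument instead runs the full elliptic WKB machinery with micro-energy $(d\,v,D_tv)^{\text{T}}$, $d=\sqrt{g^2|\xi|^4/4-h|\xi|^2}$, one step of diagonalization, and a Neumann series; the exponent bookkeeping is correct (your bound $d\le\sqrt{N/(N+1)}\,g|\xi|^2/2$ and the identity $1-\sqrt{N/(N+1)}\ge 1/(2(N+1))$ even give a marginally better constant), and the concerns you flag about the invertibility of $N_1$, integrability of $\mathcal R_1$, and the accompanying factors $d(t)/d(s)$ and $\exp\big(\int m/(2d)\big)$ are all controllable precisely because $g(t_{\xi_1})/g(t_{\xi_2})$ is bounded (depending on $N$) on vertical slices and \textbf{(E3)} keeps $g'/g$ under control. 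The trade-off is that your route carries much more technical weight and leans on bookkeeping that the paper's direct Gronwall argument side-steps entirely; what it buys is nothing extra here, since the final coefficient in the exponent is essentially the same.
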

\begin{proof}
We consider the transformed equation
\begin{equation*}
v_{tt} + \bigg( h(t)|\xi|^2-\dfrac{g^2(t)}{4}|\xi|^4\bigg)v=0.
\end{equation*}
If we define the micro-energy $V(t,\xi) = \Big( \dfrac{g(t)}{C_N}|\xi|^2v, D_tv \Big)^{\text{T}}$
%%\[ V(t,\xi) = \Big( \frac{g(t)}{C_N}|\xi|^2v, D_tv \Big)^{\text{T}} \quad \mbox{for all} \quad (t,\xi)\in\Zpd(N),  \]
then, we get the following system of first order:
\begin{equation} \label{NonIntegrableIncSystemZpd}
D_tV(t,\xi)=\left( \begin{array}{cc}
\dfrac{D_t g(t)}{g(t)} & \dfrac{g(t)}{C_N}|\xi|^2 \\
\dfrac{h(t)|\xi|^2-\frac{g^2(t)}{4}|\xi|^4}{\frac{g(t)}{C_N}|\xi|^2} & 0 \end{array} \right)V(t,\xi).
\end{equation}
We have the estimate
\[  \Big| h(t)|\xi|^2-\frac{g^2(t)}{4}|\xi|^4 \Big|\leq \frac{N}{N+1}\frac{g^2(t)}{4}|\xi|^4 \qquad \text{taking account of} \qquad \frac{g^2(t)}{4(N+1)}|\xi|^4 \leq h(t)|\xi|^2 \leq \frac{g^2(t)}{4}|\xi|^4. \]
Thus, to estimate the entries of the matrix \eqref{NonIntegrableIncSystemZpd} we will use
\[ \frac{\Big| h(t)|\xi|^2-\frac{g^2(t)}{4}|\xi|^4 \Big|}{\frac{g(t)}{C_N}|\xi|^2}\leq C_N\frac{N}{N+1}\frac{g(t)}{4}|\xi|^2. \]
To estimate $\dfrac{g(t)}{C_N}|\xi|^2$, let us choose $C_N=2+\kappa_N$. If we choose $\kappa_N \in (0,\frac{2}{N})$, then we get a suitable estimate. Let us choose $\kappa=\frac{1}{N}$, then $C_N=\frac{2N+1}{N}$. Thus, all entries of the matrix can be estimated by $\frac{4N+2}{4N+4} \frac{g(t)}{2}|\xi|^2$.
%This is suitable because we have $-\frac{g(t)}{2}|\xi|^2$ from the first transformation step.
The entry $\dfrac{g'(t)}{g(t)}$ does not bring any additional term, it brings only a constant by using the definition of the pseudo-differential zone.

\begin{corollary} \label{CorollaryNonIntegrableIncreasing}
The fundamental solution $E=E(t,s,\xi)$ to \eqref{NonIntegrableIncSystemZpd} for all $t\geq s$ and $(t,\xi), (s,\xi)\in\Zpd(N)$ satisfies the following estimate:
\[ |E(t,s,\xi)|\leq \exp\bigg( \frac{2N+1}{4N+4} |\xi|^2 \int_s^t g(\tau)d\tau \bigg). \]
\end{corollary}
From the backward transformation and the equivalence $\dfrac{g(t)}{2}|\xi|^2\approx h(t)|\xi|$ in $\Zpd(N)$, we may conclude the desired statements of the proposition for all $t\geq s$ and $(t,\xi), (s,\xi)\in\Zpd(N)$.

%From the backward transformation and the equivalence $\dfrac{g(t)}{2}|\xi|^2\approx h(t)|\xi|$ in $\Zpd(N)$, we conclude the following estimates for all $t\geq s$ and $(t,\xi), (s,\xi)\in\Zpd(N)$:
%\begin{align*}
%|\xi|^{|\beta|}|\hat{u}(t,\xi)| \lesssim \exp\bigg( -\frac{1}{4(N+1)} |\xi|^2 \int_{s}^tg(\tau)d\tau \bigg)\Big( |\xi|^{|\beta|}|\hat{u}(s,\xi)| + |\xi|^{|\beta|-1}|\hat{u}_t(s,\xi)| \Big) \quad \mbox{for} \quad |\beta|\geq 1, \\
%|\xi|^{|\beta|}|\hat{u}_t(t,\xi)| \lesssim \exp\bigg( -\frac{1}{4(N+1)} |\xi|^2 \int_{s}^tg(\tau)d\tau  \bigg)\Big( |\xi|^{|\beta|+1}|\hat{u}(s,\xi)| + |\xi|^{|\beta|}|\hat{u}_t(s,\xi)| \Big) \quad \mbox{for} \quad |\beta|\geq 0.
%\end{align*}
%These estimates imply the desired statements of the proposition.
\end{proof}
\subsection{Considerations in the hyperbolic zone $\Zhyp$ and in the reduced zone  $\Zred$} \label{Section5.1}
The treatment in the hyperbolic and reduced zone is the same as it was explained in Sections \ref{SecDecresingZhyp} and
\ref{SecDecresingZred}, respectively. We are able to extend the estimates from $\Zhyp$ to $\Zred$. For this reason, we obtain for $t\leq t_{\xi_2}$ the following estimates.
\begin{proposition} \label{NonIntegrableIncrCorZhyp}
The following estimates hold for all $t\in(0,t_{\xi_2}]$:
\begin{align*}
|\xi|^{|\beta|}|\hat{u}(t,\xi)| &\lesssim \exp\bigg( -\frac{|\xi|^2}{6}\int_{0}^tg(\tau)d\tau \bigg)\Big( |\xi|^{|\beta|}|\hat{u}_0(\xi)| + |\xi|^{|\beta|-1}|\hat{u}_1(\xi)| \Big) \quad \mbox{for} \quad |\beta|\geq 1, \\
|\xi|^{|\beta|}|\hat{u}_t(t,\xi)| &\lesssim \exp\bigg( -\frac{|\xi|^2}{6}\int_{0}^tg(\tau)d\tau \bigg)\Big( |\xi|^{|\beta|+1}|\hat{u}_0(\xi)| + |\xi|^{|\beta|}|\hat{u}_1(\xi)| \Big) \quad \mbox{for} \quad |\beta|\geq 0.
\end{align*}
\end{proposition}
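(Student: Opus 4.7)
The plan is to follow verbatim the two-step argument carried out in Sections~\ref{SecDecresingZhyp} and~\ref{SecDecresingZred} for the decreasing case, and then to glue the resulting estimates across the separating line $t_{\xi_3}$ between $\Zhyp$ and $\Zred$. The key observation is that conditions \textbf{(E1)}--\textbf{(E3)} give us essentially the same qualitative control on $g,g',g''$ as conditions \textbf{(D1)}--\textbf{(D3)} do: indeed, $h(t)=1-g'(t)/2 \in [1/2,1]$ is bounded, the estimate $|g'(t)|\le C_1 g(t)^2/G(t)$ is even stronger than what was needed in Proposition~\ref{PropDecreasingHyp}, and the bound on $|g''|$ in \textbf{(E3)} allows us to handle second derivatives of the WKB symbols exactly as before.

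First, in the hyperbolic zone I would introduce the same micro-energy $V=(p(t,\xi)v,D_tv)^\mathrm{T}$ with $p(t,\xi)=\sqrt{h(t)|\xi|^2-g(t)^2|\xi|^4/4}\approx \sqrt{h(t)}|\xi|$ on $\Zhyp$, apply the diagonalizer $M=\begin{pmatrix}1 & -1\\ 1 & 1\end{pmatrix}$, and verify that the off-diagonal remainder $\tfrac{1}{2} D_t p/p$ is uniformly integrable over $\Zhyp$ using \textbf{(E1)}--\textbf{(E3)}. The fundamental solution of the diagonalized system is then uniformly bounded, and the backward transformation $\hat{u}(t,\xi)=\exp\bigl(-\tfrac{|\xi|^2}{2}\int_0^t g(\tau)\,d\tau\bigr)\,v(t,\xi)$ produces the exponential factor $\exp\bigl(-\tfrac{|\xi|^2}{2}\int_s^t g(\tau)\,d\tau\bigr)$; this is strictly stronger than the claimed factor with $1/6$ in the exponent, and gives the result for the case $t\leq t_{\xi_3}$ (with $s=0$, where $t_{\xi_3}=0$ for small frequencies).

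Next, in the reduced zone I would take the micro-energy $V=(\tfrac{g(t)}{4}|\xi|^2 v,D_t v)^{\mathrm T}$ as in \eqref{DecreasingSystemZred} and observe that on $\Zred$ one has
\[
\tfrac{1}{5}g(t)^2|\xi|^4 \le h(t)|\xi|^2 \le \tfrac{1}{3}g(t)^2|\xi|^4,
\]
so the off-diagonal entry $(h|\xi|^2-\tfrac{g^2}{4}|\xi|^4)/(\tfrac{g}{4}|\xi|^2)$ is bounded by $\tfrac{g(t)}{3}|\xi|^2$, while the diagonal $D_t g/g$ contributes only a bounded constant thanks to \textbf{(E3)} and the definition of the zone. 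This yields $|E(t,s,\xi)|\lesssim \exp\bigl(\tfrac{|\xi|^2}{3}\int_s^t g(\tau)\,d\tau\bigr)$ and after the backward transformation the decay rate $\exp\bigl(-\tfrac{|\xi|^2}{6}\int_s^t g(\tau)\,d\tau\bigr)$.

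Finally, the gluing step: for $t\le t_{\xi_3}$ the hyperbolic estimate applies with $s=0$ and gives the claim (with the stronger constant $1/2$, which is absorbed into $1/6$). For $t_{\xi_3}\le t\le t_{\xi_2}$, I would compose the reduced-zone estimate on $[t_{\xi_3},t]$ with the hyperbolic one on $[0,t_{\xi_3}]$; at the interface $p(t_{\xi_3},\xi)\approx |\xi|\approx \tfrac{g(t_{\xi_3})}{2}|\xi|^2$, so the powers of $|\xi|$ carried by the two micro-energies match consistently, and the two exponentials combine into a single $\exp\bigl(-\tfrac{|\xi|^2}{6}\int_0^t g(\tau)\,d\tau\bigr)$ after taking the weaker of the two rates. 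The main technical obstacle I anticipate is verifying that the remainder from the hyperbolic diagonalization is actually integrable in the \emph{increasing} setting -- in contrast to the decreasing case one has to exploit the bound $|g'(t)|\lesssim g(t)^2/G(t)$ from \textbf{(E3)} together with the zone constraint $g(t)|\xi|$ bounded, rather than simply using $|g'|\lesssim g$; once this is confirmed the rest of the argument is word-for-word identical to Propositions~\ref{PropDecreasingHyp} and~\ref{PropDecreasingRed}.
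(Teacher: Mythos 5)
Your proposal matches the paper's approach exactly: the paper's proof of this proposition is a one-line pointer back to Propositions \ref{PropDecreasingHyp} and \ref{PropDecreasingRed} together with the extension of the hyperbolic estimate into $\Zred$, and your step-by-step reconstruction of that scheme is correct. One small slip: in the increasing case it is \emph{large} frequencies (not small) for which $t_{\xi_3}=0$, and the integrability of $D_t p/p$ over $\Zhyp$ follows already from $\int_0^\infty |g''|\,d\tau\le g'(0)$ and $|\xi|^2\int_0^{t_{\xi_3}} g\,g'\,d\tau\le\frac12 g^2(t_{\xi_3})|\xi|^2\lesssim 1$ via the zone constraint, so the concern you raise is not a genuine obstacle.
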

\begin{proof}
The proof is the same as the proof to Proposition \ref{PropDecreasingHyp}.
\end{proof}
\subsection{Gluing procedure} \label{Section5.3}
For large frequencies we may use the estimates from Proposition \ref{PropIncreasingEll} because of $t_{\xi_1}=0$. We glue for small frequencies the estimates from Propositions \ref{PropNonIntegrableIncZpd} and \ref{NonIntegrableIncrCorZhyp} with the estimates from Proposition \ref{PropIncreasingEll}.
\begin{corollary} \label{CorIncreasingGlueHypRed}
The following estimates hold for all $t\in[t_{\xi_1},\infty)$ with a sufficiently large $N$:
\begin{align*}
|\xi|^{|\beta|}|\hat{u}(t,\xi)| &\lesssim \exp\bigg( -C\int_{t_{\xi_1}}^t\frac{1}{g(\tau)}d\tau \bigg)\exp\bigg( -\frac{1}{4(N+1)}|\xi|^2\int_{0}^{t_{\xi_1}}g(\tau)d\tau \bigg)\Big( |\xi|^{|\beta|}|\hat{u}_0(\xi)| + |\xi|^{|\beta|-1}|\hat{u}_1(\xi)| \Big) \quad \mbox{for} \quad |\beta|\geq 1, \\
|\xi|^{|\beta|}|\hat{u}_t(t,\xi)| &\lesssim \exp\bigg( -C\int_{t_{\xi_1}}^t\frac{1}{g(\tau)}d\tau \bigg)\exp\bigg( -\frac{1}{4(N+1)}|\xi|^2 \int_{t_{\xi_1}}^tg(\tau)d\tau \bigg)\Big( |\xi|^{|\beta|+1}|\hat{u}_0(\xi)| + |\xi|^{|\beta|}|\hat{u}_1(\xi)| \Big) \\
&\quad + \exp\bigg( -\frac{1}{4(N+1)}|\xi|^2\int_{0}^tg(\tau)d\tau \bigg)\Big( |\xi|^{|\beta|+1}|\hat{u}_0(\xi)|+|\xi|^{|\beta|}|\hat{u}_1(\xi)| \Big)
\quad \mbox{for} \quad |\beta|\geq 0.
\end{align*}
\end{corollary}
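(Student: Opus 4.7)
The plan is to chain the three regional estimates together along a trajectory $\{(t,\xi): t \geq 0\}$ with fixed small $|\xi|$, which passes successively through $\Zhyp$, $\Zred$, $\Zpd(N)$ and finally $\Zell(N)$ as $t$ grows. Since large frequencies satisfy $t_{\xi_1}=0$ and are already handled directly by Proposition \ref{PropIncreasingEll}, the real work is only for small $|\xi|$. I would first combine Proposition \ref{NonIntegrableIncrCorZhyp} applied from $s=0$ to $t=t_{\xi_2}$ with Proposition \ref{PropNonIntegrableIncZpd} applied from $s=t_{\xi_2}$ to $t=t_{\xi_1}$. Each step produces an exponential factor of the form $\exp\bigl(-c_i|\xi|^2\int_{s_i}^{s_{i+1}}g(\tau)d\tau\bigr)$; choosing the smaller of the two constants, $c=\min\{1/6,1/(4(N+1))\}=1/(4(N+1))$ for $N$ large, I collect them into a single factor $\exp\bigl(-\tfrac{|\xi|^2}{4(N+1)}\int_0^{t_{\xi_1}}g(\tau)d\tau\bigr)$ acting on $|\xi|^{|\beta|}|\hat u_0(\xi)|+|\xi|^{|\beta|-1}|\hat u_1(\xi)|$.

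Next I would use Proposition \ref{PropIncreasingEll} with $s=t_{\xi_1}$ to pass from $t_{\xi_1}$ into the elliptic zone. Substituting the bound obtained at $t_{\xi_1}$ into the right-hand side of that proposition immediately yields, for $|\beta|\ge 1$, the stated estimate for $|\xi|^{|\beta|}|\hat u(t,\xi)|$, because the first term of Proposition \ref{PropIncreasingEll} is proportional to $\exp\bigl(-C\int_{t_{\xi_1}}^t\tfrac{1}{g(\tau)}d\tau\bigr)$ and leaves the previously accumulated elliptic-side factor untouched.

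For the velocity component $|\xi|^{|\beta|}|\hat u_t(t,\xi)|$, the only subtlety is the second term in Proposition \ref{PropIncreasingEll}, namely $\exp\bigl(-\tfrac{|\xi|^2}{2}\int_{t_{\xi_1}}^tg(\tau)d\tau\bigr)|\xi|^{|\beta|}|\hat u_t(t_{\xi_1},\xi)|$. I would estimate $|\hat u_t(t_{\xi_1},\xi)|$ by the same gluing of Propositions \ref{NonIntegrableIncrCorZhyp} and \ref{PropNonIntegrableIncZpd} applied to the velocity, obtaining a prefactor $\exp\bigl(-\tfrac{|\xi|^2}{4(N+1)}\int_0^{t_{\xi_1}}g(\tau)d\tau\bigr)$. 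Multiplying this with $\exp\bigl(-\tfrac{|\xi|^2}{2}\int_{t_{\xi_1}}^tg(\tau)d\tau\bigr)$ and again using $\tfrac{1}{4(N+1)}\le\tfrac{1}{2}$, the two integrals combine on the interval $[0,t]$, which is exactly the additional term displayed in the corollary.

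The step I expect to be the only delicate one is the uniform choice of constants that allows the two factors from $\Zhyp\cup\Zred$ and $\Zpd(N)$ to be merged into a single exponential with the same constant $\tfrac{1}{4(N+1)}$; this requires taking $N$ sufficiently large, which is admissible because the pseudo-differential zone is defined with a free parameter $N$ that we may enlarge without affecting Propositions \ref{PropIncreasingEll} and \ref{PropNonIntegrableIncZpd}. All remaining computations are straightforward matrix-norm multiplications, completely analogous to the gluing procedure already carried out for Corollary \ref{CorDecreasingGlueHypRedEll} in the decreasing case.
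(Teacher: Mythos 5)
Your proof is correct and follows essentially the same route as the paper: you chain the estimate from Propositions \ref{NonIntegrableIncrCorZhyp} (through $\Zhyp\cup\Zred$) and \ref{PropNonIntegrableIncZpd} (through $\Zpd(N)$) to produce a single factor $\exp\bigl(-\tfrac{|\xi|^2}{4(N+1)}\int_0^{t_{\xi_1}}g\,d\tau\bigr)$ at $t_{\xi_1}$, then substitute this into Proposition \ref{PropIncreasingEll}, and handle the extra term in the velocity estimate by merging the two exponentials over $[0,t]$ with the smaller constant $\tfrac{1}{4(N+1)}$ — exactly the argument in the paper, only presented in the opposite order (the paper writes down the elliptic-zone bound first and back-substitutes, you build the estimate at $t_{\xi_1}$ forward and then apply the elliptic bound).
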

\begin{proof}
Let us begin to estimate $|\xi|^{|\beta|}\hat{u}(t,\xi)$. The statement of Proposition \ref{PropIncreasingEll} implies
\[ |\xi|^{|\beta|}|\hat{u}(t,\xi)| \lesssim \exp\bigg( -C\int_{t_{\xi_1}}^t\frac{1}{g(\tau)}d\tau \bigg)\Big( |\xi|^{|\beta|}|\hat{u}(t_{\xi_1},\xi)| + |\xi|^{|\beta|-1}|\hat{u}_t(t_{\xi_1},\xi)| \Big). \]
Using the estimates for $|\xi|^{|\beta|}|\hat{u}(t_{\xi_1},\xi)|$ and $|\xi|^{|\beta|}|\hat{u}_t(t_{\xi_1},\xi)|$ from Proposition \ref{PropNonIntegrableIncZpd} and \ref{NonIntegrableIncrCorZhyp} we have with a large $N$ the estimate
\begin{align*}
|\xi|^{|\beta|}|\hat{u}(t,\xi)| &\lesssim \exp\bigg( -C\int_{t_{\xi_1}}^t\frac{1}{g(\tau)}d\tau \bigg)\exp\Big( -\frac{1}{4(N+1)}\int_0^{t_{\xi_1}}g(\tau)d\tau \Big)\Big( |\xi|^{|\beta|}|\hat{u}_0(\xi)| + |\xi|^{|\beta|-1}|\hat{u}_1(\xi)| \Big).
\end{align*}
In the same way, we may conclude
\begin{align*}
|\xi|^{|\beta|}|\hat{u}_t(t,\xi)| &\lesssim \exp\bigg( -C\int_{t_{\xi_1}}^t\frac{1}{g(\tau)}d\tau \bigg)\exp\bigg( -\frac{1}{4(N+1)}|\xi|^2\int_0^{t_{\xi_1}}g(\tau)d\tau \bigg)\Big( |\xi|^{|\beta|+1}|\hat{u}_0(\xi)| + |\xi|^{|\beta|}|\hat{u}_1(\xi)| \Big)\\
& \quad + \exp\bigg( -\frac{1}{4(N+1)}|\xi|^2\int_{0}^tg(\tau)d\tau \bigg)\Big( |\xi|^{|\beta|+1}|\hat{u}_0(\xi)| + |\xi|^{|\beta|}|\hat{u}_1(\xi)| \Big).
\end{align*}
This completes the proof.
\end{proof}
\subsection{Energy estimates} \label{Section5.4}
For small frequencies we shall consider the interplay between two phase functions appearing in the estimates of Corollary \ref{CorIncreasingGlueHypRed}. For this reason we discuss the term $S_r(t,|\xi|)$ which is defined as follows:
\[ S_r(t,|\xi|) := |\xi|^r\exp\bigg( -C\int_{t_{\xi_1}}^t\frac{1}{g(\tau)}d\tau \bigg)\exp\bigg( -C_N|\xi|^2\int_0^{t_{\xi_1}}g(\tau)d\tau \bigg). \]
\begin{proposition} \label{LemmaIncreasingAuxiliary}
To a given positive constant $C$ there exists a small positive constant $C_N$ such that for $t>0$ it holds
\[ S_r(t,|\xi|) \lesssim \max_{\xi\in\mathbb{R}^n}\bigg\{ |\xi|^r\exp\bigg( -C_N|\xi|^2\int_0^{t}g(\tau)d\tau \bigg) \bigg\}\lesssim \bigg( 1+\int_0^{t}g(\tau)d\tau \bigg)^{-\frac{r}{2}} \quad \mbox{for} \quad r\geq0. \]
\end{proposition}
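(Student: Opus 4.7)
The plan is to prove the two inequalities separately, exploiting the freedom to take the constant $C_N$ small (equivalently, to take the zone constant $N$ large) depending on $C$.

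For the second inequality, I would carry out the elementary Gaussian-moment optimization. For fixed $t > 0$, set $f(\eta) = |\eta|^r \exp\big(-C_N |\eta|^2 \int_0^t g(\tau)d\tau\big)$. Differentiating in $|\eta|$ gives the critical point $|\eta|_\ast^2 = r/\big(2C_N \int_0^t g\big)$, and substitution yields the maximum value $\big(r/(2eC_N \int_0^t g)\big)^{r/2}$, which is comparable to $\big(1+\int_0^t g\big)^{-r/2}$ up to a constant depending only on $r$ and $C_N$.

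For the first inequality, I would split the analysis at the optimizing scale $|\xi|_\ast^2 = r/\big(2C_N \int_0^t g\big)$. If $|\xi|\leq |\xi|_\ast$, then bounding both exponentials in $S_r$ by $1$ already gives $S_r(t,|\xi|)\leq |\xi|^r \leq |\xi|_\ast^r \lesssim \big(1+\int_0^t g\big)^{-r/2}$, which is exactly the required form. In the complementary regime $|\xi|>|\xi|_\ast$, I would extract decay from the exponential factors. Writing $s=t_{\xi_1}$, the separating-line identity $g(s)^2|\xi|^2 = 4(N+1)h(s)$ pins $|\xi|\sim \sqrt{N+1}/g(s)$, and on $[s,t]\subset \Zell$ the pointwise relation $g(\tau)^2|\xi|^2\geq 2(N+1)$ controls $1/g(\tau)$ by $|\xi|^2 g(\tau)/\big(2(N+1)\big)$. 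Together with the monotonicity \textbf{(E1)} and condition \textbf{(E3)} (which after integrating $(\log g)'\leq C_1(\log G)'$ yields $g(t)/g(s)\leq (G(t)/G(s))^{C_1}$), one shows that, with $C_N = 1/\big(4(N+1)\big)$ and $N$ large enough depending on $C$, the combined exponent $C\int_s^t d\tau/g(\tau) + C_N|\xi|^2\int_0^s g(\tau)d\tau$ dominates $C_N|\xi|^2\int_0^t g(\tau)d\tau$ up to an additive constant. This gives $S_r(t,|\xi|)\lesssim |\xi|^r \exp\big(-C_N|\xi|^2 \int_0^t g\big)$, which is dominated by the max on the right-hand side.

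The main obstacle is that the elliptic-zone pointwise inequality gives only an \emph{upper} bound on $\int_s^t 1/g$ in terms of $|\xi|^2 \int_s^t g$, whereas one naively wants a \emph{lower} bound on $\int_s^t 1/g$ in order to dominate $C_N|\xi|^2 \int_s^t g$ in the target comparison. The way forward is a dichotomy driven by the behaviour of $g$ on $[s,t]$. Either $g(\tau)^2|\xi|^2$ stays close to the threshold $2(N+1)$ throughout, in which case $g$ is nearly constant on $[s,t]$ and $\int_s^t g$ is small enough to be absorbed by the $C_N|\xi|^2 \int_0^s g$ piece via the slack $C_N\ll C$; or else $g(t)$ is substantially larger than $g(s)$, and condition \textbf{(E3)} forces enough growth to make $\int_s^t 1/g$ carry substantial weight. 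Balancing these two cases is exactly what determines the precise smallness required of $C_N$.
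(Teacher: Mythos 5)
Your Gaussian-moment computation for the second inequality is fine. The first inequality, however, is where the proposal breaks down, and the obstacle you yourself flag is not merely a technical nuisance but a fatal one for the route you chose.

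In the regime $|\xi|>|\xi|_\ast$ you aim to prove the pointwise comparison
\[
S_r(t,|\xi|)\lesssim |\xi|^r\exp\Big(-C_N|\xi|^2\int_0^t g(\tau)\,d\tau\Big)
\]
with the \emph{same} $|\xi|$ on both sides, which reduces to $C\int_s^t 1/g \gtrsim C_N|\xi|^2\int_s^t g$ (with $s=t_{\xi_1}$). This is false under the hypotheses of Section~\ref{Section4.4}. Take $g(t)=(1+t)^\gamma$, $\gamma\in(0,1)$, fix a small $|\xi|$ (hence a fixed $s$), and send $t\to\infty$. Then $\int_s^t 1/g \asymp t^{1-\gamma}$ while $|\xi|^2\int_s^t g\asymp t^{1+\gamma}$, so the left-hand side of your target decays only like $\exp(-Ct^{1-\gamma})$ whereas the right-hand side decays like $\exp(-c\,t^{1+\gamma})$, strictly faster; the ratio tends to $+\infty$. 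No choice of a small $C_N$ changes the exponents. The proposed dichotomy cannot rescue this: when $g$ grows a lot on $[s,t]$, condition \textbf{(E3)} provides an \emph{upper} bound on $g'$ and does not ``force $\int_s^t 1/g$ to carry substantial weight''; on the contrary, large $g$ makes $1/g$ small, which is precisely why $\int_s^t 1/g$ lags behind $|\xi|^2\int_s^t g$. (For the same power-law example one also has $|\xi|_\ast<|\tilde\xi|$ for large $t$, so your ``easy regime'' $|\xi|\le|\xi|_\ast$ is actually empty of admissible frequencies and everything collapses onto the flawed regime.)

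The paper's proof avoids the pointwise comparison entirely. It shows that $S_r(t,\cdot)$ is \emph{monotone decreasing} in $|\xi|$ on the admissible range, by computing $\partial_{|\xi|}S_r$, differentiating the separating-line identity $g(t_{\xi_1})^2|\xi|^2=4(N+1)h(t_{\xi_1})$ implicitly, and using \textbf{(E1)} and \textbf{(E3)} to obtain $d_{|\xi|}t_{\xi_1}\le -g(t_{\xi_1})/|\xi|$ and to check that the term $-C_N|\xi|^2 g(t_{\xi_1})\,d_{|\xi|}t_{\xi_1}$ (with $C_N$ small) dominates $r/|\xi|$. For fixed $t$ the admissible frequencies are $|\xi|\ge|\tilde\xi|$ with $t_{\tilde\xi_1}=t$; monotonicity places the supremum at the endpoint $|\xi|=|\tilde\xi|$, where the first exponential in $S_r$ collapses to $1$ and one is left with exactly $|\tilde\xi|^r\exp(-C_N|\tilde\xi|^2\int_0^t g)$, which is trivially at most the global maximum over $\xi$. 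The essential idea you are missing is that the maximizing frequency for $S_r$ is the one on the separating line $|\tilde\xi|$, not the Gaussian optimum $|\xi|_\ast$, and the two differ; the proof must pass through the former.
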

\begin{proof}
It is sufficient to verify the statement for large $t$, because for small  $t$ the set of admissible $\xi$ forms a compact set. To estimate the term $S_r(t,|\xi|)$ it is important that the first partial derivative $\partial_{|\xi|}S_r(t,|\xi|)$ is negative for $|\xi|\leq \varepsilon_r$. We have
\begin{align*}
\partial_{|\xi|} S_r(t,|\xi|) &= S_r(t,|\xi|)\bigg( \frac{r}{|\xi|} + C\frac{1}{g(t_{\xi_1})}d_{|\xi|}t_{\xi_1} - 2C_N|\xi|\int_0^{t_{\xi_1}}g(\tau)d\tau
- C_N|\xi|^2g(t_{\xi_1})d_{|\xi|}t_{\xi_1} \bigg) \\
& \leq  S_r(t,|\xi|)\bigg( \frac{r}{|\xi|} + \Big( C\frac{1}{g(t_{\xi_1})} - C_N|\xi|^2g(t_{\xi_1})  \Big)d_{|\xi|}t_{\xi_1} \bigg),
\end{align*}
Taking into account of $g^2(t_{\xi_1})|\xi|^2 = 4(N+1)h(t_{\xi_1})$ and $\frac{1}{2}\leq h(t)\leq 1$, we have
\[ |\xi|^2g(t_{\xi_1}) = \frac{|\xi|^2g^2(t_{\xi_1})}{g(t_{\xi_1})} = \frac{4(N+1)h(t_{\xi_1})}{g(t_{\xi_1})} \leq \frac{4(N+1)}{g(t_{\xi_1})}. \]
%\[  \frac{1}{g(t_{\xi_1})} = \frac{1}{4(N+1)}\frac{g(t_{\xi_1})}{h(t_{\xi_1})}|\xi|^2\leq \frac{1}{2(N+1)}g(t_{\xi_1})|\xi|^2. \]
Therefore, if we choose the constant $C_N$ sufficiently small, then the term $C\dfrac{1}{g(t_{\xi_1})}$ dominates the term $C_N|\xi|^2g(t_{\xi_1})$. Moreover, after differentiation of $g^2(t_{\xi_1})|\xi|^2 = 4(N+1)h(t_{\xi_1})$, we get
\begin{align*}
\Big( 4(N+1)h'(t_{\xi_1}) - 2|\xi|^2g(t_{\xi_1})g'(t_{\xi_1}) \Big)d_{|\xi|}t_{\xi_1} = 2|\xi|g^2(t_{\xi_1}), \qquad d_{|\xi|}t_{\xi_1} = \frac{2|\xi|g^2(t_{\xi_1})}{4(N+1)h'(t_{\xi_1}) - 2|\xi|^2g(t_{\xi_1})g'(t_{\xi_1})}, \quad \mbox{respectively}.
\end{align*}
Employing $g(t)>0$, $0\leq g'(t)\leq 1$ and $g''(t)\leq0$ from condition \textbf{(E1)}, and taking account of $h'(t)=-\dfrac{g''(t)}{2}$, we find
\begin{align*}
4(N+1)h'(t_{\xi_1}) - 2|\xi|^2g(t_{\xi_1})g'(t_{\xi_1}) \geq -2(N+1)g''(t_{\xi_1}) - 2|\xi|^2g(t_{\xi_1}) \geq  - 2|\xi|^2g(t_{\xi_1}).
\end{align*}
We note that $4(N+1)h'(t_{\xi_1}) - 2|\xi|^2g(t_{\xi_1})g'(t_{\xi_1})<0$ using again $h'(t_{\xi_1})=-\dfrac{g''(t_{\xi_1})}{2}$, $|\xi|^2g(t_{\xi_1}) = 4(N+1)\dfrac{h(t_{\xi_1})}{g(t_{\xi_1})}$, $\frac{1}{2}\leq h(t_{\xi_1})\leq1$, and the condition \textbf{(E3)}, respectively, as follows:
\begin{align*}
4(N+1)h'(t_{\xi_1}) - 2|\xi|^2g(t_{\xi_1})g'(t_{\xi_1}) &= -2(N+1)g''(t_{\xi_1}) - 8(N+1)\frac{g'(t_{\xi_1})}{g(t_{\xi_1})}h(t_{\xi_1}) \\
& \leq 2(N+1)\frac{g'(t_{\xi_1})}{g(t_{\xi_1})} - 4(N+1)\frac{g'(t_{\xi_1})}{g(t_{\xi_1})}
= -2(N+1)\frac{g'(t_{\xi_1})}{g(t_{\xi_1})}<0.
\end{align*}
Then, we get
\begin{align*}
d_{|\xi|}t_{\xi_1} \leq \frac{2|\xi|g^2(t_{\xi_1})}{- 2|\xi|^2g(t_{\xi_1})} = -\frac{g(t_{\xi_1})}{|\xi|}.
\end{align*}
Moreover, for a fixed $r$ the term $\dfrac{r}{|\xi|}$ is dominated by the negative term
\[ |\xi|^2g(t_{\xi_1})d_{|\xi|}t_{\xi_1} \leq |\xi|^2g(t_{\xi_1})\Big( -\frac{g(t_{\xi_1})}{|\xi|} \Big) = -\frac{|\xi|^2g^2(t_{\xi_1})}{|\xi|} = -\frac{4(N+1)h(t_{\xi_1})}{|\xi|} \leq -2(N+1)|\xi|^{-1} \]
if we choose $N$ large enough.
%by the term $C\dfrac{1}{g(t_{\xi_1})}$ if we choose $C$ large enough, that is,
%\[ C\frac{1}{g(t_{\xi_1})}d_{|\xi|}t_{\xi_1} \leq C\frac{1}{g(t_{\xi_1})} \Big( -\frac{g(t_{\xi_1})}{|\xi|} \Big) = -\frac{C}{|\xi|}. \]
In order to complete the proof it is sufficient to study small frequencies with $|\xi|\leq \varepsilon_r$. For $|\xi|\geq \varepsilon_r$ we  have an ``exponential decay'' from the elliptic zone. Let us now fix $t>0$. Then, the above term takes its maximum for the $|\tilde{\xi}|$ satisfying $t=t_{\tilde{\xi}_1}$. For $t=t_{\tilde{\xi}_1}$, the first integral vanishes in $S_r(t,|\xi|)$. Consequently, we get
\begin{align*}
& S_r(t,|\xi|) \leq S_r(t_{\tilde{\xi}_1},|\tilde{\xi}|) = |\tilde{\xi}|^r\exp\bigg( -C|\tilde{\xi}|^2\int_0^{t_{\tilde{\xi}_1}}g(\tau)d\tau \bigg) \\
& \lesssim \max_{\xi\in\mathbb{R}^n}\bigg\{ |\xi|^r\exp\bigg( -C|\xi|^2\int_0^{t}g(\tau)d\tau \bigg) \bigg\} \lesssim \bigg( 1+\int_0^{t}g(\tau)d\tau \bigg)^{-\frac{r}{2}}.
\end{align*}
The proof is completed. \end{proof}
Using Proposition \ref{LemmaIncreasingAuxiliary} we obtain the following statement.
\begin{corollary} \label{CorIncreasingGlueAllLarge}
The following estimates hold for all $t>0$ and small frequencies $0<|\xi|\leq 1$:
\begin{align*}
|\xi|^{|\beta|}|\hat{u}(t,\xi)| &\lesssim \bigg( 1+\int_0^{t}g(\tau)d\tau \bigg)^{-\frac{|\beta|}{2}}|\hat{u}_0(\xi)| + \bigg( 1+\int_0^{t}g(\tau)d\tau \bigg)^{-\frac{|\beta|-1}{2}}|\hat{u}_1(\xi)| \quad \mbox{for} \quad |\beta|\geq 1, \\
|\xi|^{|\beta|}|\hat{u}_t(t,\xi)| &\lesssim \bigg( 1+\int_0^{t}g(\tau)d\tau \bigg)^{-\frac{|\beta|+1}{2}}|\hat{u}_0(\xi)| + \bigg( 1+\int_0^{t}g(\tau)d\tau \bigg)^{-\frac{|\beta|}{2}}|\hat{u}_1(\xi)| \quad \mbox{for} \quad |\beta|\geq 0.
\end{align*}
\end{corollary}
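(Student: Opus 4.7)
The plan is to feed the pointwise bounds of Corollary \ref{CorIncreasingGlueHypRed} into the weight estimate of Proposition \ref{LemmaIncreasingAuxiliary}, so the argument reduces to a direct combination of these two ingredients.

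First I would specialise the estimates of Corollary \ref{CorIncreasingGlueHypRed} to small frequencies $0<|\xi|\leq 1$; for such $\xi$ the separating line $t_{\xi_1}$ is large and the picture of the zones in Figure \ref{fig.zone.increasing} is complete, so the gluing in Corollary \ref{CorIncreasingGlueHypRed} applies pointwise. Reading the right-hand side of the first inequality (with $|\beta|\geq 1$) as a combination of weights gives
\[
|\xi|^{|\beta|}|\hat{u}(t,\xi)|\;\lesssim\; S_{|\beta|}(t,|\xi|)\,|\hat{u}_0(\xi)| + S_{|\beta|-1}(t,|\xi|)\,|\hat{u}_1(\xi)|,
\]
where $S_r$ is the function introduced immediately before Proposition \ref{LemmaIncreasingAuxiliary}. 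I would then apply that proposition with $r=|\beta|$ and with $r=|\beta|-1$ (the latter being admissible because $|\beta|\geq 1$ forces $r\geq 0$) to replace each weight by $G(t)^{-r/2}$ with $G(t)=1+\int_0^t g(\tau)\,d\tau$. This yields the first line of the corollary.

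For the second inequality (with $|\beta|\geq 0$) the right-hand side in Corollary \ref{CorIncreasingGlueHypRed} splits into two groups. The first group has the two-phase exponential form $S_{|\beta|+1}(t,|\xi|)|\hat{u}_0(\xi)| + S_{|\beta|}(t,|\xi|)|\hat{u}_1(\xi)|$ and is treated exactly as above with $r=|\beta|+1$ and $r=|\beta|$. The second group carries only the purely Gaussian weight $\exp(-C_N|\xi|^2 G(t))$ acting on $|\xi|^{|\beta|+1}|\hat{u}_0(\xi)|+|\xi|^{|\beta|}|\hat{u}_1(\xi)|$; I would dispose of it by the elementary bound $\sup_{|\xi|\geq 0}|\xi|^r e^{-c|\xi|^2 G(t)}\lesssim G(t)^{-r/2}$, which is precisely the final majorant extracted in the proof of Proposition \ref{LemmaIncreasingAuxiliary}. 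Both contributions therefore collapse into the claimed decay, completing the second line.

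I do not anticipate any genuine obstacle here: all the delicate work — the monotonicity of $S_r$ in $|\xi|$ and the sign analysis of $\partial_{|\xi|} t_{\xi_1}$ under hypotheses \textbf{(E1)} and \textbf{(E3)} — is already carried out in Proposition \ref{LemmaIncreasingAuxiliary}. The only point worth flagging is the necessity of the restriction $|\beta|\geq 1$ in the first inequality, which is exactly what keeps the exponent $r=|\beta|-1$ in $S_{|\beta|-1}$ non-negative and thus matches the hypothesis $r\geq 0$ of Proposition \ref{LemmaIncreasingAuxiliary}.
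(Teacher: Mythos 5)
Your proposal follows the paper's own route exactly: read the right-hand sides of Corollary~\ref{CorIncreasingGlueHypRed} as the two-phase weights $S_{r}(t,|\xi|)$, apply Proposition~\ref{LemmaIncreasingAuxiliary} with $r=|\beta|,|\beta|-1$ (resp.\ $|\beta|+1,|\beta|$), and absorb the residual pure-Gaussian term by $\sup_{|\xi|}|\xi|^{r}e^{-c|\xi|^{2}G(t)}\lesssim G(t)^{-r/2}$. The identification of the constant $C_N=\tfrac{1}{4(N+1)}$ in the definition of $S_r$ and the check that $r\geq0$ is guaranteed by $|\beta|\geq1$ are both correct.

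There is, however, a small range of $t$ that your argument leaves uncovered. Corollary~\ref{CorIncreasingGlueHypRed} is stated for $t\in[t_{\xi_1},\infty)$ only, whereas the target corollary asserts the bounds for all $t>0$ and $0<|\xi|\leq 1$; for small frequencies the separating line satisfies $t_{\xi_1}>0$, so the interval $0<t<t_{\xi_1}$ is not reached by the gluing lemma you invoke, and your phrase ``applies pointwise'' glosses over this. The fix is immediate and uses tools you already name: for $t<t_{\xi_1}$ one lies in $Z_{\text{hyp}}\cup Z_{\text{red}}\cup Z_{\text{pd}}$, and Propositions~\ref{PropNonIntegrableIncZpd} and~\ref{NonIntegrableIncrCorZhyp} give a pure Gaussian weight $\exp\big(-c|\xi|^{2}\int_0^t g(\tau)\,d\tau\big)$ in front of the data; the same elementary $\sup$-estimate then delivers $G(t)^{-r/2}$. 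Adding this one line makes the proof complete. (For what it is worth, the displayed statement of Corollary~\ref{CorIncreasingGlueHypRed} carries a typo, writing $\int_{t_{\xi_1}}^t g$ in the Gaussian of the first group of the $\hat u_t$-estimate; the proof of that corollary shows the intended integral is $\int_0^{t_{\xi_1}}g$, which is what makes that group equal to $S_{|\beta|+1}|\hat u_0|+S_{|\beta|}|\hat u_1|$ as you assumed.)
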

For large frequencies we may use the estimates from Proposition \ref{PropIncreasingEll} because of $t_{\xi_1}=0$. These estimates imply an ``exponential type decay''.
\begin{corollary} \label{CorIncreasingEllLarge}
The following estimates hold for all $t>0$ and large frequencies $|\xi|\geq 1$:
\begin{align*}
|\xi|^{|\beta|}|\hat{u}(t,\xi)| &\lesssim \exp\bigg( -C\int_{0}^t\frac{1}{g(\tau)}d\tau \bigg)\Big( |\xi|^{|\beta|}|\hat{u}_0(\xi)| + |\xi|^{|\beta|-1}|\hat{u}_1(\xi)| \Big) \quad \mbox{for} \quad |\beta|\geq 1, \\
|\xi|^{|\beta|}|\hat{u}_t(t,\xi)| &\lesssim \exp\bigg( -C\int_{0}^t\frac{1}{g(\tau)}d\tau \bigg)\Big( |\xi|^{|\beta|+1}|\hat{u}_0(\xi)| + |\xi|^{|\beta|}|\hat{u}_1(\xi)| \Big) + \exp\bigg( -C\int_{0}^tg(\tau)d\tau \bigg)|\xi|^{|\beta|}|\hat{u}_1(\xi)| \quad \mbox{for} \quad |\beta|\geq 0.
\end{align*}
\end{corollary}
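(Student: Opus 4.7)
The plan is to obtain this corollary as a direct specialization of Proposition \ref{PropIncreasingEll}, taking advantage of the fact that at large frequencies the initial time $t=0$ already sits inside the elliptic zone, so the separating line $t_{\xi_1}$ collapses to $0$.

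First I would pin down the meaning of ``large frequency'' in a quantitative way. The membership $(0,\xi)\in \Zell(N)$ reads $g(0)^2|\xi|^2 \geq 4(N+1)h(0)$, and since $h(0)\leq 1$ and $g(0)>0$ are fixed constants, this is equivalent to $|\xi|^2 \geq 4(N+1)h(0)/g(0)^2$. After absorbing this fixed threshold into the cutoff (or equivalently fixing $N$ appropriately relative to the cutoff $|\xi|\geq 1$), I would conclude that $t_{\xi_1}=0$ for every frequency in the large-frequency regime. Consequently, Proposition \ref{PropIncreasingEll} applies all the way down to $t=0$, with the traces $\hat{u}(t_{\xi_1},\xi)$ and $\hat{u}_t(t_{\xi_1},\xi)$ replaced by the initial data $\hat{u}_0(\xi)$ and $\hat{u}_1(\xi)$ respectively.

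With this substitution the first estimate of the corollary is verbatim the first estimate of Proposition \ref{PropIncreasingEll}. For the $\hat{u}_t$ estimate the leading term is also immediate; only the residual term
\[
\exp\bigg(-\frac{|\xi|^2}{2}\int_0^t g(\tau)\,d\tau\bigg)|\xi|^{|\beta|}|\hat{u}_1(\xi)|
\]
requires a cosmetic adjustment. Using $|\xi|^2\geq 1$ in the exponent I would bound it by $\exp\big(-\tfrac{1}{2}\int_0^t g(\tau)\,d\tau\big)|\xi|^{|\beta|}|\hat{u}_1(\xi)|$, which matches the claimed form after absorbing the prefactor $1/2$ into the generic constant $C$.

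I do not anticipate any genuine obstacle: all the WKB analysis has already been carried out in Proposition \ref{PropIncreasingEll}, and this corollary is essentially a relabeling of its output in the geometric regime where the zone structure degenerates at $t=0$. The only mildly delicate point is the threshold bookkeeping that identifies ``$|\xi|\geq 1$'' with ``$(0,\xi)\in\Zell(N)$'', and this is purely routine once the zone constant $N$ is fixed.
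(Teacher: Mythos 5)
Your proposal is correct and follows the paper's own route: the paper obtains Corollary \ref{CorIncreasingEllLarge} precisely by observing that for large frequencies $t_{\xi_1}=0$, so Proposition \ref{PropIncreasingEll} applies from $t=0$ onward, and the residual term $\exp\big(-\tfrac{|\xi|^2}{2}\int_0^t g\big)|\xi|^{|\beta|}|\hat u_1(\xi)|$ is absorbed into $\exp\big(-C\int_0^t g\big)|\xi|^{|\beta|}|\hat u_1(\xi)|$ using $|\xi|\geq 1$. Your explicit bookkeeping of the threshold (identifying $|\xi|\geq 1$ with membership of $(0,\xi)$ in $\Zell(N)$, up to a fixed rescaling depending on $g(0)$, $h(0)$ and $N$) is the one point the paper leaves implicit; it is a worthwhile clarification but does not change the argument.
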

\subsection{Conclusion}\label{Section5.5}
Taking into consideration all these estimates and the fact, that the statements from Proposition \ref{PropNonIntegrableIncZpd}, Corollaries \ref{CorIncreasingGlueAllLarge} and \ref{CorIncreasingEllLarge} determine the decay estimates and regularity of the data, respectively, we may conclude the proof of Theorem \ref{ThmIncreasing}.
\end{proof}

\section{Concluding remarks} \label{Section6}
\begin{remark}\label{Remark6.1} {\it Scale-invariant models} \\
Let us turn to the scale-invariant case
\begin{equation} \label{modelscaleinvariant}
\begin{cases}
u_{tt}- \Delta u + \mu(1+t)(-\Delta)u_t=0, &(t,x) \in [0,\infty) \times \mathbb{R}^n, \\
u(0,x)= u_0(x),\quad u_t(0,x)= u_1(x), &x \in \mathbb{R}^n,
\end{cases}
\end{equation}
where $\mu>0$. Taking into consideration results from \cite{Reissig} one can arrive at the following estimates:
\begin{align*}
&& |\xi|^{|\beta|}|\hat u(t,\xi)|\leq C\Big( |\xi|^{|\beta|}|\hat u(0,\xi)|+ |\xi|^{|\beta|-1}|\hat u_t(0,\xi)| \Big) \,\,\,\mbox{for} \,\,\,|\beta|\geq 1,\\
&& |\xi|^{|\beta|}|\hat u_t(t,\xi)|\leq C(1+t) \Big( |\xi|^{|\beta|+1}|\hat u(0,\xi)|+ |\xi|^{|\beta|}|\hat u_t(0,\xi)| \Big)\,\,\,\mbox{for} \,\,\,|\beta|\geq 0,
\end{align*}
for all $(t,\xi) \in [0,\infty) \times \mathbb{R}^n$. This implies that multiplication of $|\xi|^{|\beta|}$ on the left-hand sides in the above estimates gives no faster decay depending on $|\beta|$ of higher order energies. Thus, we have no parabolic effect. In this way we get the following result.

\begin{theorem} \label{Theoremscaleinvariant}
Let us consider the Cauchy problem (\ref{modelscaleinvariant}), where the data $(u_0,u_1)$ is assumed to belong to
$\dot{H}^{|\beta|} \times \dot{H}^{|\beta|-1}$ with $|\beta|\geq 1$.
Then, we have the following estimates for Sobolev solutions:
\begin{align*}
\big\||D|^{|\beta|} u(t,\cdot)\big\|_{L^2} & \lesssim  \|u_0\|_{\dot{H}^{|\beta|}} + \|u_1\|_{\dot{H}^{|\beta|-1}},\\
\big\||D|^{|\beta|-1} u_t(t,\cdot)\big\|_{L^2} & \lesssim (1+t)\Big( \|u_0\|_{\dot{H}^{|\beta|}} + \|u_1\|_{\dot{H}^{|\beta|-1}} \Big).
\end{align*}
\end{theorem}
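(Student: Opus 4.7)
The plan is to deduce the claimed $L^2$-Sobolev estimates from the pointwise Fourier-side inequalities stated just above the theorem by a straightforward Plancherel argument, and to justify those pointwise inequalities by a WKB-analysis tailored to the scale-invariant coefficient $g(t)=\mu(1+t)$, in the spirit of Sections 2--5 and following \cite{Reissig}. Concretely, once the pointwise bounds
\begin{align*}
|\xi|^{|\beta|}|\hat u(t,\xi)| & \lesssim |\xi|^{|\beta|}|\hat u_0(\xi)|+|\xi|^{|\beta|-1}|\hat u_1(\xi)|, \\
|\xi|^{|\beta|}|\hat u_t(t,\xi)| & \lesssim (1+t)\bigl(|\xi|^{|\beta|+1}|\hat u_0(\xi)|+|\xi|^{|\beta|}|\hat u_1(\xi)|\bigr),
\end{align*}
are in hand, squaring, integrating over $\xi\in\mathbb{R}^n$ and applying $\|\,|D|^s w\|_{L^2}=\|\,|\xi|^s \hat w\|_{L^2}$ gives the statement of Theorem \ref{Theoremscaleinvariant} directly.

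To produce the pointwise estimates I would apply the partial Fourier transformation and the dissipation-removing substitution of Section 1.3, which here reduces the problem to
\begin{equation*}
v_{tt}+|\xi|^2\left(1-\tfrac{\mu}{2}-\tfrac{\mu^2(1+t)^2|\xi|^2}{4}\right)v=0,\qquad \hat u(t,\xi)=\exp\!\left(-\tfrac{\mu|\xi|^2}{4}\bigl((1+t)^2-1\bigr)\right)v(t,\xi).
\end{equation*}
The decisive point is that the coefficient depends only on the product $(1+t)|\xi|$, so any zone boundary is a level set of $(1+t)|\xi|$. I would then split into three cases:
\emph{(i)} if $\mu\ge 2$, the coefficient is nonpositive for all $(t,\xi)$ and the entire extended phase space is elliptic, so the analysis mirrors Example \ref{Example9} with $h(t)=\sqrt{(\mu-2)/(2\mu^2)}\,(1+t)^{-1}$ and one performs a single elliptic WKB-diagonalization as in Sections 2.1 and 3.3;
\emph{(ii)} if $0<\mu<2$, the separating line $(1+t)|\xi|=\sqrt{2(2-\mu)}/\mu$ of Example \ref{Example10} splits the space into a hyperbolic zone (for small $(1+t)|\xi|$) and an elliptic zone (for large $(1+t)|\xi|$), in each of which one runs the relevant WKB-construction from Sections \ref{SecDecresingZhyp} and \ref{SecDecresingZell}, and glues at the separating line. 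The backward transformation and an energy-type estimate in the reduced strip handle the transition.

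The main obstacle, and the reason this is delicate even though ``nothing decays'', is a precise bookkeeping of the interplay between the rapidly growing exponential weight $\exp(-\tfrac{\mu|\xi|^2}{4}((1+t)^2-1))$ coming from the backward substitution and the growing phase produced by the elliptic WKB-representation of $v$: the two must cancel exactly so as to leave a bound uniform in $t$ for $|\xi|^{|\beta|}|\hat u|$ and a factor exactly $(1+t)$ for $|\xi|^{|\beta|}|\hat u_t|$. This cancellation is the scale-invariant signature of the model $g(t)=\mu(1+t)$ and is precisely what is verified in \cite{Reissig}; one should check that the constants produced in the diagonalizations are independent of $\xi$ on each zone, so that the $\mu$-dependence is absorbed into the implicit constant. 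Since multiplication by $|\xi|^{|\beta|}$ produces no additional decay in $t$, the estimates confirm the absence of a parabolic effect announced in the theorem.
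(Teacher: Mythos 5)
Your proposal matches the paper's route: the paper likewise obtains the two pointwise Fourier-side inequalities from the scale-invariant WKB analysis of \cite{Reissig} (which is exactly the dissipation-removing substitution plus zone decomposition into elliptic and, for $\mu<2$, hyperbolic regions along the level set of $(1+t)|\xi|$ that you describe), and then passes to the $L^2$-estimates by Plancherel. Your additional remarks on the case split $\mu\ge 2$ versus $0<\mu<2$ and on the exact cancellation of the backward-transform weight against the elliptic WKB growth are the correct mechanism underlying the cited result, so this is the same argument spelled out in slightly more detail.
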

\end{remark}

\begin{remark} \label{Remark6.2} {\it Parabolic effect} \\
We have an almost complete picture on the validity of the ``parabolic effect'', that is, higher order energies have a faster decay with increasing order. In the case, that $g=g(t)$ is ``above
the scale-invariant case'' (see previous Remark \ref{Remark6.1}) we have shown in Section \ref{Section4.1} that there is no any parabolic effect.
In the case, that $g=g(t)$ is decreasing and integrable we have shown in Section \ref{Section4.2} that we do not have any parabolic effect.
Under the assumptions for $g=g(t)$ in Sections \ref{Section4.3} and \ref{Section4.4} we have, in general, the parabolic effect.
\end{remark}
\begin{remark} \label{Remark6.3} {\it Comparison of obtained results} \\
The estimates of Theorem \ref{Theorem1} are in some sense related to the estimates of Theorem \ref{Theoremscaleinvariant} although $g(t)=\mu(1+t),\,\mu>0$
does not satisfy the assumption \textbf{(A2)}. The difference between both results is the difference of order of regularity of the data which is $2$ in
Theorem \ref{Theorem1} and $1$ in Theorem \ref{Theoremscaleinvariant}.
The estimates of Theorem \ref{ThmIncreasing} are in some sense related to the estimates of Theorem \ref{Theoremscaleinvariant}. Theorem \ref{ThmIncreasing} gives us at least the information that in the scale-invariant case we cannot expect any parabolic effect.
The estimates of Theorem \ref{ThmDecreasing} are compatible with the estimates of Theorem \ref{ThmIncreasing}. If we formally put $g(t)\equiv 1$, then both results coincide. In the case $g(t) \equiv 1$ we apply straight-forward Fourier analysis.
The estimates of Theorem \ref{TheoremIntegrableCase} are in some sense related to the estimates of Theorem \ref{ThmDecreasing}. If $g \in L^1(0,\infty)$, then from Theorem \ref{ThmDecreasing} we get formally no decay estimates and no parabolic effect anymore.
The difference between both results is the difference of order of regularity of the data which is $2$ in
Theorem \ref{TheoremIntegrableCase} and $1$ in Theorem \ref{ThmDecreasing}.
\end{remark}

%==========================================================
\section*{Acknowledgments}
The discussions on this paper began during the research stay of the first author at the Technical University Bergakademie Freiberg, within the period August to October 2022. This stay was supported by Funda\c c\~ao de Amparo \`a Pesquisa do Estado de S\~ao Paulo (FAPESP), Grant 2021/01743-3.
%==========================================================

%\section*{References}

\end{document}